\documentclass[12pt]{amsart}

\usepackage{amsmath,graphicx}
\usepackage{adjustbox}
\usepackage{amssymb}
\usepackage{tikz, tikz-cd}
\usepackage{fullpage}
\usepackage[all]{xy}
\usepackage[margin=1in]{geometry}
\usepackage{amsthm}
\usepackage{amsfonts}
\usepackage{bbm}
\usepackage{eufrak}
\usepackage{comment}
\usepackage{amsmath}
\usepackage{amsthm,diagbox}
\usepackage{eufrak}
\usepackage{array} 
\usepackage{color}
\usepackage[utf8]{inputenc}
\usepackage[english]{babel}
\usepackage{hyperref}
\usepackage{mathtools}
\newcolumntype{L}{>{$}l<{$}}

\DeclareMathSymbol{\shortminus}{\mathbin}{AMSa}{"39}

\newtheorem{theorem}{Theorem}[section]
\newtheorem{lemma}[theorem]{Lemma}

\newtheorem{corollary}[theorem]{Corollary}

\newtheorem{proposition}[theorem]{Proposition}
\theoremstyle{definition}  
\newtheorem{definition} [theorem] {Definition} 

\newtheorem{example} [theorem] {Example}
\newtheorem{remark} [theorem] {Remark}
\newtheorem{question} [theorem] {Question}

\theoremstyle{definition}

\newcommand{\C}{{\mathbb{C}}}
\newcommand{\F}{{\mathbb{F}}}

\newcommand{\Q}{{\mathbb{Q}}}

\newcommand{\R}{{\mathbb{R}}}
\newcommand{\Z}{{\mathbb{Z}}}

\newcommand{\surj}{\twoheadrightarrow}
\newcommand{\inj}{\hookrightarrow}

\newcommand{\lk}{\ell\text{k}}

\newcommand{\spin}{\text{spin}}

\newcommand{\T}{\mathbf{T}}
\newcommand{\s}{\mathfrak{s}}
\newcommand{\PD}{\text{PD}}

\DeclareMathOperator{\HFL}{HFL}
\DeclareMathOperator{\HFK}{HFK}

\DeclareMathOperator{\HF}{HF}

\DeclareMathOperator{\CKh}{CKh}
\DeclareMathOperator{\Kh}{Kh}

\DeclareMathOperator{\AKh}{AKh}
\DeclareMathOperator{\SFH}{SFH}

\DeclareMathOperator{\AHI}{AHI}

\DeclareMathOperator{\rank}{rank}
\DeclareMathOperator{\LLL}{L}
\DeclareMathOperator{\gr}{gr}
\DeclareMathOperator{\Cone}{Cone}

\linespread{1}

\title{Floer homology, clasp-braids and detection results}
\author[Binns]{Fraser Binns}
\author[Dey]{Subhankar Dey}
\address[]{Department of Mathematics, Princeton University}
\email{fb1673@princeton.edu}
\address[]{Department of Mathematical Sciences, Durham University}
\email{subhankar.dey@durham.ac.uk}
\thanks{FB was supported by the Simons Grant {\em New structures in low-dimensional topology}. SD was supported by an individual
research grant of the DFG, project number 505125645.}

\begin{document}

\begin{abstract}

 Martin showed that link Floer homology detects braid axes. In this paper we extend this result to give a topological characterisation of links which are almost braided from the point of view of link Floer homology. The result is inspired by work of Baldwin-Sivek and Li-Ye on nearly fibered knots. Applications include that Khovanov homology detects the Whitehead link and L7n2, as well as infinite families of detection results for link Floer homology and annular Khovanov homology.
\end{abstract}

\maketitle

\section{Introduction}
Link Floer homology is a vector space valued invariant of oriented links in $S^3$ due to Ozsv\'ath-Szab\'o~\cite{HolomorphicdiskslinkinvariantsandthemultivariableAlexanderpolynomial}. We are broadly interested in determining the topological information that link Floer homology encodes. There are a number of results in this direction. Notably, link Floer homology detects the Thurston norm~\cite[Theorem 1.1]{ozsvath2008linkFloerThurstonnorm} and whether or not $L$ is fibered~\cite{ni2007knot,ni2006note,ghiggini2008knot}. The most relevant prior result for the purposes of this paper is Martin's result that link Floer homology detects whether or not a given link component is a braid axis~\cite{martin2022khovanov}.

To state Martin's result precisely, recall that a knot $K$ in $S^3$ is \emph{fibered} if its complement is swept out by a family $\{\Sigma_t\}_{t\in S^1}$ of Seifert surfaces for $K$ with disjoint interiors. A link $L$ is \emph{braided} with respect to $K$ if $L$ can be isotoped so that it intersects each page $\Sigma_t$ transversely. Now recall that if a knot $K$ is a component of a link $L$ then the link Floer homology of $L$, $\widehat{\HFL}(L)$, carries a grading called the \emph{Alexander grading for $K$}, which we denote $A_K$. Martin's result can be stated as follows:

\begin{proposition}[Proposition 1~\cite{martin2022khovanov}]
    Let $L$ be an $n$-component non-split link with a component $K$. $\widehat{\HFL}(L)$ is of rank at least $2^{n-1}$ in the maximal non-trivial $A_K$ grading and equality holds if and only if $K$ is fibered and $L-K$ is braided with respect to $K$.
\end{proposition}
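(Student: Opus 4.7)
The plan is to translate the statement into sutured Floer homology and then apply Juh\'asz's classification of product sutured manifolds. Under Juh\'asz's identification $\widehat{\HFL}(L) \cong \SFH(S^3 \setminus \nu(L), \gamma_\mu)$, where $\gamma_\mu$ consists of two oppositely oriented meridional sutures on each component of $L$, the Alexander grading $A_K$ corresponds to a $\mathrm{Spin}^c$ grading. By Juh\'asz's surface decomposition theorem, the summand of $\widehat{\HFL}(L)$ in the maximal non-trivial $A_K$ grading is identified with $\SFH(N, \gamma_N)$, where $(N,\gamma_N)$ is the balanced sutured manifold obtained by decomposing $(S^3 \setminus \nu(L), \gamma_\mu)$ along a Thurston-norm-minimizing surface $\Sigma$ representing the Seifert class of $K$ in the link exterior. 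Such a $\Sigma$ can be arranged to meet $\partial \nu(K)$ in a longitude of $K$ and each $\partial \nu(K_j)$, for $K_j \subset L - K$, in $|\lk(K,K_j)|$ meridians parallel to the two existing sutures, so that the decomposed manifold $(N,\gamma_N)$ is taut.

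Next I would establish the lower bound. The two meridional sutures on each $\partial \nu(K_j)$ persist through the decomposition as suture circles in $\gamma_N$, parallel to the new suture circles arising from $\Sigma \cap \partial \nu(K_j)$. Performing further sutured decompositions of $(N, \gamma_N)$, one for each of the $n-1$ components of $L-K$, should split off a tensor factor of $V \cong \F^2$ per component, yielding an isomorphism
\[ \SFH(N, \gamma_N) \cong \SFH(N_0, \gamma_0) \otimes V^{\otimes(n-1)}, \]
for a reduced taut balanced sutured manifold $(N_0, \gamma_0)$ (morally, the exterior of $K$ cut along $\Sigma$ after accounting for the braid strands). Since $(N_0, \gamma_0)$ is taut, Juh\'asz's non-vanishing theorem gives $\rank \SFH(N_0, \gamma_0) \geq 1$, and combining this with the splitting gives the bound $\rank \SFH(N, \gamma_N) \geq 2^{n-1}$.

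For the equality case, I would invoke Juh\'asz's theorem that a taut balanced sutured manifold has $\SFH$ of rank $1$ over $\F$ if and only if it is a product sutured manifold. Thus equality forces $(N_0, \gamma_0) \cong (F \times I, \partial F \times \{1/2\})$ for some surface $F$. Tracing this product structure back through the tensor splittings and the original surface decomposition, one obtains a fibration of $S^3 \setminus \nu(L)$ over $S^1$ with fiber isotopic to $\Sigma$: one boundary component of the fiber is a longitude of $K$, and the remaining boundary components are meridians of components of $L-K$. This is exactly the statement that $K$ is fibered and $L-K$ sits as a closed braid with respect to the resulting fibration of $S^3 \setminus \nu(K)$. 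The reverse direction is immediate: given $K$ fibered and $L-K$ braided, taking $\Sigma$ to be a fiber surface exhibits $(N_0, \gamma_0)$ as a product directly.

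The main obstacle I foresee is justifying the tensor splitting $\SFH(N,\gamma_N) \cong \SFH(N_0, \gamma_0) \otimes V^{\otimes(n-1)}$, which requires identifying explicit sutured decompositions whose effect on $\SFH$ is tensoring with $V$, and checking that these decompositions preserve tautness so that Juh\'asz's product characterization applies cleanly to $(N_0,\gamma_0)$ in the equality case. Reconstructing the global fibered-braided picture from the product structure on $(N_0, \gamma_0)$ will also demand some care in reassembling the various sutured decompositions into an honest fibration.
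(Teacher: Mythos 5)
This proposition is quoted from Martin rather than proved in the paper, so the comparison is with the standard argument (which the paper's own machinery in Proposition~\ref{Thm:Juhaszaffine}, Lemma~\ref{lem:computefiberedbraideddecomposed} and Corollary~\ref{lem:computefiberedbraided} reproduces for the fibered--braided case). Your outline is that argument: identify the top $A_K$ summand with $\SFH$ of the complement of a maximal longitudinal surface, split off $V^{\otimes(n-1)}$, and apply Juh\'asz's product detection. Two points in your setup are wrong or incomplete, and both matter for the lower bound. First, a maximal--Euler-characteristic longitudinal surface $\Sigma$ meets $\partial\nu(K_j)$ in the minimal \emph{geometric} intersection number of $K_j$ with $\Sigma$, which can strictly exceed $|\lk(K,K_j)|$ (for the Whitehead link the linking number is $0$ but the intersection number is $2$); this particular error is harmless, since extra intersections only produce more excess sutures. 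Second, and genuinely problematic for your mechanism: a component $K_j$ with $\lk(K,K_j)=0$ may be disjoint from $\Sigma$ altogether (this happens in case $(5)$ of Theorem~\ref{thm:mainbraid}, where $K'$ lies on a fiber surface). Then $\Sigma\cap\partial\nu(K_j)=\emptyset$, no new suture circles appear on that torus, and there is no pair of excess parallel sutures to split off a $V$ factor; your claimed tensor decomposition fails for that component. The factor of $2$ for such a component comes instead from the spectral sequence of Lemma~\ref{lem:spectral} from $\SFH(Y_T,\gamma_T)$ to $\SFH(Y_{T-t},\gamma_{T-t})\otimes V$ together with tautness of the filled-in manifold (which is where non-splitness of $L$ is used), and in the equality analysis one must separately rule out such closed tangle components, e.g.\ by observing that a connected product sutured manifold has connected boundary while a closed tangle component forces $\partial Y_T$ to be disconnected.

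For the components that do meet $\Sigma$, the tensor factor you want is not produced by ``further sutured decompositions'' in the sense of Gabai/Juh\'asz surface decompositions but by the standard fact that adding a pair of parallel sutures tensors $\SFH$ with $V$ (equivalently, decomposing along the product annulus between two parallel sutures); this is exactly how Lemma~\ref{lem:computefiberedbraideddecomposed} is organized, so the obstacle you flag is a known lemma rather than a gap. The rest of your argument -- equality forces the reduced piece to be a product by Juh\'asz's rank-one characterization, and regluing the vertical solid cylinders around the arcs exhibits $S^3_K\setminus\Sigma$ as $\Sigma\times I$ with $T$ vertical, i.e.\ $K$ fibered and $L-K$ braided -- is correct, as is the converse via direct computation with a fiber surface.
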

 In this paper our main goal is to give a version of Martin's result for links $L$ with a component $K$ such that the rank of the link Floer homology is of next to minimal rank in the maximal non-trivial $A_K$ grading. In particular we prove the following theorem:

\newtheorem*{thm:mainbraid}{Theorem~\ref{thm:mainbraid}}
\begin{thm:mainbraid}
  Let $L$ be an $n$-component non-split link in $S^3$ with a component $K$. The link Floer homology of $L$ in the maximal non-trivial $A_K$ grading is of rank at most $2^n$ if and only if one of the following holds:

    \begin{enumerate}
        \item $K$ is fibered and $L-K$ is braided with respect to $K$.
        \item $K$ is nearly fibered and $L-K$ is braided with respect to $K$.
        \item $K$ is fibered and $L-K$ is a clasp-braid closure with respect to $K$.
        \item $K$ is fibered and $L$ is a stabilized clasp-braid closure with respect to $K$.
        \item $K$ is fibered with fibration $\{\Sigma_t\}_{t\in S^1}$ and there exists a component $K'$ of $L$ such that $K'$ is isotopic to a simple closed curve in $\Sigma_t\setminus (L-K')$ and $L\setminus(K\cup K')$ is braided with respect to $K$.
    \end{enumerate}

\end{thm:mainbraid}

We defer the definitions of the terms involved in the statement of this Theorem to Section~\ref{sec:almostbraids}. For now we give only the following descriptions:

\emph{Nearly fibered knots} are knots which are ``nearly" fibered from the point of view of link Floer homology. Examples include $5_2$ and the Pretzel knots  $P(-3,3,2n+1)$. These knots were first studied by Baldwin-Sivek, who gave a classification of genus $1$ nearly fibered knots~\cite{baldwin2022floer}. They used this classification to obtain botany results for link Floer homology and \emph{Khovanov homology} -- a categorified link invariant due to Khovanov~\cite{khovanov2000categorification}. For example, they showed that link Floer homology and Khovanov homology detect the knot $5_2$. Li-Ye gave a topological description of nearly fibered knots of arbitrary genus which we shall make use of in due course~\cite{li2022seifert}.

A link $L$ is ``braided" with respect to a nearly fibered knot if after appropriate cuts and excisions $L$ looks like a braid. A ``clasp-braid" is a tangle obtained from a  braid by replacing a small ball containing two parallel strands with a ``clasp" -- see Figure~\ref{fig:nearlybraided}. A ``stabilization" of a clasp-braid is a tangle that can be obtained from a clasp-braid by adding parallel strands, subject to certain conditions.

Finally we note that in case $(5)$ $K'$ does not bound a disk in $\Sigma_t\setminus(L-K')$ as $L$ is assumed to be non-split.

\begin{center}
 
     \begin{figure}[h]
 \includegraphics[width=5cm]{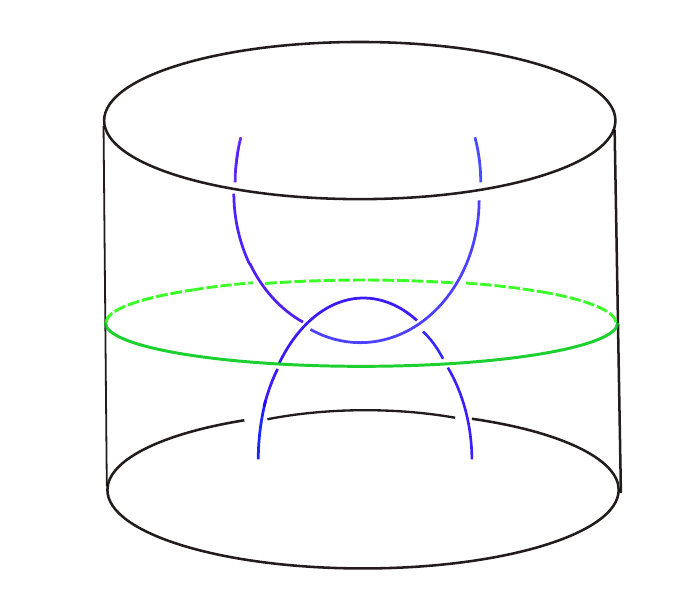}
    \caption{A clasp. The green curve indicates the core of the suture, while the blue curves indicate the tangle $T$.}\label{fig:nearlybraided}
   \end{figure}
\end{center}

The proof of Theorem~\ref{thm:mainbraid} makes extensive use of Juh\'asz's \emph{sutured Floer homology} theory, a generalization of link Floer homology to manifolds with appropriately decorated boundaries -- namely Gabai's \emph{sutured manifolds}. Specifically, given an $n$ component link $L$, the idea is to use sutured techniques to decompose the exterior of $L$ into smaller sutured manifolds which can be classified, and then determine the ways in which these pieces can be glued back together. This strategy was used by Baldwin-Sivek~\cite{baldwin2022floer} and Li-Ye~\cite{li2022seifert} in their work on nearly fibered knots, as well as in a generalization of Baldwin-Sivek's work to certain ``nearly fibered links" due to Cavallo-Matkovi\v{c}~\cite{cavallo2023nearly}.

The proof of Theorem~\ref{thm:mainbraid} passes through a classification of sutured link exteriors of small rank, Theorem~\ref{thm:mailinknrankclass}. This result is a generalization of Ni's classification of links in $S^3$ with link Floer homology of minimal rank~\cite[Proposition 1.4]{ni2014homological} and Kim's classification of links in $S^3$ with link Floer homology of next to minimal rank~\cite[Theorem 1]{kim2020links}. Since the statement of Theorem~\ref{thm:mailinknrankclass} is long and somewhat technical, we defer it to Section~\ref{sec:linkext}.

Theorem~\ref{thm:mainbraid} can be sharpened in a number of ways: the hypothesis that $L$ is not split can be readily removed -- see Corollary~\ref{rem:reducible} -- while $\widehat{\HFL}(L)$ in the maximum non-trivial $A_K$ hyperplane can be computed as a vector space equipped with $n-1$ Alexander gradings up to translation -- see Section~\ref{subsec:geography}.

Martin used her braid axis detection result as an ingredient in her proof that Khovanov homology detects $T(2,6)$~\cite[Theorem 1]{martin2022khovanov}. Theorem~\ref{thm:mainbraid} can likewise be used as an ingredient for the following Khovanov homology detection results:

\newtheorem*{thm:Khdetects}{Theorem~\ref{thm:Khdetects}}
\begin{thm:Khdetects}
    Khovanov homology detects the Whitehead link and $L7n2$.
\end{thm:Khdetects}

Indeed, we prove this result using Martin's strategy for $T(2,6)$ detection. More specifically, if $L$ is a link with the Khovanov homology type of the Whitehead link or $L7n2$, we use Lee's spectral sequence~\cite{lee2005endomorphism} to deduce the number of components of $L$ and their linking numbers and Batson-Seed's link splitting spectral sequence~\cite{batson2015link} to deduce information about the components of $L$. We then use Dowlin's spectral sequence~\cite{dowlin2018spectral} from Khovanov homology to \emph{knot Floer homology} to reduce the question to one concerning knot Floer homology. Here knot Floer homology is a version of link Floer homology due independently to Ozsv\'ath-Szab\'o~\cite{Holomorphicdisksandknotinvariants} and J.Rasmussen~\cite{Rasmussen}.

Other knot detection results for Khovanov homology include unknot detection~\cite{kronheimer2010knots} as well as a number of other small genus examples~\cite{farber2022fixed,baldwin2022floer,baldwin2018khovanov,baldwin2020khovanov}. There are larger numbers of detection results for links with at least two components~\cite{martin2022khovanov, binns2020knot,xie2020links,li2020two}.

Somewhat surprisingly, it is not the case that link Floer homology detects the Whitehead link or L7n2. In fact, the Whitehead link and L7n2 have the same link Floer homology\footnote{The link Floer homology of L7n2 was computed in~\cite[Section 12.2.1]{ozsvath2008holomorphic}. However, we note that there is a small error in the computation of the Maslov gradings.}. However, link Floer homology -- as well as knot Floer homology -- detect membership of the set consisting of the Whitehead link and L7n2, see Theorem~\ref{thm:HFKW} and Theorem~\ref{thm:HFLwhite}. On the other hand, using Theorem~\ref{thm:mainbraid} and some work of the second author, King, Shaw, Tosun and Trace~\cite{dey2023unknotted}, we can give the following infinite family of link detection results.

\newtheorem*{thm:HFLLn}{Theorem~\ref{thm:HFLLn}}
\begin{thm:HFLLn}\label{thm:HFLLn}
    Link Floer homology detects the $n$-twisted Whitehead link for $n\not\in\{-2,-1,0,1\}$.
\end{thm:HFLLn}

The \emph{$n$-twisted Whitehead links} are the links shown in Figure~\ref{L_n}. They can alternately be characterized as the index $2$-clasp-braids with respect to the unknot. Note that $L_{-2}$ and $L_{1}$ are the links L7n2 and its mirror, while $L_0$ and $L_{-1}$ are the Whitehead link and its mirror.

\begin{center}
    \begin{figure}[h]
 \includegraphics[width=5cm]{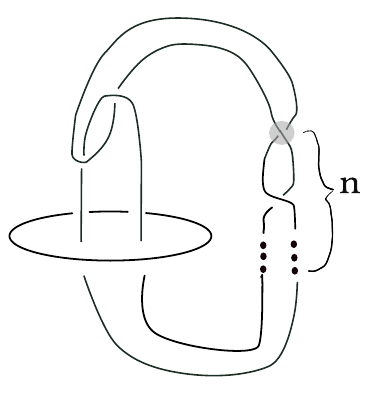}
    \caption{The $n$-twisted Whitehead links are shown here. The Whitehead link, or $L5a1$, corresponds to $n=0$, $L7n2$ corresponds to the $n=1$ case. The highlighted crossing is one we will have course to resolve. In this picture $n \geq 0$.}
\label{L_n}
\end{figure}
\end{center}

We also obtain an infinite family of \emph{annular Khovanov homology} detection results for a related family of annular links:

\newtheorem*{thm:AKH}{Theorem~\ref{thm:AKH}}
\begin{thm:AKH}
    Annular Khovanov homology detects each wrapping number 2 clasp-braid closure amongst annular knots.
\end{thm:AKH}

These annular links are those shown in Figure~\ref{annularlinks}.

\begin{center}

    \begin{figure}[h]
 \includegraphics[width=4.8cm]{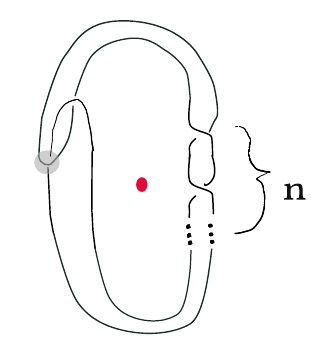}
    \caption{Wrapping number $2$ clasp-braids. The red dot indicates the annular axis, while the highlighted crossing is one which we will have course to resolve. In this picture $n \geq 0$.} \label{annularlinks}
\end{figure}
\end{center}

Annular Khovanov homology is a version of Khovanov homology for links in the thickened annulus due to Aseda-Przytycki-Sikora~\cite{asaeda_categorification_2004}. The \emph{wrapping number} of an annular link $L$ is the minimal geometric intersection number of a meridional disk and $L$.

Theorem~\ref{thm:AKH} should be compared with the result that annular Khovanov homology detects all $2$-braids -- which follows from a computation of Grigsby-Licata-Wehrli~\cite[Section 9.3]{grigsby_annular_2018} and the Grigsby-Ni's result that annular Khovanov homology detects braid closures~\cite{grigsby_sutured_2014}. Other annular Khovanov homology detection results have been given by the first author and Martin~\cite{binns2020knot}, the authors~\cite{binns2022rank}, and Baldwin-Grigsby~\cite{BG}. We note also in passing that if Khovanov homology detects a link then annular Khovanov homology detects that link when we view it as embedded in a $3$-ball in the thickened annulus, a result which follows from work of Xie-Zhang~\cite{xie2020links}.

As an intermediate step towards Theorem~\ref{thm:AKH}, we show the following;

\newtheorem*{prop:AKH2}{Proposition~\ref{prop:AKH2}}
\begin{prop:AKH2}
    Suppose $K$ is an annular knot. Suppose that the maximal non-trivial annular grading of $\AKh(K;\C)$ is two and that the $\rank(\AKh(K;\C,2))\leq 2$. Then either:
    \begin{enumerate}
        \item $K$ is a braid and $\rank(\AKh(K;\C,2))=1$.
        \item $K$ is a clasp-braid closure and $\rank(\AKh(K;\C,2))=2$.
    \end{enumerate}
\end{prop:AKH2}

The proof of this result uses a relationship between annular Khovanov homology and various versions of \emph{Instanton Floer homology}; see, for example,~\cite{kronheimer2010knots,kronheimer2011khovanov,kronheimer2011knot}. Instanton Floer invariants share many structural properties with Heegaard Floer invariants. Indeed, it is conjectured that Instanton Floer homology with complex coefficients agrees with Heegaard Floer homology~\cite[Conjecture 7.24]{kronheimer2010knots}. For the proof of Proposition~\ref{prop:AKH2} we use Xie's spectral sequence from annular Khovanov homology to annular instanton Floer homology~\cite{xie_instanton_2019} and a description of a specific summand of annular instanton Floer homology in terms of an instanton Floer homology invariant for tangles in sutured manifolds due to Xie-Zhang~\cite{xie2020links}. We then use a characterization of annular links with annular instanton Floer homology of next to minimal rank that is formally identical to that given in Theorem~\ref{thm:mainbraid} in the case that the component $K$ is an unknot.

It is natural to ask if there is a more general version of Proposition~\ref{prop:AKH2} in the style of Theorem~\ref{thm:mainbraid}.

\begin{question}\label{q:nearlybraidedannular}
   Is there a topological classification of Annular links with annular Khovanov homology of next to minimal rank in the maximum non-trivial annular grading?
\end{question}
 Note that annular links with annular Khovanov homology of minimal rank in the maximal non-trivial annular grading are braid closures by a result of Grigsby-Ni~\cite{grigsby_sutured_2014}. See Remark~\ref{rem:AKHAHI} for some further discussion.

The outline of this paper is as follows: in Section~\ref{sec:background} we review sutured manifolds, sutured Floer homology and link Floer homology. In Section~\ref{sec:almostbraids} we define the terms used in the statement of Theorem~\ref{thm:mainbraid} and discuss related notions. A classification of certain sutured link exteriors is given in Section \ref{sec:linkext}. In Section~\ref{sec:mainthmpf} we prove the main theorem, Theorem~\ref{thm:mainbraid}. The first applications of Theorem~\ref{thm:mainbraid} are given in Section~\ref{sec:HFLW}, where we determine the links with the same knot and link Floer homology as the Whitehead link, and prove Theorem \ref{thm:HFLLn}. In Section~\ref{sec:KH} we prove our two Khovanov homology detection results and in Section~\ref{sec:annular} we prove our annular Khovanov homology detection results.

\subsection*{Acknowledgements}

 We are grateful for Claudius Zibrowius' help with some of the immersed curve computations in Section~\ref{sec:HFLW} and his feedback on an earlier draft.  We would also like to thank Ilya Kofman for some helpful comments. The first author would like to thank his Ph.D. advisor, John Baldwin, as well as Gage Martin for illuminating conversations. The second author would like to acknowledge partial support of NSF grants DMS 2144363, DMS 2105525, and AMS-Simons travel grant while he was a postdoc at the University of Alabama, when the project started.

\section{Background Material}\label{sec:background}
In this section we review aspects of sutured manifold theory and sutured Floer homology, with an emphasis on those which will be relevant in subsequent sections.
\subsection{Sutured Manifolds}

Sutured Manifolds were first defined by Gabai in the 1980's for the purposes of studying taut foliations~\cite{gabai1983foliations}. In this paper we will only be interested in a subclass of sutured manifolds called \emph{balanced} sutured manifolds. We duly suppress the term ``balanced" henceforth. 

Sutured manifolds can now be defined as manifolds which can be obtained via the following construction. Take an oriented surface with non-empty boundary, $\Sigma$. Consider $\Sigma\times[-1,1]$. Let $\{\alpha_i\}_{1\leq i\leq n},\{\beta_i\}_{1\leq i\leq n}$ be collections of homologically independent curves in $\Sigma\times\{1\}$ and $\Sigma\times\{-1\}$ respectively. Attach thickened disks $D^2\times[-1,1]$ along $\partial D^2\times[-1,1]$ to neighborhoods of each $\alpha_i,\beta_i$. Denote this manifold by $Y$. $\partial Y$ comes endowed with a decomposition into $R_+\cup R_-\cup \gamma$ where $\gamma$ is given by $\partial\Sigma\times[-1,1]$, while $R_+(\gamma)$ is the interior of the components of $\partial Y\setminus \gamma$ with boundary $\partial\Sigma\times\{1\}$ while $R_-(\gamma)$ is the interior of the components of $\partial Y-s(\gamma)$ with boundary $\partial\Sigma\times\{-1\}$. $\partial Y$ also comes equipped with an orientation for which the normal vector of $R_+(\gamma)$ points out of $Y$ while the normal vector of $R_-(\gamma)$ points into $Y$. The pair $(Y,\gamma)$ is then a sutured manifold~\footnote{Technically, we should here smooth the corners of the $3$-manifold. We suppress this and all other such smoothings throughout this paper, as is customary.}.

\begin{example}
A \emph{product sutured manifold} is the product manifold $\Sigma\times[-1,1]$ endowed with sutures $\gamma=\partial\Sigma\times [-1,1]$. We call $\Sigma$ the \emph{base} of the product sutured manifold.
\end{example}

Let $R(\gamma)=R_+(\gamma)\cup R_-(\gamma)$. We will typically be interested in sutured manifolds for which $R(\gamma)$ is of minimal complexity in the following sense:
\begin{definition}
A sutured manifold $(Y,\gamma)$ is \emph{taut} if $Y$ is irreducible and $R(\gamma)$ is incompressible and Thurston norm minimizing in $H_2(Y,\gamma)$.
 \end{definition}
Another condition that is often a hypothesis on theorems concerning sutured manifolds is the following:
\begin{definition}
    A sutured manifold $(Y,\gamma)$ is \emph{strongly balanced} if each connected component $F$ of $\partial Y$ we have that $\chi(F\cap R_+(\gamma))=\chi(F\cap R_-(\gamma))$.
\end{definition}
 Sutured manifolds are of use to us primarily because they behave well under removing neighborhoods of certain surfaces. These surfaces are required to satisfy a number of conditions:
\begin{definition}
A \emph{decomposing surface} in a sutured manifold $(Y,\gamma)$ is a properly embedded surface $S$ in $Y$ such that no component of $\partial S$ bounds a disk in $R(\gamma)$ and no component of $S$ is a disk $D$ with $\partial D\subset R(\gamma)$. Moreover, for every component $\lambda$ of $S\cap \gamma$ we require that one of the following holds;\begin{enumerate}
    \item $\lambda$ is a properly embedded non-separating arc in $\gamma$ such that $|\lambda\cap s(\gamma)|=1$.
    \item $\lambda$ is a simple closed curve in a component $A$ of $\gamma$ in the same homology class as $A\cap s(\gamma)$.
\end{enumerate}
\end{definition}

If $\Sigma$ is a decomposing surface in a sutured manifold $(Y,\gamma)$ then the exterior of $\Sigma$ in $Y$, $Y'$, is naturally endowed with the structure of a sutured manifold, $(Y',\gamma')$. The operation of removing a neighborhood of $\Sigma$ from $(Y,\gamma)$, to obtain $(Y',\gamma')$ is called \emph{sutured decomposition.}

For this operation to play nicely with ``sutured Floer homology" -- an invariant we will discuss in the next subsection -- we require additional hypotheses on the surface. To explain this, let $v_0$ be a vector field on $(Y,\gamma)$, satisfying conditions as in~\cite[Section 4]{juhasz2006holomorphic}. For a surface $S$ in $(Y,\gamma)$ let $\nu_S$ denote the normal vector of $S$.
\begin{definition}
    A decomposing surface $S$ in $(M,\gamma)$ is called \emph{nice} if $S$ is open, $\nu_S$ is nowhere parallel to $v_0$ and for each component $V$ of $R(\gamma)$ the set of closed components of $S\cap V$ consists of parallel, coherently oriented, boundary coherent simple closed curves.
\end{definition}

Here a curve $c\in \Sigma$ is \emph{boundary coherent} if either $[c]\neq 0$ in $H_1(\Sigma;\Z)$, or if
$[c]=0$ in $H_1(\Sigma;\Z)$ and $c$ is oriented as the boundary of its interior. Nice decomposing surfaces $S\inj Y$ are generic in the space of embeddings of $S$ in $Y$ so we will suppress dependencies on this condition henceforth.

There are certain classes of decomposing surfaces that play a particularly important role in the sutured Floer theory.

\begin{definition}
Let $(Y,\gamma)$ be a sutured manifold. A decomposing surface $S\subset Y$ is called \emph{horizontal} if $S$ is open and incompressible, $\partial S\subset \gamma$, $\partial S$ is isotopic to $\partial R_\pm(\gamma)$, $[S]=[R_\pm(\gamma)]$ in $H_2(Y,\gamma)$, and $\chi(S)=\chi(R_\pm(\gamma))$.
\end{definition}

We will typically assume that there are no ``interesting" horizontal surfaces, i.e. that the following definition applies:
\begin{definition}
A sutured manifold $(Y,\gamma)$ is \emph{horizontally prime} if every horizontal surface in $(M,\gamma)$ is parallel to either $R_+(\gamma)$ or $R_-(\gamma)$.
    
\end{definition}

Note that if a sutured manifold is not horizontally prime then it can be decomposed along a collection of horizontal surfaces into horizontally prime pieces by a result of Juh\'asz~\cite[Proposition 2.18]{juhasz2010sutured}.

Certain types of annuli will play an important role in this work.
\begin{definition}
A \emph{product annulus} $A$ is a properly embedded annulus in a sutured manifold $(Y,\gamma)$ such that $A_\pm\subset R_\pm(\gamma)$, where $A_\pm$ are the two boundary components of $A$.
   
\end{definition}
Note that product annuli need not be decomposing surfaces. Various theorems in sutured Floer homology require additional conditions on product annuli, for example:
\begin{definition}

A product annulus $A$, properly embedded in a sutured manifold $(M, \gamma)$, is called \emph{essential} if $A$ is incompressible and  cannot be isotoped into a component of $\gamma$ with the isotopy keeping $\partial A$ in $R(\gamma)$ throughout.
    
\end{definition}

\begin{remark}
    We will be interested in undoing the operation of decomposing along a product annulus. That is, we are interested in the operation by which we take a sutured manifold $(Y,\gamma)$ with at least two sutures $\gamma_1,\gamma_2\subset \gamma$ and construct a new sutured manifold $(Y',\gamma')$ such that $(Y,\gamma)$ contains a product annulus $A$ such that decomposing $(Y',\gamma')$ along $A$ yields $(Y,\gamma)$.
    
    We can construct $(Y',\gamma')$ as follows. Pick a homeomorphism $f:\gamma_1\to\gamma_2$ such that $f$ preserves both $R_+(Y,\gamma)\cap\gamma$ and $R_-(Y,\gamma)\cap\gamma$ setwise. Then set $x\sim y$ if and only if;\begin{itemize}
        \item $x=y$ and $x,y\in Y-(\gamma_1\cup\gamma_2)$.
        \item $f(x)=y$ with $x\in\gamma_1, y\in \gamma_2$
    \end{itemize}
Apriori, $(Y',\gamma')$ is dependant on the choice of $f$. However, the group of homeomorphisms of the annulus that preserves each boundary component setwise  up to isotopy through such homeomorhisms is trivial, so in fact $(Y',\gamma')$ is independent of $f$.
    
\end{remark}

We conclude our discussion of sutured manifolds by recalling the following measure of their complexity:

\begin{definition}[Scharlemann~\cite{scharlemann1989sutured}]
Let $(Y,\gamma)$ be a sutured manifold. Given a properly embedded surface $S\subset Y$, set: 
\[
    x^s(S)=\max\{0,\chi(S\cap R_-(\gamma))-\chi(S)\}.
\]
Extend this definition to disconnected surfaces linearly and thereby to a function\\ ${x^s:H_2(Y,\partial Y;\R)\to\R}$. Call $x^s$ the \emph{sutured Thurston norm}.
\end{definition}

Equivalently we can write $x^s(\alpha)=\max\{0,\frac{1}{2}|\Sigma\cap s(\gamma)|-\chi(\Sigma):[\Sigma]=\alpha\}$.

\subsection{Sutured Floer homology}\label{subsec:suturedFloerbackground}
To each sutured manifold $(Y,\gamma)$, Juh\'asz associates a finitely generated vector space denoted $\SFH(Y,\gamma)$~\cite{juhasz2006holomorphic}. Sutured Floer homology is defined using sutured Heegaard diagrams, symplectic topology and analysis.

$\SFH(Y,\gamma)$ caries an affine grading by relative $\spin^c$ structures on $Y$. Here $\spin^c$ structures can be viewed as a equivalence classes of vector fields that agree with $v_0$. We denote these by $\spin^c(Y,\partial Y)$. There is an affine isomorphism between $\spin^c(Y,\partial Y)$ and $H^2(Y,\partial Y;\Z)$ given by tubularization, see~\cite[p639]{turaev1990euler}. Moreover, Ponincar\'e duality gives an isomorphism $H^2(Y,\partial Y;\Z)\cong H_1(Y;\Z)$. We will be particularly interested in cases in which $Y$ is the exterior of a union of surfaces and a link in $S^3$. In this case $H_1(Y;\Z)$ has a summand generated by meridians of the arc and link components.

If $(Y_1,\gamma_1)$ and $(Y_2,\gamma_2)$ are sutured manifolds, then the sutured Floer homology of the connect sum is given by:\begin{align}\label{eq:connectsum}
    \SFH(Y_1\#Y_2,\gamma_1\#\gamma_2)\cong V\otimes \SFH(Y_1,\gamma_1)\otimes \SFH(Y_2,\gamma_2)
\end{align}

Where $V$ is a rank $2$ vector space. This can be proven at the level of sutured Heegaard diagrams. The graded version of the statement is the natural one.

\begin{definition}
    Let $(Y,\gamma)$ be a sutured manifold. The \emph{sutured Floer homology polytope} is the convex hull of $\{\s\in\spin^c(M,\gamma):\SFH(M,\gamma,\s)\neq0\}$.
\end{definition}

We denote the sutured polytope of $(Y,\gamma)$ by $P(Y,\gamma)$. We describe how to compute the dimension of the sutured polytope in practice, as it will be helpful later. Suppose $(Y,\gamma)$ is a sutured manifold with sutured Heegaard diagram $(\Sigma,\alpha,\beta)$. Fix an intersection point $x\in\T_\mathbf{\alpha}\cap\T_\mathbf{\beta}$. Recall that the difference between the relative $\spin^c$-structure of $\mathbf{x}$ and any other point $\mathbf{y}\in\T_\mathbf{\alpha}\cap\T_\mathbf{\beta}$ can be thought of as an element $\epsilon(\mathbf{x},\mathbf{y})\in H_1(Y)$. Moreover $H_1(Y)\cong H_1(\Sigma)/\langle\mathbf{\alpha},\mathbf{\beta}\rangle$. To compute $\epsilon(\mathbf{x},\mathbf{y})\in H_1(\Sigma)/\langle\alpha,\beta\rangle$ pick a symplectic basis $\{z_i\}$ for $H_1(\Sigma,\partial\Sigma)$. Pick an oriented curve $\gamma$ consisting of arcs which pass from each element $x\in\mathbf{x}$ with $x\in\alpha\in\mathbf{\alpha}$ to $y\in\mathbf{y}$ with $y\in\alpha$ along $\alpha$ and arcs which pass from each element $y\in\mathbf{y}$ with $y\in\beta\in\mathbf{\beta}$ to $y\in\mathbf{y}$ with $y\in\beta$ along $\beta$. The homology class $[\gamma]\in H_1(\Sigma)$ can be read off by taking a signed count of intersections with the symplectic basis for $\Sigma$. $\epsilon(\mathbf{x},\mathbf{y})$ is then the image of $[\gamma]$ in $H_1(\Sigma)/\langle\alpha,\beta\rangle$. The dimension of the sutured Floer polytope is then given by $\dim(\langle \epsilon(\mathbf{x},\mathbf{y}):\mathbf{y}\in\T_\alpha\cap\T_\beta\rangle)$.

The sutured Floer polytope has a topological interpretation by the following Theorem:

\begin{theorem}[Friedl-Juh\'asz-Rasmussen~\cite{friedl2011decategorification}]\label{thm:FLRpolytopedetection}
    Suppose that $(M,\gamma)$ is an irreducible sutured manifold with boundary a disjoint union of tori. Let $S$ be the support of $\SFH(M,\gamma)$. Then $\max_{\s,\s'\in S}\langle \s-\s',\alpha\rangle =x^s(\alpha)$.
\end{theorem}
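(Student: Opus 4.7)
The plan is to prove the two inequalities $\leq$ and $\geq$ separately, using distinct ingredients from Juh\'asz's sutured Floer theory.

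For the upper bound $\max_{\s,\s'\in S}\langle \s-\s',\alpha\rangle\leq x^s(\alpha)$, I would fix a properly embedded decomposing surface $\Sigma$ with $[\Sigma]=\alpha$ that minimizes $\tfrac{1}{2}|\Sigma\cap s(\gamma)|-\chi(\Sigma)$. The adjunction inequality for sutured Floer homology (due to Juh\'asz, generalizing the closed Ozsv\'ath--Szab\'o adjunction inequality) constrains the pairing $\langle c_1(\s),[\Sigma]\rangle$ for each $\s\in S$ in terms of $\chi(\Sigma)$ and the boundary data $|\Sigma\cap s(\gamma)|$. Because $c_1(\s)-c_1(\s')=2(\s-\s')$ under the affine identification $\spin^c(M,\partial M)\cong H^2(M,\partial M;\Z)$, the boundary contributions and Euler characteristic contributions cancel in the difference, so the adjunction bound on $\langle \s-\s',\alpha\rangle$ reduces to $x^s(\alpha)$ after minimizing over representatives.

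For the lower bound the strategy is to realize extremal $\spin^c$ structures in $S$ via sutured decomposition. By Gabai's existence theorem for taut decompositions, I may choose a nice, taut decomposing surface $\Sigma$ with $[\Sigma]=\alpha$ realizing $x^s(\alpha)=\tfrac{1}{2}|\Sigma\cap s(\gamma)|-\chi(\Sigma)$. Decomposing $(M,\gamma)$ along $\Sigma$ yields a taut sutured manifold $(M',\gamma')$, and Juh\'asz's nonvanishing theorem gives $\SFH(M',\gamma')\neq 0$. His surface decomposition theorem identifies $\SFH(M',\gamma')$ with $\bigoplus_{\s\in O_\Sigma}\SFH(M,\gamma,\s)$, where $O_\Sigma$ denotes the set of outer $\spin^c$ structures with respect to $\Sigma$. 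Hence some $\s\in O_\Sigma$ lies in $S$. Applying the same argument to $-\Sigma$ produces $\s'\in O_{-\Sigma}\subset S$ at the opposite extreme, and a direct computation with relative first Chern classes, using that an outer $\s$ is represented by a vector field suitably pushed off of $\Sigma$, shows $\langle \s-\s',\alpha\rangle$ equals precisely $x^s(\alpha)$.

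The principal technical obstacle lies in the lower bound: one must ensure that a single surface $\Sigma$ can be chosen to simultaneously realize the sutured Thurston norm $x^s(\alpha)$, be nice in Juh\'asz's sense (so that the surface decomposition theorem applies with its clean outer-$\spin^c$ description), and produce a taut decomposition (so that the nonvanishing theorem applies). This is handled by combining Gabai's taut decomposition theorem with a perturbation argument carried out within the homology class $\alpha$. A secondary, more bookkeeping-heavy point, is verifying that $O_\Sigma$ and $O_{-\Sigma}$ genuinely sit at the extremes of $S$ in the $\alpha$-direction and are distinct; the hypothesis that $\partial M$ is a union of tori is convenient here because it makes the boundary combinatorics of $\Sigma\cap s(\gamma)$ transparent and keeps the Chern class computation at the heart of the argument manageable.
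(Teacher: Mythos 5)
The paper never proves this statement: it is imported verbatim, with attribution, from Friedl--Juh\'asz--Rasmussen and used later as a black box (e.g.\ in Lemmas~\ref{lem:nondegeneratenorm} and~\ref{lem:degpoly}). So there is no in-paper argument to compare yours against; I can only measure your sketch against the proof in the cited reference. Your skeleton matches that proof in outline, and your lower-bound half is essentially right: choose a well-groomed, nice, taut decomposing representative of $\alpha$ via Gabai, apply Juh\'asz's surface decomposition theorem together with the nonvanishing of $\SFH$ for taut balanced sutured manifolds to conclude that some outer $\spin^c$ structure lies in the support, do the same for $-\alpha$, and evaluate the difference using the Chern-class characterization of outer structures.

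Two steps are much thinner than you present them. First, the upper bound is not an off-the-shelf ``adjunction inequality'': in the sutured setting the statement $\max\langle \s-\s',\alpha\rangle\le x^s(\alpha)$ is itself a theorem proved through the same decomposition machinery (groomed representatives of $\alpha$ and the characterization of outer $\spin^c$ structures by $\langle c_1(\s,t),[\Sigma]\rangle=c(\Sigma,t)$, where $c(\Sigma,t)$ involves trivialization-dependent rotation and intersection terms), not by a formal $c_1$ computation. Your claim that ``the boundary contributions and Euler characteristic contributions cancel in the difference'' is precisely the step that needs proof: one must show $\tfrac12\bigl(c(\Sigma,t)+c(-\Sigma,t)\bigr)$ equals $\tfrac12|\Sigma\cap s(\gamma)|-\chi(\Sigma)$, and that is where the boundary combinatorics live. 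Second, and relatedly, you treat ``irreducible'' and ``boundary a disjoint union of tori'' as mere conveniences, but they are load-bearing: they force $R(\gamma)$ to be a union of annuli (so the rotation/intersection terms above simplify and the norm-minimizing surface can be arranged to be a legitimate decomposing surface), and they are what lets one discard the $\max\{0,\cdot\}$ truncation in the definition of $x^s$ by excluding sphere and compressing-disk representatives. A proof that never invokes these hypotheses should be treated with suspicion, since the equality is not claimed --- and is not expected --- for arbitrary taut balanced sutured manifolds.
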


This is a generalization of Ozsv\'ath-Szab\'o's result that the link Floer homology -- which we shall discuss in the next Subsection -- detects the Thurston norm~\cite{ozsvath2008linkFloerThurstonnorm}.

Sutured Floer homology as an ungraded object and -- to a lesser extent -- as a graded object behave well under sutured manifold decomposition. We recall a number of results witnessing this.

To state the first, we introduce some notation. If $(M,\gamma)\rightsquigarrow(M',\gamma')$ is a surface decomposition and $e:M'\inj M$ denotes the corresponding embedding then we set\begin{align*}
    F_S:=\PD_M\circ e_*\circ(\PD_{M'})^{-1}:H^2(M',\partial M')\to H^2(M,\partial M)
\end{align*}

where here $\PD_M$ is the Poincar\'e-Lefshetz duality map on $M$. Let $T(M,\gamma)$ denote the set of trivializations of $(M,\gamma)$ that restrict to $v_0$ on the boundary. Finally, let $O_S$ denote the set of outer $\spin^c$-structures on $(M,\gamma)$ with respect to $S$ -- see~\cite[Definition 1.1]{juhasz2008floer} for a definition.

\begin{proposition}[Proposition 5.4, Juh\'asz~\cite{juhasz2010sutured}]\label{Thm:Juhaszaffine}
    Let $(M,\gamma)\overset{S}{\rightsquigarrow} (M',\gamma')$ be a nice surface decomposition of a strongly balanced sutured manifold $(M,\gamma)$. Fix $t\in T(M,\gamma), t'\in T(M',\gamma')$. Then there is an affine map $f_S:\spin^c(M',\gamma')\to\spin^c(M,\gamma)$ such that:\begin{itemize}
        \item 
    $f_S$ surjects onto $O_S$ and for any $\s\in O_S$ we have: \begin{align*}
        \SFH(M,\gamma,\s)\cong \underset{\s'\in\spin^c(M',\gamma'):f_S(\mathfrak{s'})=\s}{\bigoplus}\SFH(M',\gamma',\s')
    \end{align*}
\item If $\s_1',\s_2'\in\spin^c(M',\gamma')$ then \begin{align*}F_S(\s_1'-\s_2')=f_S(\s_1')-f_S(\s_2').\end{align*}

    \end{itemize} 
\end{proposition}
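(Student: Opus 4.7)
The plan is to build $f_S$ at the level of $\spin^c$-structures represented by nowhere-zero vector fields, to check its affine behaviour against $F_S$ by inspecting the induced map on $H^2$, and then to match it with the chain-level decomposition furnished by a surface-adapted sutured Heegaard diagram.

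First I would construct $f_S$. A relative $\spin^c$-structure on $(M',\gamma')$ is an equivalence class of nowhere-zero vector fields $v'$ on $M'$ agreeing with a fixed boundary model. Undoing the surface decomposition glues back a collar of $S$, and there is a canonical way to extend $v'$ across this collar using the product structure and the orientation of $S$, producing a vector field $v$ on $M$ which restricts to $v_0$ on $\partial M$. Setting $f_S([v']) := [v]$ yields a well-defined map $\spin^c(M',\gamma') \to \spin^c(M,\gamma)$, since homologous representatives extend to homologous vector fields on $M$. The image of $f_S$ is immediately contained in the outer set $O_S$, since by construction $v$ is tangent to (and outwards-pointing across) the two copies of $S$ that have been reglued.

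Next I would verify the affine formula. The extension construction is equivariant for the $H^2$-actions on both sides (adding a cocycle to $v'$ on $M'$ corresponds to adding its pushforward to $v$ on $M$), so $f_S(\s_1') - f_S(\s_2')$ corresponds under Poincar\'e--Lefschetz duality to $e_*(\PD_{M'}(\s_1'-\s_2'))$, where $e:M'\inj M$ is the inclusion. Rewriting this using $F_S = \PD_M \circ e_* \circ \PD_{M'}^{-1}$ gives exactly the second bullet.

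Now for the chain-level decomposition. Juh\'asz's surface decomposition theorem produces a sutured Heegaard diagram $(\Sigma,\alpha,\beta)$ for $(M,\gamma)$ adapted to $S$ in which the generators of $\SFH(M',\gamma')$ are identified with the subset of $\T_\alpha \cap \T_\beta$ whose $\spin^c$-structures are outer with respect to $S$, and no Maslov index one holomorphic disk between two such generators strays into the part of the diagram removed by the decomposition. This yields the ungraded splitting $\SFH(M',\gamma') \cong \bigoplus_{\s \in O_S} \SFH(M,\gamma,\s)$ and in particular shows $f_S$ surjects onto $O_S$. To refine this to the $\spin^c$-graded statement I would check that under the bijection $\mathbf{x}' \leftrightarrow \mathbf{x}$ the $\spin^c$-structure $\s(\mathbf{x})$ is represented by exactly the vector field obtained from a representative of $\s(\mathbf{x}')$ by the canonical collar extension; hence $f_S(\s(\mathbf{x}')) = \s(\mathbf{x})$, and grouping summands by $\spin^c$-structure on the source yields the direct-sum formula of the first bullet.

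The main obstacle is that final compatibility: showing that the Heegaard-diagram $\spin^c$-structure of an outer intersection point in the larger diagram equals $f_S$ applied to its $\spin^c$-structure in the decomposed diagram. This requires using Juh\'asz's explicit vector-field model for the $\spin^c$-structure of an intersection point and tracking how the model behaves under regluing a neighbourhood of $S$; this is precisely where strong balancedness and the compatible trivializations $t,t'$ enter, pinning down the extension across $S$ uniquely up to homology so that the vector-field definition of $f_S$ matches the bijection on generators.
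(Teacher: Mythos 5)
This proposition is not proved in the paper at all: it is quoted verbatim as Proposition 5.4 of Juh\'asz's sutured polytope paper and used as a black box, so there is no in-paper argument to compare against. Your outline does, however, track the structure of Juh\'asz's actual proof: define $f_S$ by restricting/extending vector fields across a collar of $S$, deduce the affine identity from the fact that the extension is equivariant for the $H^2(M',\partial M')$- and $H^2(M,\partial M)$-actions (so the difference map is $\PD_M\circ e_*\circ \PD_{M'}^{-1}=F_S$), and then feed in the surface decomposition theorem, which identifies the generators of a diagram for $(M',\gamma')$ with the outer intersection points of a diagram for $(M,\gamma)$ adapted to $S$.

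Two caveats. First, you derive surjectivity of $f_S$ onto $O_S$ from the homology splitting, but that only shows that outer $\spin^c$ structures with \emph{nonzero} $\SFH$ are hit; surjectivity onto all of $O_S$ should instead come directly from your vector-field model: by definition an outer structure has a representative $v$ with $v_p\neq -\nu_S(p)$ along $S$, which can be homotoped to be outward along $S$ and then restricted to $M'$, producing an explicit preimage. Second, the step you defer as the ``main obstacle'' --- that the diagram-level bijection on outer generators intertwines $\s(\mathbf{x}')\mapsto\s(\mathbf{x})$ with the collar-extension map --- is precisely the substantive content of Juh\'asz's proof (it is where the adapted quadrilateral structure of the decomposing surface, strong balancedness, and the trivializations $t,t'$ are genuinely used), so as written the proposal is a correct plan rather than a complete argument. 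The tools you name for closing it are the right ones.
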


This proposition gives a measure of control of the dimension of the sutured Floer polytope under surface decomposition.

The rank of sutured Floer homology behaves even better under decomposition along horizontal surfaces:

\begin{proposition}[Proposition 8.6, Juh\'asz~\cite{juhasz2008floer}]\label{prop:juhaszhorizontaldecomp}
    Let $(Y,\gamma)$ be a sutured manifold and $(Y',\gamma')$ be the manifold obtained by decomposing $(Y,\gamma)$ along a surface $\Sigma$ such that $(Y',\gamma')$ is taut, and $[\Sigma]=0$ in $H_2(Y,\gamma)$. Suppose moreover that $\Sigma$ is open and for every component $V$ of $R(\gamma)$ the set of closed components of $\Sigma\cap V$ consists of parallel oriented boundary-coherent simple closed curves. Then $(Y',\gamma')$ has two components $(Y'_1,\gamma'_1)$ and $(Y'_2,\gamma'_2)$ and 
    
    \begin{align*} \SFH(Y,\gamma)\cong\SFH(Y',\gamma')\cong\SFH(Y_1,\gamma_1)\otimes\SFH(Y_2,\gamma_2)    
    \end{align*}

    as ungraded vector spaces.
\end{proposition}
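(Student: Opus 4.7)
The plan proceeds in three main movements, the last being the analytical heart of the argument.

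First I would show that $\Sigma$ separates $Y$. Since $[\Sigma]=0$ in $H_2(Y,\gamma)$ and $\Sigma$ is a two-sided, properly embedded codimension-one surface in the orientable manifold $Y$, a Mayer--Vietoris argument on the decomposition $Y=\nu(\Sigma)\cup(Y\setminus\nu(\Sigma))$ forces the complement $Y\setminus\Sigma$ to have exactly two components. The sutured data on $(Y,\gamma)$ restricts to each to produce sutured manifolds $(Y'_1,\gamma'_1)$ and $(Y'_2,\gamma'_2)$, so $(Y',\gamma')=(Y'_1,\gamma'_1)\sqcup(Y'_2,\gamma'_2)$ as claimed.

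Second, once the disjoint union structure is established, the isomorphism $\SFH(Y',\gamma')\cong\SFH(Y'_1,\gamma'_1)\otimes\SFH(Y'_2,\gamma'_2)$ follows directly from the Heegaard-theoretic construction of sutured Floer homology: a sutured Heegaard diagram for a disjoint union can be taken to be the disjoint union of sutured Heegaard diagrams for the summands, intersection points of the $\alpha$- and $\beta$-tori split as pairs, and holomorphic disks respect the product structure, so the Floer chain complex is literally the tensor product of the factor complexes.

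The central step is to establish $\SFH(Y,\gamma)\cong\SFH(Y',\gamma')$. I would apply the surface decomposition formula from Proposition~\ref{Thm:Juhaszaffine} along $\Sigma$ to obtain
$$\SFH(Y',\gamma')\;\cong\;\bigoplus_{\s\in O_\Sigma}\SFH(Y,\gamma,\s),$$
reducing the task to showing that every relative $\spin^c$-structure $\s$ with $\SFH(Y,\gamma,\s)\neq 0$ lies in the outer set $O_\Sigma$.

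The main obstacle is precisely this outerness containment. I would attack it via a symmetry argument: the reversed surface $-\Sigma$ also satisfies the hypotheses of the proposition (it is still nice, still null-homologous, and yields the same taut $(Y',\gamma')$ with the roles of $Y'_1$ and $Y'_2$ swapped), so a second application of Proposition~\ref{Thm:Juhaszaffine} gives the parallel formula
$$\SFH(Y',\gamma')\;\cong\;\bigoplus_{\s\in O_{-\Sigma}}\SFH(Y,\gamma,\s).$$
Combining the two identifications with the polytope interpretation of the sutured Thurston norm (Theorem~\ref{thm:FLRpolytopedetection})---which, since $x^s([\Sigma])=0$, forces the support of $\SFH(Y,\gamma)$ to pair constantly with $[\Sigma]$ and hence to lie in a single affine hyperplane on which the outerness conditions for $\pm\Sigma$ are determined---one concludes that the support of $\SFH(Y,\gamma)$ lies in $O_\Sigma$ (indeed in $O_\Sigma\cap O_{-\Sigma}$). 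Careful bookkeeping with the affine map $f_\Sigma$ of Proposition~\ref{Thm:Juhaszaffine} then collapses the two decompositions onto $\SFH(Y,\gamma)$, yielding the desired isomorphism and completing the proof.
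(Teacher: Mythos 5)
First, a point of reference: the paper does not prove this statement at all --- it is quoted verbatim as Proposition 8.6 of Juh\'asz's \emph{Floer homology and surface decompositions} \cite{juhasz2008floer} and used as a black box. So the comparison has to be with Juh\'asz's original argument, whose overall shape you have correctly reconstructed: show $\Sigma$ separates, reduce to the surface-decomposition theorem $\SFH(Y',\gamma')\cong\bigoplus_{\s\in O_\Sigma}\SFH(Y,\gamma,\s)$, and then prove that every $\s$ in the support of $\SFH(Y,\gamma)$ is outer, so that the direct sum exhausts $\SFH(Y,\gamma)$. Your first two movements (separation from $[\Sigma]=0$, and the tensor-product formula for a disjoint union of sutured Heegaard diagrams) are fine, modulo the small point that null-homologous only gives ``separating,'' not ``exactly two components,'' when $\Sigma$ is disconnected.

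The genuine gap is in the outerness step. Theorem~\ref{thm:FLRpolytopedetection} computes the sutured Thurston norm from the support of $\SFH$; it says nothing about which relative $\spin^c$ structures are \emph{outer}, which is a pointwise condition on representative vector fields ($v_p\neq -(\nu_\Sigma)_p$ on $\Sigma$), not a condition on where $\s$ sits relative to a hyperplane. Moreover, since $[\Sigma]=0$, the statement that the support ``pairs constantly with $[\Sigma]$'' is vacuously true and carries no information, so nothing in your argument actually forces a supported, non-outer $\s$ out of existence; the $\pm\Sigma$ symmetry alone only yields $\operatorname{rk}\SFH(Y,\gamma)\le\operatorname{rk}\bigoplus_{O_\Sigma}+\operatorname{rk}\bigoplus_{O_{-\Sigma}}$, which is the wrong direction. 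The missing ingredient is Juh\'asz's comparison lemma (the content behind ``$\Sigma$ determines a face of the sutured polytope''): for a nice decomposition with $(Y',\gamma')$ taut, if $\s$ is outer and $\s'$ is in the support of $\SFH(Y,\gamma)$ but not outer, then $\langle c_1(\s,t)-c_1(\s',t),[\Sigma]\rangle>0$. Granting that, the proof closes immediately: $[\Sigma]=0$ makes the pairing identically zero, tautness of $(Y',\gamma')$ guarantees at least one outer supported $\s$, hence there is no non-outer supported $\s'$, and $\SFH(Y,\gamma)=\bigoplus_{\s\in O_\Sigma}\SFH(Y,\gamma,\s)\cong\SFH(Y',\gamma')$. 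You should either quote that lemma or prove it; it cannot be replaced by the norm-detection theorem.
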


We also have a good deal of control of the rank under decomposition along product annuli. To explain this we require a further definition.

\begin{definition}
    A sutured manifold $(Y,\gamma)$ is \textit{reduced} if $(Y,\gamma)$ contains no essential product annulus.
\end{definition}

Juh\'asz showed if $(Y,\gamma)$ is a sutured manifold such that $H_2(Y)=0$ then the dimension of the sutured polytope is maximal if $(Y,\gamma)$ is horizontally prime and contains no essential product annuli~\cite[Theorem 3]{juhasz2010sutured}. We give a minor generalization of this Theorem, which we require in Sections~\ref{sec:linkext} and~\ref{sec:mainthmpf}.

\begin{lemma}\label{cor:trivialinclusion}
  
    Let $q:C_2(Y)\surj C_2(Y,\partial Y)$ be the quotient map. Suppose ${q_*:H_2(Y)\to H_2(Y,\partial Y)}$ is trivial and the sutured manifold $(Y, \gamma)$ is taut and horizontally prime. Then either:
    \begin{enumerate}
\item $(Y,\gamma)$ is not reduced,
    \item $\dim P(Y, \gamma)= b_2(Y,\partial Y)$ or
    \item $(Y,\gamma)$ is a product sutured manifold with $Y=\Sigma\times[-1,1]$ with  $\Sigma$ an annulus or a once punctured annulus.
\end{enumerate} 

\end{lemma}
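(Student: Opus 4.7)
Assume $(Y,\gamma)$ is reduced, otherwise case~(1) holds and there is nothing to prove. The plan is to bootstrap from Juh\'asz's~\cite[Theorem 3]{juhasz2010sutured}, which is the special case of the desired statement under the stronger hypothesis $H_2(Y) = 0$, and to pin down where this stronger hypothesis is genuinely needed.

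The hypothesis that $q_*$ is trivial is equivalent, via the long exact sequence of the pair $(Y,\partial Y)$, to the inclusion-induced map $i_*: H_2(\partial Y) \to H_2(Y)$ being surjective. Geometrically, every closed surface in $Y$ is homologous to a union of components of $R_+(\gamma) \cup R_-(\gamma)$ together with annuli along the sutures. Such classes lie in the subspace of $H_1(Y;\R)$ generated by the boundary components of $R_\pm$, which is precisely the subspace modulo which differences of relative $\spin^c$-structures are measured in a sutured Heegaard diagram, as recalled in Section~\ref{subsec:suturedFloerbackground}.

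I would then replay Juh\'asz's Heegaard-diagrammatic argument. His proof constructs intersection points $\mathbf{x},\mathbf{y}\in \T_\alpha\cap\T_\beta$ whose $\epsilon(\mathbf{x},\mathbf{y})$-differences span $H_1(Y;\R)$, invoking $H_2(Y)=0$ only to guarantee linear independence of certain $\alpha\cup\beta$-curves in $H_1(\Sigma)$. Under the weaker hypothesis $q_*=0$, the same argument yields linear independence modulo the image of $H_1(\partial Y) \to H_1(Y)$ spanned by boundaries of $R_\pm$, which still suffices since the polytope lives in $H^2(Y,\partial Y;\R) \cong H_1(Y;\R)$ and this image already contributes to the $\spin^c$ grading in a controlled way. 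This is the content of case~(2).

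The main obstacle will be confirming that the exceptional cases are precisely the two product sutured manifolds in case~(3) and no others. For product sutured manifolds with base an annulus or pair of pants, every essential simple closed curve in the base is boundary-parallel, so there are no essential product annuli (reducedness) and horizontal primeness is immediate, yet $\dim P(Y,\gamma)=0 < b_2(Y,\partial Y)$, confirming these are genuine exceptions. For a taut, horizontally prime, reduced sutured manifold not of this product type, one must verify that Juh\'asz's inductive argument actually succeeds in producing the required intersection points; this entails ruling out degenerate Heegaard diagrams in which the span of $\epsilon$-differences collapses, which in turn forces a product structure by the combinatorics of a minimal-complexity Heegaard diagram together with tautness and horizontal primeness.
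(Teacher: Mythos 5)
There is a genuine gap in the main case. Your plan is to ``replay Juh\'asz's Heegaard-diagrammatic argument,'' which you describe as constructing intersection points whose $\epsilon$-differences span $H_1(Y;\R)$, with $H_2(Y)=0$ invoked ``only to guarantee linear independence of certain $\alpha\cup\beta$-curves in $H_1(\Sigma)$.'' That is not how the proof of \cite[Theorem 3]{juhasz2010sutured} goes, and the misidentification matters because the entire content of the generalization is locating exactly where the hypothesis $H_2(Y)=0$ enters. Juh\'asz's argument is a surface-decomposition argument, not a Heegaard-diagram construction: for each nonzero $\alpha\in H_2(Y,\partial Y)$ one uses Gabai's \cite[Lemma 0.7]{gabai1987foliations} to produce a groomed decomposition $(Y,\gamma)\overset{S}{\rightsquigarrow}(Y',\gamma')$ with $[S]=\alpha$, $S$ open, and $(Y',\gamma')$ taut; Proposition~\ref{Thm:Juhaszaffine} then places the nonzero summand $\SFH(Y',\gamma')$ on a proper face $O_S$ of the polytope, and reducedness together with horizontal primeness rules out the degenerate decompositions. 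In that argument $H_2(Y)=0$ is used to guarantee that the connecting homomorphisms $H_2(Y,\partial Y)\to H_1(\partial Y)$ and $H_2(Y,\gamma)\to H_1(\gamma)$ are injective, so that nonzero relative classes are represented by surfaces with boundary. The observation that makes the lemma work --- and which your proposal does not make --- is that the weaker hypothesis $q_*=0$ still yields both injectivity statements: the first directly from the long exact sequence of the pair $(Y,\partial Y)$, the second by naturality. With that in hand Juh\'asz's proof runs verbatim.

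Two parts of your write-up are correct: the reformulation of $q_*=0$ as surjectivity of $H_2(\partial Y)\to H_2(Y)$, and the check that the product sutured manifolds over an annulus or once-punctured annulus are genuine exceptions (reduced, horizontally prime, but with $\dim P=0<b_2(Y,\partial Y)$). However, your final paragraph defers rather than supplies the argument that no other exceptions occur (``one must verify that Juh\'asz's inductive argument actually succeeds\dots'', ``this entails ruling out degenerate Heegaard diagrams\dots''). As written, the proposal neither establishes case (2) nor delimits case (3); it needs to be replaced by the decomposition argument sketched above.
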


We follow Juh\'asz' proof of the $H_2(Y)=0$ case~\cite[Theorem 3]{juhasz2010sutured}.

\begin{proof}
     By~\cite[Lemma 0.7]{gabai1987foliations}, we have that any non-zero element $\alpha\in H_2(Y,\partial Y)$ there is a groomed surface decomposition $(Y,\gamma)\overset{S}{\rightsquigarrow}(Y',\gamma')$ such that $(Y,\gamma)$ is taut, $[S]=\alpha$ and $S$ is ``open" in the sense that it has a boundary. The remainder of the proof follows Juh\'asz' proof of Theorem~\cite[Theorem 3]{juhasz2010sutured} verbatim, noting that since $i_*$ is trivial the connecting homomorphism $H_2(Y,\partial Y)\to H_1(\partial Y)$ is injective and moreover that the connecting homomorphism $H_2(Y,\gamma)\to H_1(\gamma)$ is injective by naturality.
\end{proof}

We will use this theorem under the hypothesese that $(Y,\gamma)$ is horizontally prime and  $\dim P(Y, \gamma)<b_2(Y,\partial Y)/2$ to produce families of product annuli. We will be interested in applying the Lemma in two cases. The first of these is the case of link exteriors.

\begin{lemma}\label{lem:trivialinclusionlinks}
    Let $L$ be an $n$ component link. The map $q_*:H_2(X(L))\to H_2(X(L),\partial X(L))$ is trivial and $b_2(X(L),\partial X(L))=b_1(\partial X(L))/2$.
\end{lemma}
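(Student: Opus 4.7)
The plan is to compute the relevant Betti numbers directly, and then use the long exact sequence of the pair $(X(L),\partial X(L))$ to show that $q_\ast$ vanishes.

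First I would pin down the homology of $X(L)$ and $\partial X(L)$. Alexander duality gives $H_0(X(L)) = \Z$, $H_1(X(L)) = \Z^n$, $H_2(X(L)) = \Z^{n-1}$, $H_3(X(L)) = 0$. Since $\partial X(L)$ is a disjoint union of $n$ tori, $b_1(\partial X(L)) = 2n$ and $b_2(\partial X(L)) = n$. For the Betti number of the pair, Lefschetz duality identifies $H_2(X(L),\partial X(L)) \cong H^1(X(L))$, and since $H_1(X(L)) \cong \Z^n$ is torsion-free, universal coefficients gives $b_2(X(L),\partial X(L)) = n = b_1(\partial X(L))/2$, which is the second claim.

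For the vanishing of $q_\ast$, I would consider the relevant segment of the long exact sequence of the pair:
\[
H_2(\partial X(L)) \xrightarrow{i_\ast} H_2(X(L)) \xrightarrow{q_\ast} H_2(X(L),\partial X(L)).
\]
By exactness it suffices to show $i_\ast$ is surjective. The key geometric observation is that the $n$ boundary tori $T_1,\dots,T_n$ of $X(L)$, viewed as classes in $H_2(X(L))$, satisfy the single relation $[T_1] + \dots + [T_n] = 0$, because their union bounds $X(L)$ inside $S^3$ (noting $H_2(S^3) = 0$). Thus the image of $i_\ast$ has rank $n-1$, which matches $b_2(X(L)) = n-1$; since $H_2(X(L))$ is free abelian, $i_\ast$ is surjective and hence $q_\ast = 0$.

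There is no real obstacle — the argument is a routine application of Alexander/Lefschetz duality plus the long exact sequence of a pair. The only step that requires a moment of thought is why the boundary tori span $H_2(X(L))$, but this is essentially the standard fact that the image of $i_\ast$ is cut out by the relation arising from $X(L)$ itself being a chain with boundary $\sum T_i$ in $S^3$.
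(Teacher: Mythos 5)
Your argument is essentially the paper's: Alexander/Lefschetz duality to compute the relevant groups, then the long exact sequence of the pair $(X(L),\partial X(L))$ to kill $q_\ast$ (your geometric relation $\sum[T_i]=0$ is just the image of the connecting map $H_3(X(L),\partial X(L))\cong\Z\to H_2(\partial X(L))$, which is how the paper phrases it, and your Lefschetz-duality computation of $b_2(X(L),\partial X(L))$ replaces the paper's appeal to ``half lives half dies''). One step needs tightening: knowing that $\operatorname{im}(i_\ast)$ has rank $n-1$ inside the free group $H_2(X(L))\cong\Z^{n-1}$ does not by itself give surjectivity of $i_\ast$ (compare $2\Z\subset\Z$); the correct conclusion is that $\operatorname{coker}(i_\ast)\cong\operatorname{im}(q_\ast)$ is a torsion group embedding in the torsion-free group $H_2(X(L),\partial X(L))\cong\Z^n$, hence zero — which is all you need and uses only facts you have already established.
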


\begin{proof}
    Suppose $L, n$ are as in the statement of the Lemma. By Alexander duality: \begin{center}
        
$H_2(X(L))\cong \widetilde{H}^{0}(L)\cong\Z^{n-1}$ and $H_3(X(L))\cong \widetilde{H}^{-1}(L)\cong0$.    \end{center} Note that $H_2(\partial X(L))\cong\Z^n$. By Poincar\'e duality $H_3(X(L),\partial X(L))\cong H^0(X(L))\cong\Z$. Applying the long exact sequence of the pair $(X(L),\partial (X(L)))$, we see that $q^*$ must be trivial. The fact that $b_2(M,\partial M)=b_1(\partial M)/2$ follows from Poincar\'e duality and the half lives half dies principle, or a Mayer-Vietoris argument involving $(X(L), \nu(L))$, where $\nu(L)$ is a neighborhood of $L$.
\end{proof}

We now prove a version for complements of longitudinal surfaces in link exteriors.
\begin{lemma}\label{lem:triviallongitudeext}
      Suppose $L$ is a link and $\Sigma$ is a longitudinal surface for a component of $L$ which does not intersect $n$ of the components of $L$. If $(Y,\gamma)$ is the sutured manifolds obtained by decomposing the sutured exterior of $L$ along $\Sigma$ then $q_*:H_2(Y;\Z)\to H_2(Y,\partial Y;\Z)$ is trivial and $b_2(Y,\partial Y)=\dfrac{b_1(\partial Y)}{2}$.
\end{lemma}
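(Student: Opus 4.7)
The plan is to deduce both assertions from Lemma~\ref{lem:trivialinclusionlinks} by a cutting argument, followed by an application of the half lives half dies principle. Write $X(L)$ for the sutured exterior of $L$, let $K$ be the component of $L$ for which $\Sigma$ is longitudinal, and let $N(\Sigma)$ be an open regular neighborhood of $\Sigma$ in $X(L)$, so that $Y = X(L)\setminus N(\Sigma)$ and $\partial Y$ contains two parallel copies $\Sigma_+$ and $\Sigma_-$ of $\Sigma$ glued to the remainder of $\partial X(L)$ along $\partial\Sigma$.

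To establish triviality of $q_*$, I would represent an arbitrary class $[S] \in H_2(Y)$ by a closed embedded surface $S$ in the interior of $Y$. Under the inclusion $Y \hookrightarrow X(L)$ the image of $[S]$ is annihilated by $q_*^{X(L)}$ by Lemma~\ref{lem:trivialinclusionlinks}, so there exists a $3$-chain $C$ in $X(L)$ with $\partial C = S + T$ for some $2$-chain $T \subset \partial X(L)$. After perturbing $C$ to be transverse to $\Sigma$, the $3$-chain $C' := C \setminus N(\Sigma)$ lies in $Y$, and its boundary decomposes as $S$ (unchanged, since $S$ may be taken disjoint from $N(\Sigma)$), the portion of $T$ lying in $\partial X(L)\setminus N(\partial \Sigma) \subset \partial Y$, and two $2$-chains on $\Sigma_\pm \subset \partial Y$ coming from $C \cap \Sigma$. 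Every piece of $\partial C'$ other than $S$ lies in $\partial Y$, so $[S] = 0 \in H_2(Y, \partial Y)$, proving that $q_*$ is trivial.

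For the equality $b_2(Y,\partial Y) = b_1(\partial Y)/2$, I would invoke the long exact sequence of the pair $(Y,\partial Y)$ with rational coefficients:
\[
H_2(Y;\Q) \to H_2(Y,\partial Y;\Q) \xrightarrow{\partial_*} H_1(\partial Y;\Q) \to H_1(Y;\Q).
\]
By the first assertion, $\partial_*$ is injective. The half lives half dies principle, applied to the compact orientable $3$-manifold $Y$, yields $\dim\ker(H_1(\partial Y;\Q) \to H_1(Y;\Q)) = b_1(\partial Y)/2$. Since by exactness this kernel equals $\im(\partial_*)$, which is isomorphic to $H_2(Y,\partial Y;\Q)$ by the injectivity just established, we conclude $b_2(Y, \partial Y) = b_1(\partial Y)/2$.

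The main technical obstacle is the cutting argument itself: verifying that $C \mapsto C \setminus N(\Sigma)$ produces a genuine $3$-chain in $Y$ with the boundary described above requires some care with transversality for $C$ and $\Sigma$, and with the behavior of $T$ near $N(\partial \Sigma) \subset \partial X(L)$. Once the set-up is fixed, however, both of these are routine; the remainder of the argument is a formal manipulation of the long exact sequence exactly as in the proof of Lemma~\ref{lem:trivialinclusionlinks}.
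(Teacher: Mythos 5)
Your proof is correct, but your route to the vanishing of $q_*$ is genuinely different from the paper's. The paper recomputes everything for $Y$ from scratch: Alexander duality gives $H_2(Y)\cong\widetilde{H}^{0}(\Sigma\cup L)\cong\Z^{n}$ and $H_3(Y)=0$, Poincar\'e--Lefschetz duality gives $H_3(Y,\partial Y)\cong\Z$, and since $\partial Y$ has $n+1$ components a rank count in the long exact sequence of $(Y,\partial Y)$ forces the image of $q_*$ to have rank zero, hence to vanish in the free group $H_2(Y,\partial Y)\cong H^1(Y)$. You instead bootstrap from Lemma~\ref{lem:trivialinclusionlinks}: a class in $H_2(Y)$ already bounds rel $\partial X(L)$ inside $X(L)$, and cutting the bounding $3$-chain along $\Sigma$ exhibits it as bounding rel $\partial Y$ inside $Y$. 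This buys you an argument that needs no identification of $H_*(Y)$ itself, only the ambient statement for the link exterior, and it adapts verbatim to further decompositions; the trade-off is that the paper's computation also records $H_2(Y)\cong\Z^n$, which is reused in the proof of Lemma~\ref{Lem:canapplycor}. The transversality bookkeeping you flag as the main obstacle can be avoided entirely by phrasing the cut via excision: $H_2(Y,\partial Y)\cong H_2\bigl(X(L),\partial X(L)\cup\overline{N(\Sigma)}\bigr)$, the square comparing this to $q_*^{Y}$ commutes, and the bottom map factors through $H_2(X(L))\to H_2(X(L),\partial X(L))$, which is zero by Lemma~\ref{lem:trivialinclusionlinks}. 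Your second half --- injectivity of $\partial_*$ over $\Q$ from the vanishing of $q_*$, combined with half lives half dies --- is essentially the argument the paper invokes (it cites this exact combination in the proof of Lemma~\ref{lem:trivialinclusionlinks}) and is fine.
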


\begin{proof}
    Suppose $Y, n$ are as in the statement of the Lemma. By Alexander duality: \begin{center}
        
$H_2(Y) \cong \widetilde{H}^{0}(\Sigma\cup L)\cong\Z^{n}$ and $H_3(Y)\cong \widetilde{H}^{-1}(\Sigma\cup L)\cong0$.    \end{center} Note that $H_2(\partial Y) \cong\Z^{n+1}$. By Poincar\'e duality $H_3(Y,\partial Y)\cong H^0(Y)\cong\Z$. Applying the long exact sequence of the pair $(Y,\partial Y)$, we see that $q^*$ must be trivial. The fact that $b_2(M,\partial M)=b_1(\partial M)/2$ follows from Poincar\'e duality, Alexander duality and the long exact sequence of the pair $(Y,\partial Y)$.
\end{proof}

In order to apply Lemma~\ref{cor:trivialinclusion} sufficiently many times we will need the following Lemma:

\begin{lemma}\label{Lem:canapplycor}
    Suppose $Y$ is a connected component of the complement of a subspace $S$ of $S^3$ consisting of the union of a connected surface $\Sigma$, a link $L$ with $n$ components which do not intersect $\Sigma$, and some collection of annuli $\{A_i\}_{i\in I}$ such that $\partial_\pm A_i\subset R_\pm(\gamma)$ for all $i\in I$, where $R_\pm(\gamma)$ are the two components of $\partial (X(\Sigma),\gamma)$. Suppose that $(Y,\gamma)$ is taut and horizontally prime. Suppose $(Y,\gamma)\overset{A}{\rightsquigarrow}(Y',\gamma')$ is a sutured manifold decomposition with $A$ an annulus. If $q,q'$ are the chain level quotient maps, then if $q_*:H_2(Y;\Z)\to H_2(Y,\partial Y;\Z)$ is trivial then so too is $q_*':H_2(Y';\Z)\to H_2(Y',\partial Y';\Z)$. Moreover, if $b_2(Y,\partial Y)=\dfrac{b_1(\partial Y)}{2}$ then $b_2(Y',\partial Y')=\dfrac{b_1(\partial Y')}{2}$.
\end{lemma}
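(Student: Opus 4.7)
The plan is to split the proof into two parts: first, to verify that $q'_*$ is trivial by a direct chain-level argument, and then to deduce the Betti-number equality as a consequence via the ``half lives, half dies'' principle.

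For the first part, I will write $Y' = \overline{Y \setminus \nu(A)}$, so that $\partial Y' = (\partial Y \setminus \nu(\partial A)) \cup A_+ \cup A_-$, where $A_\pm$ denote the two copies of $A$ appearing on the boundary of $Y'$ after the decomposition. Given $[S'] \in H_2(Y')$ represented by a $2$-cycle $S'$ supported in $Y'$, the inclusion $Y' \hookrightarrow Y$ sends $[S']$ to a class in $H_2(Y)$ which, by hypothesis, vanishes in $H_2(Y,\partial Y)$. Hence there exists a $3$-chain $C$ in $Y$ with $\partial C = S' + T$ for some $2$-chain $T$ supported in $\partial Y$. After perturbing $C$ to be transverse to $A$, I will split $C = C_1 + C_2$ into the portions supported in $Y'$ and in $\nu(A)$ respectively, and observe that $\partial C_1 = S' + (T \cap Y') + D$, where $D$ is a $2$-chain supported on $A_+ \cup A_-$. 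Both $T \cap Y'$ and $D$ lie in $\partial Y'$, so $[S'] = 0$ in $H_2(Y', \partial Y')$, showing $q'_* = 0$. Essentially the same argument handles the separating case, in which $Y'$ may have two components.

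For the second claim, the long exact sequence of the pair $(Y', \partial Y')$ reads
\[ H_2(Y') \xrightarrow{q'_*} H_2(Y', \partial Y') \xrightarrow{\partial} H_1(\partial Y') \xrightarrow{i_*} H_1(Y'), \]
giving $b_2(Y', \partial Y') = \rank(\im q'_*) + \rank(\ker i_*)$. Since $Y'$ inherits compactness and orientability from $Y \subset S^3$, the half lives, half dies principle applied componentwise yields $\rank(\ker i_*) = b_1(\partial Y')/2$. The same long exact sequence argument applied to $Y$ shows the hypothesis $b_2(Y, \partial Y) = b_1(\partial Y)/2$ is equivalent to $q_* = 0$; combining this with the first part gives $\im q'_* = 0$ and hence $b_2(Y', \partial Y') = b_1(\partial Y')/2$.

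The hardest step will be setting up the chain-level transversality for the first part cleanly, though this is routine after passing to simplicial or smooth representatives; the Betti-number part is then essentially a bookkeeping consequence of a standard duality statement.
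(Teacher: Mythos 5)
Your proof is correct, but it takes a genuinely different route from the paper's. The paper computes everything explicitly: it uses Alexander duality in $S^3$ to identify $H_2(Y)$, $H_3(Y)$, $H_2(Y')$, $H_3(Y')$, Poincar\'e--Lefschetz duality for the relative groups, and then runs a rank count in the long exact sequence of $(Y',\partial Y')$, with a case analysis on whether $A$ separates and on whether $|\partial Y'|$ equals $|\partial Y|$ or $|\partial Y|+1$; the problematic subcase (non-separating $A$ with an extra boundary component) is excluded by invoking tautness and the horizontally prime hypothesis to build a horizontal surface from $R_\pm(\gamma)$ and $A$. Your argument is local and formal: the implication $q_*=0\Rightarrow q'_*=0$ holds for cutting any compact oriented $3$-manifold along any properly embedded surface, with no reference to $S^3$, tautness, or horizontal primeness, and the Betti-number statement then drops out of half lives, half dies. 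If you want to avoid the chain-level transversality bookkeeping entirely, note that $q'_*$ factors through the composite $H_2(Y')\to H_2(Y)\xrightarrow{q_*}H_2(Y,\partial Y)\to H_2(Y,\partial Y\cup\nu(A))$ followed by the inverse of the excision isomorphism $H_2(Y',\partial Y')\cong H_2(Y,\partial Y\cup\nu(A))$, which vanishes because $q_*$ does. What the paper's approach buys is the explicit identification of the homology groups involved (which it reuses in the neighbouring lemmas) and, implicitly, the fact that the bad boundary-count case cannot occur in this setting; what yours buys is brevity, generality, and independence from the sutured hypotheses. One technical point worth making explicit in your second paragraph: exactness gives that the rank of the image of $q'_*$ is zero, and to conclude the displayed equality of Betti numbers that is all you need, so the torsion-freeness of $H_2(Y',\partial Y')$ need not be invoked.
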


\begin{proof}

We first deduce some information about different singular homology groups of $Y$. By Alexander duality, we have that both $H_3(Y;\Z)\cong \tilde{H}^{-1}(\nu(S))\cong 0$ and  

\begin{align*}
H_2(Y;\Z)\cong \tilde{H}^{0}(S)\cong \tilde{H}^0(\underset{i\in I}{\bigcup}A_i\cup\Sigma\cup L)\cong \Z^n.\end{align*}

Moreover, by Poincar\'e-Lefshetz duality $H_3(Y,\partial Y;\Z)\cong H^0(Y;\Z)\cong \Z^k$ for some $k$, while\begin{align*}
     H_2(\partial Y;\Z)\cong H^0(\partial Y;\Z)\cong\Z^{m}.
\end{align*} for some $m$. Consider the portion of the long exact sequence of the pair $(Y,\partial Y)$

\begin{center}
\begin{tikzcd}
0\arrow[r]&H_3(Y,\partial Y;\Z)\arrow[r]& H_2(\partial Y;\Z)\arrow[r]&H_2(Y;\Z)\arrow[r,"q_*"]&H_2(Y,\partial Y;\Z)
\end{tikzcd}
\end{center}

By assumption $q_*$ is trivial, so we have that $m=k+n$.

We now move on to study $Y'$. We have the following portion of the long exact sequence of the pair $(Y',\partial Y')$:

\begin{equation}\label{les}
\begin{tikzcd}
H_3(Y';\Z)\arrow[r]&H_3(Y',\partial Y';\Z)\arrow[r]& H_2(\partial Y';\Z)\arrow[r]&H_2(Y';\Z)\arrow[r]&H_2(Y',\partial Y';\Z)
\end{tikzcd}
\end{equation}

Again by Alexander duality we have that $H_3(Y';\Z)\cong \tilde{H}^{-1}(\nu(S\cup A))\cong 0$ and  \begin{align*}
H_2(Y';\Z)\cong \tilde{H}^{0}(S\cup A)\cong \tilde{H}^0(\underset{i\in I}{\bigcup}A_i\cup\Sigma\cup L\cup A)\cong \Z^n.\end{align*}

By Poincar\'e-Lefshetz duality we have that ${H_3(Y',\partial Y';\Z)\cong H^0(Y')}$ and ${H_2(\partial Y)\cong H^0(\partial Y')}$. We can thus rewrite the long exact sequence of Equation~\ref{les} as:

\begin{center}
\begin{tikzcd}
0\arrow[r]&H^0(Y';\Z)\arrow[r,"c"]& H^0(\partial Y;\Z)\arrow[r]&\Z^n \arrow[r,"q_*"]&H_2(Y',\partial Y';\Z)
\end{tikzcd}
\end{center}

We have two cases according to whether $A$ is separating or not. Suppose first that $A$ is separating.  It follows that $H^0(Y';\Z)\cong H^0(Y';\Z)\oplus \Z$. Moreover, $\partial_\pm A$ must also be separating in $\partial Y$, so that $|\partial Y'|=|\partial Y|+1$. The result follows.

Suppose $A$ is non-separating so that $H^0(Y';\Z)\cong H^0(Y;\Z)\cong\Z^k$. We have two cases; either $|\partial Y'|=|\partial Y|=k+n$ or $|\partial Y'|=|\partial Y|+1=k+n+1$. If $|\partial Y'|=|\partial Y|$ then $H^0(\partial Y';\Z)\cong H^0(\partial Y;\Z)\cong \Z^{k+n}$ and we have the desired result.

Suppose then that $|\partial Y'|=|\partial Y|+1$. We seek to obtain a contradiction. First observe that $\partial_\pm A$ must separate each of $R_\pm(\gamma)$. It follows that we can obtain two new surfaces $\Sigma_\pm$ with the same boundary as $\Sigma$ by gluing components of $R_\pm\setminus\partial_\pm A$ to a copy of $A$. Observe that at least one of $\Sigma_\pm$ has $\chi(\Sigma_\pm)\geq\chi(\Sigma)$. Relabeling if necessary we may take this surface to be $\Sigma_+$. Since $(Y,\gamma)$ is taut we have that $\chi(\Sigma_+)=\chi(\Sigma)$. It follows that $\Sigma_+$ is isotopic to $\Sigma$, since $(Y,\gamma)$ is horizontally prime. It follows that $A$ is separating, a contradiction.

To verify the condition on the Betti numbers, consider the following segment of the long exact sequence of the pair $(Y',\partial Y')$ in singular homology:

\begin{center}
\begin{tikzcd}
&H_2(Y';\Q)\arrow[r,"0"]&H_2(Y',\partial Y';\Q)\arrow[d]\\

H_1(Y',\partial Y';\Q)\arrow[d]&H_1(Y';\Q)\arrow[l]&\arrow[l] H_1(\partial Y';\Q)\\

H_0(\partial Y';\Q)\arrow[r]&H_0(Y';\Q)\arrow[r]&H_0(Y',\partial Y';\Q)\arrow[d]\\&&0
\end{tikzcd}
\end{center} 

Note that $b_2(Y',\partial Y')=b^1(Y')$ by Poincar\'e-Lefschetz duality. By the universal coefficient theorem $b^1(Y')=b_1(Y')$. By Poincar\'e-Lefschetz duality, $H_1(Y',\partial Y';\Q)\cong H^2(Y';\Q)$, while by the universal coefficeint theorem $H^2(Y';\Q)\cong H_2(Y';\Q)$. Also by Alexander duality $H_2(Y';\Q)\cong \tilde{H}^0(\underset{i\in I}{\bigcup}A_i\cup\Sigma\cup L\cup A;\Q)\cong \Q^n$.

Suppose $A$ is separating then $b_0(\partial Y')=b_0(\partial Y)+1=k+n+1$ and $b_0(Y',\partial Y')=0,b_0(Y')=k+1$. The result follows from the segment of the exact sequence given above.

Suppose $A$ is non-separating. As above we have that $|\partial Y|=|\partial Y'|$. Then $b_0(Y',\partial Y')=0,b_0(Y')=k$ and $b_0(\partial Y')=n+k$. We thus obtain the following exact sequence

\begin{center}
\begin{tikzcd}
&0\arrow[r]&H_2(Y',\partial Y';\Q)\arrow[d]\\

\Q^{n}\arrow[d]&H_2(Y',\partial Y';\Q)\arrow[l,"a"]&\arrow[l] H_1(\partial Y';\Q)\\

\Q^{n+k}\arrow[r]&\Q^k\arrow[r]&0
\end{tikzcd}
\end{center}

It follows that $a$ vanishes so that $b_1(\partial Y')=2b_1(Y',\partial Y')$.

\end{proof}

We will find it convenient to make use of the following definition:

\begin{definition}
   Suppose $(Y,\gamma)$ is balanced, taut and horizontally prime. A \emph{minimal family of essential product annuli} is a collection of annuli $\{A_i\}_{i\in I}$ such that decomposing $(Y_T,\gamma_T)$ along $\underset{i\in I}{\bigcup }A_i$ yields a sutured manifold $(Y,\gamma)$ each component $(Y_j,\gamma_j)$ of which  satisfies either:
    \begin{enumerate}
    \item $\dim P(Y_j, \gamma_j)= b_1(\partial Y_j)/2$ or
    \item $(Y_j,\gamma_j)$ is a product sutured manifold.

    \end{enumerate}

    and $|I|$ is minimal over all such families $\{A_i\}$.
\end{definition}

Note that if $\{A_i\}$ is a minimal family then each product sutured manifold obtained in the decomposition of $(Y,\gamma)$ cannot have base an annulus or disk. In particular they have strictly negative Euler characteristic.

\subsection{Link Floer Homology}
Link Floer homology is an invariant of links in $S^3$ due to Ozsv\'ath-Szab\'o~\cite{ozsvath2008holomorphic}. Link Floer homology can be be defined as the sutured Floer homology of a link exterior decorated with pairs of parallel oppositely oriented meridional sutures on each boundary component. It can be equipped with an \emph{Alexander grading} for each component, which can be thought of as taking value in $\frac{1}{2}\Z$, by evaluating the first Chern class of relative $\spin^c$ structures on $X(L)$ on Seifert surfaces for each of the components. It can then be equipped with an additional grading called the \emph{Maslov grading}. Different Maslov grading conventions exist in the literature. We use the convention that the unlink has link Floer homology with maximal non-trivial Maslov grading $0$, so that the Maslov grading is always $\Z$-valued.

Ozsv\'ath-Szab\'o showed that link Floer homology detects the Thurston norm~\cite{ozsvath2008linkFloerThurstonnorm}. Ni showed that link Floer homology detects if a link is fibered~\cite[Propositon 2.2]{ni2006note}. Martin showed that link Floer homology detects braid axes~\cite[Proposition 1]{martin2022khovanov}.

\section{Tangles, Braids, Clasp-braids and Sutured Manifolds}\label{sec:almostbraids}

In this section we review the definition of nearly fibered knots and discuss various notions of braidedness and near-braidedness that will play an important role in Section~\ref{sec:mainthmpf}.

For context, recall that Juh\'asz proved that a sutured manifold $(Y,\gamma)$ is a product if and only if $\rank(\SFH(Y,\gamma))=1$~\cite{juhasz2008floer}. It is natural to ask if there is a similar characterization of sutured manifolds satisfying the following the definition:

\begin{definition}
An \textit{almost product sutured manifold} is a sutured manifold $(Y,\gamma)$ such that $\rank(\SFH(Y,\gamma))=2$.
\end{definition}

There is currently no complete answer to this question. However, applying~\cite[Theorem 3]{juhasz2010sutured}, Baldwin-Sivek~\cite{baldwin2022floer} showed that almost product sutured manifolds that embed in $S^3$ admit a decomposition along essential product annuli into the following pieces:\begin{enumerate}
     \item product sutured manifolds
    \item exactly one of the following three pieces, up to mirroring:\begin{enumerate}
        \item A solid torus with $4$-parallel longitudinal sutures. Call this $T_4$.
        \item A solid torus with parallel oppositely oriented sutures of slope $2$.
\item The exterior of a right handed trefoil with two parallel oppositely oriented sutures of slope $2$.
\end{enumerate}
\end{enumerate}

Here the slope of a suture is measured with respect to the Seifert longitude. Baldwin-Sivek were interested in almost product sutured manifolds because they arise naturally in the study of knots that satisfy the following definition:

\begin{definition}[Baldwin-Sivek~\cite{baldwin2022floer}]
A null-homologous knot $K$ in a $3$-manifold $Y$ is called \textit{nearly fibered} if the link Floer homology of $K$ is rank $2$ in the maximal non-trivial Alexander grading.
\end{definition}

An alternative characterization of nearly fibered links can be given as follows. A null-homologous knot $K$ in a $3$-manifold $Y$ is nearly fibered if there exists a Seifert surface $\Sigma$ for $K$ such that the sutured manifold obtained by decomposing $(Y_K,\gamma_K)$ -- the exterior of $Y$ equipped with parallel meridional sutures -- along $\Sigma$ is an almost product sutured manifold.

\subsection{Tangles in Sutured Manifolds}

We now give an interpretation of the different objects referred to in Theorem~\ref{thm:mainbraid} in terms of tangles in sutured manifolds.

\begin{definition}
    Let $(Y,\gamma)$ be a sutured manifold. A \textit{tangle} is a properly embedded one dimensional sub-manifold $T\subset Y$ such that $\partial_\pm T\subset R_\pm(\gamma)$.
\end{definition}

Note that we do not require $T$ to be connected. We will be interested in studying tangles up to isotopies which preserve the condition that $\partial_\pm T\subset R_\pm(\gamma)$. In particular such isotopies are not required to fix $\partial_\pm T$. Given a tangle $T$ in a sutured manifold $(Y,\gamma)$, by a mild abuse of notation, we will call an annulus $A$ properly embedded in $(Y,\gamma)$ a \emph{product annulus} if $A\cap T=\emptyset$, $\partial_\pm A\subset R_\pm(\gamma)$, \emph{incompressible} if for any disk $(D,\partial D)\inj(Y,A)$, $(D,\partial D)$ can be be isotoped into $A$ through an isotopy of embeddings $(D_t,\partial D_t)\inj(Y,A)$ each missing $T$ and \emph{essential} if it is incompressible and if $A$ cannot be isotoped into a suture $\gamma$ through a family of annuli product annuli $A_t$ with $A_t\cap T=\emptyset$ for all $t$.

If $T$ is a tangle in a sutured manifold $(Y,\gamma)$ with a component $t$ such that $\partial t\cap R_\pm(\gamma)\neq\emptyset$ then a new tangle $T'$ can be formed by adding a new component $t'$ which runs parallel to $t$. We call this operation \textit{tangle stabilization} and the process of undoing a tangle stabilization \emph{tangle destabilization}.

In product sutured manifolds there are a particularly simple class of tangles:

\begin{definition}
    Let $(Y,\gamma)$ be a product sutured manifold with base $\Sigma$. A tangle $T$ is a \textit{braid} if it is isotopic to $\{p_i\}_{1\leq i\leq n}\times[-1,1]$ for some $n$ through an isotopy which preserves $\partial(\Sigma\times[0,1])$ as a set.
\end{definition}

Observe that any braid in a fixed product sutured manifold can be obtained from any other by a sequence of tangle stabilizations or tangle destabilizations. We extend the definition of a braid to a setting in which the underlying $3$-manifold is not necessarily a product sutured manifold.

\begin{definition}
    Let $(Y,\gamma)$ be a sutured manifold. A tangle $(T,\partial_\pm T)\subset(Y,R_\pm(\gamma))$ is a \textit{braid} if there exists a family of essential product annuli in $(Y,\gamma)$, $A_i\inj Y$, such that decomposing $(Y,\gamma)$ along $\{A_i\}$ yields a sutured manifold with a product sutured manifold component $(P,\rho)$ such that $T\subset P$ and $T$ is a braid in $(P,\rho)$.
\end{definition}

 Note that our definition generalizes the classical case of braids in a product sutured manifolds. We will be interested in studying the ``closures" of such tangles. For that we need the following preliminary definition.

\begin{definition}
    Let $L$ be a link with a component $K$. A \textit{longitudinal surface for $K$} is a surface $\Sigma$ with boundary $K$ that intersects $L-K$ transversely at a finite number of points.
\end{definition}

We can now define the notion of braid closure:

\begin{definition}
    Let $L$ be a link with a component $K$. we say that $L-K$ is \textit{braided with respect to $K$} if there   is a longitudinal surface $\Sigma$ whose image in the exterior of $L$ has maximal Euler characteristic amongst representatives of $[\Sigma]\in H_2(X(L),\partial X(L))$ such that the image of $L$ in the sutured manifold $(Y_\Sigma,\gamma_\Sigma)$ obtained by decomposing the exterior of $K$ along $\Sigma$ is a braid.
\end{definition}

This generalizes the usual notion of a braidedness in which case $K$ is required to be fibered. There are two more types of tangle referred to in the statement of Theorem~\ref{thm:mainbraid}:

\begin{definition}
    Let $T$ be a tangle in a sutured manifold $(Y,\gamma)$. We say that $T$ is a \textit{clasp-braid} if $(Y,\gamma)$ can be decomposed along an essential annulus $A$ in $(Y,\gamma)$ into a product sutured manifold $(Y_1,\gamma_1)\sqcup (Y_2,\gamma_2)$ with $T\cap Y_1$ a braid (perhaps with no strands) in $(Y_1,\gamma_1)$ and $T\cap Y_2$ a \emph{clasp}, the tangle in the product sutured manifold with base a disk, shown in Figure~\ref{fig:nearlybraided}. 
\end{definition}

 The following is a notion of ``closure" for such tangles that we will consider:

\begin{definition}
    Let $L$ be a link and $K$ a knot. We say that $L$ is \textit{a clasp-braid closure with respect to $K$} if the image of $L$ in the sutured manifold $(Y,\gamma)$ obtained by decomposing the exterior of $K$ along some longitudinal surface is a clasp-braid.
\end{definition}

There is one more type of tangle we will be interested in.
\begin{definition}
 Let $T$ be a tangle in a sutured manifold $(Y,\gamma)$. We say that $T$ is \textit{a stabilizable clasp-braid} if $(Y,\gamma)$ can be decomposed along an essential annulus into a sutured manifold $(Y_1,\gamma_1)\sqcup (Y_2,\gamma_2)$ with $T\cap Y_1$ a braid (perhaps with no strands) in $(Y_1,\gamma_1)$ and $T\cap Y_2$ a \textit{stabilizable clasp}, the tangle shown in Figure~\ref{stabilizable cusp}. A \emph{stabilized clasp-braid} is a stabilizable clasp-braid or a tangle stabilization thereof. We call the underlying sutured manifold $(Y,\gamma)$ a \emph{stabilized product sutured manifold}.
\end{definition}
 See Figure~\ref{fig:3gluing} for an example of an example of an stabilized stabilizable clasp-braid. Observe that stabilized product sutured manifolds are not taut.

\begin{center}
 
     \begin{figure}[h]
 \includegraphics[width=5.3cm]{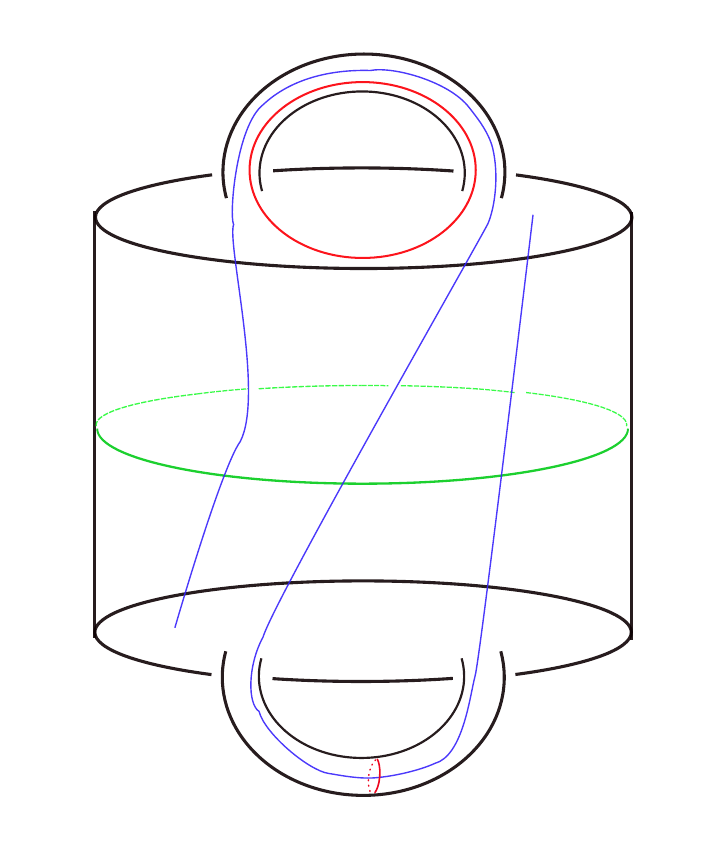}
    \caption{A stabilizable clasp. The green curve indicates the core of the suture, while the blue curves indicate the tangle $T$. The red curves indicate the core and co-core of the lower and upper stabilizing handles respectively.}\label{stabilizable cusp}
   \end{figure}
\end{center}

\begin{definition}
    Let $L$ be a link and $K$ a knot. We say that $L$ is \textit{a stabilized clasp-braid closure with respect to $K$} if $L$ can be obtained from a stabilized clasp-braid $T$ in a stabilized product sutured manifold $(Y,\gamma)$ by gluing $R_+(\gamma)$ to $R_-(\gamma)$ by a diffeomorphism $\phi$ which maps the core of the 2 dimensional stabilized handle in $R_+(\gamma)$ to the co-core of the other stabilized handle of $R_-(\gamma)$ and the co-core to the core, as unoriented curves. We moreover require that $K$ is isotopic to the image of $s(\gamma)$.
\end{definition}

Note in particular that if $L$ is a stabilized clasp-braid with respect to $K$ then $K$ is fibered.

\begin{center}
 
     \begin{figure}[h]
 \includegraphics[width=5cm]{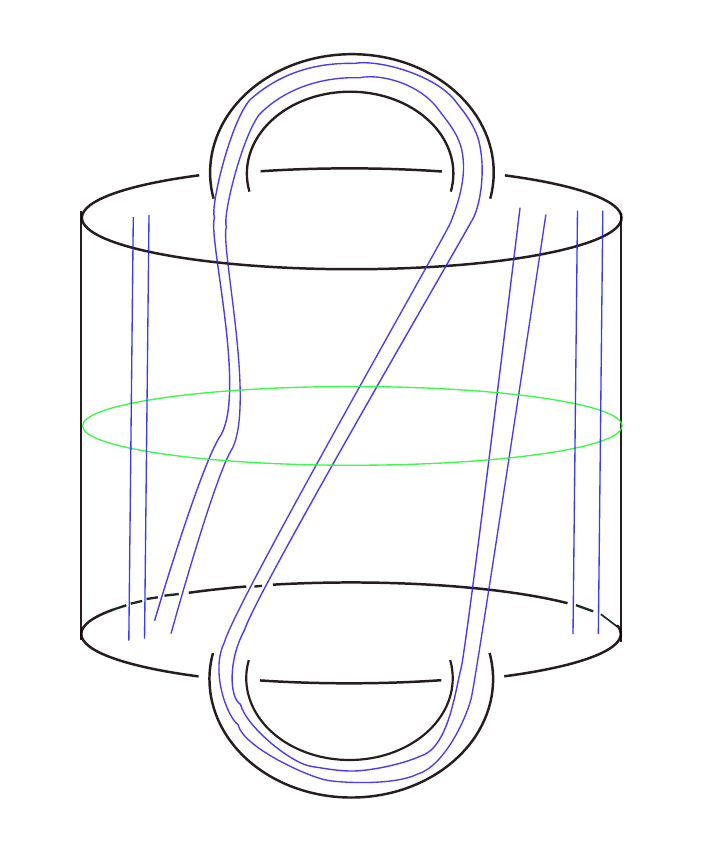}
    \caption{A stabilized clasp-braid in a stabilized sutured manifold $(Y,\gamma)$. The green curve indicates the suture.}\label{fig:3gluing}
   \end{figure}
\end{center}

We also have the following natural generalization of braid index:

\begin{definition}
    The \emph{index} of a link $L$ in the complement of a fibered knot $K$ is the minimal geometric intersection number of $L$ with a fiber surface for $K$.
\end{definition}

Observe that clasp-braid closures of index at least three are also stabilized clasp-braid closures, but not every stabilized clasp-braid closure is a clasp-braid closure. There are no stabilized clasp-braid closures of index $2$. There are no clasp-braid closures or stabilized clasp-braid closures of index $1$.

\subsection{Sutured Tangle Exteriors}\label{subsec:suturedtangleext}

We now give a description of the objects referred to in the main theorem, Theorem~\ref{thm:mainbraid} purely in terms of sutured manifolds without reference to tangles. In Section~\ref{sec:mainthmpf} we will prove Theorem~\ref{thm:mainbraid} by first working in this setting then passing to the setting of tangles in sutured manifolds.
\begin{definition}\label{sutured tangles}
   
Let $T$ be a tangle in a sutured manifold $(Y,\gamma)$. We say that $(Y_T,\gamma_T)$ is the \textit{sutured exterior for $T$} if $Y_T$ is $Y-\nu(T)$ and $\gamma_T$ consists of:\begin{itemize}
    \item $\gamma$.
    \item Pairs of parallel oppositely oriented meridional sutures on each closed component of $T$.
    \item A single meridian for each component $t$ of $T$ such that $\partial t$ has components in $R_+(\gamma)$ and $R_-(\gamma)$.
    \item A pair of parallel oppositely oriented meridians for every component $t$ of $T$ such that $\partial t$ has components in both of the components in $R_-(\gamma)$.
\end{itemize}

\end{definition}

\begin{remark}\label{rem:tanglerecovery}

Given $(Y_T,\gamma_T)$ and $(Y,\gamma)$ we wish to be able to determine $T$. To that end, let $g$ be a suture in $\gamma_T-\gamma$, which is not parallel to another suture of $\gamma_T-\gamma$. Glue in the sutured manifold $(\{z\in\C:|z|\leq 1\}\times[-1,1],\{z\in\C:|z|= 1\}\times[-1,1])$ with the tangle given by $\{(0,t):t\in[-1,1]\}$ in such a way that $(\{z\in\C:|z|=1 \}\times\{0\}$ is identified with $g$.

For every pair of parallel $g_1,g_2$ that are on a toroidal boundary component of $\partial Y_T$ not containing $\gamma$, glue in a solid torus $D^2\times S^1$ with tangle $\{0\}\times S^1$, in such a way that $g_1$ bounds a disk in $D^2\times S^1$.

Likewise, for every pair of parallel $g_1,g_2\not\in\gamma_T-\gamma$ that are not on a toroidal boundary component of $\partial Y_T$ remove $g_2$ then glue in the sutured manifold $(\{z\in\C:|z|\leq 1\}\times[-1,1],\{z\in\C:|z|= 1\}\times[-1,1])$ with the tangle given by $\{(0,t):t\in[-1,1]\}$ in such a way that $(\{z\in\C:|z|=1\}\times\{0\}$ is identified with $g_1$.

\end{remark}

The question remains as to how to recover the components of $T$ that do not give rise to sutures in $(Y_T,\gamma_T)$. We do not attempt this in general, only for the sutured exteriors of clasp-braids. We defer this until Lemma~\ref{lem:suturedclaspext}.

\subsection{The Sutured Floer Homology of Sutured Tangle Exteriors}
For Section~\ref{sec:linkext} we will require two Lemmas, slightly weaker versions of which are discussed in work of Li-Xie-Zhang~\cite[Section 1]{li2022floer}. If $(Y,\gamma)$ is a sutured manifold, $K$ is a knot in $Y$, and $\gamma_K=\gamma\cup\sigma$, where $\sigma$ is a pair of parallel curves on $\partial(\nu(K))\subset \partial Y_K$. Let $(Y_K(\sigma),\gamma)$ denote the sutured manifold obtained by capping off a component of $\sigma$ with a disk and filling in the resulting spherical boundary component with a $3$ ball. Let $V$ denote a rank $2$ vector space supported in a single affine grading. 
\begin{lemma}\label{lem:spectral}
    Suppose $(Y,\gamma)$ is a sutured manifold and $K$ is a knot in $Y$.  Then there is a spectral sequence from $\SFH(Y_K,\gamma_K)$ to $\SFH(Y_K(\sigma),\gamma)\otimes V$. This spectral sequence sends every relative $H_1(Y_K)$ grading to its image under the quotient map $H_1(Y_K)\surj H_1(Y_K)/[\mu_K]$.
\end{lemma}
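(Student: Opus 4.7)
The proof will adapt the classical Ozsv\'ath--Szab\'o construction of the spectral sequence from knot Floer homology to Heegaard Floer homology to the sutured setting. The key new ingredient is the observation that puncturing a sutured Heegaard surface at a single interior point $p$ is topologically equivalent to removing an open ball from the underlying sutured manifold and placing a trivial suture on the resulting $S^2$-boundary; by the connected sum formula~(\ref{eq:connectsum}), this operation tensors sutured Floer homology with $V$.

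First I would choose a sutured Heegaard diagram $(\Sigma, \alpha, \beta)$ for $(Y_K(\sigma), \gamma)$ in which the core of the Dehn filling producing $Y_K(\sigma)$ from $Y_K$ intersects $\Sigma$ transversely in exactly two points $p$ and $q$; this is analogous to the standard doubly-pointed Heegaard diagram construction and can be arranged via isotopy and stabilization. Puncturing $\Sigma$ at both $p$ and $q$ then yields a sutured Heegaard diagram for $(Y_K, \gamma_K)$ whose two new boundary circles are exactly the parallel sutures $\sigma$. On the other hand, puncturing at just $p$ yields a sutured Heegaard diagram for the sutured manifold $(Y_K(\sigma) \setminus \nu(p'), \gamma \cup \delta)$, where $\delta$ is a single suture on the new $S^2$ boundary; via the connected sum decomposition $(Y_K(\sigma) \setminus \nu(p'), \gamma \cup \delta) \cong (Y_K(\sigma), \gamma) \# (B^3, \delta)$, its sutured Floer homology is $V \otimes \SFH(Y_K(\sigma), \gamma)$ by~(\ref{eq:connectsum}).

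All three chain complexes share the generating set $\T_\alpha \cap \T_\beta$ but have different differentials: the complex computing $\SFH(Y_K,\gamma_K)$ counts only disks $\phi$ with $\mu(\phi)=1$ and $n_p(\phi)=n_q(\phi)=0$; an intermediate complex $C'$ counts those with $n_p(\phi)=0$; and the complex computing $\SFH(Y_K(\sigma),\gamma)$ counts all such disks. I would next define a relative grading $A$ on generators by $A(\mathbf{x}) - A(\mathbf{y}) = n_q(\phi) - n_p(\phi)$ for any $\phi \in \pi_2(\mathbf{x},\mathbf{y})$, and filter $C'$ by $\mathcal{F}_k = \mathrm{span}\{\mathbf{x} : A(\mathbf{x}) \leq k\}$. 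Every disk contributing to the differential of $C'$ has $n_p(\phi)=0$ and hence shifts $A$ by $-n_q(\phi)\leq 0$, so the filtration is preserved; its associated graded retains only those disks with $n_q(\phi)=0$, which recovers the differential computing $\SFH(Y_K,\gamma_K)$. The resulting bounded spectral sequence therefore has $E^1 \cong \SFH(Y_K,\gamma_K)$ and converges to $H_*(C') \cong V \otimes \SFH(Y_K(\sigma), \gamma)$, as required. The grading statement follows because $A$ coincides with the $[\mu_K]$-component of the relative $\spin^c$-grading on the complex computing $\SFH(Y_K,\gamma_K)$: the difference of relative $\spin^c$ structures $\epsilon(\mathbf{x},\mathbf{y})$ picks up a $(n_q(\phi) - n_p(\phi))[\mu_K]$ contribution from the pair of punctures when passing between the Heegaard diagrams for $(Y_K,\gamma_K)$ and $(Y_K(\sigma),\gamma)$, so the filtration is precisely by $[\mu_K]$ and collapsing it sends the $H_1(Y_K)$-grading to its image in $H_1(Y_K)/[\mu_K]$.

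I expect the main technical obstacle to be the first step: arranging, in the sutured category, a Heegaard diagram in which the filling core meets $\Sigma$ in exactly two points. The standard closed-case trick of stabilizing along an arc of the knot adapts readily, but one must verify compatibility with the existing sutures on $\partial Y_K(\sigma)$ and ensure that the required isotopies do not force the filling core to interact with $\partial \Sigma$.
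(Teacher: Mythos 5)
Your argument is correct and is essentially the paper's: both constructions yield the same pair of complexes (disks avoiding both special regions versus disks avoiding only one) and the same filtration by the multiplicity at the remaining region, with $E^1$-page $\SFH(Y_K,\gamma_K)$ converging to $\SFH(Y_K(\sigma),\gamma)\otimes V$ and the filtration collapsing exactly the $[\mu_K]$-component of the relative $H_1$-grading. The paper merely runs the construction in the opposite direction --- starting from an admissible sutured Heegaard diagram for $(Y_K,\gamma_K)$ and capping off a boundary circle corresponding to $\sigma$, leaving the other as the free puncture contributing $V$ --- which sidesteps the doubly-pointed-diagram existence question you flag as the main technical obstacle.
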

Here $\mu_K$ is either component of $\sigma$, which are of course both homologous.

\begin{proof}
    Consider an admissible sutured Heegaard diagram for $(Y_K,\gamma_K)$, $(\Sigma,\alpha,\beta)$. Observe that capping off the boundary components of $\Sigma$ corresponding to $\sigma$ yields an admissible sutured Heegaard diagram for $(Y,\gamma)$ with an additional puncture. The sutured Floer homology of this manifold is exactly $\SFH(Y,\gamma)\otimes V$. The desired result follows.
\end{proof}

In the case that $Y$ is an integer homology sphere and the sutures are meridional we have a stronger statement. To state it, recall that in this case the $\spin^c$ grading on Sutured Floer homology can be viewed as a $\Q^n$ grading, where the $i$th $\Q$ grading -- which we will denote by $A_i$ -- is given by taking $\frac{\langle c_1(\s),[\Sigma_i]\rangle}{2}$, where $\Sigma_i$ a Surface Poincare dual to the $i$th component of $L$.

\begin{corollary}
    Let $Y$ be an integer homology sphere. Suppose $L$ is a link in $Y$ with a component $K$. Consider the sutured manifolds $(Y_L,\gamma_L)$ obtained by removing a neighborhood of $L$ from $Y$ and endowing it with sutures. Let $K$ be a component of $L$ with a pair of meridional sutures. Then there is a spectral sequence from $\SFH(Y_L,\gamma_L)$ that converges to ${\SFH((Y(\gamma_K))_{L-K},\gamma_{L-K})[\frac{\lk(K,L_i)}{2}]\otimes V}$.
\end{corollary}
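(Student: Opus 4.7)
The plan is to apply Lemma~\ref{lem:spectral} directly, taking the base sutured manifold to be $(Y_{L-K}, \gamma_{L-K})$ and the knot to be $K$ sitting inside it. The sutured knot exterior $((Y_{L-K})_K, (\gamma_{L-K})_K)$ coincides with $(Y_L, \gamma_L)$, since by Definition~\ref{sutured tangles} the sutures assigned to $K$ in this exterior are exactly a pair of oppositely oriented meridians $\sigma$, matching the meridional sutures on $K$ in $\gamma_L$. Lemma~\ref{lem:spectral} then produces a spectral sequence from $\SFH(Y_L, \gamma_L)$ converging to $\SFH(((Y_{L-K})_K)(\sigma), \gamma_{L-K}) \otimes V$. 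Since $\sigma$ is meridional on $K$, capping off one component of $\sigma$ and filling in the resulting spherical boundary glues the neighborhood of $K$ back into $Y_{L-K}$, and this target is exactly $((Y(\gamma_K))_{L-K}, \gamma_{L-K})$.

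The remaining content is the Alexander grading shift. For each component $L_i \neq K$, pick a Seifert surface $\Sigma_i \subset Y$ for $L_i$, put in general position transverse to $K$, so that $|\Sigma_i \cap K| = |\lk(K, L_i)|$ algebraically. Restricting to $X(L-K)$ gives a surface suitable for computing $A_i$ on the right-hand side of the spectral sequence, while restricting to $X(L)$ gives a surface which differs from it by $\lk(K, L_i)$ small meridional disks of $K$. Lemma~\ref{lem:spectral} already guarantees that the relative $\spin^c$ grading is preserved modulo $[\mu_K]$; the surviving ambiguity in the $A_i$ grading is therefore exactly the evaluation of $\tfrac{1}{2}\langle c_1(\s), \cdot \rangle$ on the difference of these two surfaces, and a standard computation for sutured link exteriors with meridional sutures evaluates this contribution as $\tfrac{\lk(K, L_i)}{2}$ per meridional disk, summing to the claimed overall shift.

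The existence of the spectral sequence and the identification of its two endpoints are immediate from Lemma~\ref{lem:spectral} once the sutured manifolds are matched up correctly; the main obstacle is the grading bookkeeping, which requires being careful about conventions for relative $\spin^c$ structures on sutured link exteriors and about how restriction of a Seifert surface from $Y$ to the two different link complements alters the pairing with $c_1$.
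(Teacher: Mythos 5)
Your proposal is correct and follows essentially the same route as the paper: both obtain the spectral sequence directly from Lemma~\ref{lem:spectral} and then reduce the grading shift to the intersection number of $K$ with a Seifert surface for $L_i$ (the paper phrases this via the map $G_K$ and $c_1(G_K(\s))=c_1(\s)+\PD([K])$ from Ozsv\'ath--Szab\'o, which is the dual of your surface-modification count). One small slip in wording: the contribution should be $\tfrac{1}{2}$ per meridional disk (not $\tfrac{\lk(K,L_i)}{2}$ per disk), so that the $\lk(K,L_i)$ disks sum to the stated shift of $\tfrac{\lk(K,L_i)}{2}$.
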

Here $[\frac{\lk(K,L_i)}{2}]$ indicates a shift in each $A_i$ grading by $\frac{\lk(K,L_i)}{2}$. For the proof it is helpful to use an equivalent way of thinking about relative $\spin^c$ structures, as given in~\cite[Section 3.2]{ozsvath2008holomorphic}. Specifically, we can consider equivalence classes of non-vanishing vector fields which are standard on the boundary tori -- i.e. vector fields on $Y$ that have closed orbits given given by components of $L$, see~\cite[Subsection 3.6]{ozsvath2008holomorphic} for details.

\begin{proof}
From the proof of Lemma~\ref{lem:spectral} we see that it suffices to show that the Alexander grading shifts as stated. This follows exactly as in the case of links in $S^3$, as discussed in Ozsv\'ath-Szab\'o~\cite[Proposition 7.1, Subsection 3.7, Subsection 8.1]{HolomorphicdiskslinkinvariantsandthemultivariableAlexanderpolynomial}.

 To expand on this, recall from~\cite[Section 3.7]{ozsvath2008holomorphic} that given $K$ a component of a link $L$ there is a map \begin{align*}
     G_{K}:\spin^c(Y,L)\to\spin^c(Y,L-K)
 \end{align*}
 obtained by viewing relative $\spin^c$ structures on $Y$ relative to $L$ as $\spin^c$ structures on $Y$ relative to $L-K$. By~\cite[Lemma 3.13]{ozsvath2008holomorphic}, $c_1(G_K(\s))=c_1(\s)+\PD([K_1])$. It follows in turn that if $\Sigma$ is a Seifert surface for a component $K_i$ of $L$ then \begin{align*}\langle c_1(G_K(\s)),[\Sigma_i]\rangle=\langle c_1(\s),[\Sigma_i]\rangle+\langle\PD([K_1]),[\Sigma_i]\rangle,\end{align*}so that \begin{align*}\langle c_1(G_K(\s)),[\Sigma_i]\rangle=\langle c_1(\s),[\Sigma_i]\rangle+\lk(K,L_i).\end{align*}

 The result follows.
\end{proof}

In Section and~\ref{sec:linkext} and Section~\ref{sec:mainthmpf} our main tool will be the following Proposition, which will allow us to decompose the exterior of a longitudinal surface for a component $K$ of a link $L$ into smaller pieces.

\begin{proposition}\label{prop:allproductannuli}
     Suppose $L$ is a link and $\Sigma$ is a longitudinal surface for some component $K$ of $L$. Let $(Y_T,\gamma_T)$ be the sutured manifold obtained by decomposing $(Y_L,\gamma_L)$ along $\Sigma$. Suppose $(Y_T,\gamma_T)$ is balanced, taut and horizontally prime. Then there exists a finite collection of essential product annuli $\{A_i\}_{i\in I}$ such that decomposing $(Y_T,\gamma_T)$ along $\underset{i\in I}{\bigcup }A_i$ yields a sutured manifold $\bigsqcup (Y_i,\gamma_i)$, each component of which satisfies either:
    \begin{enumerate}
    \item $\dim P(Y_i, \gamma_i)= b_1(\partial Y_i)/2$ or
    \item $(Y_i,\gamma_i)$ is a product sutured manifold with base an annulus or a once punctured annulus.
    \end{enumerate}
\end{proposition}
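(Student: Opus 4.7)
The plan is to iteratively invoke Lemma~\ref{cor:trivialinclusion} and decompose along essential product annuli, using Lemma~\ref{Lem:canapplycor} to propagate the needed hypotheses through each decomposition step.

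First, by Lemma~\ref{lem:triviallongitudeext} we have that $q_*\colon H_2(Y_T;\Z)\to H_2(Y_T,\partial Y_T;\Z)$ is trivial and $b_2(Y_T,\partial Y_T) = b_1(\partial Y_T)/2$. Combined with the hypotheses that $(Y_T,\gamma_T)$ is taut and horizontally prime, we may apply Lemma~\ref{cor:trivialinclusion}. The three cases furnished by that lemma are exactly what we need: if $\dim P(Y_T,\gamma_T) = b_2(Y_T,\partial Y_T) = b_1(\partial Y_T)/2$, then conclusion $(1)$ of the proposition holds with $I=\emptyset$; if $(Y_T,\gamma_T)$ is a product sutured manifold with base an annulus or once punctured annulus, then conclusion $(2)$ holds with $I=\emptyset$; otherwise $(Y_T,\gamma_T)$ is not reduced, so it contains an essential product annulus $A_1$, and we decompose along it.

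To iterate, we must check the hypotheses of Lemma~\ref{cor:trivialinclusion} on each component of the resulting sutured manifold. Tautness is preserved under decomposition along an essential product annulus in a taut sutured manifold, and horizontal primeness passes to the pieces because any horizontal surface in a piece would, after regluing along $A_1$, yield a horizontal surface in $(Y_T,\gamma_T)$ not parallel to $R_\pm(\gamma_T)$, contradicting horizontal primeness. The triviality of $q_*$ and the equality $b_2(Y,\partial Y) = b_1(\partial Y)/2$ are exactly the content of Lemma~\ref{Lem:canapplycor}, applied with the ambient subset of $S^3$ being $\Sigma\cup L$ together with the essential product annuli chosen so far; these hypotheses are thus inherited by every piece produced at every stage. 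Consequently, we may apply Lemma~\ref{cor:trivialinclusion} again to each non-reduced component and either produce a piece satisfying conclusion $(1)$ or $(2)$, or find a further essential product annulus and decompose.

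The main obstacle is termination of this process. This is a Kneser--Haken style finiteness statement: in a taut, horizontally prime sutured manifold with incompressible boundary, any collection of pairwise disjoint, pairwise non-parallel essential product annuli has cardinality bounded by a universal function of a suitable complexity (such as $-\chi(R(\gamma))$). Equivalently, decomposition along an essential product annulus strictly decreases such a complexity summed across pieces that are neither products on low-complexity bases nor satisfy $\dim P = b_1(\partial)/2$, so the procedure must halt in finitely many steps. Once it halts, every remaining piece is reduced, taut, horizontally prime, and satisfies the hypotheses of Lemma~\ref{cor:trivialinclusion}, hence satisfies conclusion $(1)$ or $(2)$. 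Taking $\{A_i\}_{i\in I}$ to be the (finite) collection of annuli cut along during the procedure yields the required family.
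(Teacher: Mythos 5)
Your overall strategy is the paper's: combine Lemma~\ref{cor:trivialinclusion} with Lemma~\ref{Lem:canapplycor} and Lemma~\ref{lem:triviallongitudeext} to show that each non-product piece either has full-dimensional polytope or contains a further essential product annulus. The place where your argument genuinely diverges from the paper is also the place where it has a gap: termination. The paper sidesteps the iteration entirely by invoking Juh\'asz~\cite[Proposition 2.16]{juhasz2010sutured}, which produces in one step a finite collection of disjoint incompressible product annuli whose decomposition yields a \emph{reduced} sutured manifold; Lemma~\ref{cor:trivialinclusion} is then applied only once to each resulting piece, where case (1) of that lemma is excluded because the pieces are already reduced. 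You instead run an open-ended loop and assert that it halts.

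The termination claim as you state it does not work. You propose that a complexity such as $-\chi(R(\gamma))$, summed over the non-terminal pieces, strictly decreases under decomposition along an essential product annulus. It does not: cutting along a product annulus $A$ removes annular neighborhoods of $\partial_\pm A$ from $R_\pm(\gamma)$ and glues in two parallel copies of $A$, all of which have Euler characteristic zero, so $\chi(R(\gamma))$ is unchanged. (This is visible in the paper's own examples, e.g.\ Figure~\ref{fig:example}, where repeated annulus decompositions preserve $\chi(R(\gamma))$.) The finiteness you need is a Haken-type bound on collections of disjoint, pairwise non-parallel essential annuli — which is precisely the content of~\cite[Proposition 2.16]{juhasz2010sutured} — and it is not "equivalent" to a strictly decreasing complexity of the kind you describe, because the annuli produced at later stages of your loop live in different manifolds and must be shown to assemble into such a non-parallel collection in the original $(Y_T,\gamma_T)$. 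Replacing your termination paragraph with a citation of that proposition (applied once, at the outset) closes the gap and recovers the paper's proof; the rest of your argument, including the propagation of the homological hypotheses via Lemma~\ref{Lem:canapplycor} and the inheritance of tautness and horizontal primeness by the pieces, is sound.
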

\begin{proof}
   By a result of Juh\'asz~\cite[Proposition 2.16]{juhasz2010sutured}, there is a finite collection of disjoint incompressible product annuli $\{A_i\}_{i\in I}$ such that decomposing $(Y_T,\gamma_T)$ along $\underset{i\in I}{\bigcup }A_i$ yields a reduced sutured manifold. The result then follows from Lemma~\ref{cor:trivialinclusion}, which is applicable by Lemma~\ref{Lem:canapplycor} and Lemma~\ref{lem:triviallongitudeext}.
\end{proof}

\section{Link exteriors with Sutured Floer homology of low rank}\label{sec:linkext}

Let $L$ be a link in $S^3$. As in the previous section, we let $S^3_L$ denote the exterior of $L$. For this section we let $\gamma_L$ be some collection of sutures on $\partial S^3_L$, at least two for each component of $\partial S^3_L$, such that all but at most one component of $\partial S^3_L$ has pairs of parallel oppositely oriented meridians as sutures. We will let $\gamma_i$ denote the sutures corresponding to the $i$th component of $L$.

In this section we classify sutured manifolds $(S^3_L,\gamma_L)$ with $\rank(\widehat{\SFH}(S^3_L,\gamma_L))\leq 2^n$. This is a generalization of work of Kim, which restricted to the setting that all of the sutures were pairs of parallel oppositely oriented meridians~\cite[Theorem 1]{kim2020links}. Kim's result is in turn a generalization of work of Ni, who classified links in $S^3$ with link Floer homology or rank at most $2^{n-1}$~\cite[Proposition 1.4]{ni2014homological}. Our result is also a generalization of work of Baldwin-Sivek, who implicitly gave the $n=1$ case of this classification in~\cite[Theorem 5.1]{baldwin2022floer}.

Note that sutured Floer homology carries $n$ Alexander gradings and a Maslov grading in our setting, defined exactly as in the link Floer homology case.

\begin{theorem}\label{thm:mailinknrankclass}
    Suppose $L$ is an $n$-component link in $S^3$ with components $K_1,K_2,\dots K_n$. If $\rank(\SFH(S^3_L,\gamma_L))\leq 2^n$ then, up to relabeling and mirroring, we have:

    \begin{enumerate}
        \item $L$ is the split sum of an $n-1$ component unlink, where each $\gamma_i$ are meridians for $1\leq i\leq n-1$ and $L_n$ is one of the following:

        \begin{enumerate}
           \item An unknot exterior with two parallel sutures of slope $\infty$ or $2$.
         \item A right-hand trefoil exterior with two parallel sutures of slope $2$.
         \item An unknot exterior with four parallel sutures of slope $\infty$.
        \end{enumerate}

    \item $L$ is a Hopf link split sum an unlink, with $\gamma_{i}$ a pair of meridians for all $i$.
    \item $L$ has a split unknotted component with sutures of slope $0$.
    \item $L$ is the split sum of an unlink with meridional sutures and a Hopf link where one component has meridional sutures and the other has two slope $0$ sutures.

        \end{enumerate}
\end{theorem}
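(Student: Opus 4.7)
The proof proceeds by induction on $n$, the number of components of $L$. The base case $n=1$ is Baldwin-Sivek's classification of sutured knot exteriors with sutured Floer homology of rank at most two~\cite[Theorem 5.1]{baldwin2022floer}; their case analysis, run without restricting the single boundary torus to carry meridional sutures, produces exactly the unknot with two or four meridional, two slope-$2$, or two slope-$0$ sutures, together with the right-handed trefoil with two slope-$2$ sutures. This matches cases (1a), (1b), (1c), and (3) when $n=1$.

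For the inductive step, suppose $n \geq 2$ and the result holds for $(n-1)$-component links. The hypothesis that at most one boundary torus of $S^3_L$ carries non-meridional sutures guarantees the existence of a component $K \subset L$ whose sutures $\sigma$ are a pair of parallel oppositely oriented meridians. Lemma~\ref{lem:spectral} then yields a spectral sequence from $\SFH(S^3_L, \gamma_L)$ converging to $V \otimes \SFH(S^3_{L-K}, \gamma_{L-K})$: capping off one component of $\sigma$ with a disk and filling the resulting $S^2$ boundary glues $K$ back as an unknotted core of a ball, so the ambient manifold remains $S^3$ and the remaining sutures are $\gamma_L \setminus \sigma$. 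The rank inequality forces $\rank \SFH(S^3_{L-K}, \gamma_{L-K}) \leq 2^{n-1}$, placing $L-K$ in one of the listed configurations by induction.

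It remains to reconstruct $L$ from $L-K$, case by case. For each inductive possibility, the refined Alexander-shift statement of the corollary following Lemma~\ref{lem:spectral}, combined with Theorem~\ref{thm:FLRpolytopedetection} and the connect sum formula of Equation~\ref{eq:connectsum}, pins down the sutured polytope of $(S^3_L, \gamma_L)$. Proposition~\ref{prop:allproductannuli}, which is applicable via Lemmas~\ref{lem:trivialinclusionlinks} and~\ref{lem:triviallongitudeext}, then decomposes the complement of a longitudinal surface for an appropriate component of $L$ into product-annulus pieces and at most one reduced piece. The rank bound $2^n$ constrains that reduced piece to be either a product sutured manifold, in which case $K$ is split from $L-K$ and I land in the same inductive case, or one of the Baldwin-Sivek almost-product building blocks $T_4$, the slope-$2$ solid torus, or the slope-$2$ trefoil exterior (giving case (1)), or the slope-$0$ solid torus (giving cases (3) or (4)), or the Hopf link exterior itself (giving case (2)).

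The hardest step will be this reconstruction, specifically ruling out the possibility that $K$ is nontrivially linked with a component of $L-K$ in a way not already captured by the Hopf sublink cases (2) and (4). Such linkings do not affect $\SFH(S^3_{L-K}, \gamma_{L-K})$ but can enlarge the support of $\SFH(S^3_L, \gamma_L)$; the plan is to track the change in the relative $\spin^c$-polytope through the spectral sequence and appeal to Theorem~\ref{thm:FLRpolytopedetection} to convert any unwanted polytope dimension into Thurston norm excess incompatible with the rank bound. The technical core of the argument is the interplay between Proposition~\ref{prop:allproductannuli} and Lemma~\ref{cor:trivialinclusion}: the former decomposes the longitudinal-surface complement into easily understood pieces, while the latter ensures that reducing modulo essential product annuli preserves the maximal-polytope or product-sutured character needed to control rank multiplicatively via Proposition~\ref{prop:juhaszhorizontaldecomp}.
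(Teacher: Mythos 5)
Your overall toolkit (the component-deleting spectral sequence of Lemma~\ref{lem:spectral}, induction on $n$, the polytope/Thurston-norm dictionary of Theorem~\ref{thm:FLRpolytopedetection}, and product-annulus decompositions) is the right one, but the argument as structured has a genuine gap at exactly the step you flag as hardest: reconstructing $L$ from $L-K$. Knowing that $L-K$ falls into one of the listed configurations places essentially no constraint on the knot type of $K$ or on how $K$ links $L-K$, and your plan to ``convert any unwanted polytope dimension into Thurston norm excess'' is a hope rather than an argument --- for instance, if $L-K$ is an $(n-1)$-component unlink with meridional sutures, showing that $L$ itself must be an unlink or a Hopf link split sum an unlink is essentially the content of Ni's and Kim's theorems, which your proposal neither invokes nor reproves. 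The paper avoids the single-deletion reconstruction problem entirely for $n\geq 3$: it reduces the all-meridional case to Kim's classification, disposes of the degenerate-polytope case via Lemma~\ref{lem:degpoly} and Lemma~\ref{lem:meridian}, and then runs \emph{all} $n$ deletion spectral sequences simultaneously. If some sublink $L_i$ has $\rank(\SFH)=2^{n-1}$ the collapse of that spectral sequence caps $\dim P(Y_L,\gamma_L)$ at $2$, contradicting non-degeneracy; if every sublink has rank at most $2^{n-2}$, the combinatorial counting Lemma~\ref{lem:lowerrankbound} forces rank at least $2^n$ concentrated away from Alexander grading zero, again a contradiction. No geometric regluing is needed in the inductive step at all.

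A second, related gap is the base of the induction. The geometric case analysis you describe --- decomposing the complement of a longitudinal surface via Proposition~\ref{prop:allproductannuli}, identifying the reduced piece with one of the Baldwin--Sivek blocks or a slope-$0$ solid torus, and recognizing meridian/Hopf configurations --- is genuinely needed, but it lives in the $n=2$ case (the paper's Proposition~\ref{prop:2componentrank}), where the non-meridional component is analyzed suture-slope by suture-slope (slope $\pm2$ on an unknot or trefoil, four meridians, one or several pairs of slope-$0$ sutures, etc.). Folding this into a uniform inductive step obscures the fact that for $n\geq 3$ the product-annulus machinery is not what closes the argument, and that for $n=2$ the rank bound does not by itself force a single reduced piece: one needs the multiplicativity of Proposition~\ref{prop:juhaszhorizontaldecomp} together with Juh\'asz's product detection to kill all but one non-product piece, and then a separate homological argument (linking numbers, non-existence of $\Z/2$ summands in $H_1$ of a surface complement in $S^3$) to pin down how that piece is glued in. Without executing these steps your proposal remains a plan rather than a proof.
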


Here the slopes are measured with respect to the Seifert longitude of each component. We will use Theorem~\ref{thm:mailinknrankclass} in Section~\ref{sec:mainthmpf}.

\begin{remark}\label{rem:SFHcomp}

We describe the sutured Floer homology of the sutured manifolds in the statement of Theorem~\ref{thm:mailinknrankclass}. These computations follow from the connect sum formula for sutured Floer homology, Equation~\ref{eq:connectsum}, the sutured decomposition formula and sutured Thurston norm detection. To this end let $V$ be a rank $2$ vector space supported in Alexander multi-grading $0$.

\begin{enumerate}
    \item Suppose $L$ is the split sum of an $n-1$ component unlink and a knot $K$, which after relabeling we take to be the first component of $L$.\begin{enumerate}
        \item If $K$ is an unknot with two parallel sutures of slope infinity then $\SFH(Y_L,\gamma_L)$ is given by
    $ V^{\otimes(n-1)}$.
    \item If $K$ is an unknot with two parallel sutures of slope two then $\SFH(Y_L,\gamma_L)$ is given by
    \begin{align*} V^{\otimes(n-1)}\otimes(\F[1/2,0,0\dots 0]\oplus\F[-1/2,0,0\dots 0])\end{align*}

    where here, and for the rest of this remark, $\F^k[x_1,x_2,\dots x_n]$ indicates an $\F^k$ summand of multi-Alexander grading $[x_1,x_2,\dots x_n]$.
     \item If $K$ is $T(2,3)$ with two parallel sutures of slope two then $\SFH(Y_L,\gamma_L)$ is given by
    \begin{align*} V^{\otimes(n-1)}\otimes(\F[3/2,0,0\dots 0]\oplus\F[-3/2,0,0\dots 0])\end{align*}
    \item If $K$ is an unknot with four parallel sutures of slope infinity then $\SFH(Y_L,\gamma_L)$ is given by
    \begin{align*} V^{\otimes(n-1)}\otimes(\F[1/2,0,0\dots 0]\oplus\F[-1/2,0,0\dots 0])\end{align*}
    \end{enumerate}  

\item If $L$ is a Hopf link split sum an unlink then $\SFH(S^3_L,\gamma_L)$ is given by:\begin{align*}
    V^{\otimes(n-2)}\otimes (\F[1/2,1/2,0,\dots 0]\oplus \F[1/2,-1/2,0,\dots 0]\oplus \F[-1/2,1/2,0,\dots 0] \\ \oplus \F[-1/2,-1/2,0,\dots 0])
\end{align*}

where we label the components of $L$ such that its first two components are the Hopf link.

\item If $L$ has a split unknotted component with sutures of slope $0$ then $(S^3_L,\gamma_L)$ is not taut and $\rank(\SFH(S^3_L,\gamma_L))=0$.

\item If $L$ is a Hopf link where one component has meridional sutures and the other has a pair of slope $0$ sutures, then $\SFH(S^3_L,\gamma_L)$ is given by $V^{\otimes n}$.

\end{enumerate}

\end{remark}
We now turn to the proof of Theorem~\ref{thm:mailinknrankclass}. We begin with the case that $R(\gamma_L)$ is compressible.

\begin{lemma}\label{lem:nottaut}
     Suppose $L$ is a link in $S^3$. If $R(\gamma_L)$ is compressible then $L$ has a split unknotted component $K$ with longitudinal sutures.
\end{lemma}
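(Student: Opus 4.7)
The plan is to start with a compressing disk $D$ for $R(\gamma_L)$ and track the constraints it places on the corresponding link component. Since the sutures on any given boundary torus are parallel, each component of $R_\pm(\gamma_L) \cap T$ for $T$ a boundary torus is an annulus whose core is parallel to those sutures. Hence $\partial D$, being essential in such an annulus, is parallel on some $T = \partial\nu(K_i)$ to the sutures of $T$; writing $\sigma$ for the slope of those sutures, a slope-$\sigma$ curve on $\partial\nu(K_i)$ bounds the disk $D$ in $S^3_L$, and therefore also in the larger knot exterior $S^3_{K_i}$.

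Next, I would use this to pin down both $K_i$ and $\sigma$. The disk $D$ provides a compression of $\partial\nu(K_i)$ inside $S^3_{K_i}$, so by Dehn's Lemma $K_i$ must be unknotted. Then $S^3_{K_i}$ is a solid torus whose meridian disk is bounded by the Seifert longitude, so the only essential simple closed curves on $\partial\nu(K_i)$ bounding disks in $S^3_{K_i}$ are longitudes. Consequently $\sigma = 0$, i.e.\ the sutures on $T$ are longitudinal. The hypothesis that at most one boundary torus carries non-meridional sutures then forces $T$ to be this exceptional component; moreover the argument shows the conclusion of the lemma is vacuous in the all-meridional case.

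Finally, to conclude that $K_i$ splits off $L - K_i$, I would observe that $\partial D$ is a longitude of $K_i$ and hence isotopic, by pushing through $\nu(K_i)$, to $K_i$ itself. Capping $D$ off with an annulus on $\partial \nu(K_i)$ connecting $\partial D$ to a parallel longitude, and then isotoping, produces an embedded disk in $S^3 \setminus (L - K_i)$ bounded by $K_i$; a regular neighborhood of this disk is a ball in $S^3$ containing $K_i$ and disjoint from $L - K_i$, which exhibits $K_i$ as a split unknotted component. The only real obstacle is checking the invocation of Dehn's Lemma inside $S^3_{K_i}$ (rather than just inside $S^3_L$) and confirming that the solid-torus slope identification really pins $\sigma$ down to $0$; after these, the splitness statement follows by the standard ball-from-disk construction.
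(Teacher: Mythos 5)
Your argument is correct and is essentially the paper's (very terse) proof, spelled out: the compressing disk has boundary parallel to the sutures on some $\partial\nu(K_i)$, bounds a disk in the link exterior, and hence exhibits $K_i$ as an unknotted component split from $L-K_i$ with the sutures forced to be Seifert longitudes. One cosmetic remark: you do not need Dehn's Lemma anywhere, since the compressing disk is already embedded --- unknottedness and the slope $\sigma=0$ both follow directly from your final capping construction (or from the observation that $\partial D$ is null-homologous in $S^3_L$, hence has linking number $0$ with $K_i$).
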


\begin{proof}
   Suppose $L$ is as in the statement of the Lemma. Any compressing disk must be a Seifert surface for a component $K$ of $L$. The $\gamma_K$ must consist of longitudinal sutures by definition.
\end{proof}

The following serves as the base case of Theorem~\ref{thm:mailinknrankclass}:

\begin{proposition}\label{BaldwinSivekrank2knotexteriors}
    Suppose $K$ is a knot in $S^3$ with $\rank(\SFH(Y_K,\gamma_K))\leq 2$. Then up to mirroring we have that:

    \begin{enumerate}
    
         \item $K$ is an unknot and $\gamma_k$ consists of one of:\begin{enumerate}
              \item parallel sutures of slope $0$.
             \item two parallel sutures of slope $\infty$.
             \item two parallel sutures of slope $2$.
             \item four parallel sutures of slope $\infty$.
         \end{enumerate}
         \item $K$ is a right handed trefoil and and $\gamma_k$ consists of two parallel sutures of slope $2$.
    \end{enumerate}
\end{proposition}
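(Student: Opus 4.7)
My plan is to split the argument into three cases based on the value $r := \rank(\SFH(Y_K, \gamma_K)) \in \{0,1,2\}$.

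For $r = 0$, I would use that $\SFH$ vanishes exactly when $(Y_K, \gamma_K)$ is not taut. Since the knot exterior $Y_K$ is irreducible, non-tautness forces $R(\gamma_K)$ to be compressible. Any compressing disk $D \subset Y_K$ has $\partial D$ a simple closed curve on the torus $\partial Y_K$ that bounds a disk in $Y_K$; hence $\partial D$ is a Seifert longitude of $K$, forcing $K$ to be the unknot and the sutures to be parallel slope-$0$ curves. This yields case (1)(a).

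For $r = 1$, I would invoke Juh\'asz's product detection theorem to conclude $(Y_K, \gamma_K) \cong (\Sigma \times I, \partial \Sigma \times I)$ for some compact surface $\Sigma$ with boundary. Since $\partial Y_K$ is a single torus, an Euler characteristic/boundary count forces $\Sigma$ to be an annulus, so $Y_K$ is a solid torus and $K$ is the unknot. Under this identification the core curves of the two sutures $\partial \Sigma \times I$ become meridians of $K$, giving case (1)(b).

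For $r = 2$, I would apply Baldwin-Sivek's structure theorem (cited in Section~\ref{sec:almostbraids}) for almost product sutured manifolds embedded in $S^3$: $(Y_K, \gamma_K)$ decomposes along essential product annuli into product sutured pieces together with exactly one exceptional piece, which up to mirroring is $T_4$, the solid torus with two slope-$2$ sutures, or the right-handed trefoil exterior with two slope-$2$ sutures. These correspond to cases (1)(d), (1)(c), and (2) respectively. It then remains to show that no nontrivial product pieces appear in the decomposition: each exceptional piece already realizes a sutured knot exterior with a single torus boundary, and a direct inspection of the essential product annuli available in each (together with the constraint that $\partial Y_K$ remain a single torus, and with $Y_K$ irreducible) shows that any such decomposition must be trivial, so $(Y_K, \gamma_K)$ coincides with the exceptional piece.

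The main obstacle will be the rank-$2$ step: while the structure theorem produces the required decomposition cleanly, pinning down the precise list of $(K, \gamma_K)$ pairs requires a careful census of essential product annuli in each of the three exceptional pieces and the combinatorics of their possible annular gluings. This parallels the treatment in Baldwin-Sivek~\cite[Theorem 5.1]{baldwin2022floer} for the nearly fibered case (where $\gamma_K$ consists of two meridional sutures); the present statement extends that result to arbitrary suture configurations on the boundary torus, and the generalization should follow their strategy with additional casework for the various suture slopes.
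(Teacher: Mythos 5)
Your rank-$0$ and rank-$1$ cases are exactly the paper's argument: vanishing forces non-tautness, which for a knot exterior (where $R(\gamma_K)$ is a union of annuli, hence automatically norm-minimizing) forces a compressing disk, hence the unknot with slope-$0$ sutures; and rank $1$ is Juh\'asz's product detection \cite[Theorem 1.4, Theorem 9.7]{juhasz2008floer} plus the observation that the base must be an annulus. Both are fine.

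The rank-$2$ case is where your route diverges, and where the substance is deferred. The paper disposes of this case by directly citing Li--Ye \cite[Theorem 1.4]{li2022seifert} (together with the remark preceding it), which is precisely the classification of taut sutured manifolds embedded in $S^3$ with torus boundary, arbitrary sutures, and $\SFH$ of rank $2$ --- i.e.\ the statement you are trying to prove. Your plan instead invokes the Baldwin--Sivek decomposition of almost product sutured manifolds and then asserts that for a knot exterior the decomposition must be trivial. Two issues. First, there is a mild circularity: the list of three exceptional pieces in that decomposition theorem is itself obtained by first reducing to a torus-boundary piece and then running the rank-$2$ classification, so the ``census of annuli and gluings'' you defer is essentially a re-derivation of the cited result rather than a shortcut around it. Second, the deferred step is stated slightly backwards: the essential product annuli of the decomposition live in $(Y_K,\gamma_K)$ itself, not in the exceptional pieces, so what you actually need is either that $(Y_K,\gamma_K)$ is already reduced (every product annulus with boundary isotopic to sutures on the single boundary torus is boundary-parallel, hence inessential), or an analysis of how product pieces can be reglued to an exceptional piece so as to recover a single torus boundary component --- the kind of argument this paper carries out in Lemma~\ref{lem:nocomponentstep2} and Proposition~\ref{lem:nocomponentstep1} for a different gluing problem. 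Neither is automatic, and as written your proposal does not supply it. The cleanest fix is simply to replace your rank-$2$ paragraph with the citation of Li--Ye's classification, as the paper does.
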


\begin{proof}
    We have cases according to the value of $\rank(\SFH(Y_K,\gamma_K))$. The ${\rank(\SFH(Y_K,\gamma_K))=2}$ case follows from work of Li-Ye in \cite[Theorem 1.4]{li2022seifert} (and the remark right before), or Baldwin-Sivek, see the proof of Theorem 5.1 in~\cite{baldwin2022floer}. The $\rank(\SFH(Y_K,\gamma_K))=1$ case follows from Juh\'asz product sutured manifold detection result~\cite[Theorem 1.4, Theorem 9.7]{juhasz2008floer}.
    
    Finally, if $\rank(\SFH(Y_K,\gamma_K))=0$ then $(Y_K,\gamma_K)$ is not taut, by~\cite[Theorem 1.4]{juhasz2008floer}. Since $R(\gamma)$ is Thurston norm minimizing, it follows that $R(\gamma_K)$ is compressible, so that $K$ is the unknot and $\gamma_K$ is some collection of slope $0$ sutures. 
\end{proof}
We need two more Lemmas before we can proceed to the next case of Theorem~\ref{thm:mailinknrankclass}.

\begin{lemma}\label{lem:nondegeneratenorm}
    Suppose $(Y_L,\gamma_L)$ is taut. Let $K$ be a component of $L$. Then either:
    \begin{enumerate}
        \item The $A_K$ span of $\SFH(Y_L,\gamma_L)$ is non-zero,
        \item $K$ is the unknot and $\gamma_K$ consists of two meridians, or,
        \item $L-K$ has a component $K'$ such that $K$ is a meridian of $K'$ and each component of $\gamma_K$ has slope $0$.
    \end{enumerate}
\end{lemma}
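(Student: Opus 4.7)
My plan is to apply the Friedl--Juh\'asz--Rasmussen sutured Thurston norm detection theorem (Theorem~\ref{thm:FLRpolytopedetection}). Since $(Y_L,\gamma_L)$ is taut---in particular, $Y_L$ is irreducible and has toroidal boundary---the theorem identifies the $A_K$-span of $\SFH(Y_L,\gamma_L)$ with the sutured Thurston norm $x^s([\Sigma_K])$, where $[\Sigma_K]\in H_2(Y_L,\partial Y_L)$ is the Poincar\'e--Lefschetz dual of the $A_K$-grading, represented by a Seifert-type longitudinal surface for $K$. Assuming the $A_K$-span vanishes, I would pick a representative $\Sigma$ of $[\Sigma_K]$ realizing the minimum, so that $\chi(\Sigma)\geq\tfrac{1}{2}|\Sigma\cap s(\gamma_L)|$. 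Its homological boundary is $\lambda_K$ on $\partial\nu(K)$ together with $\lk(K,K_j)\mu_{K_j}$ on each $\partial\nu(K_j)$ for $j\neq K$.

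The core of the argument is a case analysis on the slope of $\gamma_K$ in the $(\lambda_K,\mu_K)$-basis of $H_1(\partial\nu(K))$. For every component $K_j\neq K$ with the standard pair of meridional sutures, the boundary of $\Sigma$ on $\partial\nu(K_j)$ consists of meridians parallel to $s(\gamma_j)$, contributing nothing to $|\Sigma\cap s(\gamma_j)|$. When $\gamma_K$ itself is meridional, say with $2n$ sutures, $|\lambda_K\cap s(\gamma_K)|=2n$ forces $\chi(\Sigma)\geq n\geq 1$, so $\Sigma$ contains a disk component $D$. Incompressibility of $R(\gamma_L)$ rules out $D$ having boundary a meridian of any standard-sutured component; the only remaining option is $\partial D=\lambda_K$, which forces $K$ to bound a disk in $Y_L$. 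For $|L|>1$ this would split $L$, contradicting tautness; hence $L=K$, and the resulting Euler-characteristic bookkeeping pins down $K$ as an unknot with exactly two meridional sutures, yielding case~(2).

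When $\gamma_K$ has slope~$0$, $|\lambda_K\cap s(\gamma_K)|=0$ and the inequality becomes $\chi(\Sigma)\geq 0$. Since the essential core of each annular region of $R(\gamma_K)$ is $\lambda_K$, a disk component bounded by $\lambda_K$ would compress $R(\gamma_L)$, contradicting tautness; hence $\Sigma$ must contain an annular component $A$. The homological boundary formula, combined with the same incompressibility argument applied to the other standard-sutured components, forces $\partial A=\lambda_K\cup\mu_{K'}$ for some component $K'\neq K$ of $L$, and $A$ then exhibits $K$ as isotopic in $S^3$ to a meridian of $K'$, which is case~(3). For any other slope $p/q$ with $q\neq 0$, one has $|\lambda_K\cap s(\gamma_K)|=2|q|$, hence $\chi(\Sigma)\geq|q|\geq 1$; the disk analysis of the meridional case then rules these slopes out entirely.

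The main obstacle is the careful cataloguing of simple closed curves on $\partial Y_L$ that can bound disks or co-bound annuli in $Y_L$ compatibly with tautness; the essential tools are the incompressibility of $R(\gamma_L)$ and the algebraic constraint $[\partial\Sigma]=\lambda_K+\sum_j\lk(K,K_j)\mu_{K_j}$ in $H_1(\partial Y_L)$. A minor subtlety is the at-most-one exceptional boundary component permitted by our standing hypothesis: if it is not $\partial\nu(K)$, then $\partial\Sigma$ meets that torus only in meridians, which can be isotoped disjoint from the non-meridional sutures, so this component contributes nothing to $|\Sigma\cap s(\gamma_L)|$ and does not enter the analysis.
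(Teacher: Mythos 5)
Your argument is essentially the paper's: both invoke the Friedl--Juh\'asz--Rasmussen theorem to identify the $A_K$-span with $x^s$ of the class dual to $\mu_K$, take a norm-minimizing representative, and observe that vanishing norm forces either a disk component bounded by $\lambda_K$ (leading to case (2)) or an annulus running from $\lambda_K$ to a meridian of another component (case (3)); the paper merely organizes the case analysis by the pair $(|\Sigma\cap s(\gamma_L)|,\chi(\Sigma))\in\{(0,1),(0,0),(2,1)\}$ rather than by the slope of $\gamma_K$. One caveat, which applies equally to the paper's own write-up: a disk with boundary $\lambda_K$ meeting $s(\gamma_K)$ in exactly two points is also compatible with two sutures of slope $\pm 1$ (indeed any slope $\pm 1/b$), not only with two meridians, so your claim that the disk analysis rules out all remaining slopes ``entirely'' is slightly too strong --- though this edge case does not arise where the lemma is applied later in the paper.
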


Since we are working in a setting in which $\chi(R_\pm)$ are $0$, the property of being taut is equivalent to being irreducible and incompressible which is in turn equivalent to $L$ being non-split and not having any unknotted components with slope zero sutures.

\begin{proof}
Fix a component $K$ of $L$. Let $\Sigma$ be a Thurston norm minimizing surface Poincar\'e dual to $\mu_k$ -- a meridian of $K$. Observe that unless $(|\Sigma\cap s(\gamma_L)|,\chi(\Sigma))\in\{(0,1),(0,0),(2,1)\}$ we have that $x^s(\Sigma)=\max\{0,\frac{1}{2}|\Sigma\cap s(\gamma_L)|-\chi(\Sigma)\}>0$. If $(|\Sigma\cap s(\gamma_L)|,\chi(\Sigma))=(0,1)$ then $R(\gamma_L)$ is compressible, contradicting tautness. If $(|\Sigma\cap s(\gamma_L)|,\chi(\Sigma))=(0,0)$ then $\Sigma$ is an annulus and we see that $L$ has a component $K'$ such that $K$ is a meridian of $K'$ and $\gamma_K$ consists of longitudinal sutures. If $(|\Sigma\cap s(\gamma_L)|,\chi(\Sigma))=(2,1)$ then we are in case two of the statement of the Lemma.

In all other cases $x^s(-)$ is non-trivial in the direction Poincar\'e dual to $\mu_K$. It follows from Theorem~\ref{thm:FLRpolytopedetection} that the sutured Floer homology norm is non-degenrate in the direction Poincar\'e dual to $\mu_K$ -- i.e. we are in case $1$ of the statement of the Lemma.
\end{proof}

\begin{lemma}\label{lem:meridian}
    Suppose an $n$ component link $L$ has a component $K$ with a pair of meridional sutures and another component $K'$ which is a meridian of $K$ with longitudinal sutures. If $\rank(\SFH(Y_L,\gamma_L))\leq 2^{n}$. Then $L-K'$ is an unlink.
\end{lemma}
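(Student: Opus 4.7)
The plan is to prove this by strong induction on $n$, the number of components of $L$.

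For the inductive step when $n \geq 4$, I would apply the spectral sequence of Lemma~\ref{lem:spectral} to each meridionally-sutured component $K_i$ with $i \geq 3$: the resulting spectral sequence gives the bound $\rank \SFH(Y_{L-K_i}, \gamma_{L-K_i}) \leq \tfrac{1}{2} \rank \SFH(Y_L, \gamma_L) \leq 2^{n-1}$. Since the hypotheses of the lemma persist in $L - K_i$ (the components $K, K'$ and the meridian relation are preserved), the inductive hypothesis yields that $L - K_i - K' = K \cup K_3 \cup \cdots \cup \widehat{K_i} \cup \cdots \cup K_n$ is an unlink. For $n \geq 4$, any two components of $\{K, K_3, \ldots, K_n\}$ lie in a common such $(n-2)$-component unlink (by choosing $i$ distinct from the two), so every pair is a $2$-component unlink and each component is individually unknotted; a standard pairwise-implies-totally-split argument for links in $S^3$ then yields that $L - K'$ is an unlink.

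The case $n = 3$ requires an additional argument, since the above only gives that $K = L - K_3 - K'$ is an unknot and does not constrain $K_3$. Here I would supplement by applying Lemma~\ref{lem:spectral} to $K'$ itself (which carries slope-$0$ sutures): capping off a suture on $K'$ performs $0$-surgery on $K'$, converting $S^3$ to $S^2 \times S^1$, sending $K$ to a knot $\widetilde K$ representing a generator of $H_1(S^2 \times S^1)$, and leaving $K_3$ unchanged since it is disjoint from the surgery region. The resulting rank bound $\rank \SFH((S^2 \times S^1)_{\widetilde K \cup K_3}, \gamma) \leq 4$, combined with the fact that $K_3$ is null-homologous in $S^2 \times S^1$, would be used in conjunction with a trivial-knot style detection for null-homologous components to force $K_3$ to be an unknot split from $\widetilde K$, and hence split from $K$ in $S^3$.

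The base case $n = 2$ is the heart of the argument. Here $L = K \cup K'$ with $K'$ a meridian of $K$, and we must show $\rank \SFH(Y_L, \gamma_L) \leq 4$ forces $K$ to be an unknot. The plan is to use the Seifert disk $D$ of $K'$ in $S^3$ (which exists since $K'$ is unknotted) intersecting $K$ transversely in one point, together with the annulus $D' = D - \nu(K) \subset Y_L$. One boundary of $D'$ is a longitude of $K'$ parallel to its longitudinal sutures, and the other is a meridian of $K$ parallel to its meridional sutures, so $D'$ is a decomposing surface. Decomposing $(Y_L, \gamma_L)$ along $D'$ via Proposition~\ref{Thm:Juhaszaffine} yields a sutured manifold $(Y', \gamma')$ with $\rank \SFH(Y', \gamma') \leq 4$, whose underlying 3-manifold is a ball-minus-knotted-arc complement (the arc being $K$ cut open along its intersection with $D$) with four parallel sutures on its torus boundary. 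When $K$ is an unknot, $(Y', \gamma')$ is precisely the standard $T_4$ with $\rank \SFH = 2$; when $K$ has positive genus $g$, a sutured Thurston norm computation applied via Theorem~\ref{thm:FLRpolytopedetection} to a Seifert surface for $K$ (punctured where it meets $K'$) yields a span bound of at least $2 + 2g$ in the $A_K$ direction, which together with the identification of the $T_4$-piece on the $K'$ side and Martin's minimal-rank bound at the top Alexander grading forces $\rank \SFH(Y_L, \gamma_L) > 4$, a contradiction.

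The hard part is the base case: carefully matching $(Y', \gamma')$ to the known sutured structure and extracting the unknot detection from the rank bound, potentially with an appeal to Proposition~\ref{BaldwinSivekrank2knotexteriors}. A secondary obstacle is the $n = 3$ step, which requires an appropriate trivial-knot detection result in $S^2 \times S^1$ for null-homologous components; this can likely be extracted from known product/almost-product detection theorems or from analogs of Ni's fibered-knot detection.
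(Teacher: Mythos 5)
There is a genuine gap, and the overall architecture is far more complicated than necessary. The fatal step is in your inductive step for $n\geq 4$: from the inductive hypothesis you only learn that each $(n-2)$-component sublink $L-K'-K_i$ is an unlink, and you then invoke ``a standard pairwise-implies-totally-split argument.'' No such argument exists: Brunnian links (e.g.\ the Borromean rings) have the property that \emph{every} proper sublink is an unlink while the link itself is not split, so knowing all the sublinks $L-K'-K_i$ are unlinks cannot force $L-K'$ to be an unlink. To close this you would need a rank bound on $\SFH$ of the exterior of $L-K'$ itself, which your induction never produces. Your $n=3$ case is also incomplete as written, since it defers to an unproven ``trivial-knot style detection'' for null-homologous knots in $S^2\times S^1$.

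The irony is that your $n=2$ base case contains the whole proof. The paper decomposes $(Y_L,\gamma_L)$ along exactly the annulus you describe --- the meridian disk of $K$ bounded by $K'$, punctured once by $K$, with one boundary a longitude of $K'$ (parallel to its longitudinal sutures) and the other a meridian of $K$ (parallel to its meridional sutures) --- and this works uniformly for every $n$, not just $n=2$. The decomposed manifold is homeomorphic to $Y_{L-K'}$, but carries at least six parallel meridional sutures on the boundary component coming from $K$; each excess parallel pair contributes a tensor factor of $V$, so removing them divides the rank by $4$, giving $\rank\SFH(Y_{L-K'},\gamma_{L-K'})\leq 2^{n-2}$ with all sutures meridional. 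Ni's result~\cite[Proposition 1.4]{ni2014homological} then says the $(n-1)$-component link $L-K'$ is an unlink. No induction, no Thurston norm computation, and no surgery to $S^2\times S^1$ is needed; in your base case you should likewise replace the norm argument by the observation that stripping the excess sutures leaves a knot exterior of rank at most $1$, which is an unknot exterior by Juh\'asz's product detection.
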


\begin{proof}
    Suppose $L,n$ are as in the statement of the Lemma. Decompose $(Y_L,\gamma_L)$ along the annulus cobounded by a longitude of $K'$ and a meridian of $K$. The resulting sutured manifold, $(Y,\gamma)$, has at least $6$ parallel sutures on one of its boundary components. Moreover, $Y\cong Y_{L'}$ where $L'=L-K'$. Removing these excess parallel sutures we obtain a sutured manifold of rank at most $2^{n-2}$. Note that $L'$ has only pairs of parallel meridional sutures. It follows that $L'$ is an unlink by~\cite[Proposition 1.4]{ni2014homological}.
\end{proof}

\begin{proposition}\label{prop:2componentrank}
     Suppose $L$ is a two component link and $\rank(\SFH(S^3_L,\gamma_L))\leq 4$. Then up to mirroring $(Y_L,\gamma_L)$ are given as follows:

\begin{enumerate}
    \item A Hopf link with meridional sutures.

    \item The split sum of a sutured manifold from Proposition~\ref{BaldwinSivekrank2knotexteriors} and an unknot exterior with meridional sutures.
    \item $L$ has a split unknotted component with sutures of slope $0$.
    \item $L$ is a Hopf link where one component has meridional sutures and the other has two pairs of slope $0$ sutures.
\end{enumerate}
    
\end{proposition}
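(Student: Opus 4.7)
The strategy is to case-split on tautness and on the meridionality of the sutures, invoking Kim's next-to-minimal rank classification in the standard case~\cite[Theorem 1]{kim2020links}, the spectral sequence of Lemma~\ref{lem:spectral} for capping off meridional sutures, the single-component base case Proposition~\ref{BaldwinSivekrank2knotexteriors}, and the structural Lemmas~\ref{lem:nondegeneratenorm} and~\ref{lem:meridian}.

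If $(S^3_L,\gamma_L)$ is not taut then $\SFH(S^3_L,\gamma_L)=0$ and Lemma~\ref{lem:nottaut} gives case~(3). Assume henceforth that $(S^3_L,\gamma_L)$ is taut. By hypothesis, at most one component of $\partial S^3_L$ carries non-meridional sutures. If both components carry meridional sutures, Kim's theorem specialized to $n=2$ with rank bound $2^n=4$ leaves only the Hopf link after discarding the non-taut unlink, giving case~(1).

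Suppose $K_1$ has meridional sutures and $K_2$ does not. Apply Lemma~\ref{lem:spectral} to cap off the meridional sutures on $K_1$, producing a spectral sequence
\[
\SFH(S^3_L,\gamma_L) \;\Longrightarrow\; \SFH(S^3_{K_2},\gamma_{K_2}) \otimes V,
\]
so $\rank \SFH(S^3_{K_2},\gamma_{K_2}) \leq 2$; Proposition~\ref{BaldwinSivekrank2knotexteriors} then determines $(S^3_{K_2},\gamma_{K_2})$ up to mirroring. Now apply Lemma~\ref{lem:nondegeneratenorm} to each component. Case~(2) of Lemma~\ref{lem:nondegeneratenorm} applied to $K_2$ is excluded by non-meridionality of $\gamma_{K_2}$; case~(3) applied to $K_1$ is excluded by meridionality of $\gamma_{K_1}$. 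If case~(3) applies to $K_2$, so $K_2$ is a meridian of $K_1$ with $\gamma_{K_2}$ of slope $0$, then Lemma~\ref{lem:meridian} with $K=K_1$ and $K'=K_2$ forces $L-K_2=K_1$ to be an unlink, so $K_1$ is an unknot and $L$ is a Hopf link; the rank bound together with the computations of Remark~\ref{rem:SFHcomp} pins down the suture configuration, giving case~(4).

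Otherwise, case~(1) of Lemma~\ref{lem:nondegeneratenorm} holds for $K_2$. If additionally case~(2) holds for $K_1$, then $K_1$ is an unknot with two meridional sutures and the polytope is degenerate in the $A_{K_1}$ direction; Theorem~\ref{thm:FLRpolytopedetection} yields $x^s(\PD(\mu_{K_1}))=0$, and a norm-realizing surface must then be a disk in $S^3\setminus\nu(L)$ bounded by a meridian of $K_1$ and disjoint from $K_2$, witnessing $L$ as split. The connect-sum formula~\eqref{eq:connectsum} combined with Proposition~\ref{BaldwinSivekrank2knotexteriors} then produces case~(2). The remaining subcase is case~(1) of Lemma~\ref{lem:nondegeneratenorm} for both components, giving a two-dimensional polytope; the main obstacle is to rule this out. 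This is handled by a sutured-norm computation comparing candidate surfaces---punctured Seifert disks for $K_1$ and annuli joining $K_1$ to $K_2$---against Theorem~\ref{thm:FLRpolytopedetection}, showing that any non-split, non-Hopf configuration violates the rank bound $\leq 4$.
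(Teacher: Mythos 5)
Your opening move is where the argument breaks. You assert that if $(S^3_L,\gamma_L)$ is not taut then $\SFH(S^3_L,\gamma_L)=0$, and on that basis you fold the entire non-taut case into case~(3). This is false: tautness requires irreducibility, so every split link exterior is non-taut, yet by the connected sum formula (Equation~\ref{eq:connectsum}) a split link exterior has nonvanishing sutured Floer homology (e.g.\ the two-component unlink with meridional sutures has rank $2$). The reducible case is exactly case~(2) of the proposition and cannot be discarded. The paper treats it separately: if $S^3_L$ is reducible, Equation~\ref{eq:connectsum} gives $\rank\SFH(S^3_L,\gamma_L)=2\cdot\rank\SFH(S^3_{K_1},\gamma_{K_1})\cdot\rank\SFH(S^3_{K_2},\gamma_{K_2})\leq 4$, forcing one factor to have rank $1$ (an unknot with meridional sutures by Juh\'asz) and the other rank $\leq 2$ (Proposition~\ref{BaldwinSivekrank2knotexteriors}). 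Your later attempt to recover splitness inside the taut branch --- deducing from a degenerate polytope that $K_1$ bounds a disk disjoint from $K_2$ --- is internally inconsistent, since a split exterior is reducible and hence contradicts the standing tautness assumption under which you are arguing.

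The second gap is that the ``remaining subcase,'' which you dispatch in one sentence as ``a sutured-norm computation,'' is in fact the bulk of the proof. When $K_2$ carries non-meridional sutures (slope $\pm 2$ on an unknot or trefoil, four meridional sutures, or slope-$0$ sutures), one must actually show $L$ is as listed. The paper does this by: using the fact that the spectral sequence of Lemma~\ref{lem:spectral} preserves the relative gradings to force $\lk(K_1,K_2)=0$; choosing a Thurston-norm-minimizing longitudinal surface with a maximal number of boundary components and decomposing along it; invoking Proposition~\ref{prop:allproductannuli} (hence Lemma~\ref{cor:trivialinclusion} and Lemma~\ref{lem:trivialinclusionlinks}) to cut the result along essential product annuli; and applying Juh\'asz's product-sutured-manifold detection to the rank-one pieces. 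The slope-$0$ case alone splits into three sub-cases by the number of parallel suture pairs, one of which produces case~(4) via Lemma~\ref{lem:nondegeneratenorm} and an annulus decomposition. None of this is substituted for by comparing candidate surfaces against Theorem~\ref{thm:FLRpolytopedetection}; the norm computation tells you the polytope is at most one-dimensional in some direction, but identifying the link requires the decomposition arguments. As written, the proposal proves only the meridional-sutures case (via Kim) and the meridian-component case, and leaves both the split case and the non-meridional taut cases unestablished.
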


\begin{proof}
    Suppose $L$ is as in the statement of the proposition. If $(S^3,\gamma_L)$ is irreducible but not taut, the result follows from Lemma~\ref{lem:nottaut}. We may thus assume that $(Y_L,\gamma_L)$ is taut so in turn that $\rank(\SFH(Y_L,\gamma_L))>0$.
    
    Suppose $S^3_L$ is reducible. Let $K_1$ and $K_2$ be components of $L$. Then the split sum formula for sutured Floer homology implies that \begin{align*}
        0 <\rank(\SFH(Y_L,\gamma_L))= 2\cdot\rank(\SFH(S^3_{K_1},\gamma_{K_1}))\cdot\rank(\SFH(S^3_{K_2},\gamma_{K_2}))\leq 4.\end{align*}
    
    Without loss of generality we then have that $0<\rank(\SFH(S^3_{K_1},\gamma_{K_1}))\leq 1$ and \\ ${0<\rank(\SFH(S^3_{K_2},\gamma_{K_2}))\leq 2}$. It follows from Proposition~\ref{BaldwinSivekrank2knotexteriors} that $K_1$ is an unknot and $\gamma_{K_1}$ is a pair of parallel meridional sutures, while $(K_2,\gamma_2)$ is one of the other pairs in the statement of Proposition~\ref{BaldwinSivekrank2knotexteriors}, as desired.

    Suppose now we are in the case that $S^3_L$ is irreducible. If both components of $L$ have meridional sutures then the result reduces to that given by Kim~\cite[Theorem 1]{kim2020links}, so we have that $L$ is the Hopf link or an unlink.
    
    Suppose a component $K$ of $L$ has non-meridional sutures. By Lemma~\ref{lem:spectral} there is a spectral sequence from $\SFH(Y_L,\gamma_L)$ to $\SFH(Y_K,\gamma_K)\otimes V$, where $V$ is a rank $2$ vector space. It follows that each component $K$ of $L$ is of the type given in Proposition~\ref{BaldwinSivekrank2knotexteriors}. We proceed by the cases given in the statement of Proposition~\ref{BaldwinSivekrank2knotexteriors}.

    Suppose $K$ is $T(2,\pm 3)$ with two sutures of slope $\pm 2$. Then Lemma~\ref{lem:spectral} implies that $\SFH(Y,\gamma)$ is supported in the lines $A_K=\pm\frac{3}{2}$. Consider a Thurston norm minimizing surface $\Sigma$ with maximal number of boundary components that is Poincar\'e dual to $[\mu_K]\in H^1(S^3_L)$. Suppose $\Sigma$ intersects $\partial\nu(K')$, where $K'$ is $L-K$. Then after decomposing $(S^3,\gamma_L)$ along $\Sigma$ and removing the excess parallel sutures we obtain a sutured manifold $(Y',\gamma')$ with $\rank(\SFH(Y',\gamma'))= 1$.  It follows that $(Y',\gamma')$ is a product sutured manifold~\cite[Theorem 1.4, Theorem 9.7]{juhasz2008floer}. It follows in turn that $\Sigma$ must be the image of a fiber surface for $T(2,3)$. However, the linking number must be zero, as the Alexander gradings do not shift under the spectral sequence, so we have a contradiction. Suppose $\Sigma$ does not intersect $K'$. Then decomposing along $\Sigma$ yields a sutured manifold $(Y',\gamma')$ with ${\rank(\SFH(Y',\gamma'))=2}$. Observe that since $\Sigma$ has maximal number of boundary components there can be no product annuli connecting $\Sigma_\pm$ to $\partial(\nu(K'))$. Decomposing along the product annuli yielded by Proposition~\ref{prop:allproductannuli} repeatedly -- using the fact that the rank of the sutured Floer homology is at most two, so the dimension of the sutured Floer polytope is at most one -- we obtain the disjoint union of a product sutured manifold and a sutured manifold $(Y',\gamma')$ with spherical boundary components containing $K'$. Capping off the boundary of $(Y',\gamma')$ with $3$-balls we obtain a knot $K$ with $\rank(\SFH(S^3_K,\gamma_K))=1$, where $\gamma_K$ are parallel meridional sutures. It follows that $K$ is the unknot, and $L$ is of the desired form.

  Suppose $K$ is an unknot knot with two sutures of slope $\pm 2$. The arguments for this case resembles the last case. Lemma~\ref{lem:spectral} implies that $\SFH(Y,\gamma)$ is supported in the lines ${A_K=\pm\frac{1}{2}}$. Consider a Thurston norm minimizing surface $\Sigma$ with maximal number of boundary components that is Poincar\'e dual to $\mu_K$ in $S^3_L$. Suppose $\Sigma$ intersects $\partial\nu(K')$, where $K'$ is $L-K$. Then after decomposing $(S^3,\gamma_L)$ along $\Sigma$ and removing the excess parallel sutures we obtain a sutured manifold $(Y',\gamma')$ with $\rank(\SFH(Y',\gamma'))=1$. It follows that $(Y',\gamma')$ is a product sutured manifold 
 by~\cite[Theorem 1.4, Theorem 9.7]{juhasz2008floer}. It follows in turn that $\Sigma$ must be the image of a fiber surface for the unknot. However, the linking number must be zero, as the gradings do not shift in the spectral sequence from $\SFH(S^3_L,\gamma_L)$ to $\SFH(S^3_K,\gamma_K)\otimes V$, so we have a contradiction. Suppose $\Sigma$ does not intersect $K'$. Then decomposing along $\Sigma$ yields a sutured manifold $(Y',\gamma')$ with $\rank(\SFH(Y',\gamma'))=2$. Observe that since $\Sigma$ has maximal number of boundary components there can be no product annuli connecting $\Sigma_\pm$ to $\partial(\nu(K'))$. Decomposing along the product annuli yielded by Proposition~\ref{prop:allproductannuli}, using the fact that the rank of the sutured Floer homology is at most two, so the dimension of the sutured Floer polytope is at most one -- we obtain the disjoint union of a product sutured manifold and a $3$ ball containing $K'$. Capping off the boundary of the $3$-ball with another $3$-ball we obtain a knot $K$ with $\rank(\SFH(S^3_K,\gamma_K))=1$, where $\gamma_K$ is a pair of parallel meridional sutures. It follows that $K$ is the unknot, and $L$ is of the desired form.

   Suppose $K$ is an unknot knot with four meridional sutures. Removing a pair of these sutures from $(Y_L,\gamma_L)$ yields a link exterior with meridional sutures $(S^3_{L},\gamma_{L}')$ with \\${\rank(\SFH(S^3_{L},\gamma_{L}'))\leq 2}$. It follows by work of Ni that $L'$ is an unlink~\cite[Proposition 1.4]{ni2014homological} and the desired result follows.

     Suppose $K$ is an unknot with sutures of slope $0$. We have separate cases corresponding to the number of pairs of parallel such sutures.
     
     Suppose $\gamma_K$ has strictly more than $3$ pairs of parallel sutures of slope zero. Removing one or two pairs of parallel sutures yields a sutured manifold of rank one, which is a product sutured manifold, a contradiction, since $L$ -- and hence $\partial Y_L$ -- has two components. 
     
     Suppose $\gamma_K$ has two pairs of parallel sutures of slope zero. Removing one pair yields a sutured manifold, $(Y_{L},\gamma_{L}')$, with sutured Floer homology of rank at most $2$. Note that $b_1(\partial Y_L)=4$, that $(Y_{L},\gamma_{L}')$ cannot be a product sutured manifold, and that for link exteriors, any horizontal surface must be a product annulus. It follows from Corollary~\ref{cor:trivialinclusion} and Lemma~\ref{lem:trivialinclusionlinks} that there is a product annulus $A$ in $(Y_L,\gamma_L')$. We have two cases according to whether both boundary components of $A$ are on the same component of $\partial Y_L$ or not. Suppose both components lie on the same component of $\partial Y_L$. Decomposing along $A$ yields a sutured manifold with an excess pair of parallel sutures. Removing these parallel sutures we obtain a sutured manifold $(Y,\gamma)$ with $\rank(\SFH(Y,\gamma))=1$. It follows that $(Y,\gamma)$ is a product sutured manifold, a contradiction since there is a connected component of $ Y$ with at least two boundary components. Suppose now that the two boundary components of $A$ lie on distinct components of $\partial Y_L$. Since the boundary component of $A$ corresponding to $K$ is of slope $0$ and the boundary component of $A$ corresponding to remaining component of $L$, $K'$, is of slope $\infty$, it follows that $K$  is a meridian of $K'$. To determine $K'$, observe that if we decompose $(Y_L,\gamma_L)$ along $A$ we obtain a $(Y_{K'},\gamma_{K'})$, where $\gamma_{K'}$ consists of four pairs of parallel oppositely oriented sutures. It follows that $\rank(\SFH(Y_{K'},\gamma_{K'}))\geq 2^3$, a contradiction.

     Suppose that $\gamma_K$ consists of a single pair of longitudinal sutures. Let $\Sigma$ be a longitudinal surface for $K$ in $X(L)$ that minimizes the Thurston norm and amongst such surfaces maximizes the number of boundary components. Consider the sutured manifold $(Y_\Sigma,\gamma_\Sigma)$ obtained by decomposing $(Y_L,\gamma_L)$ along $\Sigma$. Suppose the $A_K$ span is non-zero. Note that $0<\rank(\SFH(Y_\Sigma,\gamma_\Sigma)\leq 2$ and that $(Y_\Sigma,\gamma_\Sigma)$ contains two pairs of excess parallel sutures on the boundary component corresponding to $K$, a contradiction.
     
     Suppose the $A_K$ span is zero. Then applying Lemma~\ref{lem:nondegeneratenorm} implies the $K$ is a meridian of $K'$ and $\Sigma$ is an annulus. Decomposing along $\Sigma$ yields a sutured manifold with a pair of excess parallel sutures. Removing these yields a sutured manifold $(S^3_{K'},\gamma_{K'})$ with $\SFH(S^3_{K'},\gamma_{K'})$ of rank $2$ or $1$. It follows that $K'$ is the unknot as desired.

\end{proof}

Before concluding the proof of Theorem~\ref{thm:mailinknrankclass}, we require one more lemma:

\begin{lemma}\label{lem:degpoly}
    Suppose $(S^3_L,\gamma_L)$ is taut. $P(S^3_L,\gamma_L)$ is non-degenerate unless:\begin{enumerate}
        \item $L$ has a split unknotted component with meridional sutures.
        \item $L$ has a component $K$ with a meridional suture and another component $K'$ which is a meridian of $K$ with longitudinal sutures.
    \end{enumerate} 
\end{lemma}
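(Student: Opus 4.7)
The plan is to prove the contrapositive. Assume $(S^3_L, \gamma_L)$ is taut and that neither condition (1) nor (2) holds; I will show that $P(S^3_L, \gamma_L)$ is non-degenerate, equivalently by Theorem~\ref{thm:FLRpolytopedetection} that the sutured Thurston norm $x^s$ is non-degenerate on $H_2(S^3_L, \partial S^3_L; \R)$.

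First I would apply Lemma~\ref{lem:nondegeneratenorm} to each component $K_i$ of $L$, with $\alpha_{K_i}$ the class Poincar\'e dual to $\mu_{K_i}$. Since condition (2) fails, case (3) of Lemma~\ref{lem:nondegeneratenorm} cannot occur. If case (2) of that lemma holds --- so $K_i$ is an unknot with meridional sutures and a minimizing Seifert surface is a disk $D$ meeting the sutures exactly twice --- then $D$ is properly embedded in $S^3_L$ and therefore disjoint from $L-K_i$; capping $\nu(K_i)$ back in yields a $2$-sphere in $S^3$ separating $K_i$ from $L-K_i$, exhibiting $K_i$ as a split unknotted component with meridional sutures and contradicting the failure of (1). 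Hence $x^s(\alpha_{K_i})>0$ for every $i$.

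Next I would promote this per-generator non-degeneracy to full non-degeneracy of $x^s$. Suppose for contradiction $x^s(\alpha)=0$ for some non-zero rational class $\alpha$. Take a representative $\Sigma$ of some integer multiple of $\alpha$ minimizing $\tfrac{1}{2}|\Sigma\cap s(\gamma_L)|-\chi(\Sigma)$ and, amongst such representatives, maximizing the number of boundary components. The equality $\chi(\Sigma)=\tfrac{1}{2}|\Sigma\cap s(\gamma_L)|$, combined with tautness (which rules out compressing disks for $R(\gamma_L)$), forces each connected component $\Sigma_j$ to be either a disk meeting the sutures twice or an annulus disjoint from the sutures. A disk component forces a split unknotted component of $L$ with meridional sutures, as in the previous paragraph, giving condition (1); an annulus component with boundary on two distinct tori $\partial\nu(K_j)$ and $\partial\nu(K_{j'})$ forces, via matching its boundary slopes against the suture slopes, $K_j$ to be a meridian of $K_{j'}$ with $\gamma_{K_j}$ longitudinal and $\gamma_{K_{j'}}$ meridional, giving condition (2). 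Since both (1) and (2) fail by assumption, no such $\Sigma_j$ exists, so $\alpha=0$, a contradiction.

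The principal obstacle I expect is the final promotion step: justifying that each connected component of a minimal representative of a zero-norm class itself realizes a piece of configuration (1) or (2). This requires combining the sub-additivity of $x^s$, the maximality of the number of boundary components, and tautness to rule out components with excess negative Euler characteristic that could be homologically cancelled by extra-efficient components elsewhere. I would handle this by induction on the number of connected components of $\Sigma$, in the style of Juh\'asz's proof of~\cite[Theorem 3]{juhasz2010sutured} and the argument for Lemma~\ref{cor:trivialinclusion} already given above.
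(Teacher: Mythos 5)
Your argument is correct and is essentially the paper's: both convert degeneracy of $P(S^3_L,\gamma_L)$ into a class with $x^s(\alpha)=0$ via Theorem~\ref{thm:FLRpolytopedetection} and then read off conditions (1) and (2) from the possible low-complexity components of a minimizing representative (a disk meeting the sutures twice gives a split unknotted component, an annulus missing the sutures gives the meridian configuration); indeed you also handle the $(|\Sigma\cap s(\gamma_L)|,\chi(\Sigma))=(2,1)$ case that the paper's written proof elides. Your opening paragraph applying Lemma~\ref{lem:nondegeneratenorm} component-by-component is redundant, and the ``promotion'' step you flag as the principal obstacle is in fact immediate, since $x^s$ of a disconnected surface is by definition the sum of the nonnegative contributions of its components, so a norm-zero representative has every component of nonpositive complexity and no cancellation can occur.
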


\begin{proof}
    Suppose $P(S^3,\gamma_L)$ is degenerate. Then the sutured Floer homology norm is degenerate, so in turn for some second relative homology class we have that $x^s(\alpha)=0$.  That is we can find a surface $\Sigma$ with $\Sigma\in H_2(S^3_L,\partial S^3_{L})$ with $(|\Sigma\cap s(\gamma_L)|,\chi(\Sigma))\in\{(0,1),(0,0)\}$. If $(|\Sigma\cap s(\gamma_L)|,\chi(\Sigma))=(0,1)$ then $(Y_L,\gamma_L)$ is compressible, contradicting tautness. If $(|\Sigma\cap s(\gamma_L)|,\chi(\Sigma))=(0,0)$ then $\Sigma$ is an annulus and we see that $L$ has a component $K$ with a meridional suture and another component $K'$ which is a meridian of $K$ with longitudinal sutures.
\end{proof}
We can now conclude the proof of the main theorem of this section.

\begin{proof}[Proof of Theorem~\ref{thm:mailinknrankclass}]

    Suppose $L$ and $\gamma_L$ are as in the statement of the proposition. If $(S^3_L,\gamma_L)$ is not taut, the result follows from Lemma~\ref{lem:nottaut}. We may thus assume that $(Y_L,\gamma_L)$ is taut and so in turn that $\rank(\SFH(Y_L,\gamma_L))>0$.

We prove the result by induction. Note that $n=1$ and $n=2$ cases follow from Proposition~\ref{BaldwinSivekrank2knotexteriors} and Proposition~\ref{prop:2componentrank}.

Suppose $n\geq 3$. Suppose the knots $K_i$ are as in the statement of the Theorem. As in the proof of Proposition~\ref{prop:2componentrank}, we first deal with the case that $Y_L$ is reducible. If $Y_L$ is reducible then there is a separating $2$-sphere which splits $L$ into sublinks $L_1$ and $L_2$ with $|L_i|>0$. The connect sum formula for sutured Floer homology implies that \begin{align*}0<2\cdot\rank(\SFH(S^3_{L_1},\gamma_{L_1}))\cdot\rank(\SFH(S^3_{L_2},\gamma_{L_2}))\leq 2^{n}.\end{align*} Without loss of generality we have by inductive hypothesis that \begin{center}
    
$0<\rank(\SFH(S^3_{L_1},\gamma_{L_1}))\leq 2^{|L_1|-1}$, while $0<\rank(\SFH(S^3_{L_2},\gamma_{L_2}))\leq 2^{|L_2|}$.\end{center} It follows by another application of the inductive hypothesis that $L_1$ is an unlink and $\gamma_{L_1}$ is a collection of pairs of parallel meridional sutures, while $(L_2,\gamma_2)$ is one of the other pairs in the statement of Proposition~\ref{thm:mailinknrankclass}, as desired.

Note that if $P(Y_L,\gamma_L)$ is degenerate the result follows from Lemma~\ref{lem:degpoly}, Lemma~\ref{lem:meridian}.

Suppose now that $Y_L$ is irreducible and $P(Y_L,\gamma_L)$ is non-degenerate. If every component has a pair of parallel meridional sutures, the result is exactly Kim's result~\cite[Theorem 1]{kim2020links}. Suppose some component of $L$ does not have a pair of parallel meridians for sutures. Without loss of generality we can take that component to be $K_1$.

For each $i\neq 1$, Let $L_i$ denote the $n-1$ component sublink of $L$ containing $K_j$ for $j\neq i$. By inductive hypothesis each such $L_i$ satisfies $\rank (\SFH (Y_{L_i},\gamma_{L_i}))\leq 2^{n-1}$.

Suppose $\rank (\SFH (Y_{L_i},\gamma_{L_i}))=2^{n-1}$ for some $L_i$. The the spectral sequence from Lemma~\ref{lem:spectral} from $\SFH(Y_L,\gamma_L)$ to $\SFH(Y_{L_i},\gamma_{L_i})\otimes V$ collapses immediately. Observe that in each case $\rank (\SFH (Y_{L'},\gamma_{L'}))=2^{n-1}$ we have that $\dim(P(Y_{L'},\gamma_{L'}))\leq 1$, so that $\dim(P(Y_{L},\gamma_{L}))\leq 2$ by Remark~\ref{rem:SFHcomp}, a contradiction, since $\dim(P(Y_L,\gamma_L))\geq 3$ as $P(Y_L,\gamma_L))$ is non-degenerate and $L$ has at least three components.

It remains to treat the case in which $\rank(\SFH(S^3_{L_i},\gamma_{L_i}))\leq 2^{n-2}$ for all $i$. By induction and Remark~\ref{rem:SFHcomp} we have that for each $i$ either:\begin{enumerate}
    \item $L_i$ is an unlink and each $\gamma_{L_i}$ consists of pairs of curves of slope infinity.
    \item $K_1$ is an unknot with sutures of slope $0$ split sum $L_i-K_1$.

    \end{enumerate}

 It thus remains to consider the cases in which at least one $L_i$ is of type $2$. Without loss of generality we may take this to be $L_2$.

Consider the spectral sequence from $\SFH(S^3_L,\gamma_L)$ to $\SFH(S^3_{L_2},\gamma_{L_2})\otimes V$. Observe that $\SFH(S^3_{L_i},\gamma_{L_i})\otimes V$ is supported in Alexander multi-grading $\mathbf{0}$.
Since we are now assuming that $P(Y_L,\gamma_L)$ is non-degenerate. It follows that there is a generator $x\in\SFH(S^3_L,\gamma_L)$ with $A_1$ grading strictly greater than zero. Observe that every generator $y$ with $A_1(y)=A_1(x)$ does under every spectral sequence from $\SFH(S^3_L,\gamma_L)$ to $\SFH(S^3_{L_i},\gamma_{L_i})\otimes V$ for $i\neq 1$. It follows that $\rank(\SFH(S^3_L,\gamma_L,A_1=A_1(x)))\geq 2^{n-1}$ by Lemma~\ref{lem:lowerrankbound}. Likewise $\rank(\SFH(S^3_L,\gamma_L,A_1=-A_1(x)))\geq 2^{n-1}$, so that $\rank(\SFH(S^3_L,\gamma_L,A_1=\pm A_1(x)))\geq 2^{n}$ and there are no generators of Alexander grading $0$ a contradiction.

\end{proof}

\section{Longitudinal Surfaces with Sutured Floer homology of next to minimal rank.}\label{sec:mainthmpf}
The goal of this section is to prove the main result of this paper, which we recall here for the reader's convenience:

\begin{theorem}\label{thm:mainbraid}
    Suppose $K$ is a component of an $n$ component non-split link $L$ in $S^3$. The link Floer homology of $L$ in the maximal non-trivial $A_K$ grading is of rank at most $2^n$ if and only if one of the following holds:

    \begin{enumerate}
        \item $K$ is fibered and $L-K$ is braided with respect to $K$.
        \item $K$ is nearly fibered and $L-K$ is braided with respect to $K$.
        \item $K$ is fibered and $L-K$ is a clasp-braid with respect to $K$.
       \item $K$ is fibered and $L-K$ is a stabilized clasp-braid with respect to $K$.
        \item\label{case5} $K$ is fibered and there is a component $K'$ of $L$ such that $K'$ lies in a Seifert surface for $K$ and $L-K-K'$ is braided with respect to $K$.
    \end{enumerate}
\end{theorem}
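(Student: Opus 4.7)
I will prove the two directions separately. For the reverse direction, in each of cases (1)--(5) I would compute $\rank(\SFH(Y_\Sigma,\gamma_\Sigma))$---which equals the rank of $\widehat{\HFL}(L)$ in the maximal non-trivial $A_K$ grading by Juh\'asz's surface decomposition formula (Proposition~\ref{Thm:Juhaszaffine})---directly from a sutured decomposition along the natural (near-)fiber surface for $K$. Case (1) is Martin's proposition, giving rank $2^{n-1}$. In case (2), the near-fiber decomposition leaves a single almost-product factor among product pieces, contributing a factor of $2$ and so producing rank $2^n$. In cases (3)--(5) the fiber complement contains a single non-product piece---a clasp, a stabilized clasp, or an annulus carrying $K'$, respectively---each contributing a rank-$2$ factor to an otherwise-product tensor product, again giving $2^n$.

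For the forward direction, the plan is to work entirely in the sutured category. First I would choose a longitudinal surface $\Sigma$ for $K$ in the exterior of $L$ realizing the maximal non-trivial $A_K$-grading, taken to be sutured Thurston-norm minimizing and, among such surfaces, to have the maximum number of boundary components on $\partial\nu(L-K)$. Let $(Y_\Sigma,\gamma_\Sigma)$ denote the sutured manifold obtained by decomposing $(Y_L,\gamma_L)$ along $\Sigma$; then $(Y_\Sigma,\gamma_\Sigma)$ is taut and $\rank(\SFH(Y_\Sigma,\gamma_\Sigma))\leq 2^n$. I would then decompose $(Y_\Sigma,\gamma_\Sigma)$ first into horizontally prime pieces via~\cite[Proposition 2.18]{juhasz2010sutured} (using Proposition~\ref{prop:juhaszhorizontaldecomp} to keep track of ranks as a tensor product), and then each horizontally prime piece along a minimal family of essential product annuli using Proposition~\ref{prop:allproductannuli}. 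Each resulting piece is either a product sutured manifold with base an annulus or once-punctured annulus, or satisfies $\dim P = b_1(\partial)/2$. Re-embedding each non-product piece into $S^3$ via the tangle-recovery procedure of Remark~\ref{rem:tanglerecovery} and invoking Theorem~\ref{thm:mailinknrankclass} classifies each such piece as one of a short list of possibilities.

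The main obstacle is the last step: reassembling the pieces along the product annuli and horizontal surfaces to identify which of the five cases the original link $L$ falls into. Gluing is constrained, since product annuli re-glue pairs of sutures while the removed horizontal surfaces must re-assemble into a candidate (near-)fiber for $K$. Case (1) occurs when every piece is a product, so that $(Y_\Sigma,\gamma_\Sigma)$ is itself a product and $K$ is fibered with $L-K$ braided. Case (2) arises when exactly one piece is one of the three almost-product sutured manifolds from the Baldwin--Sivek list, forcing $K$ to be nearly fibered with $L-K$ braided. Cases (3) and (4) arise from the slope-$2$ unknot or trefoil exteriors in Theorem~\ref{thm:mailinknrankclass}(1)(a,b): these are precisely the sutured exteriors of a clasp or stabilized clasp. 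Case (5) arises from the Hopf-link configurations in Theorem~\ref{thm:mailinknrankclass}(2) and (4), which force an auxiliary component $K'$ lying as an essential curve in a Seifert surface for $K$. Throughout, non-splitness of $L$ together with the maximal choice of $\Sigma$ rules out the degenerate configurations (for instance, split unknotted components with longitudinal sutures). The bulk of the work will be a careful case analysis verifying that every way of gluing the classified pieces back together indeed yields one of the five stated topological configurations, and no others.
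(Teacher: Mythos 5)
Your overall skeleton matches the paper's: decompose along a maximal Euler characteristic longitudinal surface, cut further along essential product annuli via Proposition~\ref{prop:allproductannuli}, classify the non-product pieces, and reassemble; and your reverse direction (explicit rank computations for each of the five cases, as in Subsection~\ref{subsec:geography}) is fine. But the forward direction as written has a genuine gap: you never address the components of $L-K$ that are \emph{disjoint} from $\Sigma$. These become closed components of the tangle $T$, and their boundary tori (with meridional sutures) sit inside $(Y_\Sigma,\gamma_\Sigma)$. This is exactly where Case~(5) comes from, and it is where the bulk of the paper's work lives: the argument is an induction on the number $n'$ of closed components (Proposition~\ref{thm:maintangle}, with base cases Propositions~\ref{prop:nocomponentlongsurface} and~\ref{Prop:atmostcomponentintheinterior} and inductive step Proposition~\ref{prop:braid2ormore}), using the spectral sequences of Lemma~\ref{lem:spectral} that forget a closed component, together with the combinatorial polytope bounds (Lemmas~\ref{lem:lowerrankbound}--\ref{lem:dimbound} and, crucially, Lemma~\ref{lem:polytopedim}) to force $\dim P(Y_T,\gamma_T)\leq n'+1$. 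Without that dimension bound you cannot conclude that the non-product piece left after annulus decomposition has toroidal boundary components, and hence you cannot legitimately invoke Theorem~\ref{thm:mailinknrankclass}, which applies only to link exteriors in $S^3$. Relatedly, the relevant rank hypothesis after decomposition is $\rank(\SFH(Y_T,\gamma_T))\leq 2^{n'+1}$ (excess parallel sutures from arc components each contribute a factor of $2$), not the raw $2^n$ you carry over.

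A second, smaller gap is in the reassembly step. Identifying a decomposed piece as ``the sutured exterior of a clasp'' does not by itself identify the tangle: the paper explicitly flags (after Remark~\ref{rem:tanglerecovery}) that recovering $T$ from $(Y_T,\gamma_T)$ is nontrivial, and proves it for clasps in Lemma~\ref{lem:suturedclaspext} via an atoroidality/irreducibility argument showing two candidate product annuli are isotopic. Likewise, for Case~(4) you must show that the final regluing of $R_\pm(\gamma)$ carries the core of one stabilizing handle to the cocore of the other; the paper handles this by passing to an isotopic longitudinal surface of lower genus and rerunning the clasp analysis. Your proposal's ``careful case analysis'' would need to include both of these arguments to close the proof.
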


We prove Theorem~\ref{thm:mainbraid} at the end of Section 5.2, assuming Proposition~\ref{thm:maintangle} which we over the course of this section. Here we discuss some of the consequences of the Theorem. While Martin's braid detection result,~\cite[Proposition 1]{martin2022khovanov}, holds in arbitrary $3$ manifolds, the above result is dependant on the fact that $L$ is a link in $S^3$. The hypothesis that $L$ is non-split is readily removed:

\begin{corollary}\label{rem:reducible}
Suppose $K$ is a component of a split $n$ component link $L$ in $S^3$. The link Floer homology of $L$ in the maximal non-trivial $A_K$ grading is of rank at most $2^{n}$ if and only if one of the following holds:\begin{enumerate}
   \item $L=L_1\sqcup L_2$ where $L_1$ is an $n'$ component link containing $K$ such that link Floer homology of $L_1$ in the maximal non-trivial $A_K$ grading is of rank at most $2^{n'}$ and $L_2$ an unlink.
    \item $L=L_1\sqcup L_2$ where $L_1$ is an $n'$ component link containing $K$ such that link Floer homology of $L_1$ in the maximal non-trivial $A_K$ grading is of rank at most $2^{n'-1}$ and $L_2$ is a Hopf link split sum an unlink.
\end{enumerate}
\end{corollary}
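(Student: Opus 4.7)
The plan is to decompose $L$ along its splitting spheres, factorize the rank of $\widehat{\HFL}(L)$ in the top $A_K$-grading via the connect-sum formula (Equation~\ref{eq:connectsum}), and then extract the two cases of the corollary from a short arithmetic analysis that combines Martin's braid-axis bound, Ni's minimal-rank classification, and Kim's classification of next-to-minimal-rank links.

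Concretely, I would first write $L = L_1 \sqcup L'$ with $L_1$ the maximal non-split sublink of $L$ containing $K$, and set $n_1 = |L_1|$, $n' = n - n_1$. Since $L$ is split, the exterior of $L$ is an internal connect sum of the exteriors of $L_1$ and $L'$, so iterating Equation~\ref{eq:connectsum} gives
\[
\widehat{\HFL}(L) \cong V \otimes \widehat{\HFL}(L_1) \otimes \widehat{\HFL}(L')
\]
as vector spaces. Because $K \subset L_1$, both $V$ and $\widehat{\HFL}(L')$ sit in $A_K = 0$, so the top non-trivial $A_K$-grading of $\widehat{\HFL}(L)$ coincides with that of $\widehat{\HFL}(L_1)$ and
\[
\rank \widehat{\HFL}(L, A_K = \max) \;=\; 2 \cdot \rank \widehat{\HFL}(L_1, A_K = \max) \cdot \rank \widehat{\HFL}(L').
\]

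I would then combine Martin's bound $\rank \widehat{\HFL}(L_1, A_K=\max) \geq 2^{n_1-1}$ with Ni's bound $\rank \widehat{\HFL}(L') \geq 2^{n'-1}$ \cite[Proposition 1.4]{ni2014homological} to conclude that $\rank \widehat{\HFL}(L, A_K=\max) \geq 2^{n-1}$, so the hypothesis $\leq 2^n$ leaves a factor of only $2$ of slack. Kim's theorem \cite[Theorem 1]{kim2020links} says that every $m$-component link with $\rank \widehat{\HFL} \leq 2^m$ is either an unlink (rank $2^{m-1}$) or a Hopf link split sum an unlink (rank $2^m$). Applying this to $L'$, a case split yields precisely the two cases of the corollary: if $L'$ is an unlink we obtain case~(1) with $\rank \widehat{\HFL}(L_1, A_K=\max) \leq 2^{n_1}$, and if $L'$ is a Hopf link split sum an unlink then equality in Martin's bound is forced, giving case~(2); any strictly larger $\rank \widehat{\HFL}(L')$ would contradict the hypothesis once multiplied by Martin's lower bound. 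The reverse direction is immediate from the displayed rank formula.

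The only subtle point in this argument is verifying that the Künneth-type identification respects the $A_K$-grading as stated — i.e., that both $V$ and $\widehat{\HFL}(L')$ are supported at $A_K = 0$ — which follows from building a sutured Heegaard diagram for $L$ as a boundary-connect sum of diagrams for $L_1$ and $L'$ and noting that neither component sees a meridian of $K$. Beyond this grading check and the invocation of Martin, Ni, and Kim, the argument is essentially arithmetic; no new sutured decomposition is required, because the hard topological content has already been packaged into Theorem~\ref{thm:mainbraid}.
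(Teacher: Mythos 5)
Your argument is correct and follows essentially the same route as the paper's proof: factor $\widehat{\HFL}(L)$ via the connect-sum formula, apply Martin's lower bound $2^{n_1-1}$ to the piece containing $K$ and Ni/Kim to the complementary piece, and split into the two cases according to whether the leftover factor of $2$ is absorbed by $L_1$ or by $L'$. Your explicit check that $V$ and $\widehat{\HFL}(L')$ are supported in $A_K=0$ is a point the paper leaves implicit, but it does not change the substance of the argument.
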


Here $L_1\sqcup L_2$ denotes the split sum of $L_1$ and $L_2$.

\begin{proof}

The link Floer homology of the split sum of an $n$ component link $L$ containing $K$ and an $n'$ component link $L'$ is given by:
\begin{align*}
    \widehat{\HFL}(L\sqcup L')=V\otimes\widehat{\HFL}(L)\otimes \widehat{\HFL}( L')
\end{align*}

Where $V$ is a rank two vector space. This follows from Equation~\ref{eq:connectsum}. The rank of the maximal $A_K$ grading of $\widehat{\HFL}(L)$ is at least $2^{n-1}$. This follows from Martin's braid axis detection result~\cite[Proposition 1]{martin2022khovanov}. We also have that $\rank(\widehat{\HFL}( L'))\geq2^{n'-1}$ since $\widehat{\HFL}( L')$ admits a spectral sequence to $\widehat{\HF}(S^3)\otimes V^{\otimes (n'-1)}$. If the maximal $A_K$ grading of $\widehat{\HFL}(L)$ is of rank $2^{n+n'}$ then we must have that either:\begin{itemize}
    \item $\widehat{\HFL}(L)$ is of rank $2^{n}$ in the maximal Alexander grading and $\rank(\widehat{\HFL}(L'))= 2^{n'-1}$ or,
 \item $\widehat{\HFL}(L)$ is of rank $2^{n-1}$ in the maximal $A_K$ grading and $\rank(\widehat{\HFL}(L))\leq 2^{n'}$.\end{itemize}
 
 $n'$ component links $L'$ with $\rank(\widehat{\HFL}(L'))= 2^{n'-1}$ are unlinks by a result of Ni~\cite[Proposition 1.4]{ni2014homological}, while $n'$ component links $L'$ with $\rank(\widehat{\HFL}(L'))= 2^{n'}$ are Hopf links split sum unlinks by a result of Kim~\cite[Theorem 1]{kim2020links} -- see also Section~\ref{sec:linkext}. If $K$ is a component of $L$ such that $\widehat{\HFL}(L)$ is of rank $2^{n-1}$ in the maximal $A_K$ grading then $K$ is a braid axis~\cite[Proposition 1]{martin2022khovanov}. The result follows from these observation together with Theorem~\ref{thm:mainbraid}.
\end{proof}

\begin{remark}
 In Case~\ref{case5} of Theorem~\ref{thm:mainbraid} we can say a little more under the assumption that $L=K\cup K'$ and $K$ is of genus one. i.e. $K$ is either a trefoil or the figure eight knot. Essential simple closed curves in the once puncture torus are parameterized by pairs of coprime integers $(p,q)$, where $p$ and $q$ can be determined as follows. Fix a pair of oriented curves $\{x,y\}$ representing a symplectic basis for $H_1(\Sigma)$, where $\Sigma\inj S^3$ is a Seifert surface for $K$. If $K'$ is a knot in $\Sigma$ then $p=[K']\cap[x]$, $q=[K']\cap[y]$. Observe that any tuple $(p,q)$ determines a link in $\Sigma$ by taking $p$ parallel copies of $x$ and $q$ parallel copies of $y$ and then taking the oriented resolution of every intersection. The classification of the isotopy type of the link consisting of $K$ and $K'$ is thus -- a priori -- coarser than the quotient of $\{(p,q):\gcd(p,q)=1\}$ by the action of the monodromy on this set. Recall that with respect to appropriate bases, the monodromy of the $T(2,3)$ is given by $\begin{pmatrix}
 1 & -1 \\1 & 0\end{pmatrix}$ while the monodromy of the figure eight knot is given by Arnold's cat map, $\begin{pmatrix}2 & 1 \\1 & 1\end{pmatrix}$. Indeed, the classification is strictly coarser. For example, the curves $(1,0)$ and $(0,1)$ in a fiber surface of the figure eight knot are not related by the action of the monodromy of the figure eight. Nevertheless the resulting links are isotopic. See the two leftmost figures in~\cite[Figure 2]{dey2023unknotted}.
\end{remark}

Applying Proposition~\ref{Thm:Juhaszaffine} to the sutured exteriors of the links mentioned in the statement of Theorem~\ref{thm:mainbraid} allows us to determine $\widehat{\HFL}(L)$ in the maximal $A_K$ grading, complete with the remaining Alexander gradings, up to affine isomorphism. We discuss this in Subsection~\ref{subsec:geography}.

In Section~\ref{sec:annular} we will apply a version of Theorem~\ref{thm:mainbraid} for instanton Floer homology in the case that $K$ is an unknot. For the sake of comparison we state the following corollary, which follows directly from Theorem~\ref{thm:mainbraid}:

\begin{corollary}\label{cor:annularlinkfloer}
    Suppose $U$ is an unknotted component of an $n$ component non-split link $L$ in $S^3$ such that the link Floer homology of $L$ in the maximal non-trivial $A_U$ grading is of rank at most $2^n$. Then one of the following holds:

    \begin{enumerate}
        \item $L-U$ is braided with respect to $U$.
        \item There is an unknotted component $U'$ of $L-U$ such that $L\setminus(U'\cup U)$ is braided with respect to $U$ and $U'$ is isotopic to a curve in a longitudinal disk for $U$.
         \item $L-U$ is a stabilized clasp-braid with respect to $U$.
        \item $L-U$ is a clasp-braid with respect to $U$.
    \end{enumerate}
\end{corollary}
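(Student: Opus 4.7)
The plan is to apply Theorem~\ref{thm:mainbraid} directly with $K = U$ and observe how the five cases collapse when $K$ is an unknot. Since the unknot is fibered with fiber a disk, its link Floer homology is supported in Alexander grading $0$ with rank $1$; in particular, the unknot is not nearly fibered, because being nearly fibered requires rank exactly $2$ in the maximal non-trivial Alexander grading. Therefore case (2) of Theorem~\ref{thm:mainbraid} cannot occur under the hypothesis $K = U$.

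The surviving cases (1), (3), (4) of Theorem~\ref{thm:mainbraid} will correspond, respectively, to cases (1), (4), (3) of the Corollary, since the definitions of ``braided with respect to $K$,'' ``clasp-braid closure with respect to $K$,'' and ``stabilized clasp-braid closure with respect to $K$'' translate directly when $K$ is the unknot (which is automatically fibered). So these three cases require no further argument beyond reindexing.

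The only point requiring elaboration is case (5) of Theorem~\ref{thm:mainbraid}, where there is a component $K'$ of $L$ isotopic into a fiber surface $\Sigma_t$ of $U$ with $L \setminus (U \cup K')$ braided with respect to $U$. Here I would note that a fiber surface for the unknot is a disk (unique up to ambient isotopy in $S^3$), which is the ``longitudinal disk for $U$'' appearing in the statement of the Corollary. Moreover any simple closed curve embedded in a disk in $S^3$ bounds a sub-disk, hence is itself unknotted; so $K'$ is an unknot $U'$, matching case (2) of the Corollary exactly. This exhausts all cases arising from Theorem~\ref{thm:mainbraid} when $K = U$, and the Corollary follows. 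There is no real obstacle to this argument, as the Corollary is purely a specialization of the main theorem: the only substantive observation is that an unknot is fibered but not nearly fibered, and that a curve on a disk is unknotted.
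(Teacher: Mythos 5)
Your proposal is correct and matches the paper's approach: the paper states that Corollary~\ref{cor:annularlinkfloer} ``follows directly from Theorem~\ref{thm:mainbraid},'' and your elaboration supplies exactly the right details --- the unknot is fibered (hence not nearly fibered, eliminating case (2) of the theorem), and in case (5) the fiber surface is a disk, so $K'$ is an unknotted curve in a longitudinal disk for $U$. No gaps.
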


The case in which $L$ is split can be deduced as in Remark~\ref{rem:reducible}.

 Suppose $L$ is a link with a component $K$ and $\Sigma$ is a maximal Euler characteristic, maximal boundary component longitudinal surface for a component $K$ of $L$. We will prove Theorem~\ref{thm:mainbraid} by way of the following Proposition:

\begin{proposition}\label{thm:maintangle}
     Suppose $(Y_T,\gamma_T)$ is irreducible and $T$ has $n$ closed components. Then $\rank(\SFH(Y_T,\gamma_T))\leq 2^{n+1}$ if and only if:

    \begin{enumerate}
        \item $T$ is a braid and $(Y,\gamma)$ is a product sutured manifold.
        \item $T$ is a braid and $(Y,\gamma)$ is an almost product sutured manifold.
        \item $T$ is a clasp-braid and and $(Y,\gamma)$ is a product sutured manifold.
        \item\label{case:casestabilized} $T$ is a stabilized clasp-braid in a stabilized product sutured manifold.
        \item There is a component $t$ of $T$ such that $T-t$ is a braid, $(Y,\gamma)$ is product sutured manifold and $t$ is isotopic to asimple closed curve in $R_+(\gamma_{T-t})$.
    \end{enumerate}
\end{proposition}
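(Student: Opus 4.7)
The plan is to prove both implications, with the bulk of the work going into the forward (rank-to-structure) direction. For the reverse direction, I would verify in each of the five cases that $\rank(\SFH(Y_T,\gamma_T))\le 2^{n+1}$ by explicit sutured Floer homology computations. Cases (1) and (2) follow directly from Juh\'asz's product sutured manifold detection~\cite[Theorem 1.4, Theorem 9.7]{juhasz2008floer} and Baldwin--Sivek's description of almost product sutured manifolds, since removing a braid from a product (respectively almost product) sutured manifold gives a sutured manifold of the same type with a more complicated base. Cases (3), (4) reduce to a rank-$2$ local computation for the clasp (respectively stabilized clasp) which can be carried out by an explicit sutured Heegaard diagram or by decomposing along the disk containing the clasping arcs and applying the connect-sum formula \eqref{eq:connectsum}. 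Case (5) combines case (1) with the fact that a simple closed curve lying in $R_+(\gamma)$ of a product sutured manifold contributes a rank-$2$ tensor factor to $\SFH$, again via Proposition~\ref{prop:juhaszhorizontaldecomp}.

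For the forward direction I would first separate the non-taut case. If $(Y_T,\gamma_T)$ is not taut, then $R(\gamma_T)$ is compressible; a careful analysis of the compressing disk and its interaction with $T$ produces the stabilized product structure required by case (4) (alternatively a split unknotted component, which is excluded by irreducibility or absorbed into one of the other cases). When $(Y_T,\gamma_T)$ is taut, I would decompose along a maximal collection of horizontal surfaces via~\cite[Proposition 2.18]{juhasz2010sutured} and then apply Proposition~\ref{prop:allproductannuli} to each horizontally prime piece, using Lemma~\ref{Lem:canapplycor} to check that the Betti number hypothesis propagates (viewing $(Y_T,\gamma_T)$ as the complement in $S^3$ of a union of a longitudinal surface, a link, and some collection of annuli, exactly the setting of that lemma).

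Next, by Proposition~\ref{Thm:Juhaszaffine} and Proposition~\ref{prop:juhaszhorizontaldecomp} the rank is non-increasing under these decompositions, so each resulting piece with maximal-dimensional sutured polytope is a sutured link exterior with $\rank(\SFH)\le 2^{n+1}$. I would then apply Theorem~\ref{thm:mailinknrankclass} to classify each such piece; the remaining pieces are product sutured manifolds with annular or once-punctured annular base. The key observation is that the list in Theorem~\ref{thm:mailinknrankclass} is very short: unknot exteriors with various small suture configurations, trefoil exteriors with slope-$2$ sutures, Hopf link exteriors, and split unions of these. Arc components of $T$ are recorded by single (or paired) sutures on the boundary of $(Y_T,\gamma_T)$ as in Remark~\ref{rem:tanglerecovery}, while closed components of $T$ give toroidal boundary components with meridional sutures; this dictionary lets me translate the sutured pieces back into tangle language.

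The main obstacle, and the bulk of the remaining work, is the gluing analysis: I must enumerate all ways of gluing the classified pieces back together along the product annuli to produce a valid $(Y_T,\gamma_T)$ whose underlying tangle has $n$ closed components and lies inside a well-defined ambient sutured manifold $(Y,\gamma)$. Each piece must be matched to one of the five cases: a product or almost-product sutured piece with a braid gives cases (1)--(2); the trefoil piece with slope-$2$ sutures or an unknot with four meridional sutures, glued to product pieces, corresponds to the clasp-braid of case (3); the stabilized version (arising precisely when the ambient $(Y,\gamma)$ fails to be taut in a controlled way) is case (4); and a Hopf link piece, where one component becomes a curve in $R_+(\gamma)$, yields case (5). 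The delicate points are distinguishing (3) from (4) using horizontal primality of the ambient manifold, and ruling out configurations in which too many non-product pieces appear simultaneously (this is where the sharpness of the rank bound $2^{n+1}$ is used, via Proposition~\ref{Thm:Juhaszaffine} and the rank computations recorded in Remark~\ref{rem:SFHcomp}).
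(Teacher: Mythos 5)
Your overall architecture (decompose along horizontal surfaces and product annuli, classify the non-product pieces via Theorem~\ref{thm:mailinknrankclass}, then reglue) is the right shape, and your reverse direction matches the paper's Heegaard-diagram computations in Section~\ref{subsec:geography}. But there is a genuine gap in the forward direction: you never explain how you get pieces to which Theorem~\ref{thm:mailinknrankclass} applies. Proposition~\ref{prop:allproductannuli} only produces non-product pieces $(Y_i,\gamma_i)$ with $\dim P(Y_i,\gamma_i)=b_1(\partial Y_i)/2$; to conclude that such a piece is a link exterior with few boundary tori you need an \emph{upper} bound on $\dim P$, and the rank hypothesis $\rank(\SFH(Y_T,\gamma_T))\leq 2^{n+1}$ alone only gives the useless bound $\dim P\leq 2^{n+1}-1$. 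The paper's proof is an induction on the number $n$ of closed components of $T$: the base cases $n=0,1$ are Propositions~\ref{prop:nocomponentlongsurface} and~\ref{Prop:atmostcomponentintheinterior}, and the inductive step (Proposition~\ref{prop:braid2ormore}) uses the spectral sequences of Lemma~\ref{lem:spectral} to the sub-tangles $T-t_i$ together with the grading combinatorics of Lemmas~\ref{lem:lowerrankbound}--\ref{lem:dimbound} to prove $\dim P(Y_T,\gamma_T)\leq n+1$ (Lemma~\ref{lem:polytopedim}), and to show this bound persists under the annulus decompositions. That is what forces the residual non-product piece to have at most $n+1$ torus boundary components, which is exactly what Theorem~\ref{thm:mailinknrankclass} needs. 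Your proposal has no substitute for this step, so the decomposition does not terminate in classifiable pieces.

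Two further corrections. First, your dictionary between pieces and cases is wrong: the trefoil exterior with slope-$2$ sutures (and the solid torus with slope-$2$ sutures) always yield case (2), a braid in an almost product sutured manifold (Lemma~\ref{lem:nocomponentstep2}); the clasp-braid and stabilized clasp-braid cases (3)--(4) arise only from the solid torus $T_4$ with four longitudinal sutures, and which of (2), (3), (4) occurs is determined by how many of $T_4$'s sutures are glued to the product piece (Proposition~\ref{lem:nocomponentstep1}), not by horizontal primality. One also needs Lemma~\ref{lem:suturedclaspext} to show that a sutured clasp exterior really comes from a clasp. Second, you conflate the tautness of $(Y_T,\gamma_T)$ with that of $(Y,\gamma)$: in case (4) the tangle exterior $(Y_T,\gamma_T)$ is still taut -- it is the ambient stabilized product sutured manifold $(Y,\gamma)$ that fails to be taut -- so your plan to derive case (4) from a compressing disk for $R(\gamma_T)$ does not get off the ground.
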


Here we follow notation introduced earlier in this paper, see Definition~\ref{sutured tangles}, for instance. A version of this Proposition in which applies in the reducible is given in Proposition~\ref{prop:braid2ormore}. We prove Proposition~\ref{thm:maintangle} by induction over the course of the rest of this Section.

\subsection{Computations and Geography Results}\label{subsec:geography}
 In this section we collect some results concerning the geography problem for link Floer homology. Some of these computations will be useful in subsequent subsections. More specifically, we compute the maximal non-trivial $A_K$ grading of $\widehat{\HFL}$ equipped with $A_{K'}$ gradings with $K'\neq K$ for some of the links in the statement of Theorem~\ref{thm:mainbraid}.

The following affine vector spaces will appear repeatedly:\begin{align}
    B_n(A_1,\dots A_n)=&\begin{cases}\F&\text{ if }A_i\in\{0,1\}\text{ for all }i.\\0&\text{otherwise.}
    \end{cases}
\end{align}

Recall that given a link $L$ with a component $K$, we let $(Y_T,\gamma_T)$ denote the sutured tangle exterior obtained by decomposing the exterior of $L$ along an implicit maximal Euler characteristic longitudinal surface. Here, unlike in other sections, we do not remove excess parallel sutures. We note that if we were to remove these excess parallel sutures, the sutured Floer homology would have fewer $\F^2$ summands.

Recall that $\SFH(Y,\gamma)$ admits a relative $H_1(Y)$-grading.

\begin{lemma}\label{lem:computefiberedbraideddecomposed}
      Suppose $L$ is an $n$ component link with a fibered component $K$ and that $L-K$ is braided with respect to $K$. As a relatively $H_1(Y_T)$ graded vector space we have that: 
      
      \begin{align}
  \SFH(Y_T,\gamma_T,\mathbf{x}) \cong&\begin{cases}\F&\text{ if }\mathbf{x}=\underset{1\leq i\leq n-1}\sum a_i[\mu_i],\text{ where }a_i\in\{0,1\}\text{ for all }i\\0&\text{otherwise.}
    \end{cases}
\end{align}

Here $\{\mu_i\}_{1\leq i\leq n-1}$ are meridians of the $n$ tangle components endowed with excess parallel sutures.
\end{lemma}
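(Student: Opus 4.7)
The idea is to identify $\SFH(Y_T,\gamma_T)$ with the top $A_K$-slice of $\widehat{\HFL}(L)$ via Juh\'asz's surface decomposition formula, then read off the rank from Martin's braid axis detection theorem and the cube-vertex grading structure by an inductive argument on the components of $L-K$.

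Since $K$ is fibered with fiber surface $\Sigma$ and $L-K$ is braided with respect to $K$, we may isotope $L-K$ so that it meets $\Sigma$ transversely in a set $\{p_j\}$. Decomposing $(S^3_L,\gamma_L)$ along $\Sigma$ produces $(Y_T,\gamma_T)$ with $Y_T\cong(\Sigma-\{p_j\})\times I$. Applying Proposition~\ref{Thm:Juhaszaffine} to this decomposition: the outer $\spin^c$-structures on $(S^3_L,\gamma_L)$ with respect to $\Sigma$ are precisely those attaining the maximal $A_K$-grading, and summing the isomorphism
\[\widehat{\HFL}(L,\mathfrak{s})\cong\bigoplus_{\mathfrak{s}'\in f_\Sigma^{-1}(\mathfrak{s})}\SFH(Y_T,\gamma_T,\mathfrak{s}')\]
over all such $\mathfrak{s}$ yields $\SFH(Y_T,\gamma_T)\cong\widehat{\HFL}(L)(\mathrm{top}\,A_K)$ as relatively $H_1$-graded vector spaces, with the $H_1(Y_T)$-grading mapping to the $H_1(S^3_L)$-grading via $F_\Sigma$. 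By Martin's braid axis detection theorem~\cite[Proposition 1]{martin2022khovanov}, the top $A_K$-slice of $\widehat{\HFL}(L)$ has rank exactly $2^{n-1}$, fixing the total rank of $\SFH(Y_T,\gamma_T)$.

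For the $H_1$-gradings I would argue by induction on $n$. The base case $n=1$ gives $L=K$ and a product sutured manifold $(Y_T,\gamma_T)$ with $\SFH=\F$ concentrated in a single grading. For the inductive step, pick a component $L_i$ of $L-K$ and use a product annulus as in Proposition~\ref{prop:allproductannuli} to isolate the pair of excess parallel meridional sutures on $\partial\nu(L_i)$; removing this pair yields a sutured manifold with one fewer excess-sutures component. By the inductive hypothesis, the resulting $\SFH$ has half the rank and is supported on the cube vertices $\sum_{j\ne i}a_j[\mu_j]$ in the relative $H_1$ grading. Restoring the excess pair then doubles the rank with a grading shift by $[\mu_i]$, as tracked through the affine map $F_S$ of Proposition~\ref{Thm:Juhaszaffine}. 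Iterating over all $n-1$ components yields the claimed cube support.

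The main obstacle is the inductive step — specifically verifying that an excess pair of parallel meridional sutures on $\partial\nu(L_i)$ contributes exactly a factor of $\F^2$ to $\SFH$ with a grading shift of $[\mu_i]$. An alternative route is to compute the Euler characteristic directly from the multivariable Alexander polynomial $\Delta_L$: the top $A_K$-coefficient for braided $L-K$ around fibered $K$ is, up to sign, a product of factors of the form $(1-t_i^{\ell_{K,L_i}})$, whose support is precisely the cube of vertices $\sum a_i [\mu_i]$. Since the total rank equals the number of cube vertices with nonzero Euler contribution, no cancellations can occur and each vertex must support exactly an $\F$-summand.
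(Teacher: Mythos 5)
There is a genuine gap, and it sits exactly where you flagged it. The content of this lemma is not the total rank (which, as you say, follows from Martin's theorem via Proposition~\ref{Thm:Juhaszaffine}) but the \emph{relative $H_1(Y_T)$-grading}, and your proposal never actually computes it. Your first step only shows that the $H_1(Y_T)$-grading on $\SFH(Y_T,\gamma_T)$ \emph{pushes forward} under $F_\Sigma$ to the Alexander multigrading on the top $A_K$-slice of $\widehat{\HFL}(L)$; it cannot recover the $H_1(Y_T)$-grading from that slice, because $F_\Sigma$ is far from injective. Concretely, a component $L_j$ of $L-K$ braided with winding number $k_j>1$ contributes $k_j$ tangle strands, and the meridians of all of these map to the single class $[\mu_{L_j}]\in H_1(S^3_L)$; the lemma asserts the support lies in the unit cube spanned by the meridians of the \emph{specific} strands carrying the excess parallel sutures, which is strictly finer information. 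Your inductive step then defers its only substantive claim --- that an excess pair of parallel meridional sutures contributes a tensor factor $\F\oplus\F[\mu_i]$ --- and that claim \emph{is} the lemma. The paper's proof establishes it directly: it exhibits an explicit sutured Heegaard diagram for $(Y_T,\gamma_T)$ (Figure~\ref{fiberedbraid}), in which each excess pair gives a small $\alpha$--$\beta$ pair with two intersection points whose $\epsilon$-difference is visibly $[\mu_i]$, and all other strands contribute nothing; the tensor decomposition and the gradings are then read off the diagram. Note also that the paper's logical order is the reverse of yours: Corollary~\ref{lem:computefiberedbraided} about the top slice of $\widehat{\HFL}(L)$ is \emph{deduced from} this lemma, so any argument running from $\widehat{\HFL}(L)$ back to $\SFH(Y_T,\gamma_T)$ must be careful not to assume what the lemma is for.

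Your fallback Euler-characteristic argument also contains an error. The top $t_K$-coefficient of the (suitably normalized) Euler characteristic is supported on the \emph{unit} cube, i.e.\ it is $\pm\prod_i(1-t_i)$ up to units, not $\prod_i\bigl(1-t_i^{\lk(K,L_i)}\bigr)$: each $[\mu_i]$ maps to a shift of exactly $1$ in $A_{L_i}$ regardless of how many times $L_i$ winds around $K$ (check this against the top $A_K$-slice of $\widehat{\HFL}$ of the $(2,4)$-torus link, where the linking number is $2$ but the top slice is supported on $A_{L_1}\in\{0,1\}$). With the corrected formula the no-cancellation argument does pin down the top slice of $\widehat{\HFL}(L)$ as an Alexander-graded vector space, but for the reason above it still does not prove the lemma as stated.
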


\begin{figure}[ht]
    \centering
    \includegraphics[width=6.4cm]{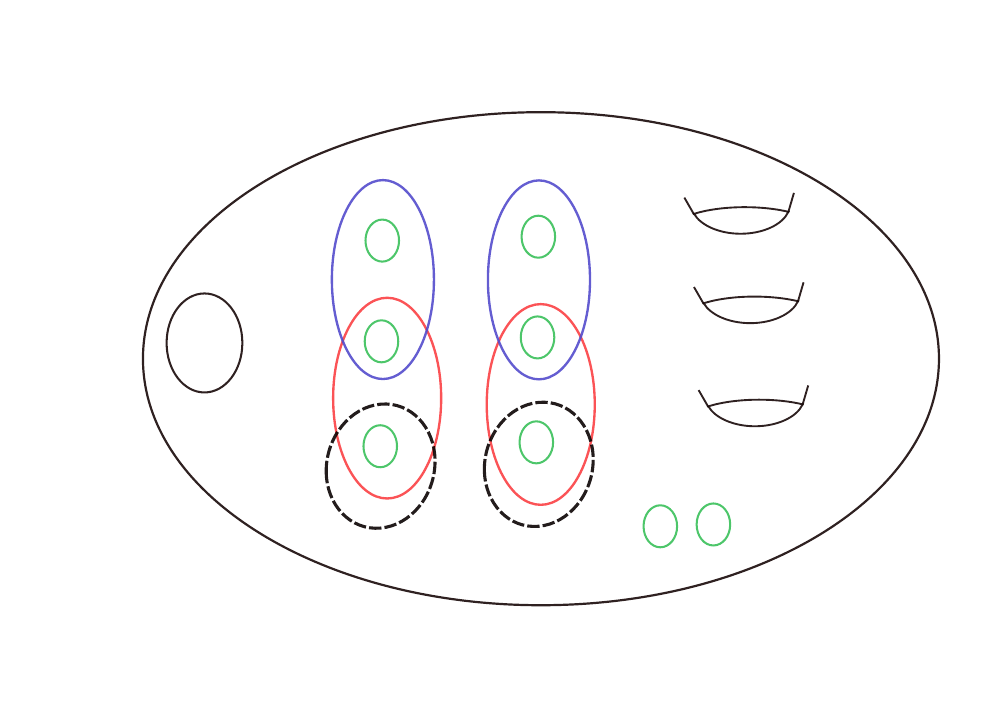}
    \caption{A sutured Heegaard diagram for a sutured braid exterior with some excess parallel sutures, as in the statement of Lemma~\ref{lem:computefiberedbraideddecomposed}. The puncture corresponds to the fibered knot $K$. The Heegaard surface can be thought of as a fiber surface for $K$, with neighborhoods of the components of $L$ removed. The green circles are boundary components corresponding to components of $T$, the red circles are the alpha curves and blue circles are the beta curves. The dotted curves are meridians $\mu_i$ of the relevant tangle components.}
    \label{fiberedbraid}
\end{figure}

\begin{proof}
  $(Y_T,\gamma_T)$ admits a sutured Heegaard diagram of the type shown in Figure~\ref{fiberedbraid}. Note that the each pair of excess parallel sutures contributes a rank two vector space in the tensor product. The relative $H_1(Y_T)$ grading can be computed from the diagram.
\end{proof}

\begin{corollary}\label{lem:computefiberedbraided}
    Suppose $L$ is an $n$ component link with a fibered component $K$ and that $L-K$ is braided with respect to $K$. Then, up to affine isomorphism, the the maximal non-trivial $A_K$ grading of $\widehat{\HFL}(L)$ is given by $B_{n-1}$.
\end{corollary}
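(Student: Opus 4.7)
The plan is to deduce this directly from Lemma~\ref{lem:computefiberedbraideddecomposed} by translating its relatively $H_1(Y_T)$-graded computation into the Alexander multi-grading on $\widehat{\HFL}(L)$.

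First I would observe that the maximum non-trivial $A_K$-slice of $\widehat{\HFL}(L)$, equipped with the remaining $n-1$ Alexander gradings $A_{K_i}$, is identified -- up to an affine shift of gradings -- with $\SFH(Y_T, \gamma_T)$ as it appears in Lemma~\ref{lem:computefiberedbraideddecomposed}. Indeed, decomposing the sutured link exterior $(S^3_L, \gamma_L)$ along a maximal Euler characteristic longitudinal surface $\Sigma$ for $K$ is a nice surface decomposition whose outer $\spin^c$-structures are precisely those realizing the maximum $A_K$-grading, and Proposition~\ref{Thm:Juhaszaffine} then supplies an affine bijection of the relevant $\spin^c$-structures identifying the corresponding sutured Floer homology summands.

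Next I would translate the relative $H_1(Y_T)$-grading of Lemma~\ref{lem:computefiberedbraideddecomposed} into the Alexander gradings. Recalling that $A_{K_i}(\mathfrak{s}) = \tfrac{1}{2} \langle c_1(\mathfrak{s}), [\Sigma_i] \rangle$ for a Seifert surface $\Sigma_i$ of $K_i$, and identifying $\spin^c$-structures on $(Y_T, \gamma_T)$ with $H^2(Y_T, \partial Y_T) \cong H_1(Y_T)$ via Poincar\'e-Lefschetz duality, the difference in $A_{K_i}$ between two $\spin^c$-structures differing by a class $\sum_j b_j [\mu_j] \in H_1(Y_T)$ equals $b_i$ (up to a universal factor). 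Hence the $[\mu_i]$-component of the $H_1(Y_T)$-grading coincides, up to an overall affine shift, with the Alexander grading $A_{K_i}$.

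Combining these two observations with Lemma~\ref{lem:computefiberedbraideddecomposed} -- which asserts that $\SFH(Y_T, \gamma_T)$ is rank one at each of the $2^{n-1}$ classes $\sum_i a_i [\mu_i]$ with $a_i \in \{0,1\}$, and vanishes elsewhere -- immediately yields that the maximum $A_K$-slice of $\widehat{\HFL}(L)$ realizes $B_{n-1}$ up to affine isomorphism. There is no substantial obstacle here; the corollary amounts to repackaging the computation of Lemma~\ref{lem:computefiberedbraideddecomposed} in terms of Alexander gradings, with the bookkeeping handled uniformly by Proposition~\ref{Thm:Juhaszaffine}.
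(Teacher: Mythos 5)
Your proposal is correct and follows essentially the same route as the paper: the paper's proof likewise applies Proposition~\ref{Thm:Juhaszaffine} to obtain the affine identification of $\SFH(Y_T,\gamma_T)$ with the maximal non-trivial $A_K$ summand of $\widehat{\HFL}(L)$, and then notes that $F_S$ carries each $[\mu_i]$ grading to the Alexander grading of the corresponding link component, so that Lemma~\ref{lem:computefiberedbraideddecomposed} gives $B_{n-1}$. Your additional justification of the $[\mu_i]\leftrightarrow A_{K_i}$ correspondence via the $c_1$ pairing is just a more explicit version of what the paper leaves implicit.
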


\begin{proof}
    Proposition~\ref{Thm:Juhaszaffine} gives an affine map from $\SFH(Y_T,\gamma_T)$ to the maximal non-trivial Alexander grading of $\widehat{\HFL}(L)$. The result follows from Lemma~\ref{lem:computefiberedbraideddecomposed}, noting that the map $F_S$ in Proposition~\ref{Thm:Juhaszaffine} sends the each $[\mu_i]$ grading to the Alexander grading of the corresponding link component. 
\end{proof}

We note in passing that this corollary could have been used to reduce the case analysis for the classification of links with Khovanov and knot Floer homology of low ranks given in~\cite{binns2022rank}.

We do not explicitly compute $\SFH(Y_T,\gamma_T)$ in the case that $K$ is nearly fibred. Rather we given the following Lemma:

\begin{proposition}\label{lem:computenearlyfibereddecomposed}
    Let $L$ be an $n$ component link with a nearly fibered component $K$ such that $L-K$ is braided with respect to $K$. Then $n-1\leq \dim(P(Y_T,\gamma_T))\leq n$ and ${\rank(\SFH(Y_T,\gamma_T))= 2^n}$. Moreover, up to affine isomorphism, the maximum non-trivial $A_K$ grading of $\widehat{\HFL}(L)$ is given by $B_{n-1}\otimes\F^2[0,0\dots 0]$.
\end{proposition}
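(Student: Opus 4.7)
The plan is to adapt the proof of Lemma~\ref{lem:computefiberedbraideddecomposed} (together with Corollary~\ref{lem:computefiberedbraided}) to the nearly fibered setting. I would first fix a Thurston-norm minimizing longitudinal surface $\Sigma$ for $K$ which meets $L-K$ transversely in the minimum number of points. Let $(Y_\Sigma,\gamma_\Sigma)$ denote the sutured manifold obtained by decomposing the sutured exterior of $K$ along $\Sigma$. By hypothesis $K$ is nearly fibered, so $(Y_\Sigma,\gamma_\Sigma)$ is an almost product sutured manifold with $\rank(\SFH(Y_\Sigma,\gamma_\Sigma))=2$, and the Baldwin-Sivek classification of almost product sutured manifolds embedded in $S^3$ constrains $\dim P(Y_\Sigma,\gamma_\Sigma) \in \{0,1\}$.

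Next I would identify $(Y_T,\gamma_T)$ with the sutured tangle exterior of the braid $T = L-K \subset (Y_\Sigma,\gamma_\Sigma)$ endowed with one pair of parallel oppositely oriented excess meridional sutures for each arc component of $T$. These excess sutures survive from the original meridional sutures on each $\partial\nu(L_i)$ in $(S^3_L,\gamma_L)$, which run parallel to (and hence do not intersect) the meridional cuts $\partial\Sigma\cap\partial\nu(L_i)$. A sutured Heegaard diagram for $(Y_T,\gamma_T)$ can then be drawn exactly as in Figure~\ref{fiberedbraid}, with the product base of Lemma~\ref{lem:computefiberedbraideddecomposed} replaced by a Heegaard diagram for the almost product $(Y_\Sigma,\gamma_\Sigma)$.

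Counting generators from this diagram, and using the standard observation that each pair of parallel excess meridional sutures multiplies $\SFH$ by a copy of the rank $2$ vector space $V$ whose two generators differ by the corresponding meridian in relative $H_1$-grading, I obtain
\[
\SFH(Y_T,\gamma_T) \;\cong\; \SFH(Y_\Sigma,\gamma_\Sigma) \otimes V^{\otimes (n-1)}
\]
as relatively $H_1$-graded vector spaces. This immediately yields $\rank(\SFH(Y_T,\gamma_T)) = 2 \cdot 2^{n-1} = 2^n$. For the polytope dimension, the lower bound $\dim P(Y_T,\gamma_T) \geq n-1$ comes from the $n-1$ linearly independent meridian directions contributed by the $V^{\otimes(n-1)}$ factor, while the upper bound $\dim P(Y_T,\gamma_T) \leq n$ follows from adding at most one extra direction from the polytope of $(Y_\Sigma,\gamma_\Sigma)$.

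Finally, I would apply Juh\'asz's surface decomposition formula (Proposition~\ref{Thm:Juhaszaffine}) to translate this computation into the maximum non-trivial $A_K$-grading of $\widehat{\HFL}(L)$: the affine map $f_S$ sends each $[\mu_i]$ to the Alexander grading $A_{L_i}$, so the $V^{\otimes(n-1)}$ factor produces precisely the $B_{n-1}$ structure on the Alexander gradings of the components of $L-K$, while the $\F^2$ factor from the almost product part sits at Alexander multi-grading $(0,0,\ldots,0)$. The main obstacle I anticipate is verifying that the two generators of $\SFH(Y_\Sigma,\gamma_\Sigma)$ genuinely lie in a common Alexander multi-grading for the components of $L-K$ --- equivalently, that the at-most-one-dimensional polytope of $(Y_\Sigma,\gamma_\Sigma)$ is orthogonal to every $[\mu_i]$ under $f_S$ --- which I expect to verify by running through each of the three pieces appearing in the Baldwin-Sivek classification and confirming that the generators of that piece's $\SFH$ do not differ by any meridian of a component of $L-K$.
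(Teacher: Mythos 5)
Your proposal is correct and follows essentially the same route as the paper: decompose the nearly fibered complement into a product piece and the rank-$2$ non-product piece, assemble the glued sutured Heegaard diagram with $n-1$ pairs of excess parallel sutures to get $\rank = 2\cdot 2^{n-1}$ and the polytope bounds $n-1\leq \dim P(Y_T,\gamma_T)\leq n$, and push the answer through Proposition~\ref{Thm:Juhaszaffine}. The only differences are cosmetic: the paper certifies the rank by decomposing along product annuli and invoking \cite[Lemma 8.9]{juhasz2008floer} rather than by a direct generator count from the glued diagram, and the orthogonality issue you flag at the end (that the $\F^2$ factor sits in a single Alexander multi-grading, i.e.\ that $F_S$ kills the extra polytope direction of the non-product piece) is simply asserted there, so your plan to verify it case-by-case against the Baldwin--Sivek pieces is, if anything, more careful than the printed argument.
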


Here $\F^2[0,0\dots,0]$ denotes the vector space $\F^2$ supported in $A_i$ gradings $0$.

\begin{proof}
Let $(Y_T,\gamma_T)$ be as in the statement of the lemma. Recall that $(Y,\gamma)$ decomposes into a product sutured manifold and an unknot or trefoil exterior with appropriate sutures. Let $(Y_J,\gamma_J)$ be the non-product component of the exterior of a minimal genus Seifert surface for $K$. Note that $\dim(P(Y_J,\gamma_J))\leq 1$ since $\rank(\SFH(Y_J,\gamma_J))= 2$. A sutured Heegaard diagram for $(Y_T,\gamma_T)$ can be obtained from a sutured Heegaard diagram, $\mathcal{H}_K$, for $(Y_J,\gamma_J)$ and a sutured Heegaard diagram, $\mathcal{H}_P$, for a product sutured manifold $(P,\rho)$ by gluing a subset of the boundary components together and modifying the sutured Heegaard diagram in the image of $\mathcal{H}_P$, so as to add excess parallel sutures. The effect of the first step is to decrease $\dim(P(Y_J,\gamma_J))$ by at most one, while the effect of the second step is to increase the dimension by at least $n-2$ and at most $n-1$.  It follows that  $n-1\leq \dim(P(Y_T,\gamma_T))\leq n$.

To see that $\rank(\SFH(Y_T,\gamma_T))=2^n$, observe that $(Y_T,\gamma_T)$ has $H_2(Y_T)\cong 0$ and can be decomposed along a collection of product annuli with at least one boundary component non-trivial in $R(\gamma_T)$ into a reduced sutured manifolds with sutured Floer homology of rank $2$ together with a product sutured manifold with $n-1$ pairs of excess parallel sutures. The desired rank equality follows from~\cite[Lemma 8.9]{juhasz2008floer}. Consider the associated Heegaard diagram, $\mathcal{H}$.

  To conclude, observe again that Proposition~\ref{Thm:Juhaszaffine} gives an affine map from $\SFH(Y_T,\gamma_T)$ to the maximal non-trivial Alexander grading of $\widehat{\HFL}(L)$. The result follows from considering the Heegaard diagram $\mathcal{H}$, noting that the map $F_S$ in Proposition~\ref{Thm:Juhaszaffine} sends (the Poincar\'e dual of) the homology classes of the excess parallel sutures to the Alexander grading corresponding to the $i$th component of $L$ and kills all other gradings. 
\end{proof}

\begin{lemma}\label{lem:nearlybraidedexterior}
     Let $L$ be an $n$ component link with a fibered component $K$ such that $L-K$ is a clasp-braid with respect to $K$.  As a relatively $H_1(Y_T)$ graded vector space $\SFH(Y_T,\gamma_T)$ is given by:
    
    \begin{align*}
  \SFH(Y_T,\gamma_T,\mathbf{x}) \cong&\begin{cases}\F&\text{ if }\mathbf{x}=\underset{1\leq i\leq n-1}\sum a_i[\mu_i],\text{ where }a_i\in\{0,1\}\text{ for all }i\neq 1\text{ and }a_1=\pm 1\\
  \F^2& \text{ if }\mathbf{x}=\underset{1\leq i\leq n-1}\sum a_i[\mu_i],\text{ where }a_i\in\{0,1\}\text{ for all }i\neq 1\text{ and }a_1=0 \\
  0&\text{otherwise.}
    \end{cases}
\end{align*}

\end{lemma}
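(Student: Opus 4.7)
The plan is to decompose $(Y_T,\gamma_T)$ using the essential product annulus provided by the clasp-braid structure, reducing the computation to a tensor product of two simpler sutured Floer homology computations.

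By the definition of a clasp-braid, $(Y,\gamma)$ decomposes along an essential product annulus $A$ into a product sutured manifold $(Y_1,\gamma_1)$ containing an $(n-2)$-strand braid (the arcs of $T$ coming from the components $K_2,\dots,K_{n-1}$ of $L-K$) and a product sutured manifold $(Y_2,\gamma_2)$ with disk base containing the clasp (whose two strands are arcs of $K_1$). Since $A\cap T=\emptyset$, $A$ remains an essential product annulus in $(Y_T,\gamma_T)$, and decomposing along it produces $(Y_{T_1},\gamma_{T_1})\sqcup(Y_{T_2},\gamma_{T_2})$. Choosing compatible sutured Heegaard diagrams for the two pieces and gluing along the common boundary arc gives a sutured Heegaard diagram for $(Y_T,\gamma_T)$ whose $\alpha$- and $\beta$-curves are exactly the disjoint union of those from the sub-diagrams, yielding an identification
\[
\SFH(Y_T,\gamma_T)\;\cong\;\SFH(Y_{T_1},\gamma_{T_1})\otimes\SFH(Y_{T_2},\gamma_{T_2})
\]
as relatively $H_1$-graded vector spaces, with the relative gradings on the right transported to $H_1(Y_T)$ via the natural pushforward maps.

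The first factor is computed by Lemma~\ref{lem:computefiberedbraideddecomposed}, applied to the sublink $L':=K\cup K_2\cup\cdots\cup K_{n-1}$, which is an $(n-1)$-component link with the same fibered component $K$ and with $L'-K$ braided with respect to $K$: the factor is $\F$ in each grading $\sum_{i=2}^{n-1}a_i[\mu_i]$ with $a_i\in\{0,1\}$, and zero elsewhere. For the second factor I will draw an explicit sutured Heegaard diagram of genus one for the clasp exterior, with two $\alpha$-curves and two $\beta$-curves, producing exactly four intersection points. Computing their relative $\epsilon$-classes as in Subsection~\ref{subsec:suturedFloerbackground} and pushing forward under $H_1(Y_{T_2})\to H_1(Y_T)$ (under which both meridians of the clasp strands map to the single class $[\mu_1]$ associated with the link component $K_1$), the four generators occur in gradings $+[\mu_1],0,0,-[\mu_1]$. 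The two generators in grading $0$ admit no Whitney disk between them in the genus-one diagram, so no differentials are possible, and
\[
\SFH(Y_{T_2},\gamma_{T_2})\;\cong\;\F[{+}[\mu_1]]\;\oplus\;\F^{2}[0]\;\oplus\;\F[{-}[\mu_1]].
\]
Taking the tensor product with the braid factor yields the stated formula.

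The main obstacle will be the clasp-piece computation: producing the sutured Heegaard diagram explicitly, verifying the count and relative gradings of the four generators, and justifying that both clasp-strand meridians are identified with the single class $[\mu_1]$ in $H_1(Y_T)$ after pushforward (reflecting that both clasp strands belong to the same component $K_1$ of $L$ once the braid side is attached).
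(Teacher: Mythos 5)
Your proposal is correct and takes essentially the same route as the paper: the paper's proof simply exhibits the glued sutured Heegaard diagram of Figure~\ref{fig:HDl5a1pairofpants}, in which the braid strands contribute rank-two tensor factors as in Lemma~\ref{lem:computefiberedbraideddecomposed} and the clasp piece contributes the $\F[\mu_1]\oplus\F^2[0]\oplus\F[-\mu_1]$ factor, and then reads off the relative $H_1(Y_T)$ grading from that diagram. This is precisely the computation you organize via the product-annulus decomposition followed by gluing the local diagrams, so the only remaining work is the explicit clasp-piece diagram and grading check you already flag.
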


Here representatives of the $\{\mu_i\}_{1\leq i\leq n-1}$ homology classes are as indicated in Figure~\ref{fig:HDl5a1pairofpants}.
 
\begin{center}

     \begin{figure}[h]
 \includegraphics[width=6.2cm]{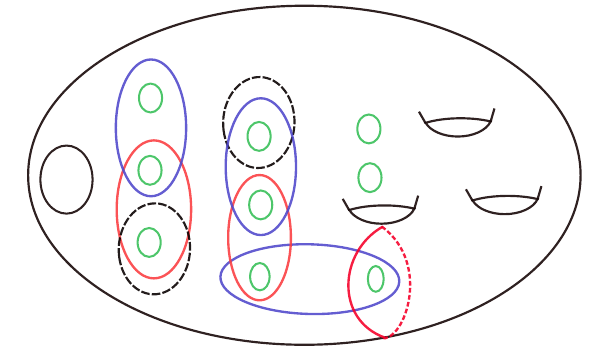}
    \caption{A sutured Heegaard diagram for the sutured manifold $(Y_T,\gamma_T)$ in the statement of Lemma~\ref{lem:nearlybraidedexterior}. We follow the conventions of Figure~\ref{fiberedbraid}, noting that we take $\mu_1$ to be the dotted black curve in the middle of the diagram, which corresponds to the component of $T$ with four parallel sutures.}\label{fig:HDl5a1pairofpants}
   \end{figure}
\end{center}

\begin{proof}

  We proceed as in Lemma~\ref{lem:computefiberedbraideddecomposed}. $(Y_T,\gamma_T)$ admits a sutured Heegaard diagram of the type shown in Figure~\ref{fiberedbraid}. Note that the each pair of excess parallel sutures contributes a rank 2 vector space in the tensor product, such as the ones to the left. The relative $H_1(Y_T)$ grading can be computed from the diagram.

\end{proof}

        \begin{corollary}\label{lem:computenearlyfibered}
       Let $L$ be an $n$ component link with a fibered component $K$ such that $L-K$ is clasp-braided with respect to $K$. Then up to affine isomorphism, the maximal $A_K$ grading for $\widehat{\HFL}(L)$ is given by $P_{n-1}\otimes E_{n-1}$.
         \end{corollary}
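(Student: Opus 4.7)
The plan is to proceed exactly in parallel with the proof of Corollary~\ref{lem:computefiberedbraided}, using Proposition~\ref{Thm:Juhaszaffine} as the bridge between the sutured computation of Lemma~\ref{lem:nearlybraidedexterior} and the top $A_K$ layer of $\widehat{\HFL}(L)$. First I would fix a maximal-Euler-characteristic longitudinal surface $\Sigma$ for the fibered component $K$ in the link exterior $X(L)$; since $K$ is fibered and $L-K$ is a clasp-braid with respect to $K$, we may take $\Sigma$ to be (the image of) a fiber surface for $K$, and $(Y_T,\gamma_T)$ is precisely the sutured manifold obtained from $(Y_L,\gamma_L)$ by decomposing along $\Sigma$.

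Proposition~\ref{Thm:Juhaszaffine} then supplies an affine map $f_\Sigma:\spin^c(Y_T,\gamma_T)\to\spin^c(Y_L,\gamma_L)$ which surjects onto the set of outer $\spin^c$-structures relative to $\Sigma$; these outer $\spin^c$-structures are exactly the ones sitting in the maximal non-trivial $A_K$ grading of $\widehat{\HFL}(L)$, and in each such grading $f_\Sigma$ induces a vector space isomorphism on sutured Floer homology. The associated map $F_\Sigma$ on gradings sends the homology class $[\mu_i]$ of the meridian of the $i$th tangle component carrying excess parallel sutures to the Alexander grading $A_{K_i}$ of the corresponding link component in $\widehat{\HFL}(L)$, and kills the other generators of $H_1(Y_T)/\langle\alpha,\beta\rangle$.

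Transporting the description of $\SFH(Y_T,\gamma_T)$ from Lemma~\ref{lem:nearlybraidedexterior} through this affine isomorphism then immediately yields the advertised description of the maximal $A_K$ grading of $\widehat{\HFL}(L)$: in the $A_1$ direction (the ``clasp direction'') we recover an $\F\oplus\F^2\oplus\F$ pattern supported in $A_1\in\{-1,0,1\}$, while in the remaining $A_i$ directions (the ``braid directions'') we recover a $B_{n-2}$-type pattern supported in $A_i\in\{0,1\}$. Re-indexing and collecting, this is precisely the claimed $P_{n-1}\otimes E_{n-1}$ decomposition up to affine isomorphism.

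The main technical point, and the only place where care is needed, is to verify the grading correspondence carefully: one must check that the tangle component that inherits the extra pair of parallel meridional sutures from the clasp piece of the decomposition indeed corresponds to the special component $K_1$ carrying the ``clasp direction'' $A_1$, and that the remaining tangle components, each carrying a single pair of excess parallel meridional sutures from the braided portion, correspond to the other components $K_2,\dots,K_{n-1}$ under $F_\Sigma$. This is a bookkeeping check that follows from tracking which meridians survive in the sutured Heegaard diagram of Figure~\ref{fig:HDl5a1pairofpants} and identifying them with the link meridians via Remark~\ref{rem:tanglerecovery}.
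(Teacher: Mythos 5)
Your proposal is correct and follows the paper's own argument essentially verbatim: apply Proposition~\ref{Thm:Juhaszaffine} to obtain the affine map onto the top $A_K$ grading, transport the relatively graded computation of Lemma~\ref{lem:nearlybraidedexterior}, and identify the images of the meridians of the excess-sutured tangle components with the Alexander gradings of the corresponding link components. The extra bookkeeping you flag about matching the clasp component to the $A_1$ direction is the right thing to check and is implicit in the paper's one-line proof.
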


         \begin{proof}
   Again we note that Proposition~\ref{Thm:Juhaszaffine} gives an affine map from $\SFH(Y_T,\gamma_T)$ to the maximal non-trivial Alexander grading of $\widehat{\HFL}(L)$. The result follows from Lemma~\ref{lem:nearlybraidedexterior}, noting that the map $F_S$ in Proposition~\ref{Thm:Juhaszaffine} sends (the Poincar\'e dual of) the homology classes of the excess parallel sutures to the Alexander grading corresponding to the $i$th component of $L$.
\end{proof}

Figure~\ref{fig:gluinghandle} shows an example of a sutured manifold as described in the Lemma.

Now set:

\begin{align}
    P_n(A_1,\dots A_n)=&\begin{cases}\F&\text{ if }A_1=\pm 1\text{ and }A_i=0\text{ for all }i>1.\\\F^2&\text{ if }A_i=0\text{ for all }i\\0&\text{otherwise.}
    \end{cases}
\end{align}

and finally:

\begin{align}
    E_n(A_1,\dots A_n)=&\begin{cases}\F&\text{if }A_1=0\text{ and }A_i\in\{0,1\}\text{ for all }i\neq 1.\\0&\text{otherwise.}
    \end{cases}
\end{align}
 
\begin{lemma}\label{lem:homessentialdecomposed}
     Let $L$ be an $n$ component link. Suppose $K$ is fibered component of $L$ and $K'$ is a component of $L$ such that $L-(K\cup K')$ is braided with respect to $K$ and $K'$ is isotopic to a curve on a Seifert surface for $K$. As a relatively $H_1(Y_T)$ graded vector space we have that:
     
     \begin{align}
\SFH(Y_T,\gamma_T,\mathbf{x})\cong&\begin{cases}
            \F&\text{ if } \mathbf{x}=\underset{1\leq i\leq n-1}{\sum}a_i[\gamma_i],  a_i\in\{0,1\}\text{ for all }i\\
            0&\text{ otherwise.}
        \end{cases}
    \end{align}

\end{lemma}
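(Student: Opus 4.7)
The plan is to construct an explicit sutured Heegaard diagram for $(Y_T,\gamma_T)$ and read off both its rank and its relative $H_1(Y_T)$-gradings, in direct analogy with Lemma~\ref{lem:computefiberedbraideddecomposed} and Lemma~\ref{lem:nearlybraidedexterior}.

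First, I would set up a concrete model. Since $K$ is fibered with fiber surface $\Sigma$, the decomposed manifold $(Y_T,\gamma_T)$ is $\Sigma\times[-1,1]$ minus (a neighborhood of) a tangle $T$. Because $L-K-K'$ is braided with respect to $K$, its image may be isotoped to $n-2$ vertical braid strands $\{p_i\}\times[-1,1]$, and because $K'$ is isotopic to a simple closed curve on a fiber, I may further arrange that $K'$ lies on $\Sigma\times\{0\}$ and is disjoint from the braid strands.

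Next, I would build the sutured Heegaard diagram $(\Sigma_H,\alpha,\beta)$. Start from the fiber $\Sigma$; remove a small open disk around each braid puncture $p_i$ to obtain $n-2$ new boundary circles, and attach a $1$-handle in a neighborhood of $K'$ whose two boundary circles encode the two parallel oppositely oriented meridional sutures on $\partial\nu(K')$. Around each braid puncture place one $\alpha$ curve and one $\beta$ curve as small parallel pushoffs of the puncture boundary, meeting in exactly two points, exactly as in Figure~\ref{fiberedbraid}. Around the $K'$ handle place one $\alpha$ curve as the meridian of the handle (bounding a compressing disk in the $\alpha$-handlebody) and one $\beta$ curve as a longitude running once along $K'$, again meeting in exactly two points; away from these patches the diagram carries no further $\alpha$ or $\beta$ curves, encoding the product sutured manifold structure.

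Counting generators: each of the $n-2$ braid patches and the $K'$ patch contributes an independent local factor of $\F^2$, yielding $2^{n-1}$ generators in total. Within each patch the two intersection points differ in relative $H_1(Y_T)$-grading by the meridional class $[\gamma_i]$ of the corresponding tangle component, so the generators are indexed by $(a_1,\dots,a_{n-1})\in\{0,1\}^{n-1}$ with grading $\sum_i a_i[\gamma_i]$. Because the local $\alpha$-$\beta$ pairs bound only small disjoint bigons contained in their own patches, the chain complex splits as a tensor product of local chain complexes and admits no differentials, giving the stated formula.

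The main obstacle will be verifying that the $\alpha$-$\beta$ pair at the $K'$ handle can in fact be chosen to meet in exactly two points and that no bigons link the $K'$ patch to the braid patches or produce additional differentials. This reduces to a local computation in a thickened-torus neighborhood of $K'$ carrying two parallel meridional sutures, whose sutured Floer homology is straightforwardly seen to be $\F^2$ and accounts for the extra factor beyond the diagram used in Lemma~\ref{lem:computefiberedbraideddecomposed}.
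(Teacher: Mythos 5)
Your overall strategy---build an explicit sutured Heegaard diagram from the fiber surface and read off the rank and the relative $H_1(Y_T)$-grading patch by patch---is exactly the paper's approach (its proof consists of exhibiting the diagram of Figure~\ref{fig:homessential} and declaring the result). Your treatment of the $n-2$ braid strands, each contributing an excess pair of parallel meridional sutures and hence a local $\F^2$ split by the corresponding meridian, matches Lemma~\ref{lem:computefiberedbraideddecomposed}. The gap is in your treatment of the closed component $K'$.

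You assert that the two generators of the local $\alpha$--$\beta$ pair at $K'$ differ by the \emph{meridional} class of $K'$. This contradicts the statement being proven: by the caption of Figure~\ref{fig:homessential}, $[\gamma_1]$ is the class of the \emph{surface-framed longitude} of $K'$, and the difference is not cosmetic. In your own diagram the $\beta$ curve at the $K'$ patch ``runs once along $K'$'', so the loop $\epsilon(x,y)$ joining the two intersection points (half an $\alpha$ arc, half a $\beta$ arc) traverses $K'$ once; its image in $H_1(\Sigma_H)/\langle\alpha,\beta\rangle\cong H_1(Y_T)$ therefore carries a nontrivial longitudinal component and cannot be $[\mu_{K'}]$. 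Relatedly, the local piece around $K'$ is not an ``excess pair of parallel sutures'': it is $T^2\times[0,1]$ with two meridional sutures on $\partial\nu(K')$ and two surface-framed longitudes on the other boundary torus---a Hopf-link-exterior type piece whose sutured Floer homology is spread in the longitude direction (indeed it has rank four, spanned by $0$, $[\mu_{K'}]$, $[\gamma_1]$, $[\mu_{K'}]+[\gamma_1]$, which is what makes the total rank $2^n$ of $B_{n-1}[\mathbf{A}_{F([\gamma_1])}]\oplus B_{n-1}$ in Corollary~\ref{cor:HFLhomessential} come out right). With your meridional grading the answer would be indistinguishable from the braided case of Lemma~\ref{lem:computefiberedbraideddecomposed}, and the downstream applications---where $F_S([\gamma_1])$ records the surface framing (Thurston--Bennequin number) of $K'$ and the linking numbers $\lk(K',L_i)$---would fail; e.g.\ the top $A_{3_1}$ summand of $\widehat{\HFL}(L7n2)$ has ranks $1,2,1$ in consecutive $A_{K'}$ gradings, not the $1,1$ your version predicts. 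Identifying the $K'$ grading class correctly is the actual content of the lemma, and it is precisely the step your argument gets wrong.
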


Here $\gamma_i$ are the homology classes of the curves shown in Figure~\ref{fig:homessential}.

\begin{center}

    \begin{figure}[h]
 \includegraphics[width=6.2cm]{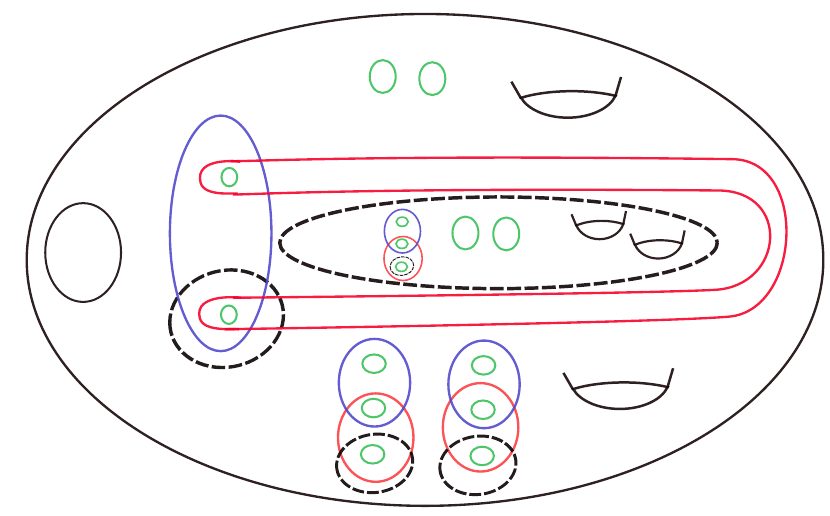}
    \caption{A sutured Heegaard diagram for the sutured manifold $(Y_T,\gamma_T)$ in the statement of Lemma~\ref{lem:nearlybraidedexterior}. We follow the conventions of Figure~\ref{fiberedbraid}, and let $[\gamma_1]$ be the homology class of a surface framed longitude of $K'$ and $\gamma_i$ be isotopic to meridians of the boundary components of the Heegaard diagram corresponding to components of $L-(K\cup K')$ with excess parallel sutures.}
\label{fig:homessential}
\end{figure}
\end{center}

\begin{proof}
    $(Y_T,\gamma_T)$ has a sutured Heegaard diagram as shown in Figure~\ref{fig:homessential}. The result follows.

\end{proof}

         \begin{corollary}
         \label{cor:HFLhomessential}
     
             Let $L$ be an $n$ component link. Suppose $K$ is fibered component of $L$ and $K'$ is a component of $L$ such that $L-(K\cup K')$ is braided with respect to $K$ and $K'$ is isotopic to a curve on a Seifert surface for $K$. Up to affine isomorphism the maximum non-trivial $A_K$ grading of $\widehat{\HFL}(L)$ is given by 
             \begin{align}
B_{n-1}[\mathbf{A}_{F([\gamma_1])}]\oplus B_{n-1}.\end{align}

        Here $[\mathbf{A}_{F([\gamma_1)]}]$ indicates a shift by the image of the homology class of the image of $\gamma_1$ written in terms of the homology classes of the  meridians $\mu_i$ of the components of $L-K$. 
        
         \end{corollary}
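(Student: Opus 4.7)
The plan is to follow the template used for Corollary~\ref{lem:computefiberedbraided} and Corollary~\ref{lem:computenearlyfibered}: feed the relatively $H_1(Y_T)$-graded computation of $\SFH(Y_T,\gamma_T)$ from Lemma~\ref{lem:homessentialdecomposed} into the affine surface-decomposition map $f_S$ of Proposition~\ref{Thm:Juhaszaffine}, where $S$ is a maximal Euler characteristic longitudinal surface for $K$. Since $K$ is fibered, the outer $\spin^c$ structures $O_S$ correspond precisely to the maximal non-trivial $A_K$ hyperplane of $\widehat{\HFL}(L)$, and $f_S$ restricted to this hyperplane is an affine bijection onto the support of $\SFH(Y_T,\gamma_T)$. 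Thus computing the $A_K$-max slice of $\widehat{\HFL}(L)$ as a multi-graded vector space reduces to pushing the $H_1(Y_T)$-grading of Lemma~\ref{lem:homessentialdecomposed} forward under the map $F_S$ identified in the second bullet of Proposition~\ref{Thm:Juhaszaffine}.

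Next, I would identify $F_S$ on the generators $[\gamma_i]$. For $i \geq 2$, the $\gamma_i$ are meridians of the tangle components of $T$ that correspond to components of $L-(K\cup K')$ with excess parallel sutures, so $F_S([\gamma_i])$ is (Poincar\'e dual to) the Alexander grading $\mathbf{A}_i$ of the corresponding component of $L$, exactly as in Corollary~\ref{lem:computefiberedbraided}. The remaining generator $[\gamma_1]$ is the homology class of a surface-framed longitude of $K'$ on the fiber surface; its image $F_S([\gamma_1])$ can be expressed in terms of the meridian classes of the other components of $L-K$ via the linking numbers of $K'$ with those components, and this image is what is denoted $\mathbf{A}_{F([\gamma_1])}$ in the statement.

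Finally, I would split the support described by Lemma~\ref{lem:homessentialdecomposed} according to the value of $a_1 \in \{0,1\}$. The $a_1 = 0$ stratum contributes an $\F$ summand for each tuple $(a_2,\dots,a_{n-1}) \in \{0,1\}^{n-2}$ placed in the corresponding Alexander multi-grading $\sum_{i\geq 2} a_i \mathbf{A}_i$, which is precisely a copy of $B_{n-1}$ (using the convention in the paper where the remaining component $K$ is excluded from the $B_{n-1}$ count). The $a_1 = 1$ stratum contributes the same pattern but translated by $F_S([\gamma_1])$, giving $B_{n-1}[\mathbf{A}_{F([\gamma_1])}]$. Summing yields
\[
B_{n-1}[\mathbf{A}_{F([\gamma_1])}] \oplus B_{n-1},
\]
as claimed. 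The only non-routine step is tracking $F_S([\gamma_1])$ correctly; this amounts to checking that capping $\gamma_1$ off along the longitudinal surface in $X(L)$ expresses it as a signed sum of meridians of the components that $K'$ links, which is the expected main obstacle but is a direct linking-number computation.
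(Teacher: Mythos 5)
Your overall route is the same as the paper's: push the relatively graded computation of Lemma~\ref{lem:homessentialdecomposed} through the affine map of Proposition~\ref{Thm:Juhaszaffine} and identify $F_S$ on the classes $[\gamma_i]$. But there is a genuine bookkeeping gap in the last step. The two strata $a_1=0$ and $a_1=1$ that you extract are each indexed by $(a_2,\dots,a_{n-1})\in\{0,1\}^{n-2}$ and are each \emph{constant} in the direction of the meridian of $K'$; so what you have actually produced is two translated copies of $B_{n-2}$, of total rank $2^{n-1}$, whereas $B_{n-1}[\mathbf{A}_{F([\gamma_1])}]\oplus B_{n-1}$ has rank $2^{n}$ and each summand spreads over $\{0,1\}$ in the $A_{K'}$ direction as well. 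Your parenthetical (``the remaining component $K$ is excluded from the $B_{n-1}$ count'') does not repair this: in Corollary~\ref{lem:computefiberedbraided} the box $B_{n-1}$ already excludes $K$ and runs over all $n-1$ components of $L-K$, including (here) $K'$. A concrete check: for $L7n2$ viewed as a trefoil $K$ together with an unknot $K'$ on its fiber surface ($n=2$), the top $A_K$ row of $\widehat{\HFL}$ is $\F\oplus\F^2\oplus\F$ spread over three consecutive $A_{K'}$ values, i.e.\ two overlapping rank-$2$ boxes $B_1$ --- total rank $4=2^n$, not $2^{n-1}=2$.

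The missing ingredient is the class $[\mu_{K'}]$ of the meridian of the closed tangle component $K'$, which carries a pair of meridional sutures in $(Y_T,\gamma_T)$ and contributes a rank-two spread in the $[\mu_{K'}]$ grading that does not appear among the classes you (and, as literally stated, Lemma~\ref{lem:homessentialdecomposed}) sum over. Relatedly, $F_S([\gamma_1])$ has a nonzero component in the $[\mu_{K'}]$ direction itself --- the surface framing (Thurston--Bennequin number) of $K'$ --- not only components along the meridians of the components that $K'$ links, so your description of the shift is also incomplete. To close the gap you need to record the $[\mu_{K'}]$ grading in the sutured Heegaard diagram of Figure~\ref{fig:homessential}, after which each of your strata becomes a genuine $(n-1)$-dimensional box and the claimed formula follows by applying $F_S$ as you describe.
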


Observe that the component of the shift $[\mathbf{A}_{F[\gamma_1]}]$ in the $[\mu_{K'}]$ grading is exactly the Thurston-Bennequin number of $K'$ viewed as a Legendrian knot. If $L_i$ is a component of $L\setminus(K\cup K')$ the component of the shift $[\mathbf{A}_{F[\gamma_1]}]$ in the $[\mu_{L_i}]$ direction is given by $[\lk(K', L_i)]$.

    \begin{proof}
  Again Proposition~\ref{Thm:Juhaszaffine} gives an affine map from $\SFH(Y_T,\gamma_T)$ to the maximal non-trivial Alexander grading of $\widehat{\HFL}(L)$. The result follows from Lemma~\ref{lem:homessentialdecomposed}, noting that the map $F_S$ in Proposition~\ref{Thm:Juhaszaffine} sends (the Poincar\'e dual of) the homology classes of the excess parallel sutures to the Alexander grading corresponding to the $i$th component of $L$ and send $[\gamma_1]$ to an appropriate sum of the homology classes of the meridians of the components of $L-K$.
\end{proof}

\begin{lemma}\label{lem:stabilizableclaspcomp}
    Suppose $L$ is an $n$-component link with a fibered component $K$ and that $L-K$ is a stabilized clasp-braid closure. As a relatively  $H_1(Y_T)$ graded vector space we have that:
  
    \begin{align*}
  \SFH(Y_T,\gamma_T,\mathbf{x}) \cong&\begin{cases}\F&\text{ if }\mathbf{x}=\underset{1\leq i\leq n-1}\sum a_i[\mu_i],\text{ where }a_i\in\{0,1\}\text{ for all }i\neq 1\text{ and }a_1=\pm 1\\
  \F^2& \text{ if }\mathbf{x}=\underset{1\leq i\leq n-1}\sum a_i[\mu_i],\text{ where }a_i\in\{0,1\}\text{ for all }i\neq 1\text{ and }a_1=0 \\
  0&\text{otherwise.}
    \end{cases}
    \end{align*}

    Here $\mu_i$ is the dotted curve shown in Figure~\ref{fig:HDfor3gluing}.
\end{lemma}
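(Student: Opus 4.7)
My plan is to mimic the strategy used in the proofs of Lemmas~\ref{lem:computefiberedbraideddecomposed}, \ref{lem:nearlybraidedexterior}, and \ref{lem:homessentialdecomposed}: produce a concrete sutured Heegaard diagram for $(Y_T,\gamma_T)$, enumerate its generators, and read off the relative $H_1(Y_T)$-gradings directly from the picture. The hypothesis is that $L-K$ is a stabilized clasp-braid closure with respect to the fibered component $K$, so after decomposing the exterior of $L$ along a fiber surface $\Sigma$ for $K$, we land in a stabilized product sutured manifold containing a stabilized clasp-braid tangle, exactly as in the setup of Figure~\ref{fig:3gluing} and Definition~\ref{case:casestabilized} of Proposition~\ref{thm:maintangle}.

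My first step is to assemble a Heegaard diagram (this is the diagram referenced as Figure~\ref{fig:HDfor3gluing}) as follows. Start with the fiber surface $\Sigma$ of $K$, cut out neighborhoods of the braid strands of $T$ and of the clasp piece, and incorporate the stabilization by attaching a $1$-handle with core/co-core identified as prescribed in the definition of a stabilized clasp-braid. The braid strands endowed with excess parallel sutures each contribute a standard pair of $\alpha,\beta$-curves meeting in two points, yielding a tensor factor supported only in the meridional homology classes $[\mu_i]$ with coefficient $0$ or $1$. The clasp piece contributes, exactly as in the diagram of Figure~\ref{fig:HDl5a1pairofpants} analyzed in Lemma~\ref{lem:nearlybraidedexterior}, an $\F^2$ in grading $0$ coming from the two intersection points on the suture-less meridian and an $\F\oplus\F$ in gradings $\pm[\mu_1]$ coming from the two intersections pushed off the clasp. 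The crucial computation is that the $1$-handle stabilization contributes an additional $\alpha,\beta$-pair intersecting in a single point, which adds a trivial tensor factor and does not alter the rank, the $H_1$-grading pattern, or the polytope dimension.

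After producing the diagram, the rest of the argument is bookkeeping. I read off generators as $n$-tuples of intersection points in $\T_\alpha\cap\T_\beta$, and compute the relative $\spin^c$-difference $\epsilon(\mathbf{x},\mathbf{y})\in H_1(\Sigma)/\langle\alpha,\beta\rangle \cong H_1(Y_T)$ using the recipe reviewed in Subsection~\ref{subsec:suturedFloerbackground}: pick oriented arcs along $\alpha$- and $\beta$-curves interpolating between $\mathbf{x}$ and $\mathbf{y}$ and take a signed count of intersections with the chosen symplectic basis. The meridional pieces force each $a_i\in\{0,1\}$ for $i\neq 1$, and the clasp analysis produces the $a_1\in\{-1,0,+1\}$ behavior with an $\F^2$ at $a_1=0$, giving exactly the stated formula.

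The main obstacle I anticipate is verifying that the $1$-handle stabilization is indeed ``invisible" to the sutured Floer invariant in the $H_1(Y_T)$-graded sense, i.e., that the core/co-core identification from the definition of stabilized clasp-braid manifests as a pair of curves meeting in a single transverse intersection that contributes trivially to all relative $H_1$ classes. This must be checked carefully because the stabilized product sutured manifold is not taut (as noted right after the definition in Section~\ref{sec:almostbraids}), so one cannot appeal to Thurston-norm detection or to Juh\'asz's horizontal decomposition directly; the computation must be done by hand on the diagram. Once this is established, the remainder of the proof is a direct transcription of the analysis in Lemma~\ref{lem:nearlybraidedexterior}, since the diagrams agree outside of the stabilization region.
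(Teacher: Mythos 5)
Your proposal follows exactly the paper's route: the paper's proof consists of observing that $(Y_T,\gamma_T)$ admits the sutured Heegaard diagram of Figure~\ref{fig:HDfor3gluing} and then reading off generators and relative $H_1(Y_T)$-gradings ``as in the previous Lemmas,'' which is precisely the construction-plus-bookkeeping you describe. Your additional care about the stabilizing $1$-handle contributing a single transverse intersection point (and hence a trivial tensor factor) is a reasonable fleshing-out of a step the paper leaves implicit, not a departure from its argument.
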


\begin{center}

     \begin{figure}[h]
 \includegraphics[width=6.2cm]{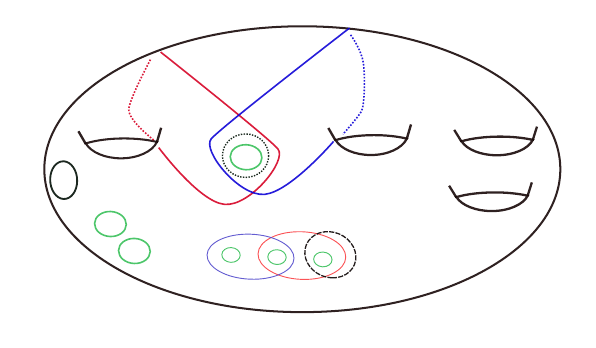}   \caption{A Heegaard diagram for the sutured manifold pictured in Figure~\ref{stabilizable cusp}. }\label{fig:HDfor3gluing}
   \end{figure}
\end{center}

\begin{proof}
Observe that $(Y_T,\gamma_T)$ has a sutured Heegaard diagram as shown in Figure~\ref{fig:HDfor3gluing}. The result follows as in the previous Lemmas.

\end{proof}

\begin{corollary}

     Suppose that $L$ is an $n$ component link with a fibered component $K$ and that $L-K$ is stabilized clasp-braid closure. Then up to affine isomorphism the link Floer homology of $L$ in the maximal non-trivial $A_K$ grading is given by:

\begin{align*}
  \begin{cases}
      \F & \text{ if }A_1=1\text{ and } A_i\in\{0,1\} \text{ for }i\neq 1.\\
      \F^2 & \text{ if }A_1=0\text{ and } A_i\in\{0,1\} \text{ for }i\neq 1.\\
      \F & \text{ if }A_1=-1\text{ and } A_i\in\{0,1\} \text{ for }i\neq 1.\\
      0&\text{ otherwise.}
  \end{cases}
\end{align*}

\end{corollary}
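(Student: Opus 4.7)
The plan is to follow the pattern established by the preceding corollaries (in particular Corollary~\ref{lem:computefiberedbraided}, Corollary~\ref{lem:computenearlyfibered}, and Corollary~\ref{cor:HFLhomessential}), namely to combine the graded sutured Floer homology computation just proved in Lemma~\ref{lem:stabilizableclaspcomp} with Juh\'asz's surface decomposition formula, Proposition~\ref{Thm:Juhaszaffine}. This is a routine translation; the only thing to check carefully is how the $H_1(Y_T)$ grading pushes forward to the Alexander multi-grading on $\widehat{\HFL}(L)$.

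First, I would recall the setup: $(Y_T,\gamma_T)$ is obtained by decomposing the sutured exterior of $L$ along a maximal Euler characteristic longitudinal surface $\Sigma$ for $K$, and since $K$ is fibered this surface is the fiber (together with the extra data of the clasp, stabilizing handles and excess parallel sutures coming from the remaining components). Applied to this decomposition, Proposition~\ref{Thm:Juhaszaffine} produces an affine map $f_S\colon \spin^c(Y_T,\gamma_T)\to \spin^c(X(L),\gamma_L)$ whose image consists exactly of the outer $\spin^c$ structures with respect to $\Sigma$, i.e. those sitting in the maximal non-trivial $A_K$ grading, and which induces an isomorphism of sutured Floer homology compatible with the affine identification of grading sets through $F_S=\PD\circ e_*\circ \PD^{-1}$.

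Next, I would identify the images of the relevant homology classes. The excess parallel sutures appearing in Figure~\ref{fig:HDfor3gluing} correspond (under $F_S$) to meridians of the components of $L-K$, so the $[\mu_i]$-grading on $\SFH(Y_T,\gamma_T)$ described in Lemma~\ref{lem:stabilizableclaspcomp} pushes forward to the Alexander grading $A_{K_i}$ of the $i$-th component of $L-K$. In particular the distinguished meridian $\mu_1$ (the one on which the grading ranges over $\{-1,0,+1\}$, coming from the clasp/stabilization piece) maps to the Alexander grading of the component carrying the clasp. Under this identification the graded vector space described in Lemma~\ref{lem:stabilizableclaspcomp} translates directly into
\[
  \begin{cases}
      \F & \text{if } A_1=1 \text{ and } A_i\in\{0,1\} \text{ for } i\neq 1,\\
      \F^2 & \text{if } A_1=0 \text{ and } A_i\in\{0,1\} \text{ for } i\neq 1,\\
      \F & \text{if } A_1=-1 \text{ and } A_i\in\{0,1\} \text{ for } i\neq 1,\\
      0 & \text{otherwise,}
  \end{cases}
\]
in the maximal non-trivial $A_K$ grading, as desired.

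The main (but mild) obstacle is verifying that $F_S$ really does send $[\mu_i]$ to the Alexander grading $A_{K_i}$ on the nose: one must check that the components of $L-K$ inherit the correct meridional classes after the surface decomposition, exactly as was done in the preceding corollaries. This is handled by inspecting the Heegaard diagram in Figure~\ref{fig:HDfor3gluing} and tracing the images of the dotted curves under the inclusion $e\colon Y_T\hookrightarrow X(L)$; once this is confirmed, the statement of the corollary is immediate from Lemma~\ref{lem:stabilizableclaspcomp} and Proposition~\ref{Thm:Juhaszaffine}. No new computation beyond what has already appeared in this subsection is required.
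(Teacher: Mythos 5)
Your proposal is correct and matches the paper's argument exactly: the paper's proof is the one-line observation that one applies Proposition~\ref{Thm:Juhaszaffine} to the graded computation in Lemma~\ref{lem:stabilizableclaspcomp}, with the identification of the $[\mu_i]$ gradings with the Alexander gradings of the components of $L-K$ handled just as in the preceding corollaries. Your more detailed tracing of $F_S$ is simply an expansion of what the paper leaves implicit.
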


\begin{proof}
Once again we need only apply Proposition~\ref{Thm:Juhaszaffine} and Lemma~\ref{lem:stabilizableclaspcomp}.
\end{proof}

\color{black}

\subsection{The first base case of Theorem~\ref{thm:maintangle}}
We first fix some notation. Let $L$ be a link in $S^3$ with a component $K$. Let $(Y,\gamma)$ be a sutured manifold obtained by decomposing $(S^3_K,\mu_K)$ along a Seifert surface which may or may not be genus minimizing. Let $T$ denote the image of $L-K$ in $(Y,\gamma)$. Let $(Y_T,\gamma_T)$ be the sutured tangle exterior of $T$, as in Definition \ref{sutured tangles}. In this subsection we prove Theorem~\ref{thm:maintangle} in the case that $T$ has no closed components.

\begin{remark}\label{rmk:conditions}
Here we note the following properties of the sutured manifolds $(Y_T,\gamma_T)$, which follows from the Definition \ref{sutured tangles}:

\begin{enumerate}
    \item $\partial Y_T$ has $n+1$ connected components, where $n$ is the number of closed components of $T$.
    \item $Y_T$ is a submanifold of $S^3$.
    \item $(Y_T,\gamma_T)$ is without excess parallel sutures.
    \item Suppose $S$ is a connected surface embedded in $Y_T$ with boundary a union of sutures $\bigcup_{1\leq i\leq k} s_{\gamma_i}$. Then either:\begin{enumerate}
        \item $k=2$ and $\gamma_1$ and $\gamma_2$ are parallel in $\partial Y_t$, or,
        \item $\{\gamma_i\}=\{\gamma\}\cup\{\gamma_t:t\text{ is a compoent of }T\text{ with }\partial t\cap R_\pm(\gamma)\neq\emptyset\}$.
    \end{enumerate}

\end{enumerate}

\end{remark}

\begin{proposition}\label{prop:nocomponentlongsurface}
 Suppose that $(Y_T,\gamma_T)$ is taut and that $T$ has no closed components. If $\rank(\SFH(Y_T,\gamma_T))\leq 2$ then either:

\begin{enumerate}
    \item $(Y,\gamma)$ is  a product sutured manifold and $T$ is a braid.
    \item $(Y,\gamma)$ is an almost product sutured manifold and $T$ is a braid.
    \item  $(Y,\gamma)$ is a product sutured manifold and $T$ is a clasp-braid.
    \item $(Y,\gamma)$ is a stabilized product sutured manifold and $T$ is a stabilized clasp-braid.
\end{enumerate}

\end{proposition}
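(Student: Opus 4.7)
The plan is to combine Proposition~\ref{prop:allproductannuli}, which decomposes $(Y_T,\gamma_T)$ along essential product annuli into manageable pieces, with a re-gluing argument that reconstructs a sutured link exterior to which Theorem~\ref{thm:mailinknrankclass} can be applied.

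First I would reduce to the case that $(Y_T,\gamma_T)$ is horizontally prime. By Proposition~\ref{prop:juhaszhorizontaldecomp}, any horizontal surface with trivial class in $H_2$ yields a tensor splitting of $\SFH(Y_T,\gamma_T)$; the rank bound forces at least one factor to have rank one, hence to be a product sutured manifold by Juh\'asz' product detection. Absorbing such product factors into the eventual tangle structure, I am reduced to analyzing a horizontally prime $(Y_T,\gamma_T)$ of rank at most two. Proposition~\ref{prop:allproductannuli} then decomposes this along essential product annuli into pieces each either with $\dim P(Y_i,\gamma_i)=b_1(\partial Y_i)/2\leq 1$, or a product sutured manifold with annular or once-punctured annular base. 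This gives a tight combinatorial grip on the pieces and a first structural statement.

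The central step is a re-gluing construction: identifying $R_+(\gamma)$ with $R_-(\gamma)$ by the canonical identification inherited from $S^3$ recovers a sutured link exterior $(S^3_L,\gamma_L')$ where $L=K\cup \overline{T}$ and $\overline{T}$ denotes the closure of $T$ in $S^3$. By Proposition~\ref{Thm:Juhaszaffine}, $\Sigma$ is a decomposing surface in $(S^3_L,\gamma_L')$ whose decomposition yields $(Y_T,\gamma_T)$, and the outer $\spin^c$-structures with respect to $\Sigma$ are precisely those in the maximal $A_K$-grading. Hence the maximal $A_K$-part of $\SFH(S^3_L,\gamma_L')$ is isomorphic to $\SFH(Y_T,\gamma_T)$ up to $V$-factors from any excess meridional sutures on components of $\overline{T}$. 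Combining with Martin's braid axis detection~\cite{martin2022khovanov}, which guarantees rank at least $2^{|L|-1}$ in the maximal $A_K$-grading of $\widehat{\HFL}(L)$, the bound $\rank(\SFH(Y_T,\gamma_T))\leq 2$ forces $|L|\leq 2$, and when $|L|=2$ forces $\overline{T}$ to carry exactly one pair of meridional sutures.

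I would then apply Theorem~\ref{thm:mailinknrankclass} -- specifically Proposition~\ref{BaldwinSivekrank2knotexteriors} when $|L|=1$ and Proposition~\ref{prop:2componentrank} when $|L|=2$ -- to enumerate the possibilities for $(S^3_L,\gamma_L')$. When $T=\emptyset$, $(Y,\gamma)$ has rank $1$ or $2$ and we land in case~(1) or~(2) according to whether $K$ is fibered or nearly fibered. When $|L|=2$, a signed-intersection count shows $\overline{T}\cap\Sigma$ consists of two points of the same sign, so $T$ is a two-strand tangle with both arcs running between $R_+(\gamma)$ and $R_-(\gamma)$; whether $(Y,\gamma)$ is a product or an almost product sutured manifold, and whether the tangle is monotone or contains a clasping crossing in the $I$-direction, distinguishes cases~(1),~(2), and~(3). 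The tangle itself is then recovered from $(Y_T,\gamma_T)$ via Remark~\ref{rem:tanglerecovery}.

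The hardest case will be the stabilized clasp-braid~(4). Here $(Y,\gamma)$ is a stabilized product sutured manifold, so the Seifert surface $\Sigma$ used to define $(Y,\gamma)$ is non-minimal, and the re-gluing argument must be carried out carefully because the Alexander grading data no longer aligns cleanly with $\Sigma$. I expect to detect the stabilization by exhibiting in $(Y_T,\gamma_T)$ a specific essential product annulus whose decomposition exposes the canceling handle pair of Figure~\ref{stabilizable cusp}; the remaining piece will then be forced by the rank bound to be the sutured exterior of a two-strand braid in a smaller product sutured manifold, matching the statement of case~(4). Combined with the previous cases, this completes the classification.
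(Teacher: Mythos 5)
Your reduction to the horizontally prime case and your use of Proposition~\ref{prop:allproductannuli} match the paper's opening moves, but your central step --- regluing $R_+(\gamma)$ to $R_-(\gamma)$ to recover $(S^3_L,\gamma_L')$ and then invoking Theorem~\ref{thm:mailinknrankclass} (via Propositions~\ref{BaldwinSivekrank2knotexteriors} and~\ref{prop:2componentrank}) --- does not work, because those results require a bound on the \emph{total} rank of $\SFH(S^3_L,\gamma_L')$, whereas the hypothesis $\rank(\SFH(Y_T,\gamma_T))\leq 2$ only bounds the part of $\SFH(S^3_L,\gamma_L')$ supported in the outer $\spin^c$-structures for $\Sigma$, i.e.\ the top $A_K$-grading. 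For the Whitehead link the tangle exterior has rank $2$ but the link exterior has rank $16>2^2$, so Proposition~\ref{prop:2componentrank} simply does not apply to the reglued manifold. Relatedly, your deduction that $|L|\leq 2$ and that $T$ is a two-strand tangle is false: Martin's lower bound $2^{|L|-1}$ concerns $\widehat{\HFL}(L)$ in the top $A_K$-grading, which differs from $\SFH(Y_T,\gamma_T)$ by the tensor factors $V$ coming from excess parallel sutures (Definition~\ref{sutured tangles} puts only a single meridian on each arc of $T$), so a braid on arbitrarily many strands, with closure of arbitrarily many components, still has $\rank(\SFH(Y_T,\gamma_T))=1$. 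The "signed intersection count" giving $|\overline{T}\cap\Sigma|=2$ has no basis --- the geometric intersection with $\Sigma$ is the braid index, which the rank does not control.

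The direction of the argument in the paper is the opposite of yours: one stays inside $(Y_T,\gamma_T)$, decomposes along a minimal family of essential product annuli to isolate a single non-product piece $(Y_1,\gamma_1)$ with toroidal boundary and rank at most $2$ (here the $n=1$ case of Theorem~\ref{thm:mailinknrankclass} legitimately applies, since that piece's \emph{total} rank is bounded), and then classifies the ways the pieces can be \emph{reassembled} subject to the constraints of Remark~\ref{rmk:conditions} --- e.g.\ the slope-$2$ pieces are killed in most gluing configurations because they would force a $\Z/2$ summand in $H_1(Y)$, impossible for a submanifold of $S^3$, and the $T_4$ piece requires the case analysis of Proposition~\ref{lem:nocomponentstep1} (two, three, or four sutures glued) together with Lemma~\ref{lem:suturedclaspext} to certify that sutured clasp exteriors only arise from clasps. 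None of this combinatorial regluing analysis, which is where the clasp-braid and stabilized clasp-braid cases actually emerge, is present in your proposal; your sketch of case~(4) gestures at "exhibiting a specific essential product annulus" without an argument. To repair the proof you would need to abandon the passage back to the link exterior and instead carry out the piece-by-piece reassembly.
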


Our proof strategy is to apply Proposition~\ref{prop:allproductannuli} to decompose $(Y_T,\gamma_T)$ into simpler pieces and to then classify the ways in which these pieces can be reassembled to a sutured manifold which satisfies the conditions in Remark~\ref{rmk:conditions}. Recall that if $(Y,\gamma)$ is a connected almost product sutured manifold we can always decompose it as $(S^3_{K'},\gamma_{K'})\sqcup (P,\rho)$ where each component of $(P,\rho)$ is glued to $(S^3_{K'},\gamma_{K'})$.

We can further simplify our analysis somewhat with the following lemma:

\begin{lemma}\label{lem:cabling}
    Suppose $T$ is a tangle in a balanced sutured manifold $(Y,\gamma)$. Let $T'$ be a stabilization of $T$. Then  $\SFH(Y_{T'},\gamma_{T'})\cong\SFH(Y_T,\gamma_T)$.\end{lemma}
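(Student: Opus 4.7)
The plan is to exhibit an explicit product disk in $(Y_{T'},\gamma_{T'})$ whose sutured decomposition recovers $(Y_T,\gamma_T)$, and then invoke the fact that decomposition along a product disk preserves sutured Floer homology. The stabilization hypothesis requires the chosen component $t$ to have endpoints on both $R_+(\gamma)$ and $R_-(\gamma)$, so the parallel push-off $t'$ is again an arc from $R_-$ to $R_+$, and both $t$ and $t'$ receive a single meridional suture in forming their respective tangle exteriors by Definition~\ref{sutured tangles}.

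The key geometric ingredient will be the embedded rectangle $D_0 \subset Y$ witnessing the parallelism between $t$ and $t'$, whose boundary consists of $t$, an arc $\tau_+ \subset R_+(\gamma)$, $t'$, and an arc $\tau_- \subset R_-(\gamma)$. Setting $D := D_0 \setminus \nu(t \cup t')$ inside $Y_{T'}$, I claim $D$ is a properly embedded disk whose boundary decomposes into four arcs alternating between $R_\pm(\gamma_{T'})$ and the sutures: namely $\tau_+$, an arc on $\partial\nu(t')$ crossing the single meridional suture of $t'$ exactly once, $\tau_-$, and an arc on $\partial\nu(t)$ crossing the meridional suture of $t$ exactly once. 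This is precisely a product disk in the sense of Juh\'asz.

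Decomposing $(Y_{T'},\gamma_{T'})$ along $D$ then amalgamates the two tubular boundaries $\partial\nu(t)$ and $\partial\nu(t')$ into a single annular boundary component carrying one meridional suture, the two original meridional sutures being fused through $D$. Topologically, cutting along $D$ undoes the parallel stabilization: the strand $t'$ is reabsorbed along $D_0$ into $t$, so up to sutured diffeomorphism the output of the decomposition is exactly $(Y_T,\gamma_T)$.

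The desired isomorphism $\SFH(Y_{T'},\gamma_{T'}) \cong \SFH(Y_T,\gamma_T)$ then follows from the standard fact that surface decomposition along a product disk preserves sutured Floer homology; this is a direct consequence of Proposition~\ref{Thm:Juhaszaffine}, since for a product disk the outer $\spin^c$-structures exhaust $\spin^c(Y_{T'},\gamma_{T'})$ and the map $f_S$ is a bijection between the two sets of $\spin^c$-structures supporting nonzero Floer groups. The only substantive verifications are the claim that $D$ is a product disk and that decomposition along it recovers $(Y_T,\gamma_T)$; the main obstacle will be carefully tracking how the sutures on $\partial\nu(t)$ and $\partial\nu(t')$ merge when the tubular neighborhoods of the two parallel strands are identified through $D$, but this reduces to a local model computation in a small ball containing $t \cup t' \cup D_0$.
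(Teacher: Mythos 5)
Your argument is correct, but it is a genuinely different route from the one in the paper. The paper's proof is a one-line Heegaard-diagram observation: a sutured diagram for $(Y_{T'},\gamma_{T'})$ is obtained from one for $(Y_T,\gamma_T)$ by adding a small puncture next to the boundary component corresponding to $t$; since no $\alpha$- or $\beta$-curves are added and the new puncture sits in a region already adjacent to $\partial\Sigma$ (hence of multiplicity zero in every counted disk), neither the generators nor the differential change. Your proof instead works at the level of the $3$-manifold: the rectangle $D_0$ witnessing the parallelism of $t$ and $t'$, with tubular neighborhoods removed, is a product disk $D$ in $(Y_{T'},\gamma_{T'})$ (its boundary meets $s(\gamma_{T'})$ in exactly two points, once on each of the single meridional sutures of $t$ and $t'$), decomposition along $D$ merges the two tubes into one tube with one meridional suture and recovers $(Y_T,\gamma_T)$ since $\nu(t\cup D_0\cup t')$ is isotopic to $\nu(t)$, and product disk decomposition preserves $\SFH$. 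Both arguments are sound; yours is more self-contained and makes the topology explicit, while the paper's is shorter but leans on the reader accepting that the extra puncture is invisible to the complex. One small improvement to yours: rather than routing the invariance through Proposition~\ref{Thm:Juhaszaffine} (which would require verifying strong balancedness, that every relative $\spin^c$ structure is outer with respect to $D$, and that non-outer ones support no homology), cite Juh\'asz's product decomposition lemma \cite[Lemma 9.13]{juhasz2006holomorphic} directly, which asserts exactly that $\SFH$ is unchanged under decomposition along a product disk with no further hypotheses beyond balancedness.
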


    \begin{proof}
        To see this note that a sutured Heegaard diagram for $(Y_{T'},\gamma_{T'})$ can be obtained by adding a small puncture to a sutured Heegaard diagram $(\Sigma,\mathbf{\alpha},\mathbf{\beta})$ for $(Y_T,\gamma_T)$ near the boundary component of $\partial\Sigma$ corresponding to $t$. The result follows.
    \end{proof}

    This will allow us to reduce some case analysis to the setting that tangles do not have parallel components connecting $R_+(\gamma)$ tp $R_-(\gamma)$. We call a tangle $T$ in a sutured manifold $(Y,\gamma)$ \emph{simplified} if $(Y_T,\gamma_T)$ is  aspherical and horizontally prime and $T'$ is not a stabilization.

We will also find the following Lemma helpful:
\begin{lemma}\label{lem:sameSFH}
    Suppose $(Y',\gamma')$ decomposes as $(Y_1,\gamma_1)\sqcup (P,\rho)$ where $(P,\rho)$ is a connected product sutured manifold with  at least one non-glued boundary component. Then $$\rank(\SFH(Y',\gamma'))=\rank(\SFH(Y_1,\gamma_1)).$$
\end{lemma}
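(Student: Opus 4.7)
The plan is to construct a sutured Heegaard diagram for $(Y',\gamma')$ that shares its $\alpha$- and $\beta$-curves with a chosen diagram for $(Y_1,\gamma_1)$, so the two sutured Floer chain complexes literally coincide.

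First, I would fix an admissible sutured Heegaard diagram $(\Sigma_1,\alpha_1,\beta_1)$ for $(Y_1,\gamma_1)$, and represent the product sutured manifold $(P,\rho)=\Sigma_P\times[-1,1]$ by its trivial Heegaard diagram $(\Sigma_P,\emptyset,\emptyset)$, whose Heegaard surface is $\Sigma_P\times\{0\}$. Next, I would realise the gluing $(Y_1,\gamma_1)\sqcup(P,\rho)\rightsquigarrow(Y',\gamma')$ at the level of Heegaard diagrams. Each gluing annulus contains exactly one suture arc from $\gamma_1$ on the $Y_1$ side and a suture circle $\partial_j\Sigma_P\times\{0\}$ on the $P$ side, so on the diagrams one identifies an arc of $\partial\Sigma_1$ with the corresponding boundary component of $\Sigma_P$. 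Set $\Sigma=\Sigma_1\cup_{\partial}\Sigma_P$. Because $(P,\rho)$ has at least one non-glued boundary component, $\Sigma$ still has non-empty boundary meeting each component, so $(\Sigma,\alpha_1,\beta_1)$ is a valid sutured Heegaard diagram for $(Y',\gamma')$, with $\gamma'$ arising from the unused arcs of $\partial\Sigma_1$ together with the free boundary components of $\Sigma_P$.

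Then I would compare chain complexes. The generating set $\T_{\alpha_1}\cap\T_{\beta_1}$ is visibly unchanged. For the differential, any Whitney disk contributes a domain: a $2$-chain in $\Sigma$ with boundary on $\alpha_1\cup\beta_1$ and vanishing multiplicity in every region meeting $\partial\Sigma$. Since $\Sigma_P$ contains no $\alpha$- or $\beta$-curves, the multiplicity of a domain is locally constant on $\Sigma_P$; since $\Sigma_P$ is connected (as $P$ is) and has at least one boundary component lying in $\partial\Sigma$ (the free boundary of $P$), that locally constant value is forced to be zero. Hence every domain is supported in $\Sigma_1$, and coincides with a domain for $(\Sigma_1,\alpha_1,\beta_1)$; the holomorphic disk counts are accordingly the same, and admissibility can be maintained by winding inside $\Sigma_1$. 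It follows that the $\widehat{CF}$ complexes of $(\Sigma,\alpha_1,\beta_1)$ and $(\Sigma_1,\alpha_1,\beta_1)$ are identical, whence $\SFH(Y',\gamma')\cong\SFH(Y_1,\gamma_1)$ and the rank equality follows.

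The main obstacle is the step ruling out nontrivial domains in $\Sigma_P$: one must be careful that connectedness of $\Sigma_P$ and the presence of a free boundary component really do pin every domain to multiplicity zero there, and that this remains true after any isotopy or winding used to achieve admissibility. This is exactly where the hypothesis that $(P,\rho)$ has a non-glued boundary component is essential; without it, the $\Sigma_P$ portion of the diagram would be closed off, and nontrivial domains supported in $\Sigma_P$ could appear (corresponding, for instance, to the extra factor arising in the connected sum formula (\ref{eq:connectsum})).
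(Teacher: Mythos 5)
Your argument is correct and is essentially the paper's proof, just written out in full: the paper likewise takes a Heegaard diagram for $(Y',\gamma')$ built from one for $(Y_1,\gamma_1)$ together with a trivial diagram for $(P,\rho)$, and observes that holomorphic disks cannot cross into the $(P,\rho)$ region because it retains a free boundary component. Your elaboration of why domains are forced to have multiplicity zero on $\Sigma_P$ (no $\alpha$- or $\beta$-curves there, connectedness, and the unglued boundary) is exactly the content behind the paper's one-line justification.
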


\begin{proof}
    Consider a Heegaard diagram for $(Y',\gamma')$ and note that pseudo-holomorphic-disks cannot cross into $(P,\rho)$ because it contains a boundary component.
\end{proof}

We now investigate the ways in which sutured tangle exteriors $(Y_T,\gamma_T)$ with \\ $\rank(\SFH(Y_T,\gamma_T))\leq 2$ can be recovered from some of its possible decompositions.

\begin{lemma}\label{lem:nocomponentstep2}
      Suppose $(Y_T,\gamma_T)$ can be decomposed along a minimal family of essential product annuli $\{A_i\}_{i\in I}$ into the disjoint union of $(P,\rho)$ and $(S^3_{K'},\gamma_{K'})$, where $(S^3_{K'},\gamma_{K'})$  is given up to mirroring by one of the following two pieces:
          \begin{enumerate}
        \item A solid torus with parallel oppositely oriented sutures of slope $2$.
\item The exterior of a right handed trefoil with two parallel oppositely oriented sutures of slope $2$.

    \end{enumerate}

    Then $(Y,\gamma)$ is an almost product sutured manifold and $T$ is a braid.
  \end{lemma}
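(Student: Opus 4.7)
The plan is to reverse-engineer the decomposition. Since the product annuli $\{A_i\}_{i\in I}$ decomposing $(Y_T,\gamma_T)$ are disjoint from $T$ by definition, the same annuli can be viewed as properly embedded surfaces in $(Y,\gamma)$ and will induce a parallel decomposition of $(Y,\gamma)$ itself. The first step is to identify the two pieces of this induced decomposition and check that $T$ sits in the ``correct'' one.

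Because $T$ does not enter $(S^3_{K'},\gamma_{K'})$ (this piece is glued to the rest only along product annuli, and all components of $\nu(T)$ lie in $(P,\rho)$), the piece $(S^3_{K'},\gamma_{K'})$ of $(Y_T,\gamma_T)$ is identical to its counterpart in the induced decomposition of $(Y,\gamma)$. The other piece, call it $(P',\rho')$, is obtained from $(P,\rho)$ by gluing $\nu(T)$ back in. Each arc $t$ of $T$ runs from $R_+(\gamma)$ to $R_-(\gamma)$ and contributes a cylindrical portion of $\partial P$ carrying a single meridional suture of $\gamma_T$; gluing this cylinder back amounts to filling in one boundary puncture of the base of the product sutured manifold $(P,\rho)$. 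Therefore $(P',\rho')$ is again a product sutured manifold $\Sigma'\times[-1,1]$, and the arc $t$ becomes the vertical segment $\{p_t\}\times[-1,1]$ over the filled-in puncture $p_t\in\Sigma'$. In particular $T\subset P'$ is literally a disjoint union of vertical arcs, i.e.\ a braid in $(P',\rho')$ in the sense of Section~\ref{sec:almostbraids}.

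Next I would verify that the $\{A_i\}$ are essential product annuli in $(Y,\gamma)$ in the tangle sense: they are disjoint from $T$ by construction and have $\partial_\pm A_i\subset R_\pm(\gamma)$, and any hypothetical compressing disk for $A_i$ in $(Y,\gamma)$, or isotopy of $A_i$ into a suture, can be made transverse to the $1$-complex $T$ and then perturbed off $T$ (since a disk can always be pushed off a codimension-$2$ subset after transversality). This would yield a compressing disk or suture-parallelism inside $(Y_T,\gamma_T)$, contradicting essentiality of $\{A_i\}$ there. Consequently the induced decomposition of $(Y,\gamma)$ along $\bigcup_{i\in I} A_i$ produces exactly $(P',\rho')\sqcup(S^3_{K'},\gamma_{K'})$.

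Finally, since product-annulus decomposition preserves the total rank of $\SFH$ (as in Proposition~\ref{prop:juhaszhorizontaldecomp} and the standard sutured Heegaard diagram argument) and $\rank(\SFH(P',\rho'))=1$ while $\rank(\SFH(S^3_{K'},\gamma_{K'}))=2$ in both listed cases, we obtain $\rank(\SFH(Y,\gamma))=2$, so $(Y,\gamma)$ is an almost product sutured manifold; moreover, the annuli $\{A_i\}$ together with the product component $(P',\rho')$ exhibit $T$ as a braid in $(Y,\gamma)$, proving both assertions of the lemma. I expect the main technical point to be the verification that each $A_i$ remains tangle-essential in $(Y,\gamma)$, but this reduces to a standard general-position argument between the $2$-dimensional disk/annulus in question and the $1$-dimensional tangle $T$.
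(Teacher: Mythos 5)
The proposal has a genuine gap: you assume from the outset that the reconstruction of $(Y,\gamma)$ from $(Y_T,\gamma_T)$ only interacts with the product piece, i.e.\ that ``the piece $(S^3_{K'},\gamma_{K'})$ is identical to its counterpart in the induced decomposition of $(Y,\gamma)$'' and that every tangle suture sits on $(P,\rho)$. That is precisely what has to be proved. A priori the two slope-$2$ sutures of $(S^3_{K'},\gamma_{K'})$ need not both be glued to a connected $(P,\rho)$ along the annuli $A_i$: one could have $I=\emptyset$ (so $Y_T=S^3_{K'}$), or only one of the two sutures glued, or $(P,\rho)$ disconnected with the two sutures attached to different components. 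In each of these configurations, recovering $(Y,\gamma)$ forces one to cap off a slope-$2$ suture of $(S^3_{K'},\gamma_{K'})$ with a thickened disk, which produces a $\Z/2$ summand in $H_1(Y)$ --- impossible, since $Y$ embeds in $S^3$ as a surface exterior. The paper's proof consists almost entirely of this case analysis; only after these configurations are excluded does one know that $(S^3_{K'},\gamma_{K'})$ is glued along both of its sutures to a single connected product piece, at which point your regluing picture of $T$ as vertical arcs over filled-in punctures does go through.

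A secondary problem is the rank argument. Decomposing along product annuli does not preserve the rank of $\SFH$ in general (Juh\'asz only gives that the rank does not increase), and Proposition~\ref{prop:juhaszhorizontaldecomp} concerns horizontal surfaces, not product annuli. The equality $\rank(\SFH(Y,\gamma))=\rank(\SFH(Y_T,\gamma_T))$ needed here comes from Lemma~\ref{lem:sameSFH}: holomorphic disks cannot enter a connected product component that retains a non-glued boundary component (the one containing $\gamma$), and this again requires first knowing that $(P,\rho)$ is connected. Your general-position argument that each $A_i$ remains essential as a product annulus in $(Y,\gamma)$ in the tangle sense is fine and consistent with the conventions of Section~\ref{sec:almostbraids}, but it does not substitute for the missing case analysis.
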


  Note that $|I|\leq 2$, and that $P$ has at most two components. No component of $(P,\rho)$ has base surface a disk or an annulus, as in these cases at least one of the product annuli is non-essential.

    \begin{proof}
        Suppose $I=\emptyset$. Then $(P,\rho)=\emptyset$ and $(Y,\gamma)$ can be recovered from $(S^3_{K'},\gamma_{K'})$ by gluing in a thickened disk along one of the boundary sutures. But in this case $H_1(Y)\cong\Z/2$ since the sutures of $\gamma_{K'}$ are of slope $2$, so that $Y$ cannot be the exterior of a surface in $S^3$, a contradiction.

 Suppose $(S^3_{K'},\gamma_{K'})$ is glued to $(P,\rho)$ along exactly one suture $\gamma_i$. Then $(Y,\gamma)$ can be recovered from $(S^3_{K'},\gamma_{K'})\cup_{\gamma_i}(P,\rho)$ by gluing in thickened disks along all but one of the sutures. However, in this case $H_1(Y)$ would have a $\Z/2$ summand since the sutures $\gamma_{K'}$ are of slope $2$, so that $Y$ cannot be the exterior of a surface in $S^3$, a contradiction.

 Suppose $(S^3_{K'},\gamma_{K'})$ is glued to $(P,\rho)$ along two sutures. If $(P,\rho)$ has two components, then we would again have that $H_1(Y)$ contains a $\Z/2$ summand since the sutures $\gamma_{K'}$ are of slope $2$, so that $Y$ cannot be the exterior of a surface in $S^3$, a contradiction.

 It follows that $(P,\rho)$ has a single connected component. $\SFH(Y,\gamma)\cong\SFH(Y_T,\gamma_T)$ by Lemma~\ref{lem:sameSFH}. Thus $(Y,\gamma)$ can be decomposed along essential product annuli $A_1,A_2$ into $(S^3_{K'},\gamma_{K'})\sqcup(P',\rho')$ where $(P',\rho')$ is a product sutured manifold and $T$ is a braid in $(P',\rho')$. It follows in turn that $(Y,\gamma)$ is an almost product sutured manifold and $T$ a braid, as desired.    
    \end{proof}

    The next reassembly Lemma will be substantially more involved. We prove the following preparatory lemma.

 \begin{lemma}\label{lem:firstsymmetricproduct}
     Suppose $(Y',\gamma')$ admits a sutured Heegaard diagram with exactly one $\alpha$ curve. Also suppose $(Y_T,\gamma_T)$ admits a sutured decomposition along a minimal family of essential product annuli $\{A_i\}_{i\in I}$ to the sutured manifold given by the disjoint union of  $(Y',\gamma')$ and $(P,\rho)$, where $(P,\rho)$ is some product sutured manifold, such that every suture of $(Y',\gamma')$ is glued. Then either:\begin{enumerate}
         \item $\rank(\SFH(Y,\gamma))=\rank(\SFH(Y_T,\gamma_T))=\rank(\SFH(Y',\gamma'))$ and $T$ is a braid in $(Y,\gamma)$, or,
         
         \item $(P,\rho)$ has a component $(P_1,\rho_1)$ with base a punctured disk and $\gamma\not\in\rho_1$.
     \end{enumerate}
 \end{lemma}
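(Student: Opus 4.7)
The plan is to combine two kinds of information: (a) standard identities relating sutured Floer homology ranks under decomposition along essential product annuli, to get the rank statement in case (1); and (b) a direct topological analysis of the reassembly of $(Y_T,\gamma_T)$ from $(Y',\gamma')$ and $(P,\rho)$ constrained by the ``one $\alpha$ curve'' hypothesis on $(Y',\gamma')$ and the minimality of $\{A_i\}_{i\in I}$, to get the braid structure.

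For the rank identities in case (1), I would proceed as follows. Since $(P,\rho)$ is a product sutured manifold, Juh\'asz's product detection gives $\rank(\SFH(P,\rho))=1$. Decomposition along essential product annuli preserves SFH rank (in the taut, horizontally prime setting), and iterating this across the gluings along the $A_i$ yields $\rank(\SFH(Y_T,\gamma_T))=\rank(\SFH(Y',\gamma'))$. Once I have shown that $T$ is a braid in $(Y,\gamma)$ in case (1), a sutured Heegaard diagram for $(Y,\gamma)$ is obtained from one for $(Y_T,\gamma_T)$ by filling in the small punctures in the Heegaard surface corresponding to the braid strands of $T$; these fillings introduce no new $\alpha$ or $\beta$ curves and do not change the domain count, giving $\rank(\SFH(Y,\gamma))=\rank(\SFH(Y_T,\gamma_T))$.

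For the structural claim that $T$ is a braid in $(Y,\gamma)$, the point is that having exactly one $\alpha$ curve (and, by balancedness, exactly one $\beta$ curve) severely restricts $(Y',\gamma')$: up to isotopy of the diagram it is a solid torus with a controlled suture pattern. I would enumerate the combinatorial possibilities for how the product annuli $\{A_i\}$ pair the sutures of $(Y',\gamma')$ to sutures of components of $(P,\rho)$, and in each admissible case show that $(Y_T,\gamma_T)$ globally looks like a union of product pieces glued along product annuli, with $(Y',\gamma')$ occupying a small corner disjoint from $T$. Undoing the meridional sutures of $\gamma_T$ that correspond to tangle components recovers $(Y,\gamma)$ with a product sutured submanifold $(P',\rho')$ containing $T$ as a standard braid, verifying the definition of braid in $(Y,\gamma)$.

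The main obstacle will be the combinatorial case analysis of the gluing, and in particular showing that the only failure mode not excluded by the hypotheses is the exceptional case (2). Here the strategy is to show that every other potential failure forces one of the following contradictions: a sphere in $Y_T$ violating the irreducibility carried over from the link exterior; an excess parallel pair of sutures on $\partial Y_T$, contradicting item (3) of Remark~\ref{rmk:conditions}; or an essential product annulus in $(Y_T,\gamma_T)$ not among $\{A_i\}$, contradicting the minimality hypothesis. The exceptional situation in (2), where some $(P_1,\rho_1)\subset(P,\rho)$ has base a punctured disk and $\gamma\notin\rho_1$, is precisely the configuration in which two sutures of $(Y',\gamma')$ are joined through $(P_1,\rho_1)$ by a product annulus that cannot be absorbed into $(Y',\gamma')$ or bypassed in the reassembly; this is the only obstruction to extending the product structure across all the gluings, so ruling it out yields case (1).
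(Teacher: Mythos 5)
Your proposal has a genuine gap at the step where you pass from $(Y_T,\gamma_T)$ to $(Y,\gamma)$. You assert that filling in the punctures of the Heegaard surface corresponding to the tangle sutures ``introduce[s] no new $\alpha$ or $\beta$ curves and do[es] not change the domain count.'' The first half is true, but the second half is exactly what can fail, and it is the entire reason the lemma has an exceptional case (2): filling in a puncture can turn a punctured bigon between an $\alpha$ curve and a $\beta$ curve into an honest disk, creating a new term in the differential and dropping the rank. The paper's proof isolates precisely when this happens: since $(Y',\gamma')$ has exactly one $\alpha$ curve (and, by balancedness, one $\beta$ curve), the only way a new holomorphic disk can appear upon filling is if some component of $(P,\rho)$ has base a punctured disk with $\gamma\notin\rho_1$ --- which is case (2). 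Your proposal never makes this connection; you describe case (2) as a purely topological obstruction to ``extending the product structure,'' which explains neither why the three ranks agree in case (1) nor why they may fail to in case (2). Relatedly, your blanket claim that decomposition along essential product annuli preserves $\SFH$ rank is not something you can invoke: Juh\'asz's results give non-increase in general, with equality only under extra hypotheses on $\partial A$; and the easy rank-preservation argument of Lemma~\ref{lem:sameSFH} is unavailable here precisely because every suture of $(Y',\gamma')$ is glued --- that is the situation this lemma exists to handle. The paper instead gets $\rank(\SFH(Y_T,\gamma_T))=\rank(\SFH(Y',\gamma'))$ by building a diagram for $(Y_T,\gamma_T)$ from a curve-free diagram for the product piece glued to the one-$\alpha$-curve diagram for $(Y',\gamma')$, so generators and differential are visibly unchanged at that stage.

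Your treatment of the braid claim is also heavier than necessary, though closer in spirit to the paper: rather than enumerating gluing patterns, the paper simply checks that each $A_i$ remains essential in $(Y,\gamma)$ after the sutures are filled, which can only fail if some component of $(P,\rho)$ has base an annulus with $\gamma$ among its sutures, and such components are already excluded by the minimality of the family $\{A_i\}_{i\in I}$.
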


 The difference between this Lemma and Lemma~\ref{lem:sameSFH} is that here we do not require that the product sutured manifold has a boundary component which is not glued.

 \begin{proof}
     Consider a sutured Heegaard diagram $\mathcal{H}_T$ for $(Y_T,\gamma_T)$ give by gluing a sutured Heegaard diagram for $(Y',\gamma')$ to a sutured Heegaard diagram for $(P,\rho)$. A sutured Heegaard diagram for $(Y,\gamma)$ is obtained by filling in every suture $\gamma_T$ save for $\gamma$. The differential can only possibly change if a punctured disk in $\mathcal{H}_T$ bounded by an $\alpha$ curve and a $\beta$ curve is filled in with a disk.

     Suppose we are in the case that $(P,\rho)$ does not have a component $(P_1,\rho_1)$ with base space a punctured disk such that $\gamma\not\in\rho_1$. We need to check that $T$ is a braid in $(Y,\gamma)$. We check that each member of the minimal family of essential product annuli $A_i$ in $(Y_T,\gamma_T)$ is still essential in $(Y,\gamma)$. This is true unless there is a component $(P_2,\rho_2)$ of $(P,\rho)$ with base space an annulus and $\gamma\in \rho_2$. But we are exluding this by assumption, so the desired result follows.
 \end{proof}

    \begin{proposition}\label{lem:nocomponentstep1}
           Suppose $(Y_T,\gamma_T)$ can be decomposed along a minimal family of essential product annuli $\{A_i\}_{i\in I}$ into the disjoint union of a product sutured manifold $(P,\rho)$ and a solid torus with $4$-parallel longitudinal sutures, $T_4$. Then either:\begin{enumerate}
               \item  $(Y,\gamma)$ is an almost product sutured manifold and $T$ is a braid,
              \item  $(Y,\gamma)$ is a product sutured manifold and $T$ is a clasp-braid, or,
              \item $(Y,\gamma)$ is a stabilized product sutured manifold and $T$ is a stabilized clasp-braid.
           \end{enumerate}
    \end{proposition}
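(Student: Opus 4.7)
The plan is to reconstruct $(Y_T, \gamma_T)$ by re-gluing $T_4$ to $(P, \rho)$ along the annuli $\{A_i\}_{i \in I}$, and to show that the structural constraints from Remark~\ref{rmk:conditions}, combined with the minimality of the family $\{A_i\}$, force one of the three configurations listed in the conclusion. Reversing a sutured decomposition along a product annulus amounts (by the discussion preceding Lemma~\ref{cor:trivialinclusion}) to identifying two distinct sutures of the decomposed manifold into one; each annulus $A_i$ therefore corresponds to a pairwise identification of two sutures in $T_4 \sqcup (P, \rho)$. Since $T$ has no closed components, Remark~\ref{rmk:conditions}(1) forces $\partial Y_T$ to be a single connected surface, while Remark~\ref{rmk:conditions}(3) forbids excess parallel sutures in $\gamma_T$, so the four parallel longitudinal sutures of $T_4$ cannot all survive the gluing intact. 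By Lemma~\ref{lem:cabling} I may also assume $T$ is simplified, and by Lemma~\ref{lem:sameSFH} any product piece of $(P,\rho)$ with a free boundary suture can be harmlessly absorbed.

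I would then case-split on how the four sutures $s_1, s_2, s_3, s_4$ of $T_4$ (ordered cyclically so that consecutive pairs cobound alternately $R_+$- and $R_-$-annuli) are paired by the $A_i$. Three configurations survive: (a) no $s_i$ is identified with another $s_j$, and all four are glued to sutures of $(P, \rho)$; (b) one adjacent pair $\{s_i, s_{i+1}\}$ cobounding an $R_\pm$-annulus is self-identified, while the remaining two are glued to sutures of $(P, \rho)$; (c) one non-adjacent pair $\{s_i, s_{i+2}\}$ is self-identified via a twisted gluing corresponding to attaching a stabilising handle, with the remaining two sutures glued to sutures of $(P, \rho)$. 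Other configurations -- multiple self-identifications, a non-adjacent untwisted identification, or gluings that leave some $s_i$ free -- are ruled out by one of the following: they create excess parallel sutures in $\gamma_T$, disconnect $\partial Y_T$, introduce a $\Z/2$ summand in $H_1(Y)$ incompatible with $Y \subset S^3$ (as in the argument of Lemma~\ref{lem:nocomponentstep2}), or produce an essential product annulus in $(Y, \gamma)$ absent from $\{A_i\}$, contradicting minimality.

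Finally, for each surviving configuration I would identify the reconstructed pair $((Y, \gamma), T)$ with the corresponding case of the conclusion. In configuration (a), $T_4$ is retained as the almost-product summand of $(Y, \gamma)$ and $T$ sits as a braid in the complementary product part, yielding Case~(1); the argument here parallels that of Lemma~\ref{lem:nocomponentstep2} with $T_4$ replacing the slope-$2$ solid torus, and uses Lemma~\ref{lem:firstsymmetricproduct} to confirm that the decomposition certifies $T$ as a braid. In configuration (b), the adjacent self-identification caps $T_4$ into a product disk containing a clasp tangle, and the remaining gluings make $(Y, \gamma)$ a product sutured manifold with $T$ a clasp-braid, yielding Case~(2). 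In configuration (c), the twisted non-adjacent self-identification produces a stabilized product sutured manifold and $T$ becomes a stabilized clasp-braid, matching the Heegaard diagram of Figure~\ref{fig:HDfor3gluing} and the computation of Lemma~\ref{lem:stabilizableclaspcomp}, yielding Case~(3). The main obstacle I anticipate is the systematic elimination step in the case split, especially distinguishing twisted from untwisted non-adjacent identifications; here a careful tracking of how the sutures in $\gamma_T$ arise from meridians of tangle components versus from $\gamma$ itself, combined with the embedding constraint $Y \subset S^3$, should supply the necessary contradictions.
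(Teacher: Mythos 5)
Your overall strategy (reglue $T_4$ to $(P,\rho)$ along the $A_i$ and case-split on the gluing pattern) is the paper's strategy, but your list of surviving configurations is wrong in a way that breaks the proof. The paper first shows that $T_4$ admits \emph{no} self-identifications at all: identifying two adjacent sutures of $T_4$ leaves a boundary component of the reglued manifold with no suture, and identifying two non-adjacent sutures produces an embedded Klein bottle in $Y_T\subset S^3$ (the image of the annulus in $\partial T_4$ cobounded by the two glued sutures), which is impossible. Your configurations (b) and (c) are exactly these forbidden self-identifications, so the tangles you propose as the sources of the clasp-braid and stabilized clasp-braid cases do not exist.

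The configurations that actually produce cases (2) and (3) are the ones you explicitly discard. You rule out ``gluings that leave some $s_i$ free'' on the grounds that they create excess parallel sutures, but Definition~\ref{sutured tangles} permits a \emph{pair} of parallel sutures in $\gamma_T$: that pair is precisely the meridional pair of a tangle arc with both endpoints in $R_+(\gamma)$, i.e.\ the clasp component. Only three or four surviving parallel sutures are excess. Accordingly, the correct trichotomy is by the number of sutures of $T_4$ glued to $(P,\rho)$: all four glued gives case (1) (via Lemma~\ref{lem:firstsymmetricproduct}, as you say); exactly two glued --- necessarily an adjacent pair, glued to a single connected component of $(P,\rho)$, with the separated-pair alternative ruled out by a homology/meridian argument --- gives the clasp-braid case, with the two free sutures of $T_4$ surviving as the clasp's meridional pair (and one then still needs Lemma~\ref{lem:suturedclaspext} to recover the clasp tangle from its sutured exterior); and exactly three glued, to a connected $(P,\rho)$, gives the stabilized clasp-braid case, the stabilizing handles coming from the triple gluing of $T_4$ to $P$ rather than from any twisted self-identification. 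As written, your elimination step kills two of the three essential configurations and replaces them with impossible ones, so the proof does not go through.
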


     Note that $|I|\leq 4$ and $P$ has at most four connected components. No component of $(P,\rho)$ has base surface a disk or an annulus, as in these cases at least one of the product annuli is non-essential.

    \begin{proof}
Our goal is to reconstruct $T$ and $(Y,\gamma)$ be regluing along the minimal family of essential product annuli.

    We first examine the case in which $T_4$ has some self gluing. Note that in order to ensure each boundary component of $\partial Y$ has a suture, adjacent sutures cannot be glued. Suppose two non-adjacent sutures are glued. Observe that one can obtain an embedded Klein bottle in $Y_T$  by considering the image of an annulus $A$ cobounding the glued sutures. It follows that there is an embedded Klein bottle in $Y_T$ and hence in $S^3$, a contradiction.

   We may thus assume that $T_4$ has no self gluing.  Moreover, at least two of $T_4$'s sutures are glued as otherwise $(Y_T, \gamma_T)$ would not satisfy the properties listed in Remark \ref{rmk:conditions}. In particular it would contains three parallel sutures. We proceed now by cases according to the number of sutures of $T_4$ that are glued to $(P,\rho)$.

   \textbf{Two of $T_4$'s sutures are glued:} Suppose that $(P,\rho)$ is not connected. Let $(P_1,\rho_1)$ and $(P_2,\rho_2)$ be the two components. Note that $\rho_1$ and $\rho_2$ each cobound a surface with either of the remaining sutures of $T_4$. Thus, since neither $(P_i,\rho_i)$ has base surface a disk or annulus, we have that at least one of these surfaces does not contain $\gamma$ and is not an annulus, contradicting point $4$ in Remark~\ref{rmk:conditions}.
   
   Suppose the two glued sutures are glued to the same connected component of $(P,\rho)$. There are two possibilities:\begin{enumerate}
       \item  Two sutures on $T_4$ that are not separated by another suture on $T_4$ are glued to $R_\pm(P,\rho)$, as shown in Figure~\ref{fig:gluinghandle}.
       \item Two sutures on $T_4$ that are separated by another suture on $T_4$ are glued to $\rho$, as shown Figure~\ref{fig:gluinghandle2}. 
   \end{enumerate}
   
   Suppose we are in the first case. We claim that the sutured manifold obtained with the sutures on the outside of the handle is identical to one with the sutures on the inside of the handle. To see this note that for a surface $\Sigma$ we define a diffeomorphism $\phi$ which interchanges any two pairs of essential simple closed curves. Consider the diffeomorphism $\phi\times\mathbbm{1}$ on $\Sigma\times[-1,1]$. Thus we may define a diffeomorphism which interchanges the core and the co-core of the $T_4$ handle we are attaching. One can then readily see that this sutured manifold is diffeomorphic to the standard one. Thus, assuming sutured clasp exteriors only arise as the exteriors of clasps -- a fact we will prove in Lemma~\ref{lem:suturedclaspext} -- we are exactly in case (2) of the statement of the Proposition.

\begin{center}   
     \begin{figure}[h]
 \includegraphics[width=5cm]{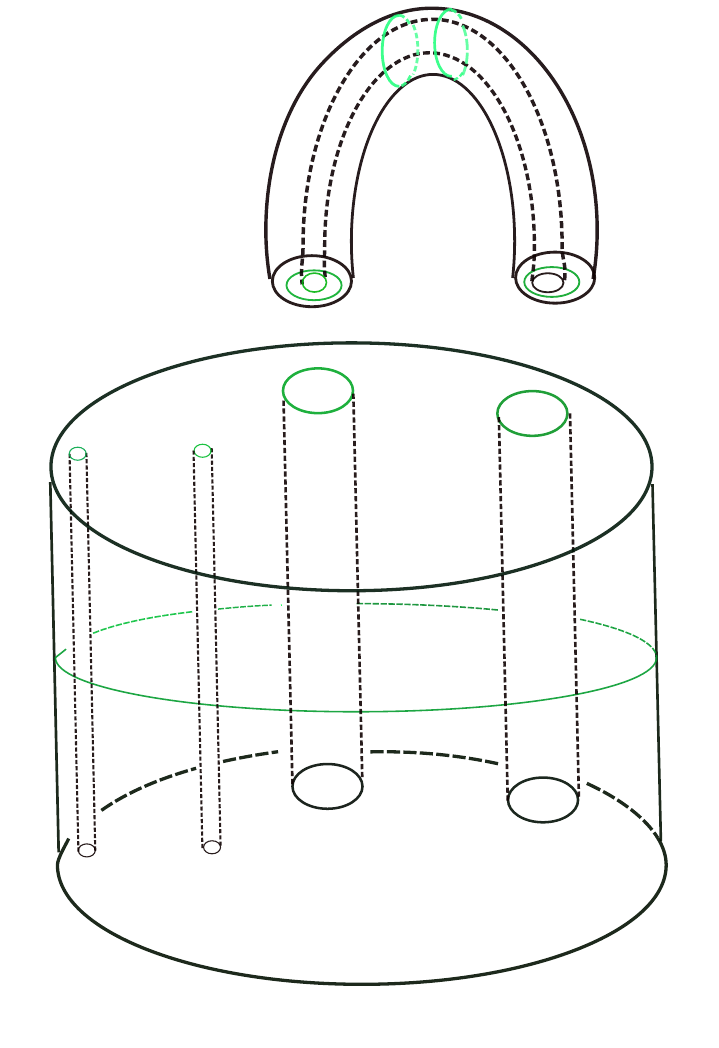}
    \caption{Gluing $T_4$ to a connected product sutured manifold. Note that in general the product sutured manifold can have genus.}\label{fig:gluinghandle}
\end{figure}
 
\end{center}

Suppose now that we are in the second case: that $T_4$ is glued to $(P,\rho)$ along two sutures that are separated by another suture in $\partial(T_4)$. Suppose one of these sutures in $T_4$ is $\gamma$ -- i.e. the suture corresponding to the knot $K$. Then in $S^3$, $K$ bounds a disk which $L$ punctures exactly once -- i.e. $L\setminus K$ has exactly one component, $L_1$, and $K$ is a meridian of $L_1$. It follows that $(Y_T,\gamma_T)$ is the exterior of $L_1$ endowed with a pair of meridional sutures. This cannot have sutured Floer homology of rank $2$, since knots in $S^3$ have sutured Floer homology of odd rank, a contradiction.

If neither of the remaining components of $T_4$'s sutures are $\gamma$ then two meridians of $T$ are homologous, a contradiction unless they are from the same component $t$ of $T$. This is impossible since they represent different homology classes in $H_1(\partial Y_T)$, apart from in the special case that $(P_i,\rho_i)$ has base surface an annulus for some $i$, which we are excluding by assumption.

\begin{center}
 
     \begin{figure}[h]
 \includegraphics[width=6.2cm]{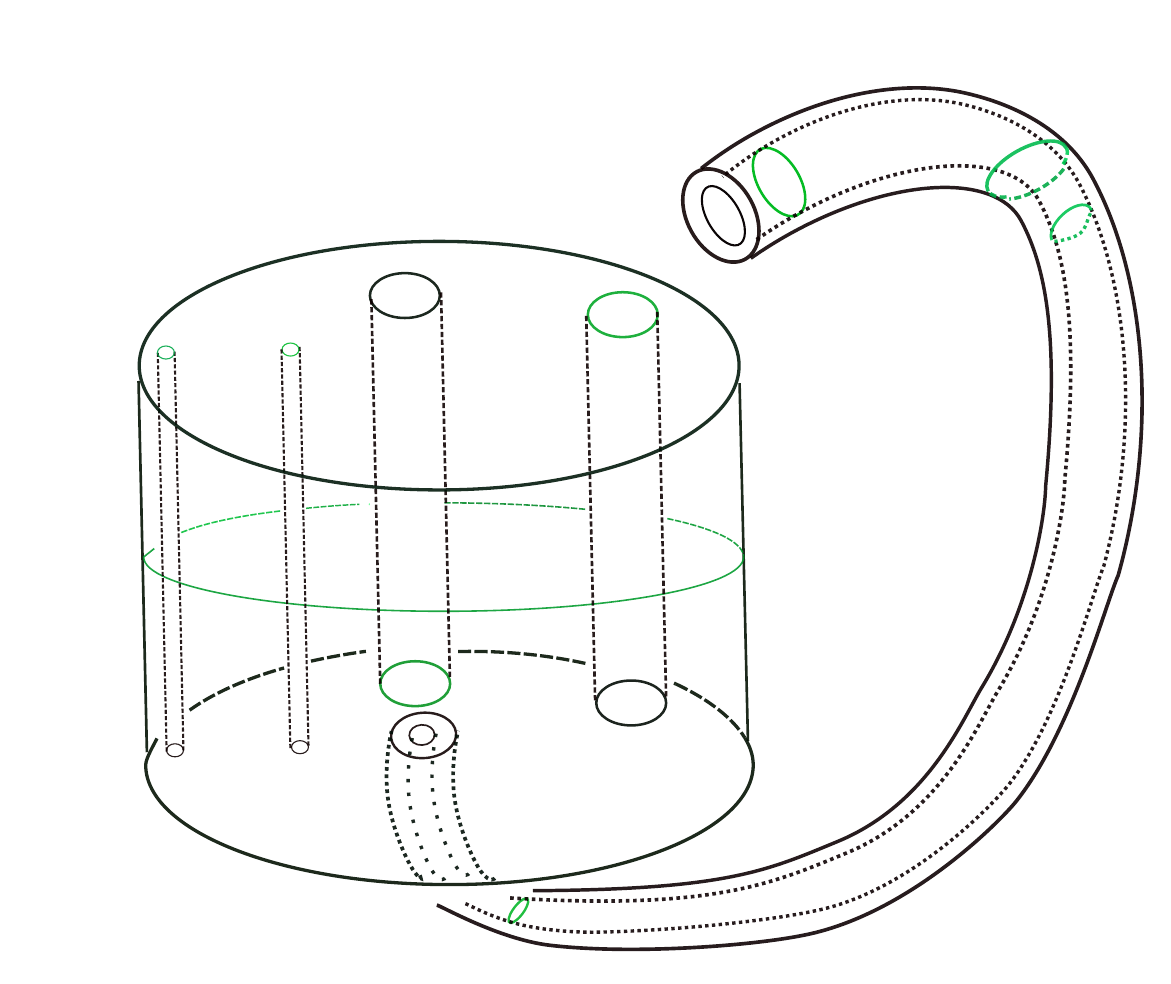}
    \caption{Gluing $T_4$ to a connected product sutured manifold. Note that in general the product sutured manifold may have genus.}\label{fig:gluinghandle2}
   \end{figure}
\end{center}

\textbf{Three of $T_4$'s sutures are glued:} Suppose $P$ is connected. Then we are in case $(3)$ of the statement of the Proposition.

Suppose that $P$ is not connected. Let $(P_1,\rho_1)$ be a connected component glued to $T_4$ along a single suture. Observe that the core of $P_1$ cobounds a surface with the remaining suture of $T_4$. Since this surface cannot be an annulus, it must contain $\gamma$, but this contradicts the assumption that $R_\pm(\gamma_T)$ are Euler characteristic maximizing.

   \textbf{All of $T_4$'s sutures are glued:} Then $\gamma_T$ contains no pairs of parallel sutures. It follows that each component $t$ of $T$ has $t\cap R_\pm(\gamma)\neq\emptyset$. Note that $T_4$ admits a sutured Heegaard diagram with exactly one $\alpha$ curve. In particular, Lemma~\ref{lem:firstsymmetricproduct} implies that $(Y,\gamma)$ is an almost product sutured manifold and $T$ is a braid in $(Y,\gamma)$  unless $(P,\rho)$ has a component $(P_1,\rho_1)$ with base surface $S$ a multi-punctured disk and $\gamma\not\in\rho_1$, so that we are in case $(1)$ of the statement of the Proposition. We may thus assume we have such a component $(P_1,\rho_1)$.

Suppose $(P,\rho)$ has at least three components. Observe that at least one component is glued along a single product annulus to $T_4$, in addition to $(P_1,\rho_1)$. Let $(P_2,\rho_2)$ be one such component. Suppose $(P_1,\rho_1)$ and $(P_2,\rho_2)$ are glued to $T_4$ along sutures $\gamma_1$ and $\gamma_2$. Observe that the curves $(s(\rho_1)\cup s(\rho_2))\cup(s(\gamma_1)\cup s(\gamma_2))$ cobound a surface in $(Y_T,\gamma_T)$. It follows that $\gamma\in \rho_2$, as else a collection of meridians of arcs in $T$ cobound a surface. We thus have a contradiction as $R_+(\gamma)$ is supposed to be Euler characteristic maximizing.

   Suppose now that $(P,\rho)$ has exactly two components, $(P_1,\rho_1)$ and $(P_2,\rho_2)$. Consider first the case in which $(Y_T,\gamma_T)$ is simplified, i.e. that $P_1$ is an annulus, so that we may take $(P,\rho)=(P_2,\rho_2)$.  It follows that we can reduce to the case that $T_4$ has three sutures glued and $(P,\rho)$ has connected base (but $R_+(\gamma_T)$  may no longer an Euler characteristic maximizing surface for $\gamma$). The non simplified case follows directly from Lemma~\ref{lem:cabling}.

\end{proof}

We now turn to proving that sutured clasp exteriors arise only as the sutured exteriors of clasps.

\begin{lemma}\label{lem:suturedclaspext}
    Suppose $(Y_T,\gamma_T)$ is a sutured clasp-braid exterior. Then $(Y,\gamma)$ is a product sutured manifold and $T$ is a clasp-braid.
\end{lemma}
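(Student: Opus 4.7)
The plan is to combine a direct topological computation with the symmetry argument already sketched in the proof of Proposition~\ref{lem:nocomponentstep1}. First, by hypothesis $(Y_T,\gamma_T)$ has the structure of a sutured exterior of a clasp-braid, which means that after decomposition along a minimal family of essential product annuli it has the form $T_4\sqcup(P,\rho)$ with $(P,\rho)$ a product sutured manifold and exactly two adjacent sutures of $T_4$ identified with sutures of $(P,\rho)$, as in the first subcase of ``Two of $T_4$'s sutures are glued'' in Proposition~\ref{lem:nocomponentstep1}. The task is then to reconstruct $(Y,\gamma)$ and $T$ from this data via Remark~\ref{rem:tanglerecovery} and to verify that $(Y,\gamma)$ is a product sutured manifold and $T$ a clasp-braid.

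Second, I would dispose of the base case by a direct computation: the sutured exterior of the standard clasp tangle in the product sutured manifold $(D^2\times[-1,1],\partial D^2\times\{0\})$ is exactly $T_4$. Drilling out the two clasp arcs from the sutured ball yields a solid torus (the clasp configuration is precisely what makes the complement a solid torus rather than a genus-two handlebody one would get from two unlinked arcs), and the original suture of the ball together with the two meridional sutures coming from the tangle arcs (one per arc, by Definition~\ref{sutured tangles}) assemble to four parallel longitudinal curves on this torus. Next, I would establish uniqueness of this identification by the self-diffeomorphism argument from the proof of Proposition~\ref{lem:nocomponentstep1}: a diffeomorphism of the form $\phi\times\mathbbm{1}$ on $\Sigma\times[-1,1]$, where $\phi$ is a homeomorphism of a once-punctured torus interchanging two pairs of essential simple closed curves, descends to a sutured self-diffeomorphism of $T_4$ that swaps the role of the core and co-core of the $T_4$ handle. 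Consequently, the two a priori different ways of pairing the parallel sutures of $T_4$ into arc-pairs both yield the same sutured tangle pair up to diffeomorphism.

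Finally, I would assemble: the braid strands of $T$ are recovered from the unglued sutures of $(P,\rho)$ via Remark~\ref{rem:tanglerecovery} (each pair of parallel sutures gives a strand); the two clasp strands are recovered from the two unglued parallel sutures of $T_4$ by the base case of the previous paragraph. The ambient $(Y,\gamma)$ is then the gluing of the product sutured ball that houses the clasp to $(P,\rho)$ along the essential annulus coming from the two glued sutures of $T_4$. Since the gluing of two product sutured manifolds along a compatible product annulus is again a product sutured manifold, $(Y,\gamma)$ is a product sutured manifold, and by construction $T$ decomposes along the image of this annulus into a braid part and a clasp part, so $T$ is a clasp-braid. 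The main obstacle is the uniqueness step in paragraph two: one must check that the core/co-core symmetry of $T_4$ is genuinely a sutured diffeomorphism and that every choice of interpretation of the four parallel sutures as two arc-pairs gives the same (clasp) tangle up to diffeomorphism; with this in hand, the assembly step is routine.
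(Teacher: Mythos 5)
Your proposal misses the actual content of this lemma. The difficulty is not the core/co-core symmetry of $T_4$ or the assembly of product pieces; it is that the clasp contains an arc $t$ with $\partial t\subseteq R_+(\gamma)$, and by Definition~\ref{sutured tangles} such an arc contributes \emph{no} suture to $\gamma_T$. It therefore cannot be recovered by the procedure of Remark~\ref{rem:tanglerecovery} --- this is precisely the caveat stated immediately after that remark, which the present lemma is meant to resolve. Your assembly step asserts that ``the two clasp strands are recovered from the two unglued parallel sutures of $T_4$,'' but a pair of parallel sutures corresponds to a \emph{single} tangle component (the arc with $\partial t\subseteq R_-(\gamma)$); the second clasp arc is invisible in $(Y_T,\gamma_T)$, and nothing in your argument constrains where it sits. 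A priori the neighbourhood of that arc could be reglued along a different annulus in $\partial Y_T$, yielding a tangle that is not a clasp or an ambient $(Y,\gamma)$ that is not a product, so the conclusion does not follow. Relatedly, your base-case computation is off: the complement of the two clasp arcs in the ball has genus-two boundary (it becomes $T_4$ only after further decomposition along product annuli and discarding product pieces, cf.\ Figure~\ref{fig:example}), and Definition~\ref{sutured tangles} does not assign ``one meridional suture per arc'' to arcs with both endpoints on one side --- it assigns a pair to arcs with $\partial t\subseteq R_-(\gamma)$ and none to arcs with $\partial t\subseteq R_+(\gamma)$.

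The paper's proof confronts the invisible arc directly: it replaces the longitudinal surface by an isotopic one (Figure~\ref{fig:isotopiclongitudes}) so that the arc with $\partial t\subseteq R_+(\gamma)$ acquires a pair of parallel meridional sutures in the new decomposition $(Y',\gamma')$, at the cost of raising the rank of sutured Floer homology to $4$. It then fills in the through-strand sutures, decomposes along a maximal family of product annuli, and uses the fact that in the resulting once-punctured torus $R_\pm$ homologous simple closed curves are isotopic --- together with an isotopy argument identifying the two candidate essential product annuli, which relies on irreducibility and atoroidality of the complement --- to force the new pair of meridians, and hence the invisible arc, into the standard clasp position. Some argument of this kind, pinning down the component of $T$ that leaves no trace in $\gamma_T$, is indispensable; without it the lemma is not proved.
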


\begin{proof}
    Observe that by decomposing $(S^3_L,\gamma_L)$ along a different (but isotopic, see Figure~\ref{fig:isotopiclongitudes}) longitudinal surface we can obtain a sutured manifold $(Y',\gamma')$ which is identical to $(Y_T,\gamma_T)$ except that it has a pair of parallel meridional sutures on the boundary component corresponding to $t\subset T$ with $\partial t \subseteq R_+(\gamma)$. Moreover $\rank(\SFH(Y',\gamma'))=4$. Fill in the sutures of $(Y',\gamma')$ corresponding to $t\subset T$ with boundary components in both $R_+(\gamma)$ and $R_-(\gamma)$ to obtain a sutured manifold $(Y'',\gamma'')$. Decompose $(Y'',\gamma'')$ along a maximal family of product annnuli and remove the connected components consisting of product sutured manifolds. The resulting sutured manifold, $(Y''',\gamma''')$, has $g(\partial Y''')\leq 3$ by Lemma~\ref{lem:trivialinclusionlinks}. Now observe that $(Y''',\gamma''')$ contains two pairs of parallel sutures and that removing either pair and decomposing along the annuli produced by Lemma~\ref{lem:trivialinclusionlinks} yields the disjoint union of a product sutured manifold and a sutured manifold $(Y_0,\gamma_0)$ with $g(\partial Y_0)\leq 1$. Note that $T$ is irreducible since it contains no closed components, so that $g(\partial Y_0)\neq0$. Using the arguments from the proof of Theorem~ \ref{lem:nocomponentstep1} we find that $(Y_0,\gamma_0)$ is a sutured clasp exterior. It follows that $(Y''',\gamma''')$ is a sutured clasp exterior with an extra pair of parallel sutures or a sutured clasp exterior glued to a once punctured annulus again with an extra pair of parallel sutures.

    In the first case, since in the punctured torus $R_\pm(\gamma''')$ simple closed curves representing the same homology class are isotopic, we have that the pair of parallel meridians are the expected curves --- as shown in the lower right hand side of Figure~\ref{fig:isotopiclongitudes} --- concluding the proof in this case.

    In the second case let $\gamma_1,\gamma_2$ be the sutures in $(Y''',\gamma''')$ without parallel copies. Observe that there is an essential product annulus $A$ in the sutured manifold obtained by removing from $(Y''',\gamma''')$ either of the two pairs of parallel sutures in $\partial Y'''$. We claim these two annuli, which we call $A_1$ and $A_2$, are isotopic.

   To do so we first show that $\partial_\pm A_1$ and $\partial_\pm A_2$ are isotopic in $R_\pm((Y''',\gamma_1\cup\gamma_2))$ respectively. Observe that without loss of generality $\partial_-A_1$ separates the pair of parallel meridional sutures from $\gamma_1\cup \gamma_2$ in $R_-((Y''',\gamma_1\cup\gamma_2))$ while $\partial_+A_2$ separates the pair of parallel meridional sutures from $\gamma_1\cup \gamma_2$ in $R_+((Y''',\gamma_1\cup\gamma_2))$. Now observe that the homotopy classes of $\partial_+A_1$ in $\pi_1(R_+(Y''',\gamma_1\cup\gamma_2))$ and $\partial_+A_2$ in $\pi_1(R_-(Y''',\gamma_1\cup\gamma_2))$ are determined by the fact that their images under the inclusion into $Y'''$ agree with $\partial_-A_{1}$ and $\partial_+A_{2}$ respectively. The claim follows, so we may take $A_1$ and $A_2$ to have shared boundary.
    
    We now show that the interiors of $A_1$, $A_2$ can be isotoped so that they do not intersect. To do so observe that after a small perturbation we may assume that $A_1$ and $A_2$ intersect in a finite collection of circles. We proceed to remove these intersections. There are two types of intersections: homologically essential curves in $A_1$ and homologically inessential curves in $A_1$.
    
    Suppose there is a component of $A_1\cap A_2$ that is homologically inessential in $A_1$. Pick an innermost such component $c$. Note that $c$ cannot be homologically essential in $A_2$ as else $\partial_\pm A_2$ would be null-homotopic. $c$ is thus homologically inessential in $A_2$. But then there are disks $D_i\subset A_i$ with $D_1\cap D_2=c$. Since $Y'''$ is irreducible it follows that $D_1\cup D_2$ bounds a solid ball which we can use to isotope away the intersection $c$.

    Suppose there is a component of $A_1\cap A_2$ that is homologically essential in $A_1$. Pick such a component that is closest to $\partial_+A_1=\partial_+A_2$, $c$. Observe that $c$ must be homologically essential in $A_2$ as else $\partial_+A_1=\partial_+A_2$ would be null-homotopic. Consider the annuli cobounded by $c$ and $\partial_+A_1$ in $A_1$ and $c$ and $\partial_+A_2$ in $A_2$. The union of these two annuli is an embedded torus in $Y'''$. Since $Y'''$ is atoroidal this torus bounds a solid torus, which may use to remove the curve $c$ via an isotopy of $A_1$ and $A_2$.
    
    Now, having removed all of the intersections between the interiors of $A_1$ and $A_2$ and ensuring that $\partial A_1=\partial A_2$ by an isotopy, we may consider the torus obtained as $A_1\cup A_2$. Again using the fact that $Y'''$ is atoroidal we see that $A_1$ and $A_2$ are isotopic. It follows that $A_1$ and $A_2$ are essential product annuli in $(Y''',\gamma''')$ and we can decompose $(Y''',\gamma''')$ along either one to reduce to the first case, completing the proof.

    \end{proof}

    \begin{center}
        \begin{figure}[h!]
            \centering
            \includegraphics[width=10cm]{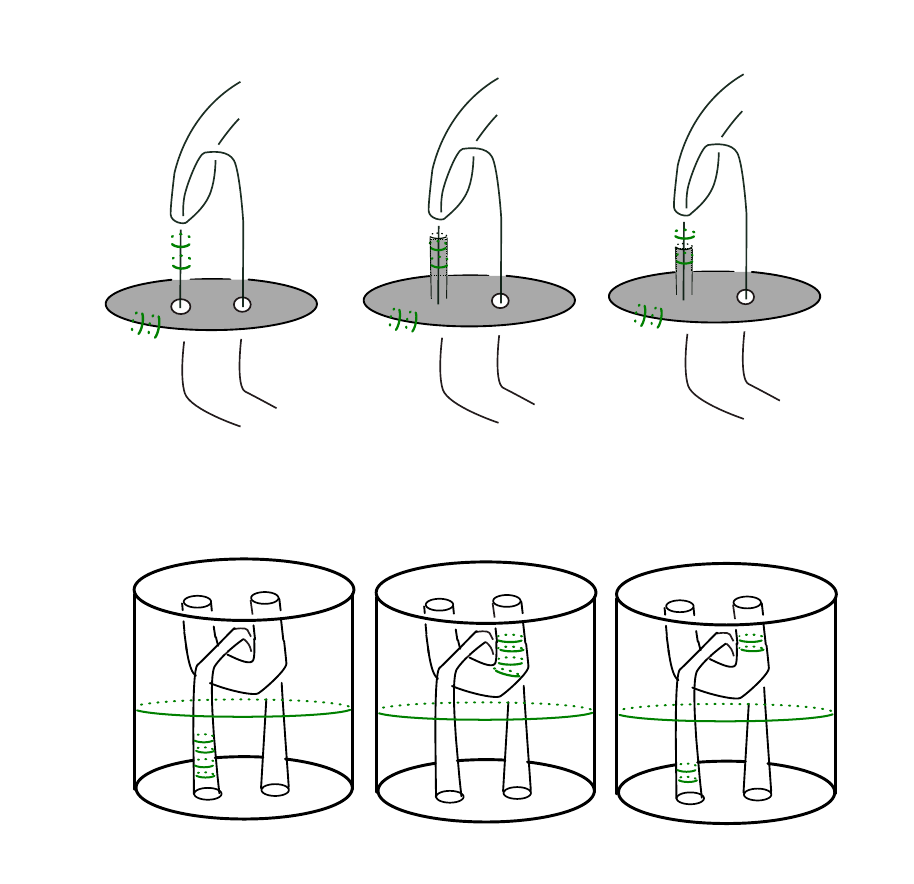}
            \caption{Three isotopic longitudinal surfaces for a component $K$ of a link $L$ together with the three sutured manifolds obtained by decomposing the exterior of $L$ along the three surfaces. Here, for simplicity, the link is a two component link. The green curves indicate the sutures.}
            \label{fig:isotopiclongitudes}
        \end{figure}
    \end{center}

\begin{proof}[Proof of Proposition~\ref{prop:nocomponentlongsurface}]
    Let $(Y_T,\gamma_T)$ be as in the statement of the Proposition. Since $(Y_T,\gamma_T)$ is taut, $\rank(\SFH(Y_T,\gamma_T))>0$ by~\cite[Proposition 9.18]{juhasz2006holomorphic}. If $\rank(\SFH(Y_T,\gamma_T))=1$ then $(Y_T,\gamma_T)$ is a product sutured manifold by a Theorem of Juh\'asz~\cite[Theorem 9.7]{juhasz2008floer}, so that we are in case $(1)$ in the statement of the Proposition.
    
    It remains to determine the sutured manifolds $(Y_T,\gamma_T)$ with $\rank(\SFH(Y_T,\gamma_T))=2$. Suppose $(Y_T,\gamma_T)$ is not horizontally prime. Then there exists a horizontal surface $\Sigma$ that is not parallel $R_\pm(\gamma_T)$. Decomposing $(Y_T,\gamma_T)$ along $\Sigma$ yields a disjoint union of two sutured manifolds $(Y_1,\gamma_1))\sqcup(Y_2,\gamma_2)$. We then have that\begin{align*}
        \SFH(Y_1,\gamma_1)\otimes\SFH(Y_2,\gamma_2)\cong\SFH(Y_T,\gamma_T)\cong\F^2.
    \end{align*}

    It follows, without loss of generality, that $\rank\SFH(Y_1,\gamma_1)=1$ so that $(Y_1,\gamma_1)$ is a product sutured manifold~\cite[Theorem 9.7]{juhasz2008floer}, contradicting the fact that $\Sigma$ is not parallel $R_\pm(\gamma)$. Thus $(Y_T,\gamma_T)$ is horizontally prime. 
  
  Suppose $(Y_T,\gamma_T)$ is reducible. Then by Equation~\ref{eq:connectsum} we have that there are sutured manifolds $(Y_1,\gamma_1)),(Y_2,\gamma_2)$ such that \begin{align*}
        \F^2\cong\SFH(Y_T,\gamma_T)\cong V\otimes\SFH(Y_1,\gamma_1)\otimes\SFH(Y_2,\gamma_2)
  \end{align*}

  where $V$ is a rank two vector space. It follows that $(Y_i,\gamma_i)$ are both product sutured manifolds. The connect sum of two such manifolds has disconnected boundary, a contradiction.
  
  Sutured manifolds $(Y_T,\gamma_T)$ with $\rank(\SFH(Y_T,\gamma_T))=2$ are by definition almost product sutured manifolds. By applying Proposition~\ref{prop:allproductannuli} repeatedly -- and noting that the resulting sutured manifolds can have rank at most two and hence sutured Floer homology of dimension at most one -- $(Y_T,\gamma_T)$ can decomposed along a family of product annuli $\{A_i\}_{i\in I}$ into a product sutured manifold and non-product pieces $\{(Y_i,\gamma_i)\}_{1\leq i\leq n}$ with $\dim (P(Y_i,\gamma_i))=b_1(\partial Y_i)/2$. If there are no non-product pieces we are in case $(1)$ of the statement of the proposition and we are done. We may this assume that $n\geq 1$. Since decomposing along product annuli does not increase the rank of sutured Floer homology by~\cite[Proposition 8.10]{juhasz2008floer}, it follows that $0<\prod_{1\leq i\leq n}(\rank(\SFH(Y_i,\gamma_i)))\leq 2$. It follows that $\SFH(Y_i,\gamma_i)=1$ for all but at most one $1\leq i\leq n$ so that $(Y_i,\gamma_i)$ is a product sutured manifold~\cite[Theorem 9.7]{juhasz2008floer} for all but at most one $1\leq i\leq n$, a contradiction unless $n=1$ and $\rank(\SFH(Y_1,\gamma_1))>1$. We can thus take $\rank(\SFH(Y_1,\gamma_1))=2$. It follows that $\dim(P(Y_1,\gamma_1))\leq 1$, so that $b_1(\partial Y_1)/2\leq 1$. Thus in turn we have that $\partial Y_1$ is a sphere or a torus. $\partial Y_1$ is not a sphere since we are assuming that $Y_T$ is irreducible. If $\partial Y_1$ a torus then, by Theorem~\ref{thm:mailinknrankclass}, $Y_1$ is one of the following three pieces up to mirroring:

    \begin{enumerate}
        \item A solid torus with $4$-parallel longitudinal sutures.
        \item A solid torus with parallel oppositely oriented sutures of slope $2$.
\item The exterior of a right handed trefoil with two parallel oppositely oriented sutures of slope $2$.
\end{enumerate}
The result now follows directly by Lemma~\ref{lem:nocomponentstep1} and Lemma~\ref{lem:nocomponentstep2}. 
\end{proof}

Figure~\ref{fig:example} gives an example of a sequence of decompositions along essential product annuli, as described in the proof above.

\begin{figure}
    \centering
    \includegraphics[width=5in]{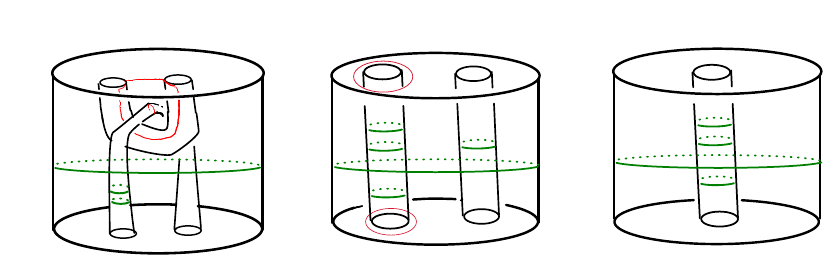}
    \caption{The left most sutured manifold is an elementary clasped sutured piece with sutured Floer homology of rank $2$. The middle sutured manifold is obtained from the first by decomposing along the product annulus with boundary given by the red curves in the first picture. The sutured manifold on the right is obtained by decomposing along the product annulus with boundary given by the red curves in the middle sutured manifold, and removing the product sutured piece. Notice that the sutured manifold on the right is a solid torus with 4 parallel longitudinal sutures. }
    \label{fig:example}
\end{figure}

We can now derive Theorem~\ref{thm:mainbraid} from Proposition~\ref{thm:maintangle}. We remind the reader that we have only proved the first base case of Proposition~\ref{thm:maintangle} thus far, and that remaining subsections are dedicated to completing the proof.

\begin{proof}[Proof of Theorem~\ref{thm:mainbraid}]
    Suppose $K$ and $L$ are as in the statement of the theorem. Let $\Sigma$ be a maximal Euler characteristic longitudinal surface for $K$. Let $(Y,\gamma)$ be the sutured complement of $\Sigma$ and $T$ be the tangle in $(Y,\gamma)$ given by the image of $L-K$ in $(Y,\gamma)$. Observe that if $T$ has $n'$ closed components then $\rank(\SFH(Y_T,\gamma_T))\leq 2^{n'+1}$. It follows that $(Y,\gamma)$ and $ T$ are as in the statement of Proposition~\ref{thm:maintangle}.

    Aside from in Case~\ref{case:casestabilized} in the statement of Proposition~\ref{thm:maintangle}, the result follows directly by regluing $R_+(\gamma)$ and $R_-(\gamma)$. Case~\ref{case:casestabilized} is more subtle; we need to show that the diffeomorphism identifies the core of the stabilizing handle in $R_+(\gamma)$ with the cocore of the stabilizing handle in $R_-$ and vice-versa. To do so first fill in all of the sutures of $(Y_T,\gamma_T)$ save for $\gamma$ and exactly one of those corresponding to a tangle component that passes through the stabilizing handles. Now reglue $R_\pm(\gamma)$ by a diffeomorphism $\phi$ which is isotopic to the diffeomorphism $R_+(\gamma)\to R_-(\gamma)$ which recovers $L$, when one forgets the basepoints corresponding to the tangle. This yields the sutured exterior of a new link $L'$ in $S^3$, one component of which is isotopic to $K$. Observe that, for $L'$, $K$ has another maximal Euler characteristic longitudinal surface, $\Sigma'$, which has genus one less than $g(R_\pm(\gamma'))$, but two more punctures. Decompose $(S^3_{L'},\gamma_{L'})$ along $\Sigma'$. After removing excess parallel sutures, the resulting sutured manifold $(Y',\gamma')$ has sutured Floer homology of rank $2$. By the arguments given in the proof of Proposition~\ref{lem:nocomponentstep1}, $(Y',\gamma')$ is a clasp-braid and $L'\setminus K$ is a clasp-braid closure with respect to $K$. It follows in turn that $\phi$ has the desired action on the components of $R_\pm(\gamma_T)$ corresponding to the stabilizing handles on the stabilized product sutured manifold.
\end{proof}

\subsection{The second base case of Theorem~\ref{thm:maintangle}}\label{subsec:2ndbasecase} We now prove Theorem~\ref{thm:mainbraid} in the case that a maximal Euler characteristic longitudinal surface for $K$ intersects all but one component of $L-K$. We continue to follow the notation of the previous subsection. However, we additionally assume that the maximal Euler characteristic longitudinal surface for $K$ that we decompose along has a maximal number of boundary components.
\begin{proposition}\label{Prop:atmostcomponentintheinterior}
  Suppose that $(Y_T,\gamma_T)$ is taut, irreducible, $T$ contains a single closed component, $t$, and $\rank(\SFH(Y_K,\gamma_K))\leq 4$. Then $(Y,\gamma)$ is a product sutured manifold ${(\Sigma\times[-1,1],\partial\Sigma\times[-1,1])}$, $T-t$ is a braid and up to isotopy, $t\subset \Sigma\times\{0\}$ and $t$ does not bound a disk in $(Y_T,\gamma_T)$.
\end{proposition}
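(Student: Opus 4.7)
The plan is to reduce the argument to the (already established) case of a tangle with no closed components, namely Proposition~\ref{prop:nocomponentlongsurface}. Set $T' := T \setminus t$, a tangle with no closed components. The boundary component of $\partial Y_T$ corresponding to $t$ is a torus $\partial\nu(t)$ carrying a pair of parallel meridional sutures; this matches the setup of Lemma~\ref{lem:spectral} with $t$ playing the role of $K$. Meridional Dehn filling of $\partial\nu(t)$ (i.e.\ capping off one meridional suture with a disk and filling the resulting spherical component with a $3$-ball) fills $\nu(t)$ back in, producing the sutured tangle exterior $(Y_{T'},\gamma_{T'})$. The spectral sequence therefore yields
\[
\rank \SFH(Y_{T'},\gamma_{T'}) \;\leq\; \tfrac{1}{2}\rank \SFH(Y_T,\gamma_T)\;\leq\;2.
\]
Since $t$ lies in the interior of $Y$, the surfaces $R_\pm(\gamma_{T'})$ agree with $R_\pm(\gamma_T)$ after capping off the two annuli on $\partial\nu(t)$, so $(Y_{T'},\gamma_{T'})$ inherits tautness and irreducibility from $(Y_T,\gamma_T)$. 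Proposition~\ref{prop:nocomponentlongsurface} then applies and produces one of four possible structures for $((Y,\gamma),T')$.

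The main technical obstacle is to rule out the three ``non-product'' structures (Cases (2), (3), (4) of Proposition~\ref{prop:nocomponentlongsurface}). In each such case $\rank \SFH(Y_{T'},\gamma_{T'})=2$, so the spectral sequence must collapse and $\rank\SFH(Y_T,\gamma_T)=4$ on the nose. The plan is to use Proposition~\ref{Thm:Juhaszaffine} together with the explicit graded computations carried out in Subsection~\ref{subsec:geography} (Lemmas~\ref{lem:computefiberedbraideddecomposed}, \ref{lem:nearlybraidedexterior}, and~\ref{lem:stabilizableclaspcomp}) to examine the sutured polytope of $(Y_T,\gamma_T)$. Concretely, the meridian class $[\mu_t]\in H_1(Y_T)$ is linearly independent from all $H_1$ classes already appearing in $P(Y_{T'},\gamma_{T'})$: the loop $t$ is null-homologous in $Y$ by an Alexander-duality computation analogous to the one in Lemma~\ref{lem:trivialinclusionlinks}, whereas $[\mu_t]$ is not null-homologous in $Y_T$. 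Reinserting $t$ therefore lifts $P(Y_{T'},\gamma_{T'})$ to a polytope of strictly higher dimension, and a vertex count forces $\rank\SFH(Y_T,\gamma_T)>4$, contradicting the hypothesis.

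With only Case~(1) surviving, $(Y,\gamma)=(\Sigma\times[-1,1],\partial\Sigma\times[-1,1])$ is a product sutured manifold and $T-t$ is a braid. It remains to describe $t$ inside this product. First, $t$ cannot bound a disk in $(Y_T,\gamma_T)$: such a disk would provide a compressing disk for $\partial\nu(t)$ in $Y_T$ whose boundary is a meridian of $t$, parallel to the two sutures of $\partial\nu(t)$; capping off with one of the meridional annuli gives a sphere in $Y_{T'}$ that, by the irreducibility of $(Y_T,\gamma_T)$ inherited by $(Y_{T'},\gamma_{T'})$, must bound a ball and hence forces $t$ to be an unknotted split component, which is excluded. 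Second, I claim $t$ is isotopic into a horizontal slice $\Sigma\times\{0\}$. The product $\Sigma\times[-1,1]$ deformation retracts to $\Sigma\times\{0\}$, so $t$ is homotopic to a curve there; the obstruction to promoting the homotopy to an ambient isotopy disjoint from the braid $T-t$ is detected by sutured Floer homology, since a genuine (non-horizontal) knotting of $t$ in the product produces additional intersection points in the associated sutured Heegaard diagram (via an analysis analogous to the ones in Subsection~\ref{subsec:geography}), pushing $\rank\SFH(Y_T,\gamma_T)$ past $4$. This completes the proof.
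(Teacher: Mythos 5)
Your overall strategy -- forget $t$, classify $(Y,\gamma)$ and $T'=T-t$ via Proposition~\ref{prop:nocomponentlongsurface}, then pin down how $t$ sits inside -- is reasonable in outline, but the two steps that carry all the content have genuine gaps.

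First, the argument ruling out cases (2)--(4) of Proposition~\ref{prop:nocomponentlongsurface} does not produce a contradiction. Even granting that $[\mu_t]$ is independent of the classes spanning $P(Y_{T'},\gamma_{T'})$, a collapsed spectral sequence only tells you that the four generators of $\SFH(Y_T,\gamma_T)$ project to the support of $\SFH(Y_{T'},\gamma_{T'})\otimes V$; their $[\mu_t]$-components are unconstrained, and four generators are perfectly compatible with a two-dimensional polytope (e.g.\ the four vertices of a parallelogram), so no ``vertex count'' forces rank $>4$. Worse, the span of $P(Y_T,\gamma_T)$ in the $[\mu_t]$-direction is, by Theorem~\ref{thm:FLRpolytopedetection}, the sutured Thurston norm of the dual class, which can vanish even when $[\mu_t]\neq 0$; so the polytope need not grow at all. (You also assert without justification that $(Y_{T'},\gamma_{T'})$ inherits tautness -- a compressing disk for $R_\pm(\gamma_{T'})$ may pass through $t$, so this does not follow from tautness of $(Y_T,\gamma_T)$, and if $(Y_{T'},\gamma_{T'})$ is not taut your appeal to Proposition~\ref{prop:nocomponentlongsurface} is unavailable.)

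Second, the final claim -- that a non-horizontal embedding of $t$ ``produces additional intersection points in the associated sutured Heegaard diagram, pushing the rank past $4$'' -- is an assertion of the proposition's conclusion rather than a proof of it; intersection-point counts in a particular diagram do not bound homology from below. The paper's route avoids both problems: it first establishes that $(Y_T,\gamma_T)$ is horizontally prime, uses the spectral sequence only to bound $\dim P(Y_T,\gamma_T)\leq 2$, observes that the maximality of the number of boundary components of the longitudinal surface forbids any essential product annulus joining $\partial\nu(t)$ to $R_\pm(\gamma)$, and then decomposes along the product annuli supplied by Proposition~\ref{prop:allproductannuli} to reduce to a connected two-component link exterior in $S^3$ with $\SFH$ of rank at most $4$. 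Theorem~\ref{thm:mailinknrankclass} (plus a linking-number argument to exclude the slope-$0$ case) identifies this as a Hopf link exterior with meridional sutures, and regluing the product pieces is what simultaneously forces $(Y,\gamma)$ to be a product, $T'$ to be a braid, and $t$ to lie in a page. Some version of this decomposition step, or a substitute for it, is needed; it cannot be bypassed by the polytope and Heegaard-diagram heuristics as written.
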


\begin{proof}

Suppose $(Y,\gamma)$, and $T$ are as in the statement of the Proposition. Suppose $(Y_T,\gamma_T)$ is not horizontally prime. Then there exists a horizontal surface $\Sigma$ that is not parallel $R_\pm(\gamma_T)$. Decomposing $(Y_T,\gamma_T)$ along $\Sigma$ and removing a pair of excess parallel sutures yields the disjoint union of two sutured manifolds $\{(Y_i,\gamma_i)\}_{1\leq i\leq 2}$. Since, $\Sigma$ is horizontal we have that\begin{align*}
        \SFH(Y_1,\gamma_1)\otimes\SFH(Y_2,\gamma_2)\cong\F^2.
    \end{align*}

    It follows, without loss of generality, that $\rank(\SFH(Y_1,\gamma_1))=1$, so that $(Y_1,\gamma_1)$ is a product sutured manifold~\cite[Theorem 9.7]{juhasz2008floer}, contradicting the assumption that $(Y_T,\gamma_T)$ is horizontally prime.
    
    We thus have that $(Y_T,\gamma_T)$ is horizontally prime. Let $T'=T-t$. Recall from Subsection~\ref{subsec:suturedFloerbackground} that there is a spectral sequence from $\SFH(Y_T,\gamma_T)$ to ${\SFH(Y_{T'},\gamma_{T'})\otimes V}$, where $V$ is a rank $2$ vector space. This spectral sequence collapses the (relative) $[\mu_{t}]\in H_1(Y_T)$ grading, where $\mu_t$ is a meridian of $t$. It follows that we have two pairs of generators ${\{x_1,x_2\},\{y_1,y_2\}\in\SFH(Y_T,\gamma_T)}$ such that $\epsilon(x_1-x_2)$ and $\epsilon(y_1-y_2)$ contained in ${\langle\mu_{K}\rangle\subset H_1(\partial Y_K)}$.  It follows that $\dim(P(Y_T,\gamma_T))\leq 2$.

Observe that there is no essential product annulus $A$ with one component on $\partial \nu(t)$ and the other on $R_\pm(Y,\gamma)$, as this would contradict our assumption that $\Sigma$ has a maximal number of boundary components.

Thus the same spectral sequence argument that $\dim(P(Y_T,\gamma_T))\leq 2$ applies even after decomposing $(Y_T,\gamma_T)$ along essential product annuli. In turn, by repeated applications of Proposition~\ref{prop:allproductannuli}, there is a collection of essential product annuli $\{A_i\}_{i\in I}$ in $(Y_T,\gamma_T)$ such that after decomposing $(Y_T,\gamma_T)$ along $\{A_i\}_{i\in I}$ and removing product sutured manifold components we are left with a sutured manifold $(Y',\gamma')$ with $g(\partial Y') \leq 2$.

If $(Y',\gamma')$ is disconnected, then since none of the essential product annuli $\{A_i\}_{i\in I}$ we have decomposed along had boundary components on both the component of $\partial Y_T$ corresponding to $t$ and the component of $\partial Y_T$ corresponding to the longitudinal surface $\Sigma$, we have that $(Y',\gamma')$ has at least one spherical boundary component. This contradicts the assumption that $(Y_T,\gamma_T)$ is irreducible.

It follows that $(Y',\gamma)$ is connected with $g(\partial(Y'))\leq2$. Indeed, $g(\partial(Y'))=2$, as else $(Y_T,\gamma_T)$ is reducible, contradicting tautness.  If there are any spherical boundary components we can fill them without increasing the rank of the sutured Floer homology. Observe that the resulting sutured manifold $(S^3_{L'},\gamma_{L'})$ is in fact a two component link exterior, with sutures on one component that may not be parallel pairs of meridians. We have that $\rank(\SFH(S^3_{L'},\gamma_{L'}))\leq 4$. It follows from Theorem~\ref{thm:mailinknrankclass} and the fact that $(Y_T,\gamma_T)$ is irreducible that $(Y_{L'},\gamma_{L'})$ is a Hopf link with meridional sutures or a Hopf link where one component has meridional sutures and the other has a pair of parallel sutures of slope $0$. In the case that there are slope zero sutures we would find that the meridian of $T$ is homologous to $[\gamma]+\sum_{t_i\in T'}[\mu_i]$ in $(Y,\gamma)$, where $[\mu_i]$ is a meridian of the $i$th component of $T'$, contradicting the fact that $t$ and $K$ have linking number zero. It follows that $(Y_{L'},\gamma_{L'})$ is a Hopf link with meridional sutures. Gluing this to a product sutured manifold yields the desired result.

\end{proof}

The version of Proposition~\ref{Prop:atmostcomponentintheinterior} in the case that $(Y_T,\gamma_T)$ is reducible follows as in Corollary~\ref{rem:reducible}:
\begin{corollary}
Let $(Y,\gamma)$ be a sutured manifold and $T$ be a tangle with a single closed component $t$. Suppose $(Y_T,\gamma_T)$ is reducible and $\rank(\SFH(Y_T,\gamma_T))\leq 4$. Then $(Y,\gamma)$, $T-t$ are as given in the statement of Proposition~\ref{prop:nocomponentlongsurface} and $t$ is a split unknot.
\end{corollary}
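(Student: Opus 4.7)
The plan is to adapt the argument of Corollary~\ref{rem:reducible} to the sutured setting, using the sutured connect sum formula. Let $S$ denote a reducing sphere in $(Y_T, \gamma_T)$; the resulting sutured connect sum decomposition $(Y_T, \gamma_T) \cong (Y_1, \gamma_1) \# (Y_2, \gamma_2)$ combined with Equation~\ref{eq:connectsum} yields
\[
2 \cdot \rank(\SFH(Y_1, \gamma_1)) \cdot \rank(\SFH(Y_2, \gamma_2)) \;=\; \rank(\SFH(Y_T, \gamma_T)) \;\leq\; 4.
\]
Hence one of the factors, which I may take to be $(Y_1, \gamma_1)$, has $\rank(\SFH) = 1$ and is therefore a product sutured manifold by Juh\'asz's detection theorem, while the other has $\rank(\SFH) \leq 2$.

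Since $S$ is disjoint from $T$, the closed component $t$ lies entirely on one side of $S$. I would next argue, by an innermost-sphere argument, that the reducing sphere can be chosen so that the ball it bounds on the $t$-side meets $T \cup (S^3 \setminus Y)$ only in $t$. Under such a choice, $(Y_1, \gamma_1)$ is identified with the sutured exterior of $t$ in $S^3$ equipped with two parallel meridional sutures, so $\SFH(Y_1, \gamma_1) \cong \widehat{\HFK}(t)$. Since $\widehat{\HFK}$ detects the unknot, combined with the rank inequality above, this forces $t$ to be the unknot; in particular $t$ is a split unknot in $Y$.

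The remaining factor is then identified with $(Y_{T-t}, \gamma_{T-t})$ and satisfies $\rank(\SFH) \leq 2$. As $T-t$ has no closed components, Proposition~\ref{prop:nocomponentlongsurface} applies to this sutured tangle exterior and produces the desired structural conclusion for $(Y, \gamma)$ and $T-t$.

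The main obstacle is the innermost-sphere step: a priori a reducing sphere might bound a ball in $S^3$ containing arc components of $T-t$ or portions of $\partial Y$, in which case the initial decomposition does not directly isolate $t$. I plan to overcome this either by choosing a reducing sphere that minimizes an appropriate complexity measure (for instance, intersection number with a reference surface carrying the arc endpoints of $T-t$), or by iterating the sutured connect sum decomposition; the rank bound of $4$ together with Equation~\ref{eq:connectsum} ensures that such iteration terminates quickly, since each non-trivial connect sum factor contributes a factor of $2$ to the total rank and consumes one of the (finitely many) reducing spheres.
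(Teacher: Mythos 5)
Your proposal is correct and follows essentially the same route as the paper: apply the connect sum formula (Equation~\ref{eq:connectsum}) to get $\rank(\SFH(Y_1,\gamma_1))\cdot\rank(\SFH(Y_2,\gamma_2))\leq 2$, identify the summand containing $t$ as a knot exterior with meridional sutures whose $\SFH$ is $\widehat{\HFK}(t)$ (forcing $t$ to be an unknot and that factor to have rank $1$), and then feed the other factor into Proposition~\ref{prop:nocomponentlongsurface}. The ``main obstacle'' you flag dissolves for a simpler reason than the ones you propose: the reducing sphere lies in the interior of $Y_T$, and $\partial Y_T$ has exactly two components --- the torus $\partial\nu(t)$ and the single component carrying $\gamma$ together with all the arc endpoints and their meridional sutures --- so any reducing sphere automatically separates $t$ from $\gamma$ and all arcs, and irreducibility of $S^3$ then identifies the $t$-side with the exterior of $t$ with no further choice of sphere needed.
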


\begin{proof}
    Suppose we are in the setting of the statement of the Corollary, and that $(Y_T,\gamma_T)$ is reducible. Then $(Y_T,\gamma_T)$ can be written as the connect sum of two sutured manifolds $(Y_1,\gamma_1)$ and $(Y_2,\gamma_2)$. By Equation~\ref{eq:connectsum}:$$\rank(\SFH(Y_1,\gamma_1))\cdot\rank(\SFH(Y_2,\gamma_2))\leq 2.$$ Without loss of generality we can take $\gamma\subset\gamma_1$. Since $S^3$ is irreducible, $\gamma_t$, the components of $\gamma_T$ corresponding to $t$, are contained in $\gamma_2$. Indeed, $(Y_2,\gamma_2)$ is a knot exterior with pairs of parallel meridional sutures. Observe that $0<\rank(\SFH(Y_2,\gamma_2))\leq 2$, so that $(Y_2,\gamma_2)$ is an unknot exterior with a pair of parallel meridional sutures. In particular we have that $\rank(\SFH(Y_2,\gamma_2))=1$, so that $\rank(\SFH(Y_1,\gamma_1))\leq 2$. The result follows from applying Proposition~\ref{prop:nocomponentlongsurface}.
\end{proof}

\subsection{The inductive Step in the proof of Theorem~\ref{thm:maintangle}.}
In this subsection we give the inductive step in the proof of Theorem~\ref{thm:maintangle}. We require a number of preparatory combinatorial lemmas. For these lemmas we let $V$ be a non-trivial vector space with a relative $\Z\langle \{\mu_i\}_{1\leq i\leq n},\{\nu_i\}_{1\leq i\leq k}\rangle$ grading, where $n,k\in\Z$. We require the relative $\Z\langle \{\mu_i\}_{1\leq i\leq n},\{\nu_i\}_{1\leq i\leq k}\}\rangle$ grading to satisfy the property that $gr(x-y)+gr(y-z)=gr(x-z)$.

\begin{lemma}\label{lem:lowerrankbound}
 Suppose that for each homogeneous elements $x\in V$ and $1\leq i\leq n$, there exists a homogeneous element $y\in V$ such that $gr(x-y)\neq 0\in\Z\langle \mu_i\rangle$. Then  for any homogeneous element $x$ of $V$, $\{ y:gr(x-y)\in \Z\langle \mu_i\rangle_{1\leq i\leq n}\}$ is supported in at least $2^n$ gradings, so that in particular  $\rank(\{ y:gr(x-y)\in \Z\langle \mu_i\rangle_{1\leq i\leq n}\})\geq 2^n$.
\end{lemma}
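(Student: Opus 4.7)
The plan is to prove the lemma by induction on $n$, using the additivity property $gr(x-y)+gr(y-z)=gr(x-z)$ of the relative grading to keep track of where partners of chosen elements lie.

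The base case $n=0$ is immediate: the set $\{y:gr(x-y)\in\Z\langle\mu_i\rangle_{1\leq i\leq 0}\}=\{y:gr(x-y)=0\}$ contains $x$ itself, so is supported in at least $2^0=1$ gradings.

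For the inductive step, I would first apply the inductive hypothesis to the subcollection $\{\mu_1,\dots,\mu_{n-1}\}$ (regarding $\mu_n$ together with the $\nu_j$ as ``other'' generators, since the hypothesis for indices $1,\dots,n-1$ is inherited from the hypothesis at $n$). This yields homogeneous elements $y_1,\dots,y_{2^{n-1}}$ in $V$ lying in distinct gradings with $gr(x-y_j)\in\Z\langle\mu_i\rangle_{1\leq i\leq n-1}$. Next, for each $y_j$ I would invoke the hypothesis with $i=n$ applied to $x=y_j$, producing a homogeneous $z_j$ with $gr(y_j-z_j)=a_j\mu_n$ for some $a_j\in\Z\setminus\{0\}$. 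By additivity,
\[
gr(x-z_j)=gr(x-y_j)+gr(y_j-z_j)\in\Z\langle\mu_i\rangle_{1\leq i\leq n-1}+\Z\langle\mu_n\rangle=\Z\langle\mu_i\rangle_{1\leq i\leq n},
\]
so each $z_j$ lies in the set of interest.

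The remaining task is to verify that the $2^n$ elements $\{y_1,\dots,y_{2^{n-1}},z_1,\dots,z_{2^{n-1}}\}$ occupy pairwise distinct gradings. The $\mu_n$-component of $gr(x-y_j)$ is zero while that of $gr(x-z_j)$ equals $a_j\neq 0$, so no $y_j$ can share a grading with any $z_k$. Among the $y_j$ the gradings are distinct by construction; among the $z_j$, the first $n-1$ components of $gr(x-z_j)$ agree with those of $gr(x-y_j)$, so distinctness of the $y_j$-gradings propagates to the $z_j$-gradings. This gives the desired $2^n$ gradings, and the rank bound follows since each grading contributes at least one dimension. I expect no serious obstacle here; the only subtlety is being careful that the hypothesis (``for every $x$'') is strong enough to supply a partner at each of the $2^{n-1}$ inductively produced elements $y_j$, which it is by design.
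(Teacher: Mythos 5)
Your proof is correct and follows essentially the same inductive strategy as the paper: both arguments induct on $n$, use the hypothesis to produce elements whose grading difference has nonzero $\mu_n$-component, and then separate the two batches by comparing $\mu_n$-components. The only cosmetic difference is that the paper generates the second batch of $2^{n-1}$ elements by reapplying the inductive hypothesis anchored at a single new element $z_{m+1}$, whereas you apply the hypothesis pointwise to each $y_j$; both yield the same count.
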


\begin{proof}
    We proceed by induction on $n$. Since $V$ is non-trivial, the base case is immediate.

    Suppose we have proven the Lemma for some $m\in\Z^{\geq 0}$. We have that there are at least $2^m$ elements $\{z_i\}_{1\leq i\leq 2^m}$ such that $gr(z_i-z_j)\in\langle \mu_i\rangle_{1\leq i\leq n}$. Consider $z_1$. By assumption there exists an element $z_{n+1}$ such that $gr(z_{m+1}-z_1)\in\langle \mu_{m+1}\rangle$. Observe that the vector space $W:=\langle x:gr(x-z_{m+1})\subset \langle \mu_i\rangle_{1\leq i\leq m}\rangle $ does not contain $z_i$ for $1\leq i\leq 2^m$. By inductive hypothesis $W$ contains at least $2^m$ elements, whence the result follows.
    
\end{proof}

\begin{lemma}\label{lem:upperranknbound}
    
Suppose that $\rank(V)\leq 2^{n+1}$ and that for each homogeneous element $x\in V$ and $1\leq i\leq n$, there exists an element $y\in V$ such that $gr(x-y)\neq 0\in\Z\langle \mu_i\rangle$. Then $V$ is supported in at most two $\{\nu_i\}_{1\leq i\leq m}$ gradings.
\end{lemma}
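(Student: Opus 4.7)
The plan is to use Lemma~\ref{lem:lowerrankbound} to show that each ``$\nu$-stratum'' of $V$ has rank at least $2^n$, and then conclude from the rank bound $\rank(V)\leq 2^{n+1}$ that there are at most two such strata.

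First, I would define an equivalence relation on homogeneous gradings of $V$: declare two gradings equivalent if their difference lies in $\Z\langle\{\mu_i\}_{1\leq i\leq n}\rangle$. Equivalently, two homogeneous gradings are equivalent iff they have the same $\{\nu_j\}$-components. Write $V=\bigoplus_c V_c$ as a graded decomposition into equivalence classes, so that the number of non-empty classes equals the number of distinct $\{\nu_j\}$-gradings appearing in the support of $V$.

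Next I would verify that each non-empty stratum $V_c$ satisfies the hypothesis of Lemma~\ref{lem:lowerrankbound}. For any homogeneous $x\in V_c$ and any $1\leq i\leq n$, the hypothesis on $V$ supplies a homogeneous $y\in V$ with $gr(x-y)\neq 0\in\Z\langle\mu_i\rangle$; but the construction (via the assumption that the difference lies in $\Z\langle\mu_i\rangle$) forces $gr(x-y)\in\Z\langle\{\mu_i\}\rangle$, so $y$ lies in the same stratum $V_c$ as $x$. Applying Lemma~\ref{lem:lowerrankbound} to $V_c$ then gives $\rank(V_c)\geq 2^n$.

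Finally, from $\rank(V)=\sum_c \rank(V_c)$ together with $\rank(V)\leq 2^{n+1}$, I would conclude that at most two classes $c$ can contribute, whence the support of $V$ meets at most two distinct $\{\nu_j\}_{1\leq j\leq k}$-gradings. There is no real obstacle; the only subtlety is the bookkeeping in the previous paragraph, namely checking that the witnessing element $y$ provided by the hypothesis lies in the same $\nu$-stratum as $x$, which is immediate from the form of $gr(x-y)$.
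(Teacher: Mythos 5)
Your proof is correct and is essentially the paper's argument: both reduce to Lemma~\ref{lem:lowerrankbound} by observing that each fixed-$\nu$ stratum (equivalently, each set $\{y:gr(x-y)\in\Z\langle\mu_i\rangle_{1\leq i\leq n}\}$) has rank at least $2^n$, and then invoke $\rank(V)\leq 2^{n+1}$. The paper phrases this contrapositively, picking three representatives of three distinct $\nu$-gradings to get $\rank(V)\geq 3\cdot 2^n$, a contradiction, but the content is the same.
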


\begin{proof}
Suppose $V$ is supported in at least three $\{\nu_i\}_{1\leq i\leq m}$ gradings. Then there exists three generators $x_1,x_2,x_3$ with $\langle gr(x_i-x_j),\langle\nu_m\rangle_{1\leq m\leq k}\rangle\neq 0$ for all $i\neq j$. By Lemma~\ref{lem:lowerrankbound}, $\rank(\langle z:gr(z-x_i)\in \langle \mu_i\rangle_{1\leq i\leq n} \rangle)\geq 2^n$ for each $i$. Thus $\rank(V)\geq 3\cdot 2^n$, a contradiction.
    
\end{proof}

For the next lemma, we let $P(V)$ denote the polytope given by the (affine) convex hull of the support of the vector space $V$.
\begin{corollary}\label{lem:dimbound}
   Suppose that $\rank(V)\leq 2^{n+1}$ and that for each element $x\in V$ and $1{\leq i\leq n}$, there exists an element $y\in V$ such that $gr(x-y)\neq 0\in\Z\langle\mu_i\rangle$. Then ${\dim(P(V))\leq n+1}$.
\end{corollary}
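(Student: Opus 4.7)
The plan is to invoke Lemma~\ref{lem:upperranknbound} directly and then run a short dimension-counting argument on the support of $V$ inside the grading lattice. Since the hypotheses of Corollary~\ref{lem:dimbound} are identical to those of Lemma~\ref{lem:upperranknbound}, that lemma immediately gives that $V$ is supported in at most two $\{\nu_i\}_{1\leq i\leq k}$ gradings. This is the step that does all the real work; the rest is essentially linear algebra.

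Next I would translate this support statement into a geometric statement about $P(V)$. The ambient grading lattice is $\Z\langle\{\mu_i\}_{1\leq i\leq n},\{\nu_j\}_{1\leq j\leq k}\rangle$, and the conclusion of Lemma~\ref{lem:upperranknbound} says that the projection of the support of $V$ onto the $\Z\langle \{\nu_j\}\rangle$-factor consists of at most two points. Equivalently, the support of $V$ is contained in the union of at most two parallel affine translates of the $n$-dimensional subspace $\Z\langle\{\mu_i\}\rangle$. Call these translates $H_1$ and (possibly) $H_2$.

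The polytope $P(V)$ is then contained in the convex hull of a subset of $H_1\cup H_2$. If only $H_1$ appears, then $P(V)\subseteq H_1$, and so $\dim P(V)\leq n<n+1$. If both $H_1$ and $H_2$ appear, then the convex hull of $H_1\cup H_2$ is contained in the affine span of $H_1$ together with any vector $v$ connecting a point of $H_1$ to a point of $H_2$; this affine span has dimension exactly $n+1$, so $\dim P(V)\leq n+1$. In either case the claim follows.

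The only place where any subtlety arises is in confirming the hypotheses of Lemma~\ref{lem:upperranknbound} hold verbatim, which they do by inspection: the rank bound $\rank(V)\leq 2^{n+1}$ and the existence of a $\mu_i$-grading shift from every homogeneous element are exactly the assumptions we are handed. No further combinatorics or Floer-theoretic input is needed, so I expect no real obstacle in executing this plan.
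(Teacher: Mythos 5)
Your proposal is correct and follows essentially the same route as the paper, whose entire proof of this corollary is the single sentence that it follows from Lemma~\ref{lem:upperranknbound}. You have simply made explicit the routine convex-geometry step (support contained in at most two parallel translates of the $n$-dimensional $\mu$-sublattice, hence convex hull of dimension at most $n+1$) that the paper leaves implicit.
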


\begin{proof}
    This follows from Lemma~\ref{lem:upperranknbound}. 
\end{proof}

For the inductive step it will be convenient for us to phrase Theorem~\ref{thm:maintangle} without the assumption that $(Y_T,\gamma_T)$ is irreducible.
\begin{proposition}\label{prop:braid2ormore}

 Let $(Y,\gamma)$ be a sutured manifold containing a tangle $T$. Let $T'$ be the $n$-component subtangle of $T$ consisting of closed components. Suppose $(Y_T,\gamma_T)$ is taut and irreducible $\rank(\SFH(Y_T,\gamma_T))\leq 2^{n+1}$ then one of the following three is true:
       
       \begin{enumerate}
\item $T'$ is a split unlink and
       \begin{enumerate}
           \item $(Y,\gamma)$ is  a product sutured manifold and $T - T'$ is a braid.
    \item $(Y,\gamma)$ is an almost product sutured manifold and $T - T'$ is a braid.
    \item  $(Y,\gamma)$ is a product sutured manifold and $T-T'$ is a clasp-braid.
    \item $(Y,\gamma)$ is a stabilized product sutured manifold and $T-T'$ is a stabilized clasp braid.

       \end{enumerate}

       \item There is a component $t$ of $T'$ such that $T'-t$ is a split unlink, $(Y,\gamma)$ is a product sutured manifold $(\Sigma\times[-1,1],\partial\Sigma\times[-1,1])$, $T-T'$ is a braid and up to isotopy $t\subset \Sigma\times\{0\}$ and $t$ does not bound a disk in $(Y_T,\gamma_T)$.

       \item $T'$ is the split sum of a Hopf link and an unlink, $(Y,\gamma)$ is a product sutured manifold and $T-T'$ is a braid.
            
       \end{enumerate}

\end{proposition}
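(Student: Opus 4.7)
The proof proceeds by induction on the number of closed components $n$ of $T'$. The base cases $n=0$ and $n=1$ are exactly Proposition~\ref{prop:nocomponentlongsurface} and Proposition~\ref{Prop:atmostcomponentintheinterior} (the reducible variant following as in the corollary after the latter).

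For the inductive step, we first reduce to the case where $(Y_T,\gamma_T)$ is horizontally prime: any non-trivial horizontal decomposition splits $\SFH(Y_T,\gamma_T)$ as a tensor product by Proposition~\ref{prop:juhaszhorizontaldecomp}, and Juh\'asz's product sutured manifold detection theorem forces at least one factor to be a product, allowing direct appeal to the inductive hypothesis on the other. Reducibility is handled in parallel using the connect sum formula of Equation~\ref{eq:connectsum}, combined with Ni's classification of links of minimal rank from~\cite{ni2014homological}.

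For each closed component $t_i$ of $T'$, Lemma~\ref{lem:spectral} produces a spectral sequence from $\SFH(Y_T,\gamma_T)$ to $\SFH(Y_{T-t_i},\gamma_{T-t_i})\otimes V$ which collapses the relative $[\mu_{t_i}]$-grading. Combined with Lemma~\ref{lem:lowerrankbound} and the hypothesis $\rank(\SFH(Y_T,\gamma_T))\leq 2^{n+1}$, this forces a dichotomy. Either (a) the sutured Thurston norm is non-degenerate in every meridional direction $[\mu_{t_i}]$, in which case Corollary~\ref{lem:dimbound} gives the polytope bound $\dim P(Y_T,\gamma_T) \leq n+1$, or (b) a degeneracy occurs in some $[\mu_{t_i}]$-direction. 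Running through the proof of Lemma~\ref{lem:nondegeneratenorm} in this relative setting, case (b) produces a surface of non-positive Euler characteristic representing a meridian, which, together with irreducibility of $(Y_T,\gamma_T)$ and the linking number constraint in $S^3$, forces either a closed component to lie on a level surface of a product piece (reducing via the inductive hypothesis applied to $T'-t_i$ to conclusion (2)), or two closed components to cobound an annulus in $Y_T$, yielding a Hopf link summand of $T'$ and conclusion (3) after peeling the summand off via Equation~\ref{eq:connectsum} and the inductive hypothesis.

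In case (a), apply Proposition~\ref{prop:allproductannuli} to decompose $(Y_T,\gamma_T)$ along a minimal family of essential product annuli into a product sutured manifold and pieces $(Y_i,\gamma_i)$ satisfying $\dim P(Y_i,\gamma_i)=b_1(\partial Y_i)/2$. Since rank is sub-multiplicative under decomposition along essential product annuli by~\cite[Proposition 8.10]{juhasz2008floer}, each non-product piece has sutured Floer homology of rank at most $2$ on toroidal boundary and is hence classified by Theorem~\ref{thm:mailinknrankclass} as one of the exotic pieces appearing in Proposition~\ref{prop:nocomponentlongsurface}. The non-degeneracy of the $[\mu_{t_i}]$-gradings forces each closed component of $T'$ to sit inside a product piece, so the reassembly analysis of Lemmas~\ref{lem:nocomponentstep1}, \ref{lem:nocomponentstep2} and~\ref{lem:suturedclaspext} carries over verbatim, with $T'$ forming a split unlink in the final gluing. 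This realizes conclusion (1). The main obstacle will be the polytope bookkeeping, namely verifying in case (a) that the closed components contribute $n$ independent meridional directions to $P(Y_T,\gamma_T)$, leaving exactly one residual direction to be absorbed by the single non-product reassembly piece permitted by Proposition~\ref{prop:nocomponentlongsurface}, and in case (b) that the Hopf link or split unknot summand exhausted by the degeneracy analysis is compatible with irreducibility of $(Y_T,\gamma_T)$ only in the enumerated configurations.
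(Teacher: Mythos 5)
Your overall toolkit is the right one (induction, the spectral sequences of Lemma~\ref{lem:spectral}, polytope dimension bounds, decomposition along product annuli, and Theorem~\ref{thm:mailinknrankclass}), and the base cases and the reductions to the irreducible and horizontally prime situations match the paper. But the inductive step has two genuine gaps.

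First, your case (a) applies Corollary~\ref{lem:dimbound} under the hypothesis that the sutured norm is non-degenerate in each meridional direction $[\mu_{t_i}]$. That is not the hypothesis of the corollary. The corollary requires that \emph{every} homogeneous generator $x$ admits a partner $y$ with $gr(x-y)$ having non-zero $\mu_{t_i}$-component, for every $i$ --- equivalently, that each spectral sequence of Lemma~\ref{lem:spectral} collapsing $[\mu_{t_i}]$ kills everything, i.e.\ that each $(Y_{T-t_i},\gamma_{T-t_i})$ is non-taut. Non-degeneracy of the norm only guarantees \emph{some} pair of generators separated in the $\mu_{t_i}$-direction. The hard case is precisely when some $(Y_{T-t_i},\gamma_{T-t_i})$ is taut of small rank (e.g.\ when $T-t_i$ is a braid plus a split unlink): then generators survive the spectral sequence, the hypothesis of Corollary~\ref{lem:dimbound} fails, and one must count generators by hand to keep $\dim P \leq n+1$. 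This is exactly what the paper's Lemma~\ref{lem:polytopedim} does, via a delicate analysis of which generators persist under which of the $n$ spectral sequences; your proposal has no substitute for it, and without it the appeal to Proposition~\ref{prop:allproductannuli} in case (a) does not get off the ground.

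Second, the case structure of your endgame is inconsistent with irreducibility. For $n\geq 1$, conclusion (1) requires $T'$ to be a non-empty split unlink and conclusion (2) requires $T'-t$ to be a split unlink for $n\geq 2$; either forces $(Y_T,\gamma_T)$ to be reducible. So in the irreducible inductive step with $n\geq 2$ the only admissible outcome is conclusion (3) --- or a contradiction. The paper's dichotomy is accordingly geometric, not norm-theoretic: either there is an essential product annulus with both boundary circles on the boundary torus of a single closed component of $T$, in which case atoroidality of $S^3$ splits off a two-component link exterior that Theorem~\ref{thm:mailinknrankclass} identifies as a Hopf link exterior (conclusion (3)); or there is no such annulus, in which case Lemma~\ref{lem:polytopedim} plus Proposition~\ref{prop:allproductannuli} produce a piece with $n+1$ torus boundary components whose classification by Theorem~\ref{thm:mailinknrankclass} forces splitness, a contradiction. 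Your claim that case (a) ``realizes conclusion (1) with $T'$ forming a split unlink in the final gluing,'' and that the reassembly Lemmas~\ref{lem:nocomponentstep1}--\ref{lem:nocomponentstep2} apply with the closed components sitting in product pieces, cannot happen in the irreducible case: the $n$ boundary tori of the closed components cannot be absorbed into product sutured pieces, and they all persist into the final non-product piece.
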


  To prove this proposition we first show that the dimension of $P(Y_T,\gamma_T)$ behaves well under certain product annulus decompositions for the $(Y_T,\gamma_T)$ in the statement of Proposition~\ref{prop:braid2ormore}.
    
\begin{lemma}\label{lem:polytopedim}
Suppose $T$ is a tangle in a sutured manifold $(Y,\gamma)$ with $n$ closed components $\{t_i\}_{1\leq i \leq n}$ and that $0<\rank(\SFH(Y_T,\gamma_T))\leq 2^{n+1}$. Suppose additionally that for all $i$ either:\begin{enumerate}
    \item  the tangles $T-t_i$ are either as in the statement of Proposition~\ref{prop:braid2ormore} or,
    \item the sutured manifolds $(Y_{T-t_i},\gamma_{T-t_i})$ are not taut.
\end{enumerate}

Let $(Z_{T'},\gamma_{T'})$ be any sutured manifold obtained by decomposing $(Y_T,\gamma_T)$ along a family of essential product annuli with boundary in $R_\pm(\gamma)$. Then $\dim(P(Z_{T'},\gamma_{T'}))\leq n+1$.
  \end{lemma}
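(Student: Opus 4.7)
My plan is to deduce this from Corollary~\ref{lem:dimbound} applied to $V := \SFH(Z_{T'}, \gamma_{T'})$, using the meridian classes $[\mu_{t_1}], \dots, [\mu_{t_n}] \in H_1(Z_{T'})$ of the closed components of $T$ as the $n$ grading classes. The conclusion of that corollary is exactly $\dim P(V) \leq n+1$, so this would finish the proof. The rank hypothesis is immediate: by assumption $\rank \SFH(Y_T, \gamma_T) \leq 2^{n+1}$, and decomposition along an essential product annulus cannot increase the rank of sutured Floer homology by Juh\'asz \cite[Proposition 8.10]{juhasz2008floer}, so $\rank V \leq 2^{n+1}$.

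The nontrivial part is verifying the grading hypothesis: that $V$ is supported in at least two distinct $[\mu_{t_i}]$-gradings for each $i$. I would fix $i$ and consider the spectral sequence of Lemma~\ref{lem:spectral} with knot $t_i$,
\[
V \;=\; \SFH(Z_{T'}, \gamma_{T'}) \;\Longrightarrow\; \SFH(Z_{T'-t_i}, \gamma_{T'-t_i}) \otimes W,
\]
where $W$ has rank $2$ and $(Z_{T'-t_i}, \gamma_{T'-t_i})$ denotes the sutured manifold obtained by filling $t_i$ back in via its meridional sutures. This spectral sequence identifies gradings via the quotient $H_1(Z_{T'}) \twoheadrightarrow H_1(Z_{T'})/[\mu_{t_i}]$. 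If $V$ were supported in only one $[\mu_{t_i}]$-grading, that quotient would be injective on support and the spectral sequence would collapse on $E_1$, forcing $\rank V = 2 \cdot \rank \SFH(Z_{T'-t_i}, \gamma_{T'-t_i})$.

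I would now derive a contradiction using the dichotomy in the hypothesis. If $(Y_{T-t_i}, \gamma_{T-t_i})$ is not taut, then $\SFH(Y_{T-t_i}, \gamma_{T-t_i}) = 0$ by \cite[Theorem 1.4]{juhasz2008floer}, and hence $\SFH(Z_{T'-t_i}, \gamma_{T'-t_i}) = 0$ since product annulus decomposition cannot increase rank; this forces $V = 0$, contradicting $\SFH(Y_T, \gamma_T) \neq 0$ together with the fact that decomposing along essential product annuli preserves non-vanishing of $\SFH$ on taut sutured manifolds. If instead $T - t_i$ fits one of the configurations of Proposition~\ref{prop:braid2ormore}, then the explicit polytope computations of Subsection~\ref{subsec:geography} (Lemmas~\ref{lem:computefiberedbraideddecomposed}, \ref{lem:computenearlyfibereddecomposed}, \ref{lem:nearlybraidedexterior}, \ref{lem:homessentialdecomposed}, and~\ref{lem:stabilizableclaspcomp}) show that $\SFH(Y_{T-t_i}, \gamma_{T-t_i})$ has support in at least two $[\mu_{t_i}]$-gradings in each of cases (1a)--(3). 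Propagating this non-degeneracy through the product annulus decomposition via the affine map $F_S$ of Proposition~\ref{Thm:Juhaszaffine}---noting that the annuli have boundary in $R_\pm(\gamma)$ and so do not touch $\partial \nu(t_i)$ or the $[\mu_{t_i}]$-direction---yields the same conclusion for $V$. Either way the grading hypothesis is verified, and Corollary~\ref{lem:dimbound} yields $\dim P(V) \leq n+1$.

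The anticipated obstacle is the second case of the dichotomy: carefully verifying, via the case analysis in Proposition~\ref{prop:braid2ormore}, that the meridian class $[\mu_{t_i}]$ remains a non-degenerate grading direction on $V$ after an \emph{arbitrary} essential product annulus decomposition. In particular, one must rule out the possibility that some annulus in the family collapses the $[\mu_{t_i}]$-direction of the polytope, which will likely require tracking how the classes $[\mu_{t_i}]$ transform under $F_S$ separately in cases (1a)--(3).
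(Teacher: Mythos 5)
Your reduction to Corollary~\ref{lem:dimbound} breaks down in the second branch of the dichotomy, and the failure is not the bookkeeping issue you flag at the end but the hypothesis itself. Take the simplest instance of case (1a) of Proposition~\ref{prop:braid2ormore}: $t_i$ is a split unknotted component (say $T$ is a split $n$-component unlink in a product sutured manifold, with no annuli in the decomposing family). Then by the connected sum formula $\SFH(Y_T,\gamma_T)\cong\F^{2^n}$ is concentrated in a \emph{single} relative $\spin^c$-grading, so in particular in a single $[\mu_{t_i}]$-grading for every $i$, and the non-degeneracy hypothesis of Corollary~\ref{lem:dimbound} is simply false. Your attempted contradiction does not bite here: the collapse of the spectral sequence forces $\rank V=2\cdot\rank\SFH(Z_{T'-t_i},\gamma_{T'-t_i})=2\cdot 2^{n-1}=2^n$, which is perfectly compatible with the standing bound $\rank V\leq 2^{n+1}$. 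Relatedly, the sentence asserting that the computations of Subsection~\ref{subsec:geography} show $\SFH(Y_{T-t_i},\gamma_{T-t_i})$ has support in at least two $[\mu_{t_i}]$-gradings cannot be what you mean: $t_i$ is not a component of $T-t_i$, so $[\mu_{t_i}]$ dies in $H_1(Y_{T-t_i})$; and the charitable reinterpretation (that $V$ itself is spread in the $[\mu_{t_i}]$-direction) is exactly what the split-unlink example refutes. A similar failure occurs in case (2) of Proposition~\ref{prop:braid2ormore} when the surface framing of the curve $t$ is zero, since by Lemma~\ref{lem:homessentialdecomposed} the $[\mu_t]$-spread of the support is governed by that framing.

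This is precisely why the actual argument is an induction on $n$ with a case analysis rather than a single application of Corollary~\ref{lem:dimbound}. When every $(Y_{T-t_i},\gamma_{T-t_i})$ fails to be taut, your argument (and the paper's) works: every generator must die in each spectral sequence, the differentials shift the $[\mu_{t_i}]$-grading, and Corollary~\ref{lem:dimbound} applies. When some $T-t_i$ is taut of type (1b), (1c), (1d), (2) or (3), one instead uses that $\rank\SFH(Z_{T'-t_i},\gamma_{T'-t_i})\otimes V=2^{n+1}$ forces the spectral sequence to collapse, whence $\dim P(Z_{T'},\gamma_{T'})\leq\dim P(Z_{T'-t_i},\gamma_{T'-t_i})+1\leq n+1$ by the inductive hypothesis. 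The remaining case, where some $T-t_1$ is of type (1a), requires a direct count of generators and their multi-gradings (locating $2^n$ generators differing only in $\langle\mu_i\rangle$-directions, then analysing how the remaining generators can sit relative to them) to bound the polytope dimension; no appeal to Corollary~\ref{lem:dimbound} is possible there. So the overall strategy needs the inductive structure you have omitted, and the middle case needs an argument you have not supplied.
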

For the proof we identify closed components of $T$, $t_i$, and closed components of $T'$, $t'_i$. We will make extensive us of Lemma~\ref{lem:spectral}: that for each closed component $t_i\in T$ there are spectral sequences from $\SFH(Y_T,\gamma_T)$ to $\SFH(Y_{T-t_i},\gamma_{T-t_i})\otimes V$ collapsing the $[\mu_i]$ gradings.

  \begin{proof}
We proceed by induction. The $n=0$ case holds because $\rank(\SFH(Z_{T'},\gamma_{T'}))\leq 2$, so that $\dim(P(Z_{T'},\gamma_{T'}))\leq 1$.      
  
For the inductive step we have a number of cases depending on $(Z_{T'-t_i},\gamma_{T'-t_i})$. Suppose that $(Y_{T-t_i},\gamma_{T-t_i})$ is taut for some $i$. Then $(Y_{T-t_i},\gamma_{T-t_i})$ is of form $1$ or $2$ in the statement of Proposition~\ref{prop:braid2ormore}. 
    
    With this in mind deal with the cases:
    \begin{enumerate}
        \item  Suppose that for some $i$, $(Y_{T-t_i},\gamma_{T-t_i})$ is as in case $1b),c),d)$ or $2$ or $3$ in the statement of Proposition~\ref{prop:braid2ormore}. Then $\rank(\SFH(Z_{T'-t_i},\gamma_{T'-t_i}))=2^{n}$ so the spectral sequence from $\SFH(Z_{T'},\gamma_{T'})$ to $\SFH((Z_{T'-t_i},\gamma_{T'-t_i}))\otimes V$ collapses immediately. It follows in turn that the $\dim(P(Z_{T'},\gamma_{T'}))\leq \dim(P(Z_{T'-t_i'},\gamma_{T'-t_i'})))+1$. But we have that $\dim(P(Z_{T-t_i'},\gamma_{T-t_i'})))\leq n$ by inductive hypothesis so the claim holds.

        \item Suppose now that there is no $i$ such that $(Y_{T-t_i},\gamma_{T-t_i})$ is as in case $1b),c),d)$, $2$ or $3$ in the statement of Proposition~\ref{prop:braid2ormore}.  That is we suppose that for some $i$, which without loss of generality we take to be $1$, $(Y_{T-t_1},\gamma_{T-t_1})$ is as in case $1a)$ in the statement of the Proposition, i.e. that $\underset{i\geq 2}{\bigcup}t_{i}$ is a split unlink $T-\underset{i\geq 1}{\bigcup}t_{i}$ is a braid and $(Y,\gamma)$ is a product sutured manifold. Since $\rank(\SFH(Z_{T'-t'_{1}},\gamma_{T'-t'_1})\otimes V)= 2^{n}$, it follows from Lemma~\ref{lem:spectral} that there is a collection of generators $X=\{x_k\}_{1\leq k\leq 2^{n}}$ such that $\gr(x_{k_1},x_{k_2})\in\langle\mu_i\rangle\subset H_1(Z_{T'})$, where $\mu_i$ is the meridian of $t_i$.
        
        If all other generators $y$ of $\SFH(Z_{T'},\gamma_{T'})$ satisfy $\gr(x_j,y)\in\langle\mu_i\rangle$, for some ${1\leq j\leq 2^{n}}$ we have that $\dim(P(Z_{T'},\gamma_{T'}))=1$ and we are done.

        Suppose then that there exists some generator $y$ such that $\gr(x_j,y)\not\in\langle\mu_i\rangle$ for some $j$. Observe that for all but at most one $j$, $j_0$, $\gr(x_k,y)\not\in\langle\mu_j\rangle$ for any ${1\leq k\leq 2^n}$. Consider the plane $P$ containing $y$ spanned by all $\langle \mu_i\rangle_{i\in I}$ where ${I=\{1\leq i\leq n:i\neq j_0\}}$. Note that $I$ contains $n-1$ elements. Observe that if a generator $z$ from $P$ persists under the spectral sequence to $\SFH(Z_{T'-t_i'},\gamma_{T'-t_i'})\otimes V$ for some $i\in I$ then the closed components of $T'-t_i'$ would be a split unlink and $\rank(\SFH(Z_{T'},\gamma_{T'-t_i'}))=2^{n-1}$ with support in a single multi-grading. We would thus have found the remaining $2^n$ distinct generators and deduce that $\dim(P(Z_{T'},\gamma_{T'}))\leq 3$, proving the result.
        
        Suppose then that no generator in the plane $P$ that persists under any spectral sequence to $\SFH(Z_{T'-t_i'},\gamma_{T'-t_i'})$ for any $i\in I$. Then we can apply Lemma~\ref{lem:lowerrankbound} to obtain a further $2^{n-1}$ generators $\{y_i\}_{1\leq i\leq 2^{n-1}}$ of distinct multi-gradings. Observe that unless one of $\{y_i\}_{1\leq i\leq 2^{n-1}}$ differs from an element of $X$ by an element in $\langle\mu_{j_0}\rangle$, we can find a further $2^{n-1}$ distinct generators, and deduce that $\dim(P(Y_{T'},\gamma_{T'}))\leq n$.
        
        Suppose then that one of the generators $\{y_i\}_{1\leq i\leq 2^{n-1}}$ differs from a generator in $L$ by an element in $\langle\mu_{j_0}\rangle$. We can still find a further $2^{n-1}-2$ generators that do not increase the dimension of $\{y_i\}_{1\leq i\leq 2^{n-1}}\cup X$. Consider the two remaining generators $a,b$. Suppose that adding both does not increase the dimension of the polytope. Then $\dim(P(Z_{T'},\gamma_{T'}))\leq n$ and we have the desired result. Suppose adding at least one generator, say $a$, increases the dimension of the polytope. Then it differs from the previous generators in at least one grading in $H_1(Z_{T'})/\langle\mu_i\rangle$. Observe that this generator must die under the spectral sequence to $\SFH(Z_{T'-t_{1}'},\gamma_{T'-t_{1}'})$, so it follows that $\gr(a,b)\in\langle \mu_1\rangle$. This implies that $\dim(P(Z_{T'},\gamma_{T'}))\leq n+1$, as desired.

    \end{enumerate}

 Suppose now that $(Y_{T-t_i},\gamma_{T-t_i})$ is not taut for all $i$. Then under each spectral sequence from $\SFH(Z_{T'},\gamma_{T'})$ to $\SFH(Z_{T'-t'_i},\gamma_{T'-t'_i})\otimes V$ corresponding to forgetting $t'_i$, every generator has a component of a differential to or from another generator. That is, for all non-zero $x\in\SFH(Z_{T'},\gamma_{T'})$, $1\leq i\leq n$ there exists a generator $x_i$ with $gr(x_i-x)\neq 0\in\langle\mu_j\rangle\subset H_1(Z_{T'})$. By Lemma~\ref{lem:dimbound} we have that $\dim(P(Z_{T'},\gamma_{T'}))\leq n+1$.

\end{proof}

\begin{proof}[Proof of Proposition~\ref{prop:braid2ormore}]

We have proven the $n=0$ and $n=1$ cases in Proposition~\ref{prop:nocomponentlongsurface} and Proposition~\ref{Prop:atmostcomponentintheinterior}.

We have two cases: either $(Y_T,\gamma_T)$ is reducible or it is not. By the connect sum formula for sutured Floer homology, Equation~\ref{eq:connectsum}, it suffices to prove the Proposition in the case that $(Y_T,\gamma_T)$ is irreducible.

We thus assume that $n\geq 2$ and that $(Y_T,\gamma_T)$ is irreducible. We proceed towards a contradiction. Suppose $(Y_T,\gamma_T)$ is not horizontally prime. Then there exists a horizontal surface $\Sigma'$ that is not parallel $R_\pm(\gamma_T)$. Decomposing $(Y_T,\gamma_T)$ along $\Sigma'$ and removing all pairs of excess parallel sutures yields the disjoint union of two sutured manifolds $\{(Y_i,\gamma_i)\}_{1\leq i\leq 2}$. We then have that
\begin{align*} \SFH(Y_1,\gamma_1)\otimes\SFH(Y_2,\gamma_2)\cong\F^2.
    \end{align*}

    It follows, without loss of generality, that $\rank\SFH(Y_1,\gamma_1)=1$, so that $(Y_1,\gamma_1)$ is a product sutured manifold~\cite[Theorem 9.7]{juhasz2008floer} contradicting the fact that $(Y_L,\gamma_L)$ has multiple boundary components. We may thus assume that $(Y_T,\gamma_T)$ is horizontally prime.

    Suppose now that there was an essential product annulus $A$ with both boundary components on a boundary component of $(Y_T,\gamma_T)$ corresponding to a closed component of $T$. Then, since $S^3$ is atoroidal, decomposing along $A$ would yield a split sutured manifold $(Y_{T'},\gamma_{T'})\sqcup(S^3_{L'},\gamma_{L'})$ where $T'$ has $m<n$ closed components and $L'$ has $n-m+1>1$ components. Observe that: $$2^{n+1}\geq\rank(\SFH((Y_{T'},\gamma_{T'})\sqcup(S^3_{L'},\gamma_{L'}L)))\geq \rank(\SFH(Y_{T'},\gamma_{T'}))\cdot\rank(\SFH(S^3_{L'},\gamma_{L'}))>0.$$

    Now, $\rank(\SFH(Y_{T'},\gamma_{T'}))\geq 2^m$ -- which follows from the inductive hypothesis -- so we have that \[2^{n+1-m}\geq \rank(\SFH(S^3_{L'},\gamma_{L'})).\] It follows that $(S^3_{L'},\gamma_{L'})$ is as in the statement of Theorem~\ref{thm:mailinknrankclass}. Since we are assuming that $(Y_T,\gamma_T)$ is irreducible and that the annulus $A$ was essential we have that ${L'}$ is either a Hopf link with meridional sutures, a Hopf link where one component has two sutures of slope $0$. The latter case is impossible as then two closed components of $L'$ would have isotopic meridians. It follows that $m=n-1$. Since $\rank(\SFH(S^3_{L'},\gamma_{L'}))= 4$, we have that $\rank(\SFH(Y_{T'},\gamma_{T'}))\leq 2^{m}$ and $T'$ has strictly fewer closed components than $T$. In particular by induction we see that $(Y,\gamma)$ is a product sutured manifold and $T'$ a braid together with some split unlinked components. Thus $T$ is a braid together with a split unlink and a split Hopf link, as desired.

  The only remaining case is that in which there are no essential product annuli with both boundary components on a component of $\partial Y_T$ corresponding to a closed component of $T$. Since $(Y_T,\gamma_T)$ is horizontally prime, and the dimension of $P(Y_T,\gamma_T)$ does not increase under decomposition along product annuli with boundary in $R_\pm (\gamma)$ by Lemma~\ref{lem:polytopedim}, Proposition~\ref{prop:allproductannuli} implies that we can decompose $(Y_T,\gamma_T)$ along product annuli $\{A_j\}$ to obtain a sutured manifold $(Y',\gamma')$ with $\dim(P(Y',\gamma'))\leq n+1$. Moroever, $\partial Y'$ has no spherical components since $(Y_T,\gamma_T)$ is irreducible and $\partial Y'$ contains at least $n$ toroidal components since $T$ has $n$ closed components. It follows that $\partial Y'$ contains exactly $n+1$ toroidal components. Indeed, we have that all but one of the components of the boundary has meridional sutures. However, since we are assuming that $n\geq 3$, Theorem~\ref{thm:mailinknrankclass} implies that $L$ is split, contradicting our assumption that $(Y_T,\gamma_T)$ is irreducible. The result follows.

\end{proof}

\section{Links with the Floer homology of twisted Whitehead links}\label{sec:HFLW}

Our first goal in this Section is to prove that link Floer homology detects most \emph{$n$-twisted Whitehead links}, which we denote $L_n$, the family of links shown in Figure~\ref{L_n}, where $n$ indicates the number of half-twists in the box region:

\begin{theorem}\label{thm:HFLLn}
    Suppose $\widehat{\HFL}(L)\cong\widehat{\HFL}(L_n)$, for some $n \notin \{-2,-1,0,1\}$. Then $L$ is $L_n$.
\end{theorem}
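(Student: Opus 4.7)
The plan is to extract topological data about $L$ from $\widehat{\HFL}(L)\cong\widehat{\HFL}(L_n)$, apply Theorem~\ref{thm:mainbraid} to reduce to $L$ being an index-$2$ clasp-braid closure about an unknotted axis, and then invoke the classification result of \cite{dey2023unknotted} together with a comparison of multivariable Alexander polynomials to identify $L$ with $L_n$.

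First I would establish the standard consequences of the isomorphism $\widehat{\HFL}(L)\cong\widehat{\HFL}(L_n)$: $L$ has two components (from $\rank\widehat{\HFL}(L)\geq 2^{|L|-1}$ compared with the total rank of $\widehat{\HFL}(L_n)$); both components are unknots (by Ozsv\'ath--Szab\'o's Thurston norm detection applied componentwise, since each component of $L_n$ has Seifert genus $0$); the linking number between the components is $0$ (via the Torres formula applied to the multivariable Alexander polynomial, which is recoverable from $\widehat{\HFL}$); and $L$ is non-split (since $\widehat{\HFL}(L_n)$ does not decompose as $V\otimes\widehat{\HFL}(L_1')\otimes\widehat{\HFL}(L_2')$ for any splitting of $L_n$).

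Next I would fix an unknotted component $K$ of $L$. The maximal non-trivial $A_K$ grading of $\widehat{\HFL}(L_n)$ lies at $A_K=1$ and has rank $4=2^{|L|}$, so Theorem~\ref{thm:mainbraid} applies. I expect to rule out four of the five cases: case (1) would give top rank $2^{|L|-1}=2$ by Martin's braid axis detection, contradicting $4$; case (2) is impossible since the unknot is fibered and therefore not nearly fibered; case (4) requires index $\geq 3$, while $\max A_K=1$ forces wrapping number $2$ and hence index $2$; and case (5) would place the other component on a fiber disk of $K$, but every simple closed curve on a disk bounds a subdisk, so $L$ would be split, contradicting what was established above. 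Hence we are in case (3): $L-K$ is an index-$2$ clasp-braid closure with respect to $K$.

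Finally, I would invoke the classification from \cite{dey2023unknotted}, which identifies the index-$2$ clasp-braid closures about an unknotted axis as exactly the family $\{L_m\}_{m\in\Z}$, forcing $L=L_m$ for some $m\in\Z$. The value of $m$ is then pinned down by the multivariable Alexander polynomial, which is determined by $\widehat{\HFL}$; the hypothesis $n\notin\{-2,-1,0,1\}$ is precisely the range on which these polynomials distinguish $m$, the excluded set reflecting the mirror-ambiguity among $\{L_{-2},L_{-1},L_0,L_1\}$ together with the coincidence $\widehat{\HFL}(L_0)\cong\widehat{\HFL}(L_{-2})$ mentioned in the introduction. The main obstacle will be the case-by-case elimination against Theorem~\ref{thm:mainbraid}, and in particular the index-based exclusion of the stabilized clasp-braid case, where the graded Alexander-grading distribution alone would not suffice to distinguish case (3) from case (4).
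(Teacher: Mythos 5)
Your skeleton matches the paper's (two components, linking number zero, non-split, apply Theorem~\ref{thm:mainbraid}, eliminate cases, land on an index-$2$ clasp-braid closure with unknotted axis, then separate the members of the family), but there are two genuine gaps. The more serious one is the claim that both components of $L$ are unknots. This is false for the model links themselves: one component of $L_n$ is an unknot, but the other is the twist knot $T_n$, which has genus one for $n\neq 0,-1$ (this is visible in the paper's computation, where $\widehat{\HFL}(L_n,A_U=0)$ admits a spectral sequence to $\widehat{\HFK}(T_n)\otimes V$). Thurston norm detection only controls longitudinal surfaces in the link exterior: the top $A_K$ grading being $1$ permits either a twice-punctured disk or a genus-one Seifert surface disjoint from the other component, so a genus-one fibered component is entirely consistent with $\widehat{\HFL}(L)\cong\widehat{\HFL}(L_n)$. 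Consequently your dismissal of case~(5) of Theorem~\ref{thm:mainbraid} (``every simple closed curve on a disk bounds a subdisk'') collapses: the configuration that must actually be excluded is a trefoil or figure-eight component together with an essential simple closed curve on its genus-one fiber surface. This is the hardest step of the paper's argument (Lemma~\ref{LnNonbraid}); it invokes the classification of homologically essential curves on such fiber surfaces from \cite{dey2023unknotted} and checks, family by family, that no curve has the genus and surface framing forced by $\widehat{\HFL}(L_n)$. In the paper, unknottedness of the axis is a \emph{conclusion} of the case analysis (the fiber is punctured twice and has Euler characteristic $-1$, hence is a disk), not an input to it.

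The second gap is at the end. The multivariable Alexander polynomial is invariant under mirroring (up to the standard symmetry), so it cannot distinguish $L_m$ from its mirror $\overline{L_m}=L_{-m-1}$; for every $m>1$ both of these lie in the allowed range $n\notin\{-2,-1,0,1\}$ and even have link Floer homology of the same total rank ($4m+10$). Excluding $\{-2,-1,0,1\}$ only disposes of the coincidence $\widehat{\HFL}(L_0)\cong\widehat{\HFL}(L_{-2})$ and its mirror, not of the mirror ambiguity in general. The paper resolves the remaining ambiguity with the Maslov gradings, which requires the full graded computation of $\widehat{\HFL}(L_m)$ for all $m$ (Proposition~\ref{HFLLn}, via Zibrowius's tangle invariants and skein exact sequences); some computation of this kind is unavoidable in your plan as well.
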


Note that $n$-twisted Whitehead links are exactly clasp-braids of index $2$ with unknotted axes. Twisted Whitehead links admit a number of symmetries: reversing the orientation of either component does not change the oriented link type. On the other hand if $n\geq 0$ then $\overline{L_n}=L_{-n-1}$, as unoriented links, where $\overline{L}$ denotes the mirror of $L$.

Our strategy for proving this result is to apply the main Theorem~\ref{thm:mainbraid}, and exclude every case but that in which $L$ consists of an unknot together with a two-standed clasp-braid -- i.e. twisted Whitehead links. This relies on some work of the second author and King, Shaw, Tosun and Trace~\cite{dey2023unknotted}. The result will then follow from the fact that link Floer homology distinguishes appropriate twisted Whitehead links, a fact we verify in Section~\ref{sec:Lncomp}. Indeed, we compute the link Floer homology of all twisted Whitehead links.

We can also treat the cases excluded from the statement of Theorem~\ref{thm:HFLLn}. Note that $L_0$ is the Whitehead link, L5a1, while $L_1$ is L7n2.

\begin{theorem}\label{thm:HFKW}
    Suppose $L$ is a two component link with $\widehat{\HFK}(L)\cong\widehat{\HFK}(L_0)$. Then $L$ is the Whitehead link ($L5a1$) or $L7n2$.
\end{theorem}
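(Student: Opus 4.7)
The plan is to extract basic topological data of $L$ from $\widehat{\HFK}(L)$ and then invoke Theorem~\ref{thm:mainbraid} applied to one component of $L$. First, using the standard relation between $\widehat{\HFK}$ and $\widehat{\HFL}$ for multi-component links (the former is obtained from the latter by collapsing the multi-Alexander grading and tensoring with $V^{\otimes(|L|-1)}$), combined with detection of the Alexander polynomial by the graded Euler characteristic and of the Thurston norm by the top Alexander grading, one reads off from $\widehat{\HFK}(L)\cong \widehat{\HFK}(L_0)$ that $L$ has two components, linking number $0$, and two unknot components.

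Next, I would apply Theorem~\ref{thm:mainbraid} to an unknot component $K$ of $L$. Since the top $A_K$-grading of $\widehat{\HFL}(L)$ has rank at most $2^{|L|}=4$, one of cases (1)--(5) holds. Case (2) is excluded because $K$ is an unknot, hence not nearly fibered. Case (5) requires a third component, ruled out by $|L|=2$. In case (1), $L-K$ would be a closed braid in the solid torus $S^3\setminus\nu(K)$ with number of strands equal to $\lk(K,L-K)=0$, contradicting $|L|=2$. Case (4) is excluded by comparing the ranks and supports in each top $A_K$-grading of $\widehat{\HFL}$ that case (4) yields (as computed in Section~\ref{subsec:geography}) with those arising from $L_0$. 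This leaves case (3): $L-K$ is an index-two clasp-braid with respect to the unknot $K$, so $L=L_n$ is an $n$-twisted Whitehead link for some $n\in\Z$.

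Finally, I would invoke the explicit computation of $\widehat{\HFK}(L_n)$ carried out in Section~\ref{sec:Lncomp} to conclude that $\widehat{\HFK}(L_n)\cong \widehat{\HFK}(L_0)$ exactly when $n\in\{0,1\}$, so $L=L_0=L5a1$ or $L=L_1=L7n2$.

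The principal obstacle is the first step: $\widehat{\HFK}$ lacks the multi-Alexander refinement of $\widehat{\HFL}$, so extracting $|L|$, linking numbers, and per-component genera from $\widehat{\HFK}(L)$ alone requires combining Ozsv\'ath--Szab\'o genus detection, the Alexander polynomial--Euler characteristic relation, and careful tracking of the $V^{\otimes(|L|-1)}$ factor. A secondary subtlety is the explicit exclusion of case (4), which requires a careful comparison of multi-Alexander graded structures even though only the collapsed single Alexander grading of $\widehat{\HFK}$ is available as input.
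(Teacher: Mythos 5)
Your overall architecture (get linking number and component data, feed Theorem~\ref{thm:mainbraid} an unknotted component, then compare with the computation of $\widehat{\HFK}(L_n)$) is reasonable in outline, but the step you yourself flag as ``the principal obstacle'' is a genuine gap, not a technicality. From $\widehat{\HFK}(L)$ alone you cannot read off that the components of $L$ are unknots, nor that the top $A_K$-grading of $\widehat{\HFL}(L)$ has rank at most $4$: the graded Euler characteristic and the top collapsed Alexander grading give you the multivariable Alexander polynomial and the Thurston norm of the class of a Seifert surface for all of $L$, but not the per-component $A_i$-spans, and the total rank $16$ together with Martin's lower bound only forces rank at least $2$ in the top $A_K$-grading, leaving plenty of room for it to exceed $4$. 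Without that rank bound you cannot invoke Theorem~\ref{thm:mainbraid} at all. The paper closes exactly this gap by a different mechanism: $\widehat{\HFK}(W)$ is $\delta$-thin, hence $\widehat{\HFL}(L)$ is $E_2$-collapsed and decomposes into four $1\times 1$ boxes; the linking number $0$ (from the Conway polynomial) rules out either component being braided about the other, so Martin's result forces rank at least $4$ in each top $A_i$-grading; the symmetry of link Floer homology then pins the box centres down and yields $\widehat{\HFL}(L)\cong\widehat{\HFL}(W)$, after which the $\widehat{\HFL}$-detection result (Theorem~\ref{thm:HFLwhite}) finishes. Some version of this thinness/box argument (or an equivalent way of reconstructing the multi-grading) is indispensable and is absent from your plan.

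Two smaller points. Case (5) of Theorem~\ref{thm:mainbraid} is \emph{not} ruled out by $|L|=2$: for a two-component link $L=K\cup K'$ the condition that $L-K-K'$ be braided is vacuous, and case (5) genuinely occurs for two-component links (this is the situation handled by Lemma~\ref{LnNonbraid}); with $K$ unknotted it is excluded instead because a simple closed curve in a disk bounds a disk, forcing $L$ to be split. Relatedly, you never verify that $L$ is non-split, which is a hypothesis of Theorem~\ref{thm:mainbraid}; this follows, e.g., from the non-degeneracy of the link Floer polytope once the multi-graded structure is known, but again that information must first be recovered from $\widehat{\HFK}$.
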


Here $\widehat{\HFK}(L)$ is the knot Floer homology of $L$, a link invariant due independently to Ozsv\'ath-Szab\'o and J.Rasmussen~\cite{Rasmussen,ozsvath2005knot}. The knot Floer homology of a link $L$ can be recovered from the link Floer homology of $L$ by collapsing the multi-Alexander grading to a single Alexander grading and shifting the Maslov grading up by $\frac{n-1}{2}$, where $n$ is the number of components of $L$.

Note that knot Floer homology distinguishes the Whitehead link from its mirror. Indeed it follows from the behaviour of link Floer homology under mirroring that if $L$ is a link with the link Floer homology of the mirror of the Whitehead link then $L$ is either the mirror of the Whitehead link or the mirror of $L7n2$.

Our strategy for proving Theorem~\ref{thm:HFKW} is to first show, in Theorem~\ref{thm:HFLwhite}, that if $L$ is a link with the same link Floer homology as the Whitehead link then $L$ is the Whitehead link or $L7n2$. To prove Theorem~\ref{thm:HFKW} we then show that if a link has the same knot Floer homology as the Whitehead link then it has the same link Floer homology as the Whitehead link.

\subsection{The link Floer homology of twisted Whitehead links}\label{sec:Lncomp}

In this section we show that link Floer homology detects most twisted Whitehead links. Our first goal is to compute the link Floer homology of the $n$-twisted Whitehead links.

First recall from~\cite[Section 12]{ozsvath2008holomorphic} that for $i=0$ or $1$ we have that the summands of $\widehat{\HFL}(L_0))$ or $\widehat{\HFL}(L_1))$ are given by:

  \begin{center}

    \begin{tabular}{|c|c|c|c|}
    \hline
    \backslashbox{\!$A_2$\!}{\!$A_1$\!}
     &-1& 0&1 \\
\hline
$1$&$\F_{0}$&$\F_1^2$&$\F_2 $\\\hline
$0$&$\F_{-1}^2$&$\F_{0}^4$&$\F_1^2$\\\hline
$-1$&$\F_{-2}$&$\F_{-1}^2$&$\F_{0}$\\\hline
  \end{tabular} 
\end{center}

 Throughout this section, we let $T_n$ denote the twist knot with $n$ many half-twists. Since $T_n$ is alternating, $\widehat{\HFK}(T_n)$ can be computed from the Alexander polynomial and signature of $T_n$. For $n$ odd we have that:
\[ \widehat{\HFK}(T_n, i) = \begin{cases}
\mathbb{F}_2^{\frac{n+1}{2}} &\text{if } i =1 \\
\mathbb{F}_{1}^n &\text{if }  i=0 \\
\mathbb{F}_{0}^{\frac{n+1}{2}} & \text{if }  i=-1
\end{cases} \]
While for $n$ even we have that:
\[ \widehat{\HFK}(T_n, i) = \begin{cases}
\mathbb{F}_1^{\frac{n}{2}} &\text{if } i =1 \\
\mathbb{F}_0^{n+1} &\text{if }  i=0 \\
\mathbb{F}_{-1}^{\frac{n}{2}} & \text{if }  i=-1
\end{cases} \]

\begin{proposition}
\label{HFLLn}
  When $n>1$ is odd, the summands of $\widehat{\HFL}(L_n)$ are given by:\begin{center}
      \begin{tabular}{|c|c|c|c|}
    \hline
    \backslashbox{\!$A_2$\!}{\!$A_1$\!}
     &$-1$& $0$&$1$ \\
\hline
$1$&$\F_{0}$&$\F_1^2$&$\F_2 $\\\hline
$0$&$\F_{-1}^{\frac{n+3}{2}}\oplus \F_{0}^{\frac{n-1}{2}}$&$\F_{0}^{n+2}\oplus \F_{1}^{n-2}$&$\F_{1}^{\frac{n+3}{2}}\oplus \F_{2}^{\frac{n-2}{2}}$\\\hline
$-1$&$\F_{-2}$&$\F_{-1}^2$&$\F_{0}$\\\hline
  \end{tabular} 
\end{center}

When $n>0$ is even, the summands of $\widehat{\HFL}(L_n)$ are given by:

\begin{center}
      \begin{tabular}{|c|c|c|c|}
    \hline
    \backslashbox{\!$A_2$\!}{\!$A_1$\!}
     &$-1$& $0$&$1$ \\
\hline
$1$&$\F_{-1}$&$\F_0^2$&$\F_1 $\\\hline
$0$&$\F_{-2}^{\frac{n+2}{2}}\oplus \F_{-1}^{\frac{n-2}{2}}$&$\F_{-1}^{n+3}\oplus \F_{0}^{n-1}$&$\F_{0}^{\frac{n+2}{2}}\oplus \F_{1}^{\frac{n-2}{2}}$\\\hline
$-1$&$\F_{-3}$&$\F_{-2}^2$&$\F_{-1}$\\\hline
  \end{tabular} 
\end{center}

\end{proposition}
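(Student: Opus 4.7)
My plan is to prove the computation by induction on $|n|$, taking the cases $n \in \{0,1\}$ as base cases, since these are recorded immediately before the Proposition. For negative $n$ I would reduce to positive $n$ via the effect of mirroring on $\widehat{\HFL}$, together with the identity $\overline{L_n} = L_{-n-1}$ noted in the introduction; the orientation reversal symmetry of $L_n$ observed there ensures that only the $A_1 \to -A_1$ component of the mirror need be tracked.

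For the inductive step, I would apply the oriented skein exact triangle for link Floer homology at the highlighted crossing of Figure~\ref{L_n}. One resolution carries $L_n$ to $L_{n-2}$ (to which the inductive hypothesis applies), while the other resolution unclasps a strand from the diagram and produces the split union of an unknot with a twist knot $T_m$ for an explicit $m$ depending on $n$. The link Floer homology of $U \sqcup T_m$ is then determined by the K\"unneth-type formula for split links, combined with the formula for $\widehat{\HFK}(T_m)$ recalled just before the Proposition.

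With $\widehat{\HFL}(L_{n-2})$ and $\widehat{\HFL}(U \sqcup T_m)$ thus known, the skein triangle reads
\[
\cdots \to \widehat{\HFL}(L_n) \to \widehat{\HFL}(U \sqcup T_m) \to \widehat{\HFL}(L_{n-2}) \to \cdots,
\]
with explicit Alexander and Maslov grading shifts reflecting the change in linking number at the crossing. An Euler-characteristic check against the multi-variable Alexander polynomial of $L_n$ --- which itself satisfies a two-term recursion in $n$ directly from the same skein relation, starting from the known polynomial of the Whitehead link --- pins down the ranks of the connecting maps in each bi-Alexander grading. In the generic bigradings there is simply no room for cancellation in the long exact sequence; in the extremal bigradings (for example the corners $A_2 = \pm 1$) the answer is already determined, up to Maslov shift, by Corollary~\ref{lem:computenearlyfibered} applied to $L_n$ as a clasp-braid closure with respect to its unknotted component, which provides an independent cross-check.

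The principal obstacle is the careful bookkeeping of Alexander and Maslov grading shifts in the skein triangle, and in particular verifying that the uniform shift of Maslov gradings by $-1$ appearing as one passes from the $n$ odd to the $n$ even formulae is produced by the interplay between the parity of $n$ and the Maslov grading of $T_m$. A secondary check, which I would perform after the induction closes, is to confirm graded Euler-characteristic agreement with the multi-variable Alexander polynomial of $L_n$ in every bi-Alexander grading, ensuring that no hidden cancellation has been overlooked.
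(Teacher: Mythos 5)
Your overall architecture---induction on $n$ via a skein triangle at the highlighted crossing of Figure~\ref{L_n}, with mirroring and orientation-reversal symmetries handling negative $n$---overlaps with part of the paper's argument, but two steps do not work as stated. First, the oriented resolution of the highlighted crossing is not the split union of an unknot with a twist knot. That crossing is a self-crossing of the clasped component, so the oriented resolution \emph{increases} the number of components: the paper identifies $L'_0$ as $Hopf^{-}\sqcup U$, the split union of a negative Hopf link and an unknot. The twist knots $T_{n-2}$ and $T_{n+2}$ enter the computation by an entirely different route, namely Zibrowius' pairing theorem applied to the decomposition of $L_n$ into the $(2,-2)$-pretzel tangle and the rational tangle of slope $-\tfrac{2}{2n+1}$; that pairing gives
\[
V\otimes\widehat{\HFK}(L_n)\cong\bigl(V\otimes\widehat{\HFK}(T_{n-2})\bigr)\oplus\bigl(V\otimes\widehat{\HFK}(T_{n+2})\bigr)\oplus\bigl((\F^4\oplus\F^4)\otimes V\bigr),
\]
and hence the total rank $4n+10$, which is an essential input you do not otherwise have.

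Second, and more seriously, your mechanism for determining the connecting maps fails: in any exact triangle the graded Euler characteristic of the middle term equals the signed sum of the other two \emph{regardless} of the connecting homomorphism, since cancelling pairs sit in adjacent Maslov gradings and contribute zero to $\chi$. So agreement with the multivariable Alexander polynomial can never pin down $\rank(\delta)$, and "no room for cancellation" is exactly what must be argued. This is the point at which the paper uses the rank $4n+2$ of $\widehat{\HFL}(L_n,A_U=0)$ coming from the tangle pairing: the skein sequence alone leaves two candidate answers for $\widehat{\HFL}(L_n;(1,0))$, and the wrong one is excluded by a rank count against the spectral sequence to $\widehat{\HFK}(T_n)\otimes V$, not by Euler characteristics. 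A related caveat: the skein exact sequence invoked in the paper is for $\widehat{\HFK}$ at a fixed collapsed Alexander grading; the bi-Alexander grading is reconstructed afterwards using the spectral sequences to $\widehat{\HFK}(U)\otimes V$, the genus bounds on the two components, and the symmetries of $\widehat{\HFL}$---your use of Corollary~\ref{lem:computenearlyfibered} for the extremal $A_2=\pm1$ rows is a reasonable substitute for that part, but it does not repair the two gaps above.
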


Note in particular that:\begin{enumerate}
    \item The maximal $A_2$ grading is $1$.
    \item $\widehat{\HFL}(L_n)$ is of rank $4$ in the maximal $A_2$ grading.
    \item $\rank(\widehat{\HFK}(L_n)) = 4n+10$ for $n>1$ while $\rank(\widehat{\HFK}(L_n)) =16$ for $-2 \leq n \leq 1$, and $\rank(\widehat{\HFK}(L_n)) =-4n+6$ for $n < -2$. 
\end{enumerate} 

To prove Proposition~\ref{HFLLn} we apply Zibrowius' machinery of 4-ended tangle invariants~\cite{zibrowiuspeculiar}. This invariant assign to each $4$-ended tangle in a $3$-ball an immersed multicurve in the $2$-sphere with $4$ punctures. The Heegaard Floer homology of the union of two four ended tangles can then be computed as the lagrangian Floer homology of the pair of corresponding multi-curves, which in practice amounts to counting intersection points between the pair of multi-curves. We borrow notation from \cite{zibrowiuspeculiar}. 

\begin{proof}

We use the ungraded version of the pairing theorem from \cite[Corollary 0.7]{zibrowiuspeculiar} for the tangle invariants corresponding to the $(2,-2)$ pretzel tangle, $S_1$, (see \cite[Example 6]{zibrowius2020manual}) and the rational tangle of slope $-\frac{2}{2n+1}$, $S_2$. \[\gamma_{S_1} = \gamma_{Q_{\frac{1}{2}}} \oplus \gamma_{Q_{-\frac{1}{2}}} \oplus s_2(0) \oplus s_2(0), \gamma_{S_2} = \gamma_{Q_{-\frac{2}{2n+1}}}, \]
where $s_2(0)$ are the special components and the remaining components are rational with the specified slopes, see Figure~\ref{HFT(P(2,-2))}.

\begin{center}
    \begin{figure}[h]
    \centering
    \includegraphics[width=6.7in]{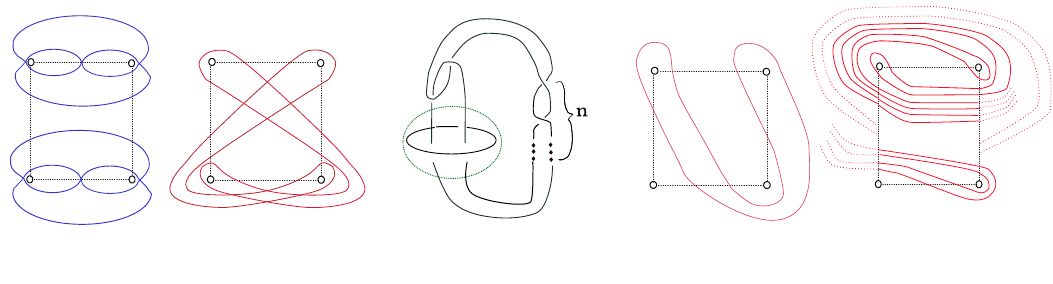}
    \caption{The two left-most figures are components of the tangle invariant of the pretzel tangle $P(2,-2)$, which is the tangle in the interior of the green sphere indicated in the picture of $L_n$. The two right-most figures are the tangle invariant of the rational tangle given by the $0-$twisted clasps and by the $n-$twisted clasps, where $n>0$, which is the tangle that lies on the exterior of the sphere.}
    \label{HFT(P(2,-2))}
\end{figure}
\end{center}

By \cite[Theorem 3.7]{zibrowiuspeculiar} we have that as ungraded vector spaces: 

\begin{align*}
    V \otimes \widehat{\HFK}(L_n) \cong  V \otimes \widehat{\HFK}(S_1 \cup S_2) &\cong \HF (mr(\gamma_{S_1}), \gamma_{S_2}) \\
=  \HF(\gamma_{Q_{\frac{1}{2}}}, \gamma_{Q_{-\frac{2}{2n+1}}}) \oplus \HF(\gamma_{Q_{-\frac{1}{2}}}, \gamma_{Q_{-\frac{2}{2n+1}}}) \\ \oplus \HF(-s_2(0),  \gamma_{Q_{-\frac{2}{2n+1}}}) \oplus  \HF(-s_2(0),  \gamma_{Q_{-\frac{2}{2n+1}}})
\end{align*}

\[V \otimes \widehat{\HFK}(L_n) = V \otimes \widehat{\HFK}(Q_{\frac{1}{2}} \cup Q_{-\frac{2}{2n+1}}) \oplus  V \otimes \widehat{\HFK}(Q_{-\frac{1}{2}} \cup Q_{-\frac{2}{2n+1}}) \oplus ((\mathbb{F}^4 \oplus \mathbb{F}^4) \otimes V) \]

Here $V$ denotes a two dimensional vector space. Thus we have, 

\begin{equation}
\label{HFTLn}
 V \otimes \widehat{\HFK}(L_n) \cong (V \otimes \widehat{\HFK}(T_{n-2})) \oplus (V \otimes \widehat{\HFK}(T_{n+2})) \oplus  ((\mathbb{F}^4 \oplus \mathbb{F}^4) \otimes V) 
\end{equation}

which follows by pairing the special component of the immersed curve invariant for $P(2,-2)$ tangle with the rational tangle, as shown in Figure~\ref{fig:HFTpairing}.

\begin{center}
\begin{figure}[h]
    \centering
    \includegraphics[width=5.4in]{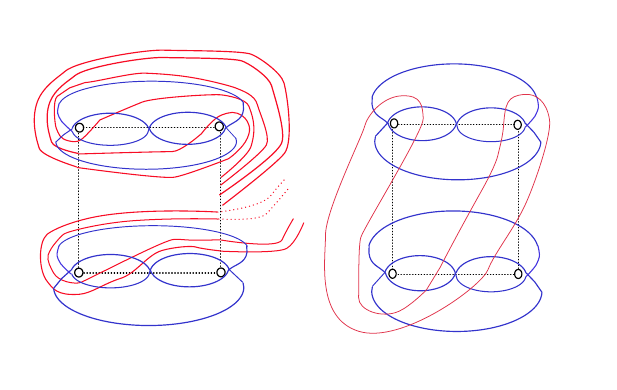}
    \caption{The left figure shows pairing the special component corresponding to $P(2,-2)$ with the rational tangle corresponding to the $n-$twisted clasp tangle. On the right is the diagram in the $n=0$ case. We only drew the portion of the pairing diagram for general $n$ in which the intersections occur.}
    \label{fig:HFTpairing}
\end{figure}

\end{center}

We now determine the Maslov gradings of the generators of $\widehat{\HFL}(L_n)$. Recall the skein exact sequence in knot Floer homology from \cite[Chapter 9]{gridhomologybook}. If a crossing $p$ in a diagram for a link $L'_+$ involves only one component of $L'_+$, then for every Alexander grading $i$ we have the following exact sequence connecting the knot Floer homology groups of $L'_+, L'_-, L'_0$:

\[\cdots \rightarrow \widehat{\HFK}(L'_-,i) \rightarrow \widehat{\HFK}(L'_0,i) \rightarrow \widehat{\HFK}(L'_+,i) \rightarrow \cdots\]

The maps to and from $L'_0$ decreases the Maslov grading by $\frac{1}{2}$, and the map from $L'_+$ to $L'_-$ preserves the Maslov grading.

We apply this Skein exact sequence to the highlighted crossing in Figure~\ref{L_n}. In this case $L'_+ = L_n, L'_-=L_{n-2}, L'_0 = Hopf^{-} \sqcup U$, where $U$ denotes the unknot.  Recall ${\widehat{\HFK}(Hopf^{-} \sqcup U)}$, is given by:

\[ \widehat{\HFK}(Hopf^{-} \sqcup U,i) = \begin{cases}
 \mathbb{F}_2 \oplus \mathbb{F}_1 & \text{if } i=1 \\  
 \mathbb{F}^2_1 \oplus \mathbb{F}_0 & \text{if } i=0 \\ 
  \mathbb{F}_0 \oplus \mathbb{F}_{-1} & \text{if } i=-1 
\end{cases}\]

Since $\widehat{\HFK}(L_0) = \widehat{\HFK}(W)$ is supported in Alexander grading $2$, and $\widehat{\HFK}(Hopf^{-} \sqcup U)$ does not have support at Alexander grading $2$, the skein exact sequence implies that for any $m \in \mathbb{Z}$,
\[\widehat{\HFK}(L_{2m}, 2) \cong \widehat{\HFK}(L_0, 2) \cong  \mathbb{F}_{\frac{3}{2}}.\]

\noindent Since both components of $L_n$ bound Euler characteristic $-1$ surfaces in the link exterior, it follows that the maximum support of $\widehat{\HFL}(L_n)$ is at $A_i = 1, i=1,2$, and also that \[\widehat{\HFL}(L_{2m}; (1,1)) \cong \mathbb{F}_1, \widehat{\HFL}(L_{2m}; (-1,-1)) \cong \mathbb{F}_{-3}, \] 
where the last equality follows from the symmetry of knot Floer homology: \[\widehat{\HFK}_d(L_n, -i) \cong \widehat{\HFK}_{d-2i}(L_n, i)\]
Consider the link $W'$ formed by reversing the orientation of the unknotted component of $L_0$ we get that $\widehat{\HFK}(W',2) \cong \mathbb{F}_{-\frac{1}{2}}$. Again using the above mentioned skein exact sequence and the symmetery mentioned, we get that 
\[\widehat{\HFL}(L_{2m}; (1,-1)) \cong \mathbb{F}_{-1}, \widehat{\HFL}(L_{2m}; (-1,1)) \cong \mathbb{F}_{-1}\]

Recall that there is a spectral sequence whose $E_1$ term in $\widehat{\HFL}(L_n)$, and whose $E_{\infty}$ term, ignoring the $A_{L_n - U}$ grading, is isomorphic to $\widehat{\HFK}(U) \otimes V$, where $V$ is a two dimensional vector space $\mathbb{F}_0 \oplus \mathbb{F}_{-1}$, since $\lk(U, L_n-U) = 0$.  We thus have that:
\begin{equation}
\label{evenHFL}
    \widehat{\HFL}(L_{2m}, A_U = 1) \cong \mathbb{F}_{1}[1,1] \oplus \mathbb{F}_0^2[0,1] \oplus \mathbb{F}_{-1}[-1,1]
\end{equation}

A similar statement holds for $\widehat{\HFL}(L_n, A_U = -1)$.

 Now notice that $T_{-n} \simeq m(T_{n-1}), n \geq 0$, where $m(K)$ denotes the mirror of $K$. Thus, starting with a specified orientation on both of the components of $L_{n-1}, n>0$ , one can obtain $L_{-n}$ by first taking mirror of $L_{n-1}$ and then changing the orientation of the unknotted component of the link obtained.  

Recall that link Floer homology satisfies the following symmetry properties:

\[\widehat{\HFL}_d(L; A_1, A_2) \cong \widehat{\HFL}_{2A_1 + 2A_2 - d + 1 - 2}(m(L); A_1, A_2) \]
\[ \widehat{\HFL}_d(L; A_1, A_2) \cong \widehat{\HFL}_{d-2A_2+\lk(L_1, L_2)}(L'; A_1, -A_2),\]
\noindent where  $m(L)$ is the mirror of $L$ and $L'$ is obtained from $L$ by reversing the orientation of one of its components $L_2$.

Using these symmetries, the skein exact sequence and Equation~\ref{evenHFL} we have that 

\begin{equation}
\label{oddHFL}
    \widehat{\HFL}(L_{2m+1}, A_U=1) \cong \mathbb{F}_{2}[1,1] \oplus \mathbb{F}_1^2[0,1] \oplus \mathbb{F}_{0}[-1,1].
\end{equation}

\noindent Indeed from Equation \ref{evenHFL} we can deduce that \[
    \widehat{\HFL}(L_{-2}, A_U = 1) \cong \mathbb{F}_{1}[1,1] \oplus \mathbb{F}_0^2[0,1] \oplus \mathbb{F}_{-1}[-1,1].\]
    \noindent Thus, \[
    \widehat{\HFL}(L_{1}, A_U = 1) \cong \mathbb{F}_{2}[1,1] \oplus \mathbb{F}_1^2[0,1] \oplus \mathbb{F}_{0}[-1,1].\]

We now determine the $A_U=0$ summand of $\widehat{\HFL}(L_n)$. Equation \ref{HFTLn} implies that 

\[\rank(\widehat{\HFL}(L_n, A_U = 0)) = \rank(\widehat{\HFK}(T_{n-2})) + \rank(\widehat{\HFK}(T_{n+2})) = 4n+2,\] 

which along with the existence of a spectral sequence to $\widehat{\HFK}(T_n)\otimes V$ and the Maslov grading information from Equations \ref{evenHFL} and \ref{oddHFL} implies that when $n>1$ is odd

\[\rank(\widehat{\HFL}(L_n; (0, \pm 1))) \geq n+1,\]
and that when $n>0$ is even
\[\rank(\widehat{\HFL}(L_n; (0, \pm 1))) \geq n. \]

We treat when $n>1$ is odd. The case when $n>0$ is even is similar.

Suppose $n>1$ is odd. Recall that $\widehat{\HFK}(T_n,1) \cong \mathbb{F}_2^{\frac{n+1}{2}}$. From the spectral sequence from $\widehat{\HFL}(L_n)$ to $\widehat{\HFL}(T_n)\otimes V$ we see that either:\begin{enumerate}
\item $\widehat{\HFL}(L_n,; (1,0)) \cong \F^{\frac{n+5}{2}}_1\oplus\F^{\frac{n+1}{2}}_2$ or;
\item $\widehat{\HFL}(L_n,; (1,0)) \cong \F^{\frac{n+3}{2}}_1\oplus\F^{\frac{n-1}{2}}_2$.
\end{enumerate}

In either case we have that $\widehat{\HFL}_i(L,(0,-1))\cong \widehat{\HFL}_{i-2}(L,(0,1))$ by the symmetry properties of $L_n$ and link Floer homology.

In case (1) we have that 
$$\rank(\widehat{\HFL}(L_n,; ( 1,0)))+\rank(\widehat{\HFL}(L_n,; (-1,0))) =2n+12,$$ hence, since $\rank(\widehat{\HFL}(L_n))=4n+12$, we have that $\rank(\widehat{\HFL}(L_n,; (0,0))) =2n-10$ and $\rank(\widehat{\HFL}(L_n,; A_1=0) =2n-6$. This is a contradiction, since $\widehat{\HFL}(L_n,; A_1=0)$ admits a spectral sequence to \[\widehat{\HFK}(T_n;0) \otimes V = \mathbb{F}_1^{n} \oplus \mathbb{F}_{0}^{n}.\]

Thus for $n>1$ odd, \[\widehat{\HFL}(L_n;(1,0)) \cong \mathbb{F}_{1}^{\frac{n+3}{2}} \oplus \mathbb{F}_{2}^{\frac{n-1}{2}}.\]
A similar argument can be used to determine $\widehat{\HFL}(L_n; (0,0))$, for $n >1$ odd, completing the computation in that case.

Finally that $\widehat{\HFL}(L_1)$ can be obtained from $\widehat{\HFL}(L_{-2})$ by using the symmetry properties of link Floer homology mentioned above. In general, for $n>0$ $\widehat{\HFL}(L_{-n})$ can also be obtained from $\widehat{\HFL}(L_{n-1})$ using the symmetry properties.

\end{proof}

To parse the following Lemma it is helpful to recall that link Floer homology detects the number of components of a link.
\begin{lemma}
\label{LnNonbraid}
    Suppose $L$ is a link with a genus one fibered component $K$ such that ${\widehat{\HFL}(L) \cong \widehat{\HFL}(L_n)}$ for some $n$. If the second component of $L$, $K'$, is isotopic to a curve in a genus one Seifert surface for the first component, $K$, then $L$ is isotopic to $L_{1}$, $L_{\pm2}$ or $L_{-3}$.
\end{lemma}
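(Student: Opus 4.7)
The plan is to apply Corollary~\ref{cor:HFLhomessential} to determine the structure of $\widehat{\HFL}(L)$ in its maximum non-trivial $A_K$ grading, match this against the computations of $\widehat{\HFL}(L_n)$ in Proposition~\ref{HFLLn} to cut down the possibilities for $n$, and then invoke the explicit identification of the relevant $(K, K')$ pairs from \cite{dey2023unknotted} to pin down $L$ on the nose.

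First, since $K$ is a genus one fibered knot, $K$ must be one of $T(2,3)$, $T(2,-3)$, or the figure-eight knot $4_1$, each of which has a once-punctured torus fiber $\Sigma$. By the remark preceding Subsection~\ref{subsec:geography}, $K'$ is determined, up to the monodromy action on $H_1(\Sigma) \cong \Z^2$, by a primitive class $(p,q) \in \Z^2$; the monodromies in question are $\begin{pmatrix}1 & -1\\ 1 & 0\end{pmatrix}^{\pm 1}$ for $T(2,\pm 3)$ and Arnold's cat map $\begin{pmatrix}2 & 1\\ 1 & 1\end{pmatrix}$ for $4_1$.

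Second, applying Corollary~\ref{cor:HFLhomessential} with $n = 2$, the maximum non-trivial $A_K$ grading of $\widehat{\HFL}(L)$ is $B_1[\mathbf{A}_{F([\gamma_1])}] \oplus B_1$ up to affine isomorphism; this has rank exactly $4$ and is supported in at most three consecutive $A_{K'}$ gradings. Inspecting Proposition~\ref{HFLLn} together with the mirror symmetry $\overline{L_n} \simeq L_{-n-1}$, the rank of $\widehat{\HFL}(L_n)$ at the maximum non-trivial Alexander grading of the ``knotted'' (non-unknotted) component equals $4$ only when $n \in \{-3,-2,-1,0,1,2\}$, and exceeds $4$ otherwise. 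Hence the link Floer homology type of $L$ is restricted to one of these six; and since $\widehat{\HFL}(L_0) \cong \widehat{\HFL}(L_1)$ and $\widehat{\HFL}(L_{-1}) \cong \widehat{\HFL}(L_{-2})$ (as noted in the introduction), the claim $L \in \{L_1, L_{\pm 2}, L_{-3}\}$ is consistent with each of the remaining four distinct link Floer homology types.

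Finally, to pin down $L$ on the nose I appeal to the classification in~\cite{dey2023unknotted}: for each of the three choices of a genus one fibered $K$ and each primitive class $(p,q)$ modulo monodromy, the link $K \cup K'$ can be identified explicitly, with the relevant cases depicted in Figure~2 there. Their analysis shows that the only such links whose link Floer homology has rank $4$ in the maximum non-trivial $A_K$ grading are precisely $L_1, L_2, L_{-2}$, and $L_{-3}$. The principal obstacle is this final identification step: for each $K$ one must enumerate orbits of primitive vectors under the monodromy, match each admissible orbit to a specific twisted Whitehead link, and verify that the Alexander shift $\mathbf{A}_{F([\gamma_1])}$ produced by Corollary~\ref{cor:HFLhomessential} is compatible with the grading data of Proposition~\ref{HFLLn}. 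This is where~\cite{dey2023unknotted} does the essential topological work.
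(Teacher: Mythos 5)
Your outline matches the paper's strategy at the outset---identify $K$ as $T(2,\pm3)$ or $4_1$, parameterize $K'$ by a primitive class in $H_1(\Sigma)$ modulo the monodromy, and extract constraints from Corollary~\ref{cor:HFLhomessential}---but the final step, which you yourself flag as ``the principal obstacle,'' is where the actual proof lives, and you have deferred it to \cite{dey2023unknotted}, which does not do that work. That reference classifies the \emph{unknotted} essential simple closed curves on the genus-one Seifert surfaces of these knots; it does not compute link Floer homology, does not show that $K'$ must be unknotted, and does not single out $L_1,L_{\pm2},L_{-3}$. Three concrete things are missing. First, you never bound the genus of $K'$: one must deduce $g(K')\leq 1$ from the fact that $\widehat{\HFL}(L_n)$ is supported in $A_{K'}$-gradings between $-1$ and $1$, via the Thurston-norm interpretation of the minimal Alexander grading. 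Second, the key numerical constraint is never extracted: matching $B_1[\mathbf{A}_{F([\gamma_1])}]\oplus B_1$ from Corollary~\ref{cor:HFLhomessential} against the top $A_K$-row of Proposition~\ref{HFLLn} (ranks $1,2,1$ in three consecutive $A_{K'}$-gradings) forces the surface framing of $K'$ in the fiber to be $\pm1$; this is what reduces infinitely many monodromy orbits to finitely many candidates. Third, one must actually run the resulting case analysis: for $3_1$ only the $(1,0)$, $(0,1)$ and $(1,1)$-$\infty$ curves have framing $\pm 1$, and they are monodromy-equivalent, giving L7n2; for $4_1$ the Fibonacci family of unknotted curves collapses under the monodromy to $(1,0)$ or $(0,1)$, giving $L_2$; and every genus-one curve on either fiber must be excluded by explicit genus and framing formulas for the $(m,n)$-loop and $(m,n)$-$\infty$ families (this is several pages of computation, and is not in \cite{dey2023unknotted}'s classification of unknots). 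Without these steps the conclusion is asserted, not proved.

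Separately, your intermediate restriction (``rank $4$ at the maximum grading of the knotted component only for $n\in\{-3,\dots,2\}$'') does not accomplish what you want: for every $n$ the \emph{unknotted} component of $L_n$ has rank $4$ in its top Alexander grading, so a bare rank count at a top grading cannot by itself pin down which component of $L_n$ corresponds to $K$, nor restrict $n$; one needs the finer $A_{K'}$-grading (and Maslov) data. In any case this step is only a consistency check and cannot substitute for the missing identification of the isotopy type of $L$.
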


\begin{proof}

Suppose $L,K$ and $K'$ are as in the statement of the lemma. Note that since ${\widehat{\HFL}(L)\cong\widehat{\HFL}(L_n)}$, $L$ is fibered.  Moreover, $K$ is a genus one fibered knot by Theorem~\ref{thm:mainbraid}, Lemma~\ref{lem:homessentialdecomposed}. Thus, $K$ is $3_1$, $\overline{3_1}$ or $4_1$.

  \cite[Lemma 3.10]{juhasz2008floer} -- as interpreted in~\cite[Section 5]{binns2022cable}, for example --  implies that
\[\min \{A_i: \widehat{\HFL}(L,A_i) \not\cong 0 \} =\min\{ \frac{1}{2} \{-2g(S) - n(S, L-L_i): \partial S = L_1\}\},\]
where $n(S,L)$ is the minimal geometric intersection number of  $S$ with $L$. Since the minimal Alexander $A_K$ and $A_{K'}$ gradings in which $\widehat{\HFL}(L_n)$ is supported are $-1$, the Seifert genus of the knot $K'$ is 0 or 1.

Notice that Proposition \ref{HFLLn} and Corollary \ref{cor:HFLhomessential} imply that the surface framing of $K'$ with respect to $\Sigma_K$ is $\pm 1$, where $K' \subset \Sigma_K, \partial \Sigma_K = K, g(\Sigma_K) = 1$.

To study $K$ and $K'$ we use techniques developed by Dey-King-Shaw-Tosun-Trace, who study unoriented homologically essential simple closed curves on genus one Seifert surfaces of various genus one knots~\cite{dey2023unknotted}. Of course, each such curve can be endowed with two possible orientations. Unoriented unknotted curves come in two types: $(m,n)-\infty$ type curves and $(m,n)-loop$ type curves, see~\cite[Figure 2,3]{dey2023unknotted}, where in each case $\gcd(m,n)=1$ and $m,n \geq 0$. Here an $(m,n)-loop$ curve can be thought of as homologically $(m,0) + (0,n)$, and an $(m,n)-\infty$ curve is homologous to $(m,0) - (0,n)$ in an appropriate fixed basis for the homology of the Seifert surface.

First we treat the case that $g(K')=0$, i.e. that $K'$ is an unknot. The homologically essential unknotted curves on the genus one Seifert surfaces for $3_1$ and $4_1$ are characterized explicitly in~\cite[Theorem 1.1, 1.2]{dey2023unknotted}. Any genus one Seifert surface of $3_1$ contains exactly 6 unknotted curves (see~\cite[Figure 2]{dey2023unknotted}) up to isotopy fixing the Seifert surface set-wise, while any genus one Seifert surface of $4_1$ contains infinitely many, \cite[Theorem 1.1]{dey2023unknotted} up to isotopy fixing the Seifert surface set-wise. On a genus one Seifert surface of $3_1$, notice that only the $(1,0), (0,1)$ and the $(1,1)-\infty$ type unknotted curves have surface framing $\pm 1$. It can also be observed that these curves are related by the action of monodromy of $3_1$ -- $\begin{bmatrix}
    1 & -1 \\ 1 & 0 
\end{bmatrix}$  with respect to the basis $((1,0),(0,1))$ -- whence they are isotopic in the complement of $3_1$. Thus each of the links consisting of $3_1$ and one of the curves above, are isotopic to $L7n2$.

Now recall that the monodromy of $4_1$ acts on the homology of a fiber surface for $4_1$ acts by $\begin{bmatrix}
    2 & 1 \\ 1 & 1 
\end{bmatrix}$ with respect to 
the basis $((1,0), (0,1))$. The monodromy induces an isotopy of the exterior of the knot $4_1$. The unknots on the genus one Seifert surface of $4_1$ are parameterized by the Fibonacci sequence. These unknotted curves are isotopic to either $(1,0)$ or $(0,1)$ in the exterior of $4_1$, where the isotopy is given by multiple powers of the monodromy. Each of the links consisting of $4_1$ and $(1,0)$ or $(0,1)$ are isotopic to $L_2$.

We now proceed to the case that $K'$ is homologically essential and $g(K')=1$. We show that there is no essential closed curve of genus $1$ with surface framing $\pm 1$ on a genus 1 Seifert surface for $3_1$ or $4_1$. To do so we follow the case analysis in the proof of Proposition 1.2, 3.1, 3.3 and 3.4 of \cite{dey2023unknotted}.

We first treat the $3_1$ case. \cite[Proposition 3.1]{dey2023unknotted} states that homologically essential closed curves on a genus one Seifert surface of $3_1$ are isotopic to the closures of negative (classical) braids.  For $(m,n)-\infty$ and $(m,n)-loop$ curves $c$ with $m>n>0$, we have that $g(c) = \frac{m(m-n-2)+n^2+1}{2}$, and $g(c) = \frac{m(m+n-2) +n(n-2)+1}{2}$ respectively. The former is $1$ if and only if $m=3,n=1$, while the latter is $1$ if and only if $m=2, n=1$. For $(m,n)-loop$ curves $c$ with $n>m>0$, $g(c) = \frac{n(n-m-2)+m^2+1}{2}$ and for $(m,n)-\infty$ curves $c$ with $n>m>0$ we have $g(c) = \frac{n(n-1)+m(m-2)+n(m-1)+1}{2}$. The former is $1$ exactly when $m=1,n=3$, and the latter is $1$ exactly when $m=1, n=2$. The surface framing of a $(m,n)-loop$ curve on a fiber surface for $3_1$ is $-m^2-mn-n^2$ and that of a $(m,n)-\infty$ curve is $-m^2+mn-n^2$. Thus it is easy to see that neither of the above mentioned genus one curves have surface framing $\pm 1$.

We proceed now to the case of $4_1$. \cite[Proposition 3.3, 3.4]{dey2023unknotted} characterizes the homologically essential simple closed curves on a genus one Seifert surface for $4_1$. For our purpose, there are six cases to consider. The first pair of the cases are $(m,n)-loop$ curves with $m>n>0$, which are isotopic to negative braid closures,
and $(m,n)-\infty$ curves with $n>m>0$, which are isotopic to positive braid closures. For a curve $c$ of the former type, $g(c) = \frac{n(m-n)+(m-1)^2}{2}$, while for a curve of the latter type $g(c) = \frac{m(n-m)+(n-1)^2}{2}$. The former is 1 exactly when $m=2, n=1$ and the latter is 1 exactly when $n=2, m=1$. Since the surface framings of $(m,n)-loop$ type and $(m,n)-\infty$ type curves on a genus one Seifert surface for $4_1$ are $-m^2-mn+n^2$ and $-m^2+mn+n^2$ respectively, it is readily seen that the two genus one curves above do not have surface framing $\pm 1$. 

We now treat the remaining cases for $4_1$. When $m>n$, $(m,n)-\infty$ curve were divided into three cases, according to if $m-n = n, m-n >n$ or $m-n <n$. The curve is an unknot for the first case. For the second case, we have ${g(c) = \frac{n(m-2n)+(m-n-1)^2}{2}}$. Similarly for $(m,n)-loop$ curve when $n>m$ is divided into three subcase, whethere $n-m = m, n-m >m$ or $n - m<m$. The curve is an unknot for the first case. For the second case we have ${g(c) =  \frac{m(n-2m)+(n-m-1)^2}{2}}$.  The former is 1 when $m=3, n=1$, the latter is 1 when $m=1, n=3$. Again it can be seen readily that neither of the these curves have surface framings $\pm 1$.

The third subcase has further subcases divided into if $2n-m = m-n, 2n- m > m-n$ or $2n-m < m-n$, for an $(m,n)-\infty$ curve. The first and the third subcases only produce unknots, as proved in \cite[Lemma 3.5]{dey2023unknotted}, while for the second subcase an $(m,n)-\infty$ curve can be isotoped to an $(m-n, 2n-m)-\infty$ curve and is of genus ${\frac{(2n-m-1)^2 + (m-n)((2n-m)-(m-n))}{2}}$. The third subcase for $(m,n)-loop$ curve is also divided into three further subcases based on if $2m - n = n -m, 2m-n > n-m$ or $2m-n < n-m$. Again the same lemma implies that the first and the third case only produces unknots. For the second subcase, an $(m,n)-loop$ curve can be isotoped into an $(2m-n, n-m)-loop$ curve, which has genus  ${\frac{(2m-n-1)^2 + (n-m)((2m-n)-(n-m))}{2}}$. The former is $1$ when $2n-m = 2, m-n = 1$ and the latter is $1$ if $2m-n = 2, n-m =1$. It is easy to see again that none of these curves have surface framing $\pm 1$. 

For each of $3_1$ and $4_1$ there is a single non-homologically essential curve to analyse, namely a curve in the surface parallel to the boundary. However, these curves all have surface framing $0$, so we are done with this case.

Finally note that the case of $\overline{3_1}$ can be reduced to the case of $3_1$ by taking mirrors.

\end{proof}

\begin{lemma}\label{lem:twistedwhiteheaddetection}
    Suppose $L$ is a link with $\widehat{\HFL}(L)\cong\widehat{\HFL}(L_n)$ for some $n$. Then $L$ is a twisted Whitehead link.
\end{lemma}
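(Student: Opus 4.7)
The plan is to apply Corollary~\ref{cor:annularlinkfloer} to an unknotted component of $L$ and eliminate all cases except the index-$2$ clasp-braid closures, which are precisely the twisted Whitehead links. First, I would establish the coarse topological data: since link Floer homology detects the number of components, $L$ has two components. Using link Floer homology's detection of the Thurston norm together with the fact that the maximal non-trivial $A_1$ and $A_2$ gradings of $\widehat{\HFL}(L_n)$ are both $1$, each component of $L$ has Seifert genus zero, hence is an unknot. Non-splitness of $L$ follows from the observation that $\widehat{\HFL}(L_n)$ is not of the form prescribed by the connect-sum formula (Equation~\ref{eq:connectsum}) for a split link. Let $U$ denote one of the unknotted components.

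Next, since $\widehat{\HFL}(L)$ has rank $4 = 2^2$ in its maximal non-trivial $A_U$ grading (by Proposition~\ref{HFLLn}), Corollary~\ref{cor:annularlinkfloer} places $L$ into one of four cases. Case (1), where $L-U$ is braided with respect to $U$, would force the rank at the maximal non-trivial $A_U$ grading to be $2^{n-1}=2$ by Martin's braid-axis detection result, contradicting the rank of $4$. Case (2), where a second unknotted component $U'$ is isotopic to a curve in a longitudinal disk for $U$, is excluded by non-splitness: since $L$ has only two components and any simple closed curve in a disk bounds a sub-disk, $U'$ would bound a disk disjoint from $L\setminus U'=U$, forcing $L$ to be split.

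It remains to rule out Case (3), the stabilized clasp-braid case, and to establish that in Case (4) the clasp-braid has index exactly $2$. For this I would use the graded computations of Subsection~\ref{subsec:geography}. Lemma~\ref{lem:stabilizableclaspcomp} combined with Proposition~\ref{Thm:Juhaszaffine} gives the full support, including Alexander shifts, of the maximal $A_U$ grading of $\widehat{\HFL}$ for a stabilized clasp-braid closure; the relative Alexander grading structure produced by this computation will be incompatible with the explicit structure of $\widehat{\HFL}(L_n)$ from Proposition~\ref{HFLLn} (in particular the pattern of Maslov gradings on the four generators in the maximal $A_U$ grading, together with the linking number constraint forced by $\lk(U,L-U)=0$). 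For Case (4), Lemma~\ref{lem:nearlybraidedexterior} and Proposition~\ref{Thm:Juhaszaffine} similarly give the graded structure of the maximal $A_U$ grading of any clasp-braid closure with $n-1$ components beyond $U$; demanding that this match the two-component rank-$4$ structure of $\widehat{\HFL}(L_n)$ forces $n=2$ and in turn forces the braid portion of the clasp-braid to be empty, leaving only the two strands of the clasp with some internal twisting. This description coincides precisely with that of the index-$2$ clasp-braid closures with unknotted axis, which are the $L_m$.

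The principal obstacle is the detailed bookkeeping in Case (3) and in bounding the index in Case (4): the maximal $A_U$ grading alone does not distinguish a clasp-braid from a stabilized clasp-braid closure or from higher-index clasp-braid closures, so the argument must combine the $\spin^c$-shifts of Proposition~\ref{Thm:Juhaszaffine} with the full bigraded information of $\widehat{\HFL}(L_n)$ and the constraints imposed by $L-U$ being a single knot.
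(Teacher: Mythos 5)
Your proposal has a genuine gap at its foundation. You claim that because the maximal non-trivial $A_1$ and $A_2$ gradings of $\widehat{\HFL}(L_n)$ are both $1$, Thurston norm detection forces each component of $L$ to be an unknot. This is false on two counts. First, the quantity detected is the sutured Thurston norm of a longitudinal surface in the \emph{link} complement, namely $\tfrac{1}{2}(2g(S)+n(S,L-L_i))$; a maximal grading of $1$ only says $2g(S)+n(S,L-L_i)=2$, which is compatible with a genus-one Seifert surface disjoint from the other component just as well as with a twice-punctured disk. Second, and decisively, the conclusion is not even true for the links $L_n$ themselves: the non-axis component of $L_n$ is the twist knot $T_n$, which has genus one for most $n$ (e.g.\ $L_{-2}=L7n2$ contains a trefoil, and $L_{\pm2}$, $L_{-3}$ contain a figure-eight knot, as used in Lemma~\ref{LnNonbraid} and Lemma~\ref{lem:Khl2components}). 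Consequently you may not invoke Corollary~\ref{cor:annularlinkfloer}, which requires an unknotted component; the paper instead applies Corollary~\ref{rem:reducible} to the component $L_1$ whose top Alexander grading carries the rank-$4$ piece, without assuming it is unknotted, and only deduces unknottedness at the very end from the Euler characteristic of the longitudinal surface.

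This error propagates: you dismiss case (2) (a component lying on a longitudinal surface for the other) as vacuous because a curve in a disk would bound a disk, but once the surface is allowed to be a genus-one fiber surface of a trefoil or figure-eight knot this case is genuinely realized — it produces exactly $L_1$, $L_{\pm2}$, $L_{-3}$ — and handling it is the bulk of the paper's work here (Lemma~\ref{LnNonbraid}, which classifies unknotted and genus-one curves of surface framing $\pm1$ on genus-one fiber surfaces via~\cite{dey2023unknotted}). You would also then need to exclude the case of a braid about a nearly fibered knot, which is \emph{not} ruled out by rank (it gives rank $2^n=4$ at the top grading); the paper excludes it because Hoste's theorem forces $\lk(L_1,L_2)=0$ while a braid about an axis has nonzero linking number. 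Your treatment of the remaining cases (stabilized clasp-braids and bounding the clasp-braid index) points at workable tools but is left as acknowledged bookkeeping; the paper disposes of both in one line each via the intersection number of $L_2$ with the longitudinal surface.
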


\begin{proof}
Suppose $L$ is as in the statement of the Lemma.       Let $L = L_1 \cup L_2$. Observe that $L$ does not contain a split unknotted component since its link Floer homology polytope is non-degenerate. Without loss of generality, let \[\widehat{\HFL}(L, A_{L_1}) \cong  \mathbb{F}_m[1,1] \oplus \mathbb{F}_{m-1}^2[0,1] \oplus \mathbb{F}_{m-2}[-1,1],\] where $m \in \{1,2\}$ according to whether $n$ is odd or even. Corollary~\ref{rem:reducible} implies that $L$ could be one of three forms; a (stabilized) clasp-braid with its axis, a braid about an almost fibered knot or a genus one fibered knot with a simple closed curve in a Seifert surface. The Conway polynomial -- and hence link Floer homology -- detects the linking number of two component links by a result of Hoste \cite{hoste1985firstcoefficientoftheconwaypolynomial}. It follows that $\lk(L_1,L_2) = 0$. This rules out the case in which one of the component is an almost fibered knot and the other one its braid axis, since the linking number for such a link is non-zero. Lemma \ref{LnNonbraid} rules out the case in which, after a possible relabeling, $L_1$ is a genus one fibered knot and $L_2$ is a 
simple closed curve in a fiber surface of $L_1$. Thus $L_1$ is fibered and $L_2$ is a clasp or a stabilized clasp-braid with respect to $L_1$. $L_2$ cannot be a stabilized clasp braid with respect to $L_1$ as then the maximal non-trivial $A_{K_1}$ grading in $\widehat{\HFL}(L)$ would be at least $3$.
    
    To conclude, observe that $L_1$ must be an unknot since it has a longitudinal surface of Euler characteristic $-1$. Thus $L$ is of the form shown in Figure~\ref{L_n}. 
\end{proof}

We can now prove the main theorem of this section:

\begin{proof}[Proof of Theorem~\ref{thm:HFLLn}]
Suppose $L$ is as in the statement of the Theorem. By Lemma~\ref{lem:twistedwhiteheaddetection} $L$ is of the form $L_n$ for some $n$. Observe that $n\not\in\{-2,-1,0,1\}$ since $\widehat{\HFL}(L)$ is not of the correct rank. Now, up to mirroring and reversing the orientation of the components, the rank of $\widehat{\HFL}$ distinguishes each $L_n$, since $\rank(\widehat{\HFL}(L_n)) = 4n+10$ for $n>1$, ${\rank(\widehat{\HFL}(L_n)) = \rank(\widehat{\HFL}(L_{1-n}))=6-4n}$ for $n<-2$. Since the Maslov gradings distinguish $\widehat{\HFL}(L_{n})$ from $\widehat{\HFL}(L_{-n-1})$ this concludes the proof.
\end{proof}
 We can now deal with the remaining cases:
\begin{theorem}\label{thm:HFLwhite}
    Suppose $\widehat{\HFL}(L)\cong\widehat{\HFL}(W)$. Then $L$ is the Whitehead link or L7n2.
\end{theorem}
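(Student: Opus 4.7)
The plan is to deduce this directly from Lemma~\ref{lem:twistedwhiteheaddetection}, the rank computation in Proposition~\ref{HFLLn}, and the mirror symmetry of link Floer homology. Since $W = L_0$, applying Lemma~\ref{lem:twistedwhiteheaddetection} with $n = 0$ immediately yields that $L$ is isotopic to some twisted Whitehead link $L_m$. I would then invoke the rank count following Proposition~\ref{HFLLn}---namely that $\rank(\widehat{\HFL}(L_m)) = 16$ iff $-2 \leq m \leq 1$---together with the hypothesis $\rank(\widehat{\HFL}(L)) = \rank(\widehat{\HFL}(W)) = 16$ to restrict $m$ to $\{-2,-1,0,1\}$.

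The remaining step is to distinguish $W = L_0$ and $L7n2 = L_1$ from their mirrors $\overline{W} = L_{-1}$ and $\overline{L7n2} = L_{-2}$. For this I would use the absolute Maslov grading: from the table at the start of Section~\ref{sec:HFLW}, the unique generator of $\widehat{\HFL}(W)$ at bi-Alexander grading $(1,1)$ sits in Maslov grading $2$, while the mirror symmetry
\[\widehat{\HFL}_d(L; A_1,A_2) \cong \widehat{\HFL}_{2A_1+2A_2-d-1}(m(L); A_1,A_2)\]
forces the corresponding generator of $\widehat{\HFL}(\overline{W})$, and equally of $\widehat{\HFL}(\overline{L7n2})$ (using that $\widehat{\HFL}(L7n2) \cong \widehat{\HFL}(W)$), to sit in Maslov grading $2\cdot 1 + 2\cdot 1 - 2 - 1 = 1$. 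Since $\widehat{\HFL}(L) \cong \widehat{\HFL}(W)$ as Maslov-graded vector spaces, this rules out $L_{-1}$ and $L_{-2}$, leaving $L \simeq W$ or $L \simeq L7n2$.

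The main difficulty in the overall strategy lies upstream, in Lemma~\ref{lem:twistedwhiteheaddetection} (which relies on the main classification Theorem~\ref{thm:mainbraid} together with the topological input of~\cite{dey2023unknotted}) and in the explicit computation of Proposition~\ref{HFLLn}; once those are in hand, the present theorem is essentially bookkeeping, and the Maslov-grading check under mirroring is an immediate substitution into the symmetry formula already recorded in Section~\ref{sec:Lncomp}.
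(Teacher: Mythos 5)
Your proposal is correct and follows essentially the same route as the paper: apply Lemma~\ref{lem:twistedwhiteheaddetection} to conclude $L\simeq L_m$, then use the rank count from Proposition~\ref{HFLLn} to force $m\in\{-2,-1,0,1\}$. You are in fact slightly more careful than the paper's one-line conclusion, since you explicitly rule out the mirrors $L_{-1}$ and $L_{-2}$ via the Maslov grading and the mirror symmetry formula — a step the paper only makes explicit in the proof of Theorem~\ref{thm:HFLLn} — and your grading computation checks out.
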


\begin{proof}
    Suppose $L$ is as in the statement of the Theorem. By Lemma~\ref{lem:twistedwhiteheaddetection} $L$ is a twisted Whitehead link. Observe that the only twisted Whitehead links with link Floer homology of rank 16 are the Whitehead link and $L7n2$, as desired.

\end{proof}

\subsection{Links with the same Knot Floer homology as the Whitehead link}

In this section we determine the links with the same knot Floer homology as the Whitehead link. We prove Theorem~\ref{thm:HFKW} by reducing it to reduce the detection result to a link Floer homology detection result.

\begin{proof}[Proof of Theorem~\ref{thm:HFKW}]
Let $L$ be as in the statement of the Theorem. By Theorem~\ref{thm:HFLwhite} it suffices to show that $\widehat{\HFL}(L)\cong\widehat{\HFL}(W)$.

 The Conway polynomial, and hence knot Floer homology, detects the linking number of two component links~\cite{hoste1985firstcoefficientoftheconwaypolynomial}. It follows that $\lk(L)=0$. It follows in turn that no component of $L$ is braided with respect to the other. It follows that in $\widehat{\HFL}(L)$ the rank in each maximal non-trivial Alexander grading must be at least $4$.

Recall that $\widehat{\HFK}(L)$ is $\delta$-thin. It follows that $\widehat{\HFL}(L)$ is $E_2$ collapsed. It follows from Ozsv\'ath-Szab\'o's classification of $E_2$ collapsed chain complexes that $\widehat{\HFL}(L)$ decomposes as the direct sum of one-by-one boxes~\cite[Section 12.1]{ozsvath2008holomorphic}. There must be four such boxes. Indeed, since $\widehat{\HFK}(L)\cong\widehat{\HFK}(W)$ we must have that one of these boxes is centered at $(a,1-a)$ for some $a\in\Z+\frac{1}{2}$, one of these is centred at $(b,-b)$ for some $b\in\Z+\frac{1}{2}$. The remaining two boxes are centred at $(-a,1-a)$ and $(-b,b)$ by the symmetry properties of link Floer homology. Since $\widehat{\HFL}(L)$ must be of rank at least $4$ in both maximal non-trivial Alexander gradings, we readily see that $a=\frac{1}{2}$, $b=\pm\frac{1}{2}$, concluding the proof.

\end{proof}

\section{Khovanov Homology Detection results}\label{sec:KH}

Khovanov homology is a link invariant due to Khovanov~\cite{khovanov2000categorification} which shares a number of structural properties with knot Floer homology. In this section we give two new detection results for Khovanov homology:

\begin{theorem}\label{thm:Khdetects}
    Khovanov homology detects the Whitehead link and L7n2.
\end{theorem}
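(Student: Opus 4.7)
The plan is to adapt Martin's strategy for $T(2,6)$-detection~\cite{martin2022khovanov}, combining three spectral sequences emanating from Khovanov homology with the knot Floer detection result Theorem~\ref{thm:HFKW}. Let $L$ be a link with $\Kh(L) \cong \Kh(W)$ as bigraded vector spaces; the $L7n2$ case would be entirely parallel. First I would invoke Lee's spectral sequence~\cite{lee2005endomorphism}: its $E_\infty$ page has dimension $2^{|L|}$ and the bigradings of its surviving generators record the pairwise linking numbers. Reading off the appropriate bigraded data from $\Kh(W)$ will yield that $L$ has exactly two components $L_1,L_2$ with $\lk(L_1,L_2)=0$.

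Next I would apply Batson-Seed's link splitting spectral sequence~\cite{batson2015link}, whose $E_\infty$ page is isomorphic to the Khovanov homology of the split sum $L_1 \sqcup L_2$ up to grading shifts determined by the linking numbers already computed. Comparing bigraded ranks against the known $\Kh(W)$ and invoking Kronheimer-Mrowka's unknot detection theorem~\cite{kronheimer2010knots} would then force each $L_i$ to be an unknot, narrowing $L$ down to a two-component link with unknotted components and trivial linking number.

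With the number of components, the linking number, and the component types pinned down, I would run Dowlin's spectral sequence~\cite{dowlin2018spectral} from $\Kh(L)$ converging to $\widehat{\HFK}(L) \otimes V$, where $V$ is a rank two vector space. A rank comparison against the known $\Kh(W)$, together with the standard Alexander grading symmetry and Thurston norm constraints on link Floer homology of a two-component link with linking number zero and unknotted components, would force $\widehat{\HFK}(L) \cong \widehat{\HFK}(W)$. Theorem~\ref{thm:HFKW} then reduces $L$ to being either the Whitehead link or $L7n2$. Since $\Kh(W)$ and $\Kh(L7n2)$ are distinct as bigraded vector spaces (verified by direct computation), the original hypothesis singles out $L=W$.

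The principal technical obstacle will be the rank-collapse step for Dowlin's spectral sequence: in general it is not known to degenerate at $E_2$, and one must exploit the structural constraints on the target $\widehat{\HFK}$ carefully to rule out non-trivial higher differentials. The Lee and Batson-Seed steps, by contrast, are bigraded rank bookkeeping supplemented by the already-cited unknot detection theorem.
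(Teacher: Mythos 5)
Your overall architecture matches the paper's: Lee's spectral sequence for the component count and linking number, Batson--Seed for the component types, Dowlin to pass to Floer homology, and a Floer-theoretic detection result to finish. However, two of your steps have genuine gaps as stated.

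First, the Batson--Seed step. The spectral sequence gives $\rank(\Kh(L;\Z/2))\geq\rank(\Kh(L_1;\Z/2))\cdot\rank(\Kh(L_2;\Z/2))$, i.e.\ a product bound of $16$; since each factor is only bounded below by $2$, one component could a priori have Khovanov homology of rank up to $8$, and Kronheimer--Mrowka's unknot detection theorem applies only when the rank is exactly $2$. The paper instead chains Batson--Seed with Shumakovitch's relation $\rank(\Kh(K;\Z/2))=2\rank(\widetilde{\Kh}(K;\Z/2))$ and Dowlin's reduced rank bound to obtain $\rank(\widehat{\HFK}(L_1))\cdot\rank(\widehat{\HFK}(L_2))\leq 4$, where the classification of knots with knot Floer homology of rank at most $4$ (unknot or trefoil) is available; the trefoil options are then excluded for the Whitehead link using the $(i-j)$-graded refinement of Batson--Seed. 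Relatedly, the $L7n2$ case is \emph{not} ``entirely parallel'': there one component is forced to be a left-handed trefoil, so any argument designed to make both components unknots would fail for that case.

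Second, the Dowlin step. Dowlin's spectral sequence respects only the \emph{relative} $\delta$-grading, so a rank comparison cannot produce the absolutely bigraded isomorphism $\widehat{\HFK}(L)\cong\widehat{\HFK}(W)$ needed to invoke Theorem~\ref{thm:HFKW} as a black box (whose proof uses the absolute Alexander gradings of $\widehat{\HFK}(W)$ to place the boxes). The paper extracts only what Dowlin actually provides --- $\delta$-thinness and a rank bound --- and then uses the box decomposition of $E_2$-collapsed complexes, Martin's braid-axis detection, the vanishing linking number, and the symmetries of link Floer homology to pin down the top Alexander gradings of $\widehat{\HFL}(L)$ directly, feeding these into Theorem~\ref{thm:mainbraid} rather than Theorem~\ref{thm:HFKW}. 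Your plan can be repaired along these lines, but as written both the unknot-detection step and the reduction to Theorem~\ref{thm:HFKW} do not go through.
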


These give examples of links which Khovanov homology detects but link Floer homology does not, addressing a question asked by the first author and Martin~\cite{binns2020knot}. We note in passing that it is also natural to ask the following question in the opposite direction:

\begin{question}
    Does there exist a pair of links which Khovanov homology cannot distinguish but which knot Floer homology can?
\end{question}

The authors are unaware of any such examples.

\subsection{A Review of Khovanov homology}

Let $L$ be a link, and $R$ be the ring $\Z,\Z_2$ or $\Q$. The \emph{Khovanov chain complex of $L$}, $(\CKh(L,R),\partial)$, is a finitely generated $\Z\oplus\Z$-filtered chain complex over $R$~\cite{khovanov2000categorification}.

$$\CKh(L;R):=\underset{i,j\in\Z}{\bigoplus}\CKh^{i,j}(L)$$

Here $i$ is called the \textit{homological grading}, while $j$ is called the \textit{quantum grading}. The filtered chain homotopy type of $L$ is an invariant of $L$. The $R$-module $\CKh(L,R)$ has generators corresponding to decorated resolutions of $D$, while $\partial$ is determined by a simple TQFT. The parity of the $j$ gradings in which $\Kh(L)$ has non-trivial support agrees with the parity of the number of components of $L$. The \textit{Khovanov homology of $L$} is obtained by taking the homology of $\CKh(L;R)$. A choice of basepoint $p\in L$ induces an action on $\CKh(L;R)$, which commutes with the differential. Taking the quotient of $\CKh(L;R)$ by this action and taking homology with respect to the induced differential yields a bigraded $R$-module called the~\textit{reduced Khovanov homology} of $L$, which is denoted $\widetilde{\Kh}(L,p;R)$~\cite{khovanov2003patterns}. Given a collection of points $\mathbf{p}=\{p_1,p_2,\dots,p_k\}\subset L$, there is a generalization of reduced Khovanov homology called \textit{pointed Khovanov homology}, $\Kh(L,\mathbf{p};R)$, due to Baldwin-Levine-Sarkar~\cite{baldwin2017khovanov}.

$\Kh(L;R)$ admits a number of useful spectral sequences. Suppose $L$ has two components $L_1,L_2$. If $R$ is $\Q$ or $\Z_2$ then $\Kh(L;R)$ admits a spectral sequence to $\Kh(L_1;R)\otimes\Kh(L_2;R)$, called the \emph{Batson-Seed spectral sequence}~\cite{batson2015link}. Indeed this spectral sequences respects the $i-j$ grading on $\Kh(L;R)$ in the sense that:
\begin{align}\label{GradedBatsonSeed}
    \rank^{i-j=l}(\Kh(L;R))\geq \rank^{i-j=l+2\lk(L_1,L_2)}(\Kh(L_1;R)\otimes\Kh(L_2;R))
\end{align}

 There is another spectral sequence called the \emph{Lee spectral sequence} from $\Kh(L;\Q)$ which abuts to an invariant called the \emph{Lee Homology} of $L$, $\LLL(L):=\bigoplus_i\LLL^i(L)$~\cite{lee2005endomorphism}. This spectral sequence respects the $i$ gradings in the sense that $\rank(\Kh^i(L;\Q))\geq \rank(\LLL^i(L))$. Lee showed that $\LLL(L)\cong\bigoplus_{i=1}^n\Q^2_{a_i}$ where $a_i$ are integers and $L$ has $n$ components. Indeed, if $L$ has two components $L_1,L_2$ then $\LLL(L)\cong\Q^2_{0}\oplus\Q^2_{\lk(L_1,L_2)}$~\cite[Proposition 4.3]{lee2005endomorphism}.

Finally, there is a spectral sequence due to Dowlin~\cite{dowlin2018spectral}, relating Khovanov homology and knot Floer homology. If $L$ is a link and $\mathbf{p}\subseteq L$, with exactly one element of $\mathbf{p}$ on each component of $L$, then there is a spectral sequence from $\Kh(L,\mathbf{p};\Q)$ to $\widehat{\HFK}(L;\Q)$ that respects the relative $\delta$-gradings. Here $\widehat{\HFK}(L;\Q)$ uses the coherent system of orientations given in~\cite{alishahi2015refinement}. We use this version of knot Floer homology, and the corresponding link Floer homology for the remainder of this section. As a corollary, Dowlin shows that if $L$ has $n$ components then: \begin{align}
    2^{n-1}\rank(\widetilde{\Kh}(L;\Q))\geq \rank(\widehat{\HFK}(L;\Q))\label{DowlinReduced}\end{align}

\subsection{Khovanov Homology detects the Whitehead link}
In this Section we show that Khovanov homology detects $L_0$. The idea is to reduce the classification of links with the  Khovanov homology type of the Whitehead link to the classification of  links with the knot Floer homology type of the Whitehead link, appeal to Theorem~\ref{thm:HFKW} and notice that the Whitehead link and L7n2 have distinct Khovanov homologies.

The Khovanov Homology of the Whitehead link is given as follows;
\begin{center}

    \begin{tabular}{|c|c|c|c|c|c|c|c|}
    \hline
    \backslashbox{\!$q$\!}{\!$h$\!}
     &-3& -2&-1&0&1&2 \\
\hline
4& &&&&&$\Z$\\\hline
2&&&&& &$\Z/2$\\\hline
0&&&&$\Z^2$ & $\Z$&\\\hline
-2&&&$\Z$&$\Z^2$&&\\\hline
-4& &$\Z$&$\Z/2$&&&\\\hline
-6&&$\Z\oplus\Z/2$&&&&\\\hline
-8&$\Z$&&&&&\\\hline
  \end{tabular} 
\end{center}

Observe that $\Kh(L;\Q)$ and $\Kh(L;\Z/2)$ are determined by $\Kh(L;\Z)$ by the universal coefficient theorem.
\begin{lemma}\label{whiteheadlinking}
Suppose $L$ is a link such that $\Kh(L;\Q)\cong\Kh(W;\Q)$. Then $L$ has two components with linking number $0$.
\end{lemma}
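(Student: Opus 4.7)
The plan is to use the Lee spectral sequence in two stages, first to determine the component count of $L$ and then to extract its linking number. Reading off the table, the rational ranks of $\Kh(W;\Q)$ total $10$ and are supported only in even quantum gradings. The parity of these quantum gradings records the parity of the number of components $n$ of $L$, so $n$ is even; and since the Lee spectral sequence converges to $\LLL(L) \cong \Q^{2^n}$, we also have $2^n \le 10$, forcing $n \le 3$. Combining these yields $n = 2$.

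For the linking number $\ell := \lk(L_1, L_2)$, I will combine the formula $\LLL(L) \cong \Q^2_0 \oplus \Q^2_\ell$ as a homologically graded vector space with a bookkeeping count on the Lee spectral sequence. Every higher differential $d_r$ has bidegree $(1, 4r)$ on $(h, q)$, so each cancellation pairs generators in consecutive homological gradings. If $c_i$ denotes the total number of cancellations between $h = i-1$ and $h = i$, then
\begin{align*}
\rank \LLL^h(L) \;=\; \rank \Kh^h(L;\Q) - c_h - c_{h+1}.
\end{align*}

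The $h$-graded rational ranks of $\Kh(W;\Q)$ are $(1, 2, 1, 4, 1, 1)$ for $h = -3, \dots, 2$. A short case analysis on $\ell$ then concludes the argument: if $|\ell|$ is large enough that $h = \ell$ lies outside $\{-3, \ldots, 2\}$, or if $\rank \Kh^\ell(L;\Q) < 2$, we get an immediate contradiction; and for the remaining small values of $\ell$ a direct solve of the linear system for the $c_i$ forces one of them to be negative. Only $\ell = 0$ admits a valid solution, realised by $(c_{-2}, c_{-1}, c_0, c_1, c_2) = (1, 1, 0, 0, 1)$, which accounts for the total rank drop of $10 - 4 = 6$. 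The subtlest step is verifying the precise homological gradings of the Lee canonical generators for a two-component link; this is standard after Lee and Rasmussen, but worth being careful with, since different sources use slightly different conventions for the factor relating linking number and Lee grading.
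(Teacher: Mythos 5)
Your proof is correct and follows essentially the same route as the paper: both arguments run the Lee spectral sequence, pin down $n=2$ from the rank of Lee homology together with the quantum-grading parity, and then show the second pair of Lee generators cannot sit in any homological grading other than $0$. Your cancellation bookkeeping with the $c_i$ is just a systematized version of the paper's observation that the $(-3,-8)$ generator must cancel into $h=-2$, forcing $\LLL_{-2}(L)=0$; I checked that your solution $(c_{-2},c_{-1},c_0,c_1,c_2)=(1,1,0,0,1)$ for $\ell=0$ is consistent and that $\ell=-2$ (the only other grading of rank at least $2$) indeed fails.
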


\begin{proof}
   Suppose $L$ is as in the statement of the Theorem. Consider the spectral sequence from $\Kh(L;\Q)$ to the Lee homology of $L$, $\LLL(L;\Q)$. Recall that the Lee homology of an $n$ component link is given by $\bigoplus_{1\leq i\leq n}\Q^2_{n_i}$ where $n_i$ indicates the homological grading. Note that homological gradings $-2$ and $0$ are the only two homological gradings in which $\Kh(L;\Q)$ is rank at least two. Moreover, $\Kh_{-2}(L)\cong\Q^2$, $\Kh_{0}(L)\cong\Q^4$. It follows that $L$ can have at most three components. Since the quantum grading takes value in the even integers, it follows that $L$ in fact has two components.

    To see that the two components of $L$ have linking number $0$ observe that a generator of $\Kh(L;\Q)$ of $(h,q)$-grading $(-3,-8)$ cannot persist under the spectral sequence to $\LLL(L;\Q)$. Since the Lee homology differential lowers the homological grading by one and $\rank(\Kh_{-2}(L))=2$, it follows that $\rank(\LLL_{-2}(L))=0$. Thus $\LLL(L)$ is supported in homological grading $0$, whence it follows that the linking number of $L$ is zero.
\end{proof}

\begin{lemma}\label{L5L7components}
Suppose $L$ is a two component link with $\lk(L)=0$ and ${\rank(\Kh(L;\Z/2))=16}$. Then $L$ has one unknotted component and another component which is either an unknot or a trefoil.
\end{lemma}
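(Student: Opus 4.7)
The plan is to bound the Khovanov homologies of $L_1$ and $L_2$ via the Batson-Seed spectral sequence, translate those bounds into knot Floer rank bounds using Dowlin's spectral sequence and Shumakovitch's splitting, and then use parity together with known knot Floer detection results to pin down the components.

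First I would apply Batson-Seed's spectral sequence with $\Z_2$ coefficients to obtain $\rank(\Kh(L_1;\Z_2))\cdot\rank(\Kh(L_2;\Z_2))\leq 16$. Each $L_i$ is a knot, so Shumakovitch's theorem gives $\Kh(L_i;\Z_2)\cong\widetilde{\Kh}(L_i;\Z_2)\otimes V$ for $V$ of rank two, so these ranks are even; setting $r_i:=\rank(\widetilde{\Kh}(L_i;\Z_2))$ gives $r_1 r_2\leq 4$ and $r_i\geq 1$. By Kronheimer-Mrowka's unknot detection theorem, $r_i=1$ iff $L_i$ is the unknot.

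Next I would bound the knot Floer rank by composing Dowlin's inequality $\rank(\widetilde{\Kh}(K;\Q))\geq\rank(\widehat{\HFK}(K;\Q))$ with the universal coefficient bound $\rank(\widetilde{\Kh}(K;\Q))\leq\rank(\widetilde{\Kh}(K;\Z_2))$, yielding $\rank(\widehat{\HFK}(L_i;\Q))\leq r_i$. Since $\chi(\widehat{\HFK}(K))=\Delta_K(1)=\pm 1$, the rank of $\widehat{\HFK}(K;\Q)$ is always odd. Hence $r_i=2$ would force $\rank(\widehat{\HFK}(L_i;\Q))=1$ and so $L_i$ the unknot (by Ozsv\'ath-Szab\'o), contradicting $r_i=1$ for the unknot. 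The possibilities for $(r_1,r_2)$ are thus, up to swapping, $(1,1)$, $(1,3)$, or $(1,4)$; in the last two cases, the odd-parity constraint forces $\rank(\widehat{\HFK}(L_2;\Q))=3$, which by Ghiggini's trefoil detection and Baldwin-Sivek's figure-eight detection for knot Floer homology identifies $L_2$ with an element of $\{3_1,\overline{3_1},4_1\}$.

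Finally, I would rule out $L_2=4_1$ by running the Shumakovitch-Dowlin chain in reverse: $\rank(\Kh(4_1;\Z_2))\geq 2\rank(\widehat{\HFK}(4_1;\Q))=10$, so $r_{4_1}\geq 5>4$, a contradiction. Hence $L_2\in\{3_1,\overline{3_1}\}$, completing the proof. The main subtlety I expect is the bookkeeping between reduced and unreduced Khovanov homology and between $\Q$ and $\Z_2$ coefficients in the Shumakovitch-Dowlin chain; once that is set up the case analysis is short.
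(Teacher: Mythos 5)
Your proof is correct and follows essentially the same route as the paper: Batson--Seed over $\Z/2$, Shumakovitch's splitting to pass to reduced Khovanov homology, the universal coefficient theorem, and Dowlin's spectral sequence to get $\rank(\widehat{\HFK}(L_1;\Q))\cdot\rank(\widehat{\HFK}(L_2;\Q))\leq 4$, followed by the classification of knots with knot Floer homology of rank less than $4$. The only difference is that you spell out the parity and figure-eight exclusion that the paper leaves implicit in citing that classification.
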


\begin{proof}
   Recall that the Batson-Seed spectral sequence yields a rank bound $$16=\rank(\Kh(L;\Z/2)\geq \rank(\Kh(L_1;\Z/2))\cdot\rank(\Kh(L_2;\Z/2)$$ where $L_1$ and $L_2$ are the two components of $L$. Now, $2\cdot\rank(\widetilde{\Kh}(K;\Z/2))=\rank(\Kh(K;\Z/2))$ for any $K$ by a result of Shumakovitch~\cite[Corollaries 3.2 B-C]{shumakovitch2014torsion}, so by an application of the universal coefficient theorem we find that ${4\geq\rank( \widetilde{\Kh}(L_1;\Q))\cdot\rank(\widetilde{\Kh}(L_2;\Q))}$. Applying the rank bound from Dowlin's spectral sequence we find that \[{4\geq \rank(\widehat{\HFK}(L_1;\Q))\cdot\rank(\widehat{\HFK}(L_2;\Q)}.\] But the only knots with knot Floer homology of rank less than $4$ are the trefoils and the unknot as desired.
\end{proof}

\begin{lemma}\label{lem:L5L7components2}
    Suppose $L$ is a link such that $\Kh(L;\Z)\cong\Kh(W;\Z)$. Then $L$ has unknotted components.
\end{lemma}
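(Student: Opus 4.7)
My plan is to combine the previous two lemmas with the graded version of the Batson-Seed spectral sequence (Equation~\ref{GradedBatsonSeed}) to eliminate the trefoil case.

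First I would observe that by Lemma~\ref{whiteheadlinking} and Lemma~\ref{L5L7components} (applied to the mod $2$ reduction obtained from the universal coefficient theorem), $L$ is a two-component link $L_1 \cup L_2$ with $\lk(L_1, L_2) = 0$, and we may assume $L_1$ is an unknot while $L_2$ is either an unknot or a trefoil. It thus suffices to rule out the case in which $L_2 \in \{T(2,3), \overline{T(2,3)}\}$.

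The heart of the argument is a $\delta := i - j$ graded rank comparison using~\eqref{GradedBatsonSeed} with $R = \Q$ (and $2\lk = 0$). From the given table for $\Kh(W;\Z)$, passing to $\Q$ to kill the three $\Z/2$ torsion summands, $\Kh(W;\Q)$ is supported in $\delta$-gradings $\{-2,0,1,2,4,5\}$ with ranks $1,2,2,3,1,1$. On the other hand, the standard computation gives $\Kh(T(2,3);\Q)$ supported at $\delta \in \{-1,-3,-6\}$ with ranks $1,2,1$ (and $\Kh(\overline{T(2,3)};\Q)$ at the negated values), while $\Kh(U;\Q)$ has $\delta$-ranks $1,1$ at $\delta = \pm 1$. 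By K\"unneth I would compute the $\delta$-graded ranks of $\Kh(T(2,3);\Q) \otimes \Kh(U;\Q)$ by convolution and observe that in the right-handed case there is rank $3$ at $\delta = -2$, whereas $\Kh(W;\Q)$ has only rank $1$ there; in the left-handed case one finds non-zero rank at $\delta = 7$, where $\Kh(W;\Q)$ vanishes. Either way~\eqref{GradedBatsonSeed} is violated, contradicting the assumption that $L_2$ is a trefoil.

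This completes the reduction: both potential chiralities of the trefoil component are excluded, so $L_2$ must be an unknot and $L$ has unknotted components. The only mildly subtle point is remembering that~\eqref{GradedBatsonSeed} carries a $2\lk(L_1, L_2)$ shift, which vanishes in our setting thanks to Lemma~\ref{whiteheadlinking}; the rest is a short bookkeeping exercise with ranks in the $\delta$-grading.
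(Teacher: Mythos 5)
Your proposal is correct and follows essentially the same route as the paper: reduce to a two-component, linking-number-zero link with one unknotted component and one unknot-or-trefoil component via Lemmas~\ref{whiteheadlinking} and~\ref{L5L7components}, then rule out both trefoil chiralities using the $(i-j)$-graded Batson--Seed inequality~\eqref{GradedBatsonSeed}. The only (immaterial) difference is that for the right-handed trefoil the paper points to the generator of the tensor product at $i-j=-7$, where $\Kh(W;\Q)$ vanishes, while you compare ranks at $i-j=-2$; both computations are valid.
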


\begin{proof}
    Suppose $L$ is as in the statement of the Lemma. By Lemma~\ref{whiteheadlinking} $L$ has two components with linking number zero. Let $K$ denote the potentially unknotted component. Applying Lemma~\ref{L5L7components} we have that one componen is an unknot and the other component is either a trefoil or unknot.

    The refined version of Batson-Seed's spectral sequence yields the following inequality;
    $\rank^l(\Kh(L;\Q))\geq \rank^l(\Kh(U;\Q)\otimes\Kh(K;\Q))$ where $U$ is the unknot and $l$ is the grading $h-q$. Observe that if $K$ is the right handed trefoil then $\Kh(U;\Q)\otimes\Kh(K;\Q)$ has a generator in grading $-7$. However $\Kh(L;\Q)$ does not have a generator of $l$ grading $-7$ so $K$ cannot be the right handed trefoil.

    Observe that if $K$ is the left handed trefoil then $\Kh(U;\Q)\otimes\Kh(K;\Q)$ has a generator in $l$ grading $7$, while $\Kh(L;\Q)$ does not. It follows that $K$ also cannot be the left handed trefoil and is thus the unknot.

\end{proof}

\begin{lemma}\label{lem:KhWtoHFL}
Suppose $L$ is a two component link with an unknotted components $U_1$ and $U_2$ such that $\Kh(L;\Z/2)$ is delta-thin and of rank $16$. Then the maximal non-trivial $A_{i}$ gradings in $\widehat{\HFL}(L)$ are $1$ and the rank in these gradings are both $4$.
\end{lemma}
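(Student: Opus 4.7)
The plan is to mirror the strategy used in the proof of Theorem~\ref{thm:HFKW}: transfer the Khovanov hypothesis to link Floer homology via Dowlin's spectral sequence, exploit $\delta$-thinness to decompose $\widehat{\HFL}(L)$ into one-by-one boxes, and then apply Martin's braid axis detection result together with the vanishing linking number to locate those boxes.

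First I would bound the total rank. By Shumakovitch's identity, $\rank(\widetilde{\Kh}(L;\F_2))=8$, and a universal-coefficient argument (using $\delta$-thinness to control the $2$-torsion) yields $\rank(\widetilde{\Kh}(L;\Q))\leq 8$. Dowlin's inequality~\eqref{DowlinReduced} then gives $\rank(\widehat{\HFK}(L;\Q))\leq 16$, so $\rank(\widehat{\HFL}(L))\leq 16$. Since Dowlin's spectral sequence respects the relative $\delta$-grading, $\widehat{\HFK}(L;\Q)$ inherits $\delta$-thinness. As in the proof of Theorem~\ref{thm:HFKW}, Ozsv\'ath--Szab\'o's classification of $E_2$-collapsed link Floer complexes \cite[Section~12.1]{ozsvath2008holomorphic} then shows that $\widehat{\HFL}(L)$ decomposes as a direct sum of one-by-one boxes, each contributing four generators at the corners of a unit square in the $(A_1,A_2)$-plane.

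Next I would establish a lower bound in each maximal Alexander grading. By Martin's braid axis detection result, the rank of $\widehat{\HFL}(L)$ in the maximal non-trivial $A_{K_i}$ grading is at least $2^{n-1}=2$, with equality if and only if $L-K_i$ is braided with respect to $K_i$. Since any braid has nonzero linking number with its braid axis and $\lk(K_1,K_2)=0$, neither component is a braid axis, so this rank exceeds $2$ strictly. Each one-by-one box contributes either $0$ or $2$ generators to any given $A_1$- or $A_2$-line, so parity promotes the bound to $\geq 4$. Combined with $\rank(\widehat{\HFL}(L))\leq 16$ we obtain exactly four boxes, paired by the conjugation symmetry of link Floer homology (with $\lk=0$) into mirror pairs $\{(a,b),(-a,-b)\}$. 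A short case analysis on placements then shows that the only configurations simultaneously satisfying the rank-$\geq 4$ conditions at both $\max A_1$ and $\max A_2$ have all four box centers at $(\pm\tfrac{1}{2},\pm\tfrac{1}{2})$, either as four distinct corners or as a multiplicity-two pair, yielding $\max A_1=\max A_2=1$ and rank $4$ at each maximum.

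The main obstacle is excluding box configurations of the form $(\pm a,\pm b)$ with $a$ or $b$ a half-integer of absolute value greater than $\tfrac{1}{2}$; such configurations (for example $(\pm 3/2,\pm 1/2)$) formally satisfy the box-counting and parity inequalities but would give $\max A_i>1$. To rule them out I would combine the refined Batson--Seed spectral sequence inequality~\eqref{GradedBatsonSeed}, which forces a specific $(i-j)$-profile of $\Kh(L;\F_2)$ from the two unknotted components, with the full $\delta$-thin rank-$16$ bigraded structure of $\Kh(L;\F_2)$: a wider Alexander support in $\widehat{\HFK}(L;\Q)$ would, by the $\delta$-grading compatibility of Dowlin's spectral sequence, demand Khovanov generators in bidegrees incompatible with these constraints. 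Carrying out this compatibility check carefully is the crux of the argument.
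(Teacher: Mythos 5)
The first half of your argument tracks the paper's proof: Dowlin plus $\delta$-thinness to get a $\delta$-thin $\widehat{\HFK}(L;\Q)$ of rank (at most) $16$, the decomposition of the $E_2$-collapsed $\widehat{\HFL}(L)$ into one-by-one boxes, and Martin's result plus $\lk=0$ plus box parity to get rank at least $4$ on each maximal $A_i$ line. But you have correctly identified, and then left open, the crux: excluding configurations such as four boxes centered at $(\pm\tfrac{3}{2},\pm\tfrac{1}{2})$, which satisfy every constraint you have assembled (total rank $16$, conjugation symmetry, integrality of the $A_i$, rank $\geq 4$ at both maxima) yet give $\max A_1=2$. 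That is a genuine gap, and the route you sketch for closing it is unlikely to work: Dowlin's spectral sequence lands in $\widehat{\HFK}(L;\Q)$, where the Alexander multigrading has been collapsed to a single grading and only the \emph{relative} $\delta$-grading is respected, so it cannot see the individual $A_1,A_2$ positions of the boxes (indeed a box centered at $(\tfrac{3}{2},-\tfrac{1}{2})$ and one centered at $(\tfrac{1}{2},\tfrac{1}{2})$ have identical collapsed Alexander profiles). The graded Batson--Seed inequality only gives \emph{lower} bounds on $(i-j)$-graded ranks of $\Kh(L)$ coming from $\Kh(U_1)\otimes\Kh(U_2)$, so it cannot forbid the extra support that a wide configuration would require.

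The paper closes this gap with two observations you are missing. First, a symmetry-plus-integrality argument: if three or more boxes met the maximal $A_1$ line, conjugation symmetry would force $\max A_1=\tfrac{1}{2}$, contradicting $A_1\in\Z$ (since $\lk=0$); hence exactly two boxes meet each maximal line and the rank there is exactly $4$. Second, and decisively, the spectral sequence from $\widehat{\HFL}(L)$ to $\widehat{\HFL}(U_i)\otimes V$, which (because $U_i$ is an unknot and the linking number vanishes) is supported in $A_i$-grading $0$; this forces $\widehat{\HFL}(L)$ to have a generator with $A_i=0$. Since the two boxes meeting the maximal line occupy $A_i\in\{\max A_i-1,\max A_i\}$ and their conjugates occupy $A_i\in\{-\max A_i,-\max A_i+1\}$, the existence of a generator at $A_i=0$ pins $\max A_i=1$. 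You should replace your Batson--Seed/Dowlin compatibility check with this spectral-sequence argument; it is the standard tool for locating the Alexander support of a component-forgetting degeneration and is what makes the case analysis terminate.
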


\begin{proof}
Suppose $L$ is as in the statement of the Theorem. We have that $\widehat{\HFK}(L;\Q)$ is also $\delta$-thin and of rank $16$. It follows that $\widehat{\HFL}(L;\Q)$ is $E_2$ collapsed. $\widehat{\HFL}(L;\Q)$ thus splits as a direct sum of four $1\times 1$ boxes by~\cite[Section 6]{binns2022rank}. Since the linking number of the two component of $L$ is zero, we have that neither component is braided about the other. We thus have that the rank in the maximal $A_i$ grading is at least four for each $i$ by~\cite[Proposition 1]{martin2022khovanov}. It follows that at least two of the boxes contain generators in each of the maximal $A_i$ gradings.

Suppose three or more of the boxes contain generators in the maximal $A_1$ grading. By the symmetry of link Floer homology we must have that the maximal $A_1$ grading is $\frac{1}{2}$. But the $A_1$ grading is $\Z+\frac{\lk(L_1,L_2)}{2}$-valued, so we have a contradiction since the linking number is zero.

Thus exactly two boxes contain generators in maximal $A_1$ grading. Similarly there are exactly two boxes which contain generators in the maximal $A_2$ grading. Moreover there must exist generators in $A_1$ and $A_2$ grading $0$, since $L$ has an unknotted component, the linking number is $0$ and there is a spectral sequence from $\widehat{\HFL}(L)$ to $\widehat{\HFL}(U)\otimes V$. It follows then that the maximal $A_1$ or $A_2$ gradings are both $1$, and we have the desired result.
\end{proof}

We can now conclude our proof that Khovanov homology detects $L_0$.

\begin{proof}
    Suppose $L$ is a link with $\Kh(L;\Z)\cong\Kh(W;\Z)$. $\rank(\Kh(L;\Z/2))=16$ by the universal coefficient theorem.  It follows that $L$ has unknotted components by Lemma~\ref{lem:L5L7components2}. Observe that $\Kh(L;\Z/2)$ is delta thin, so we can apply Lemma~\ref{lem:KhWtoHFL} to deduce that $\widehat{\HFL}(L)$ has rank $4$ in the maximum non-trivial Alexander grading corresponding to either component and that these gradings are both one. We also have that $L$ is not split, since the Khovanov homology of the two component unlink does not agree with $\Kh(W)$. We can thus apply Theorem~\ref{thm:mainbraid} to deduce that $L$ is of the form $L_n$ for some $n$. The only links with two unknotted components are $L_0$ and its mirror $\overline{W}$. These two links are distinguished by their Khovanov homologies, so the result follows.
\end{proof}

\subsection{Khovanov Homology Detects L7n2}

The Khovanov Homology of L7n2 is given as follows;
\begin{center}

    \begin{tabular}{|c|c|c|c|c|c|c|c|}
    \hline
    \backslashbox{\!$q$\!}{\!$h$\!}
     &-5&-4&-3& -2&-1&0 \\
\hline
0& &&&&&$\Z^2$\\\hline
-2&&&&& $\Z$&$\Z^2$\\\hline
-4&&&&$\Z$ & $\Z/2$&\\\hline
-6&&&&$\Z\oplus\Z/2$&&\\\hline
-8& &$\Z$&$\Z$&&&\\\hline
-10&&$\Z/2$&&&&\\\hline
-12&$\Z$&&&&&\\\hline

  \end{tabular} 
\end{center}
Following our notation in Section~\ref{sec:HFLW} we let $L_{-2}$ denote L7n2.

Observe that $\Kh(L;\Q)$ is determined by $\Kh(L;\Z)$ by the universal coefficient theorem.
\begin{lemma}\label{L7linking}
Suppose $L$ is a link such that $\Kh(L;\Q)\cong\Kh(L_{-2};\Q)$. Then $L$ has two components with linking number $0$.
\end{lemma}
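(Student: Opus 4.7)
The plan is to mimic the proof of Lemma~\ref{whiteheadlinking} via the Lee spectral sequence. Applying the universal coefficient theorem to the table for $\Kh(L_{-2};\Z)$ yields $\rank\Kh(L;\Q)=10$, with $\rank\Kh_h(L;\Q) = 1,1,1,2,1,4$ for $h=-5,-4,-3,-2,-1,0$ respectively, and with all support in even quantum gradings. Since the parity of the quantum gradings of $\Kh(L;\Q)$ matches the parity of the number of components of $L$, that number is even. Moreover, $\LLL(L)\cong\bigoplus_{i=1}^{n}\Q^2_{a_i}$, and each $\Q^2$ summand forces $\rank\Kh_{a_i}(L;\Q)\geq 2$; since only $h=-2$ and $h=0$ satisfy that rank bound (hosting at most one and two such summands respectively), we get $n\leq 3$. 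Combined with the parity constraint, this forces $n=2$.

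For the linking number $\ell=\lk(L_1,L_2)$ the Lee homology is $\Q^2_0\oplus\Q^2_\ell$. The Lee spectral sequence gives $\rank\LLL_h(L)\leq\rank\Kh_h(L;\Q)$ for every $h$, and since $\rank\Kh_h(L;\Q)\leq 1$ for $h\in\{-5,-4,-3,-1\}$ and $\Kh_h(L;\Q)=0$ for $h\notin\{-5,\dots,0\}$, the only candidates are $\ell\in\{-2,0\}$. To rule out $\ell=-2$, I would set $\alpha_i=\rank\Kh_i(L;\Q)$ and let $c_i$ denote the total rank of the Lee-spectral-sequence cancellations pairing $h=i$ with $h=i+1$ across all pages. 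Then $\alpha_i-\rank\LLL_i(L)=c_{i-1}+c_i$ for every $i$. The equations at $h=-5,-4,-3$ successively force $c_{-5}=1$, $c_{-4}=0$, $c_{-3}=1$, independent of $\ell$. If $\ell=-2$, then $\rank\LLL_{-2}(L)=2$, so the equation at $h=-2$ reads $2-2=1+c_{-2}$, forcing $c_{-2}=-1$, an impossibility. Hence $\ell=0$.

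The main obstacle is the bookkeeping step for the linking number: one has to use that Lee differentials shift the homological grading by exactly one, so cancellations always pair generators in adjacent homological strips, and that Lee ranks at each homological grading are even. Given the specific Khovanov profile of $L_{-2}$, the forced cascade of pairings in $h\in\{-5,-4,-3,-2\}$ leaves no room for a $\Q^2_{-2}$ Lee summand, which is precisely what the argument exploits.
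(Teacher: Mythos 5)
Your proof is correct and follows essentially the same route as the paper: the Lee spectral sequence pins down the component count via the rank profile and quantum-grading parity of $\Kh(L_{-2};\Q)$, and the linking number is forced by showing that the rank-one groups in homological degrees $-5,-4,-3$ leave no room for a $\Q^2_{-2}$ summand in Lee homology. The only cosmetic difference is that the paper forces the cancellation from $h=-3$ into $h=-2$ by observing that the generator in bigrading $(-3,-8)$ cannot be a coboundary for quantum-grading reasons, whereas your cascade $c_{-5}=1$, $c_{-4}=0$, $c_{-3}=1$ achieves the same conclusion using only the homological grading.
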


\begin{proof}
   Suppose $L$ is as in the statement of the Theorem. Consider the spectral sequence from $\Kh(L;\Q)$ to the Lee homology of $L$, $\LLL(L;\Q)$. Observe that $\rank(\LLL(L;\Q))=2^n$ where $n$ is the number of components of $L$. Recall too that the Lee homology of an $n$ component link is given by $\bigoplus_{1\leq i\leq n}\Q^2_{n_i}$ where $n_i$ indicates the homological grading. Note that homological gradings $-2$ and $0$ are the only two homological gradings in which $\Kh(L;\Q)$ is rank at least two. Moewovwe $\Kh_{-2}(L)\cong\Q^2$, $\Kh_{0}(L)\cong\Q^4$. It follows that $L$ can have at most three components. Since the quantum grading takes value in the even integers, it follows that $L$ in fact has two components.

    We now show that the two components of $L$ have linking number $0$. Let $x$ be an element of $\Kh(L;\Q)$ in grading $(-3,-8)$. Let $\partial_*$ be the differential induced on $\Kh(L;\Q)$ by the Lee differential. Observe that $x$ cannot be a coboundary  of $\partial_*$ as there is no generator of homological grading $-3$ with quantum grading strictly less that $-8$. Since $x$ cannot persist under the spectral sequence it follows that $\partial x\neq 0$, whence $\rank(\LLL_{-2}(L;\Q))<2$, whence $\LLL(L)$ must be supported entirely in homological grading zero. It follows that the linking number is zero as required.
\end{proof}

\begin{lemma}\label{lem:Khl2components}
    Suppose $L$ is a link such that $\Kh(L;\Z)\cong\Kh(L_{-2};\Z)$. Then $L$ has an unknotted component and a left handed trefoil component.
\end{lemma}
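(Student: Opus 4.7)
The plan is to follow the template of Lemma~\ref{lem:L5L7components2}: first get down to a short list of candidate component pairs using the $\mathbb{Z}/2$ rank together with Lemma~\ref{L5L7components}, then eliminate the remaining cases by comparing $(h-q)$-graded ranks via the refined Batson--Seed spectral sequence (Equation~\ref{GradedBatsonSeed}).

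First I would set up the numerics. From the given bigraded table for $\Kh(L_{-2};\Z)$, the total $\Q$-rank is $10$ and there are three $\Z/2$ summands, so the universal coefficient theorem gives $\rank(\Kh(L;\Z/2)) = 10 + 2\cdot 3 = 16$. Combined with Lemma~\ref{L7linking}, which tells us $L$ has two components of linking number $0$, the hypotheses of Lemma~\ref{L5L7components} hold. Hence $L$ has an unknotted component $U$ and a second component $K$ that is either an unknot or a (left- or right-handed) trefoil. It remains to rule out the first two of these three options.

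Next I would read off the $(h-q)$-graded $\Q$-Poincar\'e polynomial of $\Kh(L_{-2};\Q)$. Setting $\ell = h - q$, the only non-zero ranks occur at $\ell \in \{0,1,2,4,5,7\}$; in particular $\Kh(L;\Q)$ has \emph{no} generators of negative $\ell$-grading. With $\lk(L_1,L_2)=0$ the inequality in Equation~\ref{GradedBatsonSeed} specialises to
\[
\rank^{h-q=\ell}(\Kh(L;\Q)) \geq \rank^{h-q=\ell}(\Kh(U;\Q)\otimes\Kh(K;\Q))
\]
for every $\ell$. If $K$ were an unknot, the tensor product $\Kh(U;\Q)^{\otimes 2}$ carries a generator at $\ell=-2$, which is incompatible with the support of $\Kh(L;\Q)$. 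If $K$ were the right-handed trefoil, then $\Kh(K;\Q)$ is supported at $(h,q)\in\{(0,1),(0,3),(2,5),(3,9)\}$, i.e. at $\ell\in\{-1,-3,-3,-6\}$; tensoring with $\Kh(U;\Q)$ (supported at $\ell=\pm 1$) produces a generator at $\ell=-7$, again contradicting the support of $\Kh(L;\Q)$. Therefore $K$ must be the left-handed trefoil, completing the proof.

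The argument is essentially routine case-checking and there is no serious obstacle; the only place to be careful is bookkeeping of the bigradings of the right- versus left-handed trefoil (they differ by $\ell \mapsto -\ell$) and ensuring that the linking-number shift in Equation~\ref{GradedBatsonSeed} vanishes, which it does by Lemma~\ref{L7linking}. A sanity check against the $\ell$-graded ranks for $\Kh(U;\Q)\otimes \Kh(\overline{3_1};\Q)$ -- which come out to $1,3,2,1,1$ at $\ell = 0,2,4,5,7$ and are each bounded above by the corresponding ranks $2,3,2,1,1$ for $\Kh(L_{-2};\Q)$ -- confirms that the one surviving case is genuinely consistent.
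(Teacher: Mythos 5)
Your proof is correct and follows essentially the same route as the paper's: linking number zero from Lee homology, the unknot/trefoil dichotomy from Lemma~\ref{L5L7components}, and elimination of the remaining cases via the $(h-q)$-graded Batson--Seed inequality. The only substantive difference is that in the unknot--unknot case you correctly locate the obstruction at $\ell=-2$ (the paper's text says $\ell=2$, where $\Kh(L_{-2};\Q)$ is in fact nonzero, so your version is the right reading).
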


\begin{proof}
    Suppose $L$ is as in the statment of the Lemma. By Lemma~\ref{L7linking} $L$ has two components with linking number zero.  Applying Lemma~\ref{L5L7components} we have that one component is an unknot and the other component is either a trefoil or unknot. Let $K$ denote the potentially unknotted component.

    The refined version of Batson-Seed's spectral sequence yields the following inequality;
    $\rank^l(\Kh(L;\Q))\geq \rank^l(\Kh(U;\Q)\otimes\Kh(K;\Q))$ where $U$ is the unknot and $l$ is the grading $h-q$. Observe that If $K$ is the right handed trefoil then $\Kh(U;\Q)\otimes\Kh(K;\Q)$ has a generator in grading $-7$. However $\Kh(L;\Q)$ does not have a generator of $l$ grading $-7$ so $K$ cannot be the right handed trefoil.

    Observe that If $K$ is the unknot then $\Kh(U;\Q)\otimes\Kh(K;\Q)$ has a generator in $l$ grading $2$, while $\Kh(L;\Q)$ does not. It follows that $L$ also cannot be the unknot and is thus the left handed trefoil.

\end{proof}

\begin{lemma}\label{lem:KhLtoHFL}
Suppose $L$ is a two component link with an unknotted component $U$ and a left handed trefoil component $T(2,-3)$ such that $\Kh(L;\Z/2)$ is delta-thin and of rank $16$. Then the maximal non-trivial $A_{i}$ gradings in $\widehat{\HFL}(L)$ are $1$ and the rank in these gradings are both $4$.
\end{lemma}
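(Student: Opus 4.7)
The plan is to follow the blueprint of Lemma~\ref{lem:KhWtoHFL} closely, with only minor modifications to accommodate the trefoil component in place of the second unknot. First, I would transfer the hypothesis on $\Kh(L;\Z/2)$ to $\widehat{\HFK}(L;\Q)$ via Dowlin's spectral sequence~\cite{dowlin2018spectral}. Shumakovitch's theorem gives $\rank(\widetilde{\Kh}(L;\Z/2))=8$, so $\rank(\widetilde{\Kh}(L;\Q))\leq 8$ by the universal coefficient theorem, and Dowlin yields $\rank(\widehat{\HFK}(L;\Q))\leq 16$ while preserving $\delta$-thinness. Since $\widehat{\HFK}(L;\Q)\cong\widehat{\HFL}(L;\Q)$ as ungraded vector spaces for a $2$-component link, $\widehat{\HFL}(L;\Q)$ is $\delta$-thin, hence $E_2$-collapsed, and decomposes as a direct sum of $1\times 1$ boxes by~\cite[Section 6]{binns2022rank}. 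The linking number $\lk(U, T(2,-3))=0$ is forced by the Khovanov structure as in Lemma~\ref{L7linking}, so neither component is a braid axis of the other, and~\cite[Proposition 1]{martin2022khovanov} then gives $\rank(\widehat{\HFL}(L;A_i=\max))\geq 4$ for each $i$. Combined with $\rank(\widehat{\HFL}(L;\Q))\leq 16$, this forces exactly four boxes, with at least two attaining the maximal $A_i$-value for each $i$.

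Second, the conjugation symmetry $\widehat{\HFL}(L)_{(A_1,A_2)}\cong\widehat{\HFL}(L)_{(-A_1,-A_2)}$ pairs boxes, and since $\lk=0$ forces integer-valued Alexander gradings, no box can be self-symmetric (self-symmetry would require a position of $-\tfrac{1}{2}$). If three or more boxes shared $\max A_1$, their symmetric counterparts at $\min A_1$ would force at least six distinct boxes — impossible. Hence exactly two boxes attain $\max A_i$ for each $i$, giving rank exactly $4$ there. To pin down the actual value of $\max A_i$, I would use the spectral sequences obtained by forgetting each component, which preserve the other Alexander grading with shift $0$ since $\lk=0$. Forgetting the trefoil gives a spectral sequence to $\widehat{\HFK}(U)\otimes V$, supported at $A_U=0$, so $\widehat{\HFL}(L)$ has non-trivial $A_U=0$ support. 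Forgetting the unknot gives a spectral sequence to $\widehat{\HFK}(T(2,-3))\otimes V$, supported at $A_T\in\{-1,0,1\}$, so $\widehat{\HFL}(L)$ has non-trivial support at $A_T\in\{-1,0,1\}$, and in particular at $A_T=0$. If $\max A_U\geq 2$, the two boxes at $A_U$-position $\max A_U - 1\geq 1$ and their symmetric partners at $A_U$-position $-\max A_U\leq -2$ exhaust all four boxes and cover $A_U$-values $\{-\max A_U,\,-\max A_U+1,\,\max A_U-1,\,\max A_U\}$, missing $A_U=0$ and contradicting the spectral sequence. Hence $\max A_U=1$, and identically $\max A_T=1$.

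The main obstacle is the case analysis ruling out $\max A_T\geq 2$; this now leverages the trefoil's wider $A_T$-support $\{-1,0,1\}$ rather than the unknot's $\{0\}$, but the argument is structurally identical since the key input is non-trivial support at $A_T=0$, which the trefoil still provides. A minor technical subtlety is securing the rank equality $\rank(\widehat{\HFL}(L;\Q))=16$ rather than a strict inequality; this follows a posteriori from the box count being forced to exactly four by the rank lower bounds at both maximal $A_i$-gradings together with the symmetry pairing, so no genuinely new input beyond what appears in Lemma~\ref{lem:KhWtoHFL} is required.
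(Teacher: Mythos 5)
Your proposal is correct and follows essentially the same route as the paper: pass to $\widehat{\HFK}(L;\Q)$ via Shumakovitch and Dowlin, invoke the $1\times 1$ box decomposition for $E_2$-collapsed $\delta$-thin links, use Martin's braid-axis detection plus the conjugation symmetry to force exactly two boxes at each maximal $A_i$-grading, and pin down $\max A_i=1$ from the spectral sequences to $\widehat{\HFK}(U)\otimes V$ and $\widehat{\HFK}(T(2,-3))\otimes V$, which force support at $A_i=0$. The only caveat — shared equally by the paper's own proof — is that $\lk(U,T(2,-3))=0$ is not literally derivable from the stated hypotheses and is really an implicit assumption supplied by Lemma~\ref{L7linking} in the application.
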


\begin{proof}
Suppose $L$ is as in the statement of the Theorem. We have that $\widehat{\HFK}(L;\Q)$ is also $\delta$-thin and of rank $16$. It follows that $\widehat{\HFL}(L;\Q)$ is $E_2$ collapsed. $\widehat{\HFL}(L;\Q)$ thus splits as a direct sum of four $1\times 1$ boxes by~\cite[Section 6]{binns2022rank}. Since the linking number of the two component of $L$ is zero, we have that neither component is braided about the other. We thus have that the rank in the maximal $A_i$ grading is at least four for each $i$ by~\cite[Proposition 1]{martin2022khovanov}. It follows that at least two of the boxes contain generators in each of the maximal $A_i$ gradings.

Suppose three or more of the boxes contain generators in the maximal $A_1$ grading. By the symmetry of link Floer homology we must have that the maximal $A_1$ grading is $\frac{1}{2}$. But the $A_1$ grading is $\Z+\frac{\lk(L_1,L_2)}{2}$-valued, so we have a contradiction since the linking number is zero.

Thus exactly two boxes contain generators in maximal $A_1$ grading. Similarly there are exactly two boxes which contain generators in the maximal $A_2$ grading. Moreover there must exist generators in $A_1$ and $A_2$ grading $0$, since both the unknot and the knot Floer homology of both the left handed trefoil and the unknot have generators of Alexander grading $0$, the linking number is $0$ and there are spectral sequences from $\widehat{\HFL}(L)$ to $\widehat{\HFL}(U)\otimes V$ and $\widehat{\HFL}(T(2,-3))\otimes V$. It follows then that the maximal $A_1$ and $A_2$ gradings are both $1$, and we have the desired result.
\end{proof}

We now prove that Khovanov homology detects $L7n2$
\begin{proof}
    Suppose $L$ is a link with $\Kh(L;\Z)\cong\Kh(L_{-2};\Z)$. Observe that $L$ has an unknotted component and left handed trefoil component by Lemma~\ref{lem:Khl2components}. Observe that $\Kh(L;\Z/2)$ is delta thin, so we can apply Lemma~\ref{lem:KhLtoHFL} to deduce that $\widehat{\HFL}(L)$ has rank $4$ in the maximum non-trivial Alexander grading corresponding to the unknotted component of $L$ and that this Alexander grading is one. We also have that $L$ is not split, since the Khovanov homology of the split sum of a left handed trefoil and an unknot does not agree with $\Kh(L_{-2})$. We can thus apply Theorem~\ref{thm:mainbraid} to deduce that $L$ is of the form $L_n$ for some $n$. The only link $n$ for which $L_n$ has a left handed trefoil component is $L_{-2}$, so we have the desired result.
\end{proof}

\section{Annular Khovanov homology detection results}\label{sec:annular}

Annular Khovanov homology is a version of Khovanov homology for links in the thickened annulus due to Aseda-Przytycki-Sikora~\cite{asaeda_categorification_2004}, see also~\cite{roberts_knot_2013,grigsby_sutured_2014}. It takes value in the category of $\Z\oplus\Z\oplus\Z$-graded $\Z$-modules. The first grading is called the \emph{homological grading}, the second is called the \emph{quantum grading} and the final grading is called the \emph{annular grading}. In this Section we prove the following annular Link detection results:

\begin{theorem}\label{thm:AKH}
 Annular Khovanov homology detects each of the family of annular knots shown in Figure~\ref{annularlinks} amongst annular knots.
\end{theorem}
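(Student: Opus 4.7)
The plan is to reduce the problem to Proposition~\ref{prop:AKH2} and then distinguish the members of the one-parameter family in Figure~\ref{annularlinks} from one another by their annular Khovanov homology. Write $K_n$ for the $n$-th link in that family. I would first verify, directly from an annular diagram, that $K_n$ has maximal non-trivial annular grading equal to $2$ and rank exactly $2$ in that grading. Case $(1)$ of Proposition~\ref{prop:AKH2} is then excluded because the $K_n$ are honest wrapping number $2$ clasp-braid closures, not braid closures.

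Suppose $K$ is an annular knot with $\AKh(K;\C)\cong\AKh(K_n;\C)$ as triply graded vector spaces. Then the maximal non-trivial annular grading of $\AKh(K;\C)$ is $2$ and the rank in that grading is $2$, so Proposition~\ref{prop:AKH2} applies and forces $K$ to be a clasp-braid closure. A wrapping number $2$ clasp-braid closure in the thickened annulus is determined up to annular isotopy by the integer recording the twisting in its clasp region, so $K$ must equal $K_m$ for some $m\in\Z$.

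The remaining step is to prove $m=n$. For this I would resolve the highlighted crossing in Figure~\ref{annularlinks} and run the annular Khovanov skein long exact sequence. One of the two resolutions produces the previous member $K_{n-1}$ of the family, while the other produces a simpler annular link (a $2$-braid together with a meridional unknot, or a trivial $2$-strand annular link) whose $\AKh$ admits a direct computation. Iterating gives a recursion for $\AKh(K_n;\C)$ from which the annular Poincar\'e polynomial can be extracted as an explicit function of $n$, and the theorem follows once one verifies that this function is injective on $\Z$.

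The hard part will be this final verification. For $|n|$ large, the total rank grows monotonically in $n$ and immediately distinguishes $K_n$ from $K_{n'}$; however, a handful of small $|n|$ cases may coincide on total rank and must be separated using finer data, such as the bigraded support in the top annular grading, the $\delta$-graded structure, or an annular Lee-type deformation of $\AKh$ in the spirit of the arguments in Section~\ref{sec:HFLW} and Section~\ref{sec:KH}. I expect a careful bookkeeping of the skein recursion, together with such a refinement for the small cases, to complete the argument.
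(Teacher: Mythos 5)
Your proposal follows essentially the same route as the paper: apply Proposition~\ref{prop:AKH2} to force $K$ to be a wrapping number $2$ clasp-braid closure (hence some $L_m$), then distinguish the members of the family by an explicit skein computation of $\AKh(L_n)$ -- this is exactly what Lemma~\ref{lem:2nearbraidcomp} supplies. Two corrections to your execution plan. First, resolving the highlighted crossing does not produce $K_{n-1}$: both resolutions are closures of $2$-braids ($\sigma^{-n-1}$ and $\sigma^{-n}$ with a strand reoriented), whose annular Khovanov homology is already known from Grigsby--Licata--Wehrli, so the exact triangle splits and gives $\AKh(L_n)$ in a single step -- no iterated recursion or injectivity-of-a-Poincar\'e-polynomial argument is needed. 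Second, your claim that total rank distinguishes $K_n$ from $K_{n'}$ for large $|n|$ fails for every mirror pair: since $\overline{L_n}=L_{-n-1}$, the total rank is $2n+8$ for all $n\geq 1$ and is symmetric under $n\mapsto -n-1$, so the homological/quantum grading data is needed to separate mirrors for all $n$, not just small ones; the explicit formulas in Lemma~\ref{lem:2nearbraidcomp} do this.
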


Our strategy is to apply an adapted version of Theorem~\ref{thm:mainbraid} to deduce that if $K$ is an annular knot with the annular Khovanov homology type of $L_n$ then $K$ is of the form $L_m$ for some $m$. We will prove this in Section~\ref{sec:AKHdetectproof}. It will thus suffice to show that annular Khovanov homology can distinguish between the links $\{L_n\}_{n\in\Z}$. We will prove this by computing $\AKh(L_n)$ for all $n$ in Section~\ref{sec:AKHcomputations}.

\subsection{Annular Khovanov homology Computations}~\label{sec:AKHcomputations}
We compute the annular Khovanov homology of clasp-braids $L_n$ shown in Figure~\ref{annularlinks}. We begin by stating and, for the sake of completeness, proving the exact triangles in annular Khovanov homology. We note however, that these results are essentially the same as those given in the Khovanov homology case, and have indeed been used in the annular Khovanov homology case in~\cite{grigsby_annular_2018}, for example. Let $[a]$ denote a shift in the homological grading of a vector space by $a$ and $\{b\}$ denote a shift in the quantum grading of a vector space by $b$. For a link diagram $D$ let $n_+$ denote the number of positive crossings in $D$ and $n_-$ denote the number of negative crossings in $D$.

\begin{lemma}\label{lem:AKhexact}
   Let $c$ be a crossing in a diagram $D$ for a link $L$. Let $L_0$ and $L_1$ be the links with diagrams given by the $0$ and $1$ resolutions of $D$ respectively. If $c$ is a positive crossing then we have a short exact sequence:
\begin{center}
\begin{tikzcd}
\AKh(L)\arrow[rr]&&\AKh(L_0)\{1\}\arrow[dl]\\&\AKh(L_1)[n_-^1-n_-+1]\{3n_-^1-3n_-+2\}\arrow[ul]
\label{eq:akhskeinpos}
\end{tikzcd}
\end{center}

If $c$ is a negative crossing then we have an exact triangle:

\begin{center}
\begin{tikzcd}
    \AKh(L)\arrow[rr]&&\AKh(L_0)[n_-^0-n_-]\{3n_-^0-3n_-+1\}\arrow[dl]\\&\AKh(L_1)\{-1\}\arrow[ul]
\label{eq:akhskeinneg}
\end{tikzcd}
\end{center}
    
Here in both triangles the connecting homomorphism preserves the annular and quantum gradings and increases the homological grading by one. The other homomorphisms preserve all gradings.
    
\end{lemma}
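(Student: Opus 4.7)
The plan is to deduce both exact triangles directly from the mapping cone structure on the annular Khovanov chain complex, following the standard template in Khovanov homology but keeping careful track of the annular grading and of the normalisation shifts imposed by $n_+$ and $n_-$.

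First I would recall that for any diagram $D$ of an annular link and any chosen crossing $c$, the unnormalised annular Khovanov complex $\CKh_{ann}(D)$ is, by construction, the mapping cone of a chain map
\[
 d_c : \CKh_{ann}(D_0)\{1\} \longrightarrow \CKh_{ann}(D_1)\{1\},
\]
where $D_0$ and $D_1$ are the $0$- and $1$-resolutions of $c$ and the map $d_c$ is induced by the merge/split saddle cobordism at $c$. The key point specific to the annular setting is that this saddle cobordism is supported in a disk disjoint from the annular axis, so its induced map on the Asaeda--Przytycki--Sikora TQFT preserves the annular grading. Hence the mapping cone short exact sequence
\[
 0 \to \CKh_{ann}(D_1)\{1\} \to \CKh_{ann}(D) \to \CKh_{ann}(D_0)\{1\}[1] \to 0
\]
is a sequence of triply graded (indeed annular-grading-preserving) chain complexes, and its long exact sequence in homology is the raw skein triangle.

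Next I would convert from unnormalised to normalised annular Khovanov homology by inserting the correct shifts $[-n_-]\{n_+ - 2n_-\}$ for each of the three diagrams $D, D_0, D_1$. This is where the two cases diverge. If $c$ is positive, then resolving at $c$ decreases $n_+$ by one in both resolutions but the negative-crossing count can only change at the $1$-resolution (since an unoriented resolution may force orientations that turn formerly positive crossings into negative ones for $D_1$, while $D_0$ inherits the orientation of $L$); comparing the shifts produces exactly $\{1\}$ for the $L_0$ summand and $[n_-^1 - n_- + 1]\{3n_-^1 - 3n_- + 2\}$ for the $L_1$ summand. The negative-crossing case is symmetric, with the roles of $L_0$ and $L_1$ swapped and the shifts correspondingly altered. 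This bookkeeping is routine but is the main place where errors can creep in, so I would do it carefully by writing out the shifts for $D$, $D_0$ and $D_1$ side by side and cancelling.

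Finally I would note that the connecting homomorphism in the long exact sequence of a mapping cone raises the homological grading by one and, because $d_c$ is induced by a saddle cobordism disjoint from the axis, preserves the quantum and annular gradings; the two other maps are chain-level inclusions/projections and preserve all gradings. Assembling these observations gives the two stated triangles. The only nontrivial step is the shift computation in the second paragraph; everything else is formal from the mapping cone description of $\CKh_{ann}(D)$ and the fact that annular grading is preserved by the Khovanov differential.
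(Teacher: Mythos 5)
Your proposal is correct and follows essentially the same route as the paper: realise the unnormalised annular complex as the mapping cone of the saddle map at $c$, take the induced long exact sequence, and then convert to the normalised invariant using the relations among $n_\pm$, $n_\pm^0$, $n_\pm^1$ (with $D_0$ inheriting the orientation when $c$ is positive and $D_1$ when $c$ is negative). Your added remark that the saddle cobordism is disjoint from the annular axis, so the edge map preserves the annular grading, is a point the paper leaves implicit; just be careful that in the intermediate short exact sequence the homological shift $[1]$ belongs on the $1$-resolution term, not the $0$-resolution term, before carrying out the final bookkeeping.
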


The superscripts on the crossing numbers indicate that we are considering the crossing number in the diagram $D_i$ obtained by taking the $i=0$ or $i=1$ resolution of $D$ at $c$.

\begin{proof}
Let $c$ be a positive crossing in a diagram $D$ for a link $L$. Note that $D_0$ inherits an orientation from $D$. Let $C$ denote the pre-shift annular Khovanov chain complex. Then:
\begin{equation*}
    C(D)\cong\Cone(C(D_0)\to C(D_1)[1]\{1\})
\end{equation*}

Where the map in the mapping cone increases the homological grading by one and preserves the quantum grading. It follows that there is an exact triangle:

\begin{center}
\begin{tikzcd}
    H(D)\arrow[rr]&&H(D_0)\arrow[dl]\\&H(D_1)[1]\{1\}\arrow[ul]
\end{tikzcd}
\end{center}

We can rewrite the exact triangle above as:

\begin{center}
\begin{tikzcd}
    H(D)[-n_-]\{n_+-2n_-\}\arrow[r]&H(D_0)[-n_-]\{n_+-1-2n_-\}\{1\}\arrow[d]\\&H(D_1)[-n_-^1]\{n_+^1-2n_-^1\}[n_-^1-n_-]\{2n_-^1-n_+^1+n_+-2n_-\}[1]\{1\}\arrow[ul]
\end{tikzcd}
    \end{center}
Note that $\AKh(L):=H(D)[-n_-]\{n_+-2n_-\}$. Moroever, $n_+^0=n_+-1$, $n_-^0=n_-$, and $n_-^1+n_+^1=n_-+n_+-1$, so we have in turn have the first exact triangle in the statement of the Lemma.

On the other hand, if $L_-$ is a link with a negative crossing $c$ then $D_1$ inherits an orientation and we have that;

\begin{center}
\begin{tikzcd}
    H(D)[-n_-]\{n_+-2n_-\}\arrow[r]&H(D_0)[-n_-^0]\{n_+^0-2n_-^0\}[n_-^0-n_-]\{n_+-2n_--n_+^0+2n_-^0\}\arrow[d]\\&H(D_1)[1-n_-]\{n_+-2n_-+2\}\{-1\}\arrow[ul]
\end{tikzcd}

\end{center}

But $n_-^1=n_--1$, $n_+^1=n_+$, and $n_+^0+n_-^0=n_-+n_+-1$, so we have in turn the desired result.

\end{proof}

We apply these skein exact triangles to compute the annular Khovanov homology of $L_n$ for all $n$. Set $V_m$ to be the $(m+1)$-dimensional $(j,k)$ graded vector space, with dimension $1$ in each annular gradings between $-m$ and $m$ that shares the same parity as $m$. $V_m$ is supported in $j-k$ grading $0$. The $j$ grading of the minimum $k$ grading generator of $V_m$ is $-m$. 
\begin{lemma}\label{lem:2nearbraidcomp}
   Let  $n\geq 1$. If $n$ is even then $L_{n}$ has annular Khovanov homology given by:

  \begin{align}
\AKh^i(L_{n})\cong\begin{cases} V_2\{2n-1\}&\text{for } i=n\\
V_2\{2n-3\}\oplus V_0\{2n-3\}&\text{for } i=n-1\\
V_0\{2i+1\}\oplus V_0\{2i-1\}&\text{if } 1\leq i\leq n-2\\
V_0\{1\}\oplus V_0\{-1\}\oplus V_0\{-1\}&\text{if }i=0\\
V_0\{-1\}&\text{if }i=-1\\
V_0\{-5\}&\text{if }i=-2\\
0&\text{else.}
\end{cases}
\end{align}

If $n$ is odd then we have that:
\begin{align}
\AKh^i(L_{n})\cong\begin{cases}V_2\{2n+5\}&\text{for } i=n+2\\
V_2\{2n+3\}\oplus V_0\{2n+3\}&\text{for } i=n+1\\
V_0\{2i+1\}\oplus V_0\{2i+3\}&\text{if }2\leq i\leq n\\
V_0\{3\}&\text{if }i=1\\
V_0\{3\}\oplus V_0\{1\}&\text{if }i=0\\
0&\text{else.}\end{cases}
\end{align}
\end{lemma}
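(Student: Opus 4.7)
The plan is to proceed by induction on $n$, applying the skein exact triangles of Lemma~\ref{lem:AKhexact} at the highlighted crossing indicated in Figure~\ref{annularlinks}. The induction changes $n$ by $2$, which accounts for the parity split in the statement, because resolving the highlighted crossing in the clasp region removes a pair of crossings while preserving the parity of the twist count.

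First I would establish the base cases $n=1$ and $n=2$. These can be computed either directly from the cube of resolutions or by bootstrapping the skein triangle starting from the annular Khovanov homology of the closure of the identity two-braid in the thickened annulus, which is $V_2 \oplus V_0$ up to grading shifts by the Grigsby--Licata--Wehrli computation~\cite[Section 9.3]{grigsby_annular_2018}. For the inductive step, resolving the highlighted crossing in the diagram for $L_n$ yields two simpler annular links: one resolution produces $L_{n-2}$, to which the inductive hypothesis applies directly; the other resolution produces an elementary annular link, namely the closure of a two-braid in the thickened annulus (or a closely related wrapping-number-two link obtained by joining the two strands of the clasp), whose annular Khovanov homology is already known. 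The sign of the highlighted crossing determines which of the two skein triangles from Lemma~\ref{lem:AKhexact} applies, and one must track the shifts $[n_\pm^i - n_\pm]\{\cdot\}$ carefully using the orientation on each resolution.

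The main technical obstacle will be verifying that the resulting long exact sequence splits into short exact sequences in each tri-graded piece, and tracking the grading shifts accurately. The splittings in the extremal gradings should be automatic: the $V_2$ summand in homological degree $n$ (even case) or $n+2$ (odd case) has no possible source or target in the other two terms of the triangle for grading reasons, so the connecting homomorphism must vanish there. In the middle homological degrees, the two $V_0$ summands of the form $V_0\{2i+1\} \oplus V_0\{2i-1\}$ (even case) or $V_0\{2i+1\} \oplus V_0\{2i+3\}$ (odd case) should arise precisely as contributions from the two neighbouring terms $\AKh(L_{n-2})$ and $\AKh$ of the two-braid closure surviving side by side, with the connecting homomorphism vanishing because the two relevant quantum gradings are separated by $2$ while the connecting map preserves the quantum grading.

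Once these splittings are verified, assembling the contributions homological degree by homological degree yields the claimed formulas. The even and odd cases are handled in parallel, with the discrepancy in grading shifts between them arising from the different signs of the highlighted crossing in the two parity classes, equivalently from whether the triangle of Lemma~\ref{lem:AKhexact} is the positive or negative version.
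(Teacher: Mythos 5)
Your inductive setup rests on a claim that is not true: that one of the two resolutions of the highlighted crossing of $L_n$ is $L_{n-2}$. The skein triangles of Lemma~\ref{lem:AKhexact} relate $\AKh(L)$ to the annular Khovanov homologies of the \emph{two unoriented resolutions} $L_0$ and $L_1$ of a single crossing; passing from $L_n$ to $L_{n-2}$ is a \emph{crossing change} (removing a full twist), which is not a resolution and never appears as a term in these triangles. (You may be conflating this with the oriented skein exact sequence for knot Floer homology used in Section~\ref{sec:HFLW}, where the triple is $L_+=L_n$, $L_-=L_{n-2}$, $L_0=\text{Hopf}^-\sqcup U$.) Relatedly, resolving one crossing removes exactly one crossing from the diagram, not "a pair," so the step-size-two induction has no diagrammatic realisation via a single application of Lemma~\ref{lem:AKhexact}. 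If instead you resolved a crossing in the twist box, one resolution would be (an unoriented diagram of) $L_{n-1}$, changing parity, and the other would merge the two strands of the clasp component — neither matches your scheme.

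The actual argument needs no induction at all. The highlighted crossing in Figure~\ref{annularlinks} is a \emph{clasp} crossing: its two resolutions are both closures of two-stranded braids in the thickened annulus, namely $\sigma^{-n-1}$ and $\sigma^{-n}$ with the orientation of one strand reversed (the orientation reversal only affects the overall grading shift). Both of these are computed by Grigsby--Licata--Wehrli~\cite[Section 9.3]{grigsby_annular_2018}, so a single application of the appropriate (positive or negative, depending on the parity of $n$) triangle from Lemma~\ref{lem:AKhexact}, together with a careful bookkeeping of the shifts $[n_\pm^i-n_\pm]\{\cdot\}$ and a check that the connecting homomorphism vanishes (for which your grading-separation argument is the right idea), yields the formulas directly for every $n\geq 1$ at once. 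Your correct instincts about the sign of the crossing, the GLW input, and the splitting criteria would all carry over once the resolutions are identified correctly.
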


For the proof we let $\sigma^n$ denote the closure of the two stranded braid with $n$ right handed half twists.

\begin{proof}
    Suppose $n$ is even. Then the highlighted crossing in Figure~\ref{annularlinks} is negative and by Lemma~\ref{lem:AKhexact} we have an exact sequence:

\begin{center}
\begin{tikzcd}
    \AKh(L_{n})\arrow[rr]&&\AKh(\sigma^{-n-1})[n_-^0-n_-]\{3n_-^0-3n_-+1\}\arrow[dl]\\&\AKh((\sigma^{-n})')\{-1\}\arrow[ul]
\end{tikzcd}
\end{center}  

where $(\sigma^{-n})'$ is $\sigma^{-n}$ with the orientation of a strand reversed. Now, $n_-^0-n_-=n-1$, so we have:

\begin{center}
\begin{tikzcd}
    \AKh(L_{n})\arrow[rr]&&\AKh(\sigma^{-n-1})[n-1]\{3n-2\}\arrow[dl]\\&\AKh((\sigma^{-n})')\{-1\}\arrow[ul]
\end{tikzcd}
\end{center}

Now, Grigsby-Licata-Wehrli computed $\AKh(\sigma^{-n})$ for all $n$~\cite[Section 9.3]{grigsby_annular_2018}, where if $n$ is even the two strands are oriented in parallel. We recall the result here for the reader's convenience. For all $n\geq 1$ we have that:

\begin{align}\label{eq:akhsigma}
\AKh^i(\sigma^{-n})\cong\begin{cases} V_2\{-n\}&\text{for } i=0\\
V_0\{2i-n\}&\text{if }-n\leq i\leq -1\text{ and }i\text{ is odd}\\
V_0\{2i+2-n\}&\text{if }-n+1\leq i\leq -2\text{ and }i\text{ is even}\\
V_0\{2-3n\}\oplus V_0\{-3n\}&\text{if }i=-n\text{ and }n\text{ is even}\\
0&\text{else.}\end{cases}
\end{align}

Observe that if $n$ is even then Equation~\ref{eq:akhsigma} implies that:

\begin{align}
\AKh^i(\sigma^{-n-1})\cong\begin{cases} V_2\{-n-1\}&\text{for } i=0\\
V_0\{2i-n-1\}&\text{if }-n-1\leq i\leq -1\text{ and }i\text{ is odd}\\
V_0\{2i+1-n\}&\text{if }-n\leq i\leq -2\text{ and }i\text{ is even}\\
0&\text{else.}\end{cases}
\end{align}

so in turn:

\begin{align}
\AKh^i(\sigma^{-n-1})[n-1]\{3n-2\}\cong\begin{cases} V_2\{2n-3\}&\text{for } i=n-1\\
V_0\{2i-1\}&\text{if }-2\leq i\leq n-2\text{ and }i\text{ is even}\\
V_0\{2i+1\}&\text{if }-1\leq i\leq n-3\text{ and }i\text{ is odd}\\
0&\text{else.}\end{cases}
\end{align}

On the other hand, the grading shift formula in annular Khovanov homology implies that $\AKh((\sigma^{-n})')\cong\AKh(\sigma^{-n})[n]\{3n\}$, so that $\AKh^i((\sigma^{-n})')\{-1\}$ is given by:

\begin{align}\label{eq:akh}
\AKh(\sigma^{-n})[n]\{3n-1\}\cong\begin{cases} V_2\{2n-1\}&\text{for } i=n\\
V_0\{2i-1\}&\text{if }0\leq i\leq n-1\text{ and }i\text{ is odd}\\
V_0\{2i+1\}&\text{if }1\leq i\leq n-2\text{ and }i\text{ is even}\\
V_0\{1\}\oplus V_0\{-1\}&\text{if }i=0\\
0&\text{else.}\end{cases}
\end{align}

We then see that the exact sequence splits and the desired result follows.

Now consider the case in which $n$ is odd. In this case the highlighted crossing in Figure~\ref{annularlinks} is positive and by Lemma~\ref{lem:AKhexact} we have an exact triangle:

\begin{center}
\begin{tikzcd}
\AKh(L_{n})\arrow[rr]&&\AKh((\sigma^{-n-1})')\{1\}\arrow[dl]\\&\AKh(\sigma^{-n})[n_-^1-n_-]\{3n_-^1-3n_-+2\}\arrow[ul]
\end{tikzcd}
\end{center}

Now, $n_-^1-n_-=n+1$, so we have that:

\begin{center}
\begin{tikzcd}
\AKh(L_{n})\arrow[rr]&&\AKh((\sigma^{-n-1})')\{1\}\arrow[dl]\\&\AKh(\sigma^{-n})[n+2]\{3n+5\}\arrow[ul]
\end{tikzcd}
\end{center}

As before we have that:

\begin{align}\label{eq:akh}
\AKh^i(\sigma^{-n})[n+2]\{3n+5\}\cong\begin{cases} V_2\{2n+5\}&\text{for } i=n+2\\
V_0\{2i+1\}&\text{if }2\leq i\leq n+1\text{ and }i\text{ is even}\\
V_0\{2i+3\}&\text{if }3\leq i\leq n\text{ and }i\text{ is odd}\\
0&\text{else.}\end{cases}
\end{align}

while $\AKh((\sigma^{-n-1})')\cong \AKh(\sigma^{-n-1})[n+1]\{3n+3\}$, so that $\AKh((\sigma^{-n-1})')\{1\}$ is given by:

\begin{align}
 \AKh(\sigma^{-n-1})[n+1]\{3n+4\}\cong \begin{cases}V_2\{2n+3\}&\text{for } i=n+1\\
V_0\{2i+1\}&\text{if }0\leq i\leq n\text{ and }i\text{ is odd}\\
V_0\{2i+3\}&\text{if }1\leq i\leq n-1\text{ and }i\text{ is even}\\
V_0\{3\}\oplus V_0\{1\}&\text{if }i=0\\
0&\text{else.}\end{cases}
\end{align}
If follows that the connecting homomorphism vanishes and we have the desired result.

\end{proof}

The $n\leq 1$ cases follow from the symmetry properties of annular Khovanov under mirroring, noting that for $n\leq 1$ $\overline{L_n}=L_{-n-1}$. Moreover, $\overline{L_0}=L_{-1}$. Thus it remains only to compute $\AKh(L_0)$.

\begin{lemma}
 The annular Khovanov homology of $L_0$ is given by 
\begin{align}
 \AKh^i(L_0)\cong\begin{cases} 
 V_2\{-3\}\oplus V_0\{-1\}&\text{for } i=0\\
 V_2\{-3\}&\text{for } i=-1\\
V_0\{-5\}&\text{if }i=-2\\
0&\text{ else.}
\end{cases}
\end{align}
\end{lemma}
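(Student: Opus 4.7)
The plan is to compute $\AKh(L_0)$ by applying the skein exact triangle of Lemma~\ref{lem:AKhexact} to the highlighted crossing $c$ in Figure~\ref{annularlinks} at $n=0$, following the same strategy used in Lemma~\ref{lem:2nearbraidcomp} for $n \geq 1$. The new feature for $n = 0$ is that one of the two resolutions is no longer a two-stranded braid closure but rather a two-component annular unlink with one component orientation-reversed, so the inputs to the skein triangle are different and the naive extension of the $n$ even formula fails.

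First, I would determine the sign of $c$ from the orientation convention. Under the conventions used in the even case of Lemma~\ref{lem:2nearbraidcomp}, $c$ is a negative crossing, and $L_0$ has $n_- = 1$ with $n_-^0 = 0$ after resolving. This yields the triangle
\begin{center}
\begin{tikzcd}
\AKh(L_0) \arrow[r] & \AKh(K_0)[-1]\{-2\} \arrow[r] & \AKh(K_1)\{-1\} \arrow[r] & \AKh(L_0)[1]
\end{tikzcd}
\end{center}
where $K_0 \cong \sigma^{-1}$ is the 0-resolution (a two-strand braid closure with parallel orientations inherited from $L_0$) and $K_1$ is the 1-resolution, an oppositely-oriented two-component annular unlink.

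Next, I would compute the annular Khovanov homology of each resolution. Grigsby-Licata-Wehrli's formula~(\ref{eq:akhsigma}) gives $\AKh(\sigma^{-1}) \cong V_2\{-1\}$ supported in homological grading $0$, which after the shift $[-1]\{-2\}$ contributes $V_2\{-3\}$ in homological grading $-1$. For the oppositely-oriented annular unlink $K_1$, the Khovanov chain complex of the underlying unlink diagram has trivial differential, so its annular Khovanov homology can be read off as a tensor product of annular unknot homologies, with an overall grading shift from the orientation reversal on one component computed as in the derivation of Equation~(\ref{eq:akh}). Plugging into the triangle and applying the shift $\{-1\}$ gives a candidate structure for each bigrading of $\AKh(L_0)$.

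Finally, I would analyze the connecting homomorphism. It preserves the annular grading and sends quantum grading $q$ to $q$ while raising the homological grading by one, so it is constrained to act between only a few pairs of bigradings. Comparing the resulting possibilities against the annular-grading parity of $L_0$ (an annular knot) and the claimed formula, I would pin down which components of the connecting map vanish and which do not. The main obstacle will be the explicit identification of this connecting homomorphism: while it is heavily constrained, showing it has exactly the rank required to yield the claimed $V_2\{-3\} \oplus V_0\{-1\}$, $V_2\{-3\}$, $V_0\{-5\}$ structure may require a direct TQFT-level calculation on the Bar-Natan cobordism from $K_0$ to $K_1$, as the exact triangle alone leaves some ambiguity in the extension.
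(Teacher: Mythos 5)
Your overall route is the same as the paper's: apply the negative-crossing skein triangle of Lemma~\ref{lem:AKhexact} to the highlighted crossing, identify the $0$-resolution with $\sigma^{-1}$ (shifted by $[-1]\{-2\}$) and the $1$-resolution with the crossingless closure of the identity two-braid with opposite orientations, and argue that the triangle splits. However, there is a concrete error in the one computation you actually carry out: equation~(\ref{eq:akhsigma}) at $n=1$ does \emph{not} give $\AKh(\sigma^{-1})\cong V_2\{-1\}$ concentrated in homological degree $0$; it also contains a $V_0\{-3\}$ summand in homological degree $-1$ (the case $-n\leq i\leq -1$ with $i$ odd). After the shift $[-1]\{-2\}$ this summand becomes exactly the $V_0\{-5\}$ in homological degree $-2$ of the claimed answer, so as written your computation cannot reproduce the stated formula --- the $i=-2$ line would be missing entirely.

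Second, the difficulty you anticipate with the connecting homomorphism does not arise. Once both resolutions are computed correctly, $\AKh(\sigma^{-1})[-1]\{-2\}$ is supported in homological degrees $-1$ and $-2$, while $\AKh(\mathbf{1})\{-1\}$ is supported only in homological degree $0$. Since the non-connecting maps in the triangle preserve all gradings, the map between the two resolved terms in the long exact sequence vanishes for support reasons, and, everything being a vector space over a field, the sequence splits with no extension ambiguity; this is precisely how the paper concludes, and no TQFT-level analysis of the cobordism map is needed. (Your values $n_-=1$, $n_-^0=0$ differ from the paper's $n_-=2$, $n_-^0=1$, but only the differences enter the shift, so this discrepancy is harmless.)
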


\begin{proof}
      We proceed as in the previous Lemma. Applying Lemma~\ref{lem:AKhexact} to the crossing highlighted in Figure~\ref{annularlinks} we have the following exact triangle;

\begin{center}
\begin{tikzcd}
    \AKh(L_{0})\arrow[rr]&&\AKh(\sigma^{-1})[n_-^0-n_-]\{3n_-^0-3n_-+1\}\arrow[dl]\\&\AKh(\mathbf{1})\{-1\}\arrow[ul]
\end{tikzcd}
\end{center}

Here $\mathbf{1}$ is the closure of the identity two stranded braid where the orientations of the two components disagree. Observe that $n_-^0=1$, $n_-=2$, we in turn have 

\begin{center}
\begin{tikzcd}
    \AKh(L_{0})\arrow[rr]&&\AKh(\sigma^{-1})[-1]\{-2\}\arrow[dl]\\&\AKh(\mathbf{1})\{-1\}\arrow[ul]
\end{tikzcd}
\end{center}

Now, Grigsby-Licata-Wehrli~\cite[Section 9.3]{grigsby_annular_2018}, computed $\AKh(\sigma^{-1})$, so we have that:

\begin{align}
\AKh^i(\sigma^{-1})[-1]\{-2\}\cong\begin{cases} V_2\{-3\}&\text{for } i=-1\\
V_0\{-5\}&\text{if }i=-2\\
0&\text{else.}\end{cases}
\end{align}

On the other hand it follows from the definition of annular Khovanov homology that $\AKh^0(\mathbf{1})\{-1\}\cong V_2\{-3\}\oplus V_0\{-1\}$ and $\AKh^i(\mathbf{1})\{-1\}\cong 0$ for $i\neq 0$. It follows that the exact triangle splits and we have the desired result.

\end{proof}

\subsection{Detection Result Proofs}\label{sec:AKHdetectproof}

To prove Theorem~\ref{thm:AKH} we apply Xie's spectral sequence from $\AKh(L;\C)$ to a version of instanton Floer homology called \emph{annular instanton Floer homology}~\cite{xie2021instantons}, denoted $\AHI(L;\C)$. We note that while appropriate versions of instanton Floer homology and Heegaard Floer homology are conjecturally equivalent~\cite[Conjecture 7.24]{kronheimer2010knots} there is currently no analogue of Xie's spectral sequence in the setting of Heegaard Floer homology.

We now state a limited version of Theorem~\ref{thm:mainbraid} in the context of annular instanton Floer homology. To do so recall that annular instanton Floer homology carries a version of an Alexander grading, the top non-trivial summand of which is a version of sutured instanton Floer homology by~\cite[Lemma 7.10]{xie_instanton_2019}. This should be compared with Corollary~\ref{cor:annularlinkfloer}.

\begin{lemma}\label{lem:wrappingnumber1detection}
    Suppose $L$ is an annular link such that the maximal non-trivial annular grading of $\AKh(L;\C)$ is $w\in\{1,2\}$. Then $L$ has wrapping number $w$.
\end{lemma}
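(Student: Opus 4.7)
The plan is to show that the maximum non-trivial annular grading $w$ of $\AKh(L;\C)$ and the wrapping number $w_0$ of $L$ coincide, by establishing the two inequalities $w\leq w_0$ and $w_0\leq w$. The restriction $w\in\{1,2\}$ is then only used to ensure $w_0\geq 1$ by ruling out the trivial case.

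For the elementary upper bound $w\leq w_0$, first fix a diagram $D$ of $L$ on the annulus realizing the wrapping number, so that $D$ meets a meridional disk transversely in exactly $w_0$ points. Kauffman-state resolutions of $D$ are local modifications at crossings, all of which are disjoint from the meridional disk; hence every resolution of $D$ still meets the meridional disk in the same $w_0$ points. Each component of a resolution is an embedded curve in the annulus and is therefore either trivial (disjoint from the meridional disk) or essential (meeting the meridional disk in exactly one point). Consequently every resolution of $D$ contains exactly $w_0$ essential circles, and since the annular grading of a labeled resolution is a signed count over its essential circle labels, every generator of the annular Khovanov chain complex sits in annular grading within $[-w_0,w_0]$. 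This gives $w\leq w_0$.

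For the reverse inequality, first rule out $w_0=0$: if $L$ were contained in a $3$-ball in $A\times I$ then it would admit a diagram disjoint from a meridional disk, so every resolution would have no essential circles and $\AKh(L;\C)$ would be concentrated in annular grading $0$; this contradicts the assumption $w\geq 1$. Hence $w_0\geq 1$. Now apply Xie's spectral sequence from $\AKh(L;\C)$ to $\AHI(L;\C)$~\cite{xie_instanton_2019}, which respects the annular grading and therefore yields a rank inequality in each annular grading. Combining this with the Xie-Zhang identification~\cite{xie2020links} of the top annular summand of $\AHI(L;\C)$ with an instanton Floer tangle invariant of the exterior of $L$ in $A\times I$, together with the non-vanishing of that tangle invariant for non-trivial annular links (the instanton analogue of Juh\'asz's product-detection theorem), one concludes that the maximum non-trivial annular grading of $\AHI(L;\C)$ equals $w_0$. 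The spectral-sequence rank bound then gives $w\geq w_0$.

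The main obstacle is establishing the non-vanishing of the top annular grading summand of $\AHI(L;\C)$ via the Xie-Zhang tangle invariant; the upper bound and the handling of $w_0=0$ are both elementary. Combining the two inequalities yields $w=w_0$, proving the lemma.
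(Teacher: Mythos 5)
Your argument is correct and follows essentially the same route as the paper's: both rest on Xie's spectral sequence from $\AKh(L;\C)$ to $\AHI(L;\C)$ together with the fact that the top non-trivial annular grading of $\AHI$ is the wrapping number (the paper simply cites Xie's Theorem 5.16 and Corollary 1.16 where you unpack the elementary upper bound and sketch the Xie--Zhang tangle-invariant/non-vanishing input). One small correction to your upper bound: a trivial circle in a resolution may meet the meridional arc (in an even number of points) and an essential circle may meet it more than once (an odd number of times), so a resolution has \emph{at most} $w_0$ essential circles rather than exactly $w_0$; the parity count still bounds the annular grading by $w_0$, so the conclusion is unaffected.
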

\begin{proof}
    Suppose $L$ is as in the statement of the Lemma. Consider Xie's spectral sequence from $\AKh(L;\C)$ to $\AHI(L;\C)$~\cite{xie2021instantons}. It follows from~\cite[Theorem 5.16]{xie2021instantons} that $\AHI(L;\C)$ is either supported only in annular grading $0$ or the maximum non-trivial annular grading is $w$. 

Suppose towards a contradiction that $\AHI(L;\C)$ is supported only in annular grading $0$. Then~\cite[Corollary 1.16]{xie2021instantons} implies that $L$ is contained in a $3$-ball in the thickened annulus. But then $\AKh(L;\C,w)=0$, a contradiction.
\end{proof}
We now prove a version of Corollary~\ref{cor:annularlinkfloer} for annular instanton Floer homology.

\begin{proposition}\label{prop:AHI}
    Suppose $K$ is a knot with $\AHI(K;\C)$ of rank at most $2$ in the maximal non-trivial annular grading, which we assume is non-zero. Then either:
    \begin{enumerate}
        \item $K$ is a braid closure,
        \item $K$ is a clasp-braid closure or,
        \item $K$ is a stabilized clasp-braid closure.
    
    \end{enumerate}
\end{proposition}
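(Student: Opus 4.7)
The plan is to mirror the proof of Proposition~\ref{prop:nocomponentlongsurface} in sutured instanton Floer homology rather than in sutured Heegaard Floer homology. The key identification, which I would record first, is that the top non-trivial annular grading summand of $\AHI(K;\C)$ is isomorphic to the sutured instanton Floer homology $\SHI(Y_T,\gamma_T)$ of the sutured tangle exterior obtained by decomposing the complement of $K$ (together with the annular axis $U$) along a meridian disk $D$ of the solid torus complement of $U$. This is precisely the instanton analogue of the decomposition used in Section~\ref{subsec:suturedtangleext}, and is available via the Xie--Zhang instanton tangle invariant framework cited in the introduction (together with~\cite[Lemma 7.10]{xie_instanton_2019}). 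Under this identification the hypothesis becomes $\rank\SHI(Y_T,\gamma_T) \leq 2$, and the tangle $T$ (the image of $K$ in the cut-open sutured manifold) has no closed components because $K$ is connected and meets $D$ in finitely many points.

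Next I would run the argument of Proposition~\ref{prop:nocomponentlongsurface} with each Heegaard-Floer input replaced by its instanton counterpart: Kronheimer--Mrowka's product detection theorem for $\SHI$ (giving that $\rank\SHI(Y_T,\gamma_T)=1$ forces $(Y_T,\gamma_T)$ to be a product), the instanton versions of the horizontal surface and product annulus decomposition inequalities (as developed by Kronheimer--Mrowka and Ghosh--Li), the instanton Thurston-norm/polytope machinery, and the instanton classification of almost-product sutured manifolds embedded in $S^3$ (the analogue of Baldwin--Sivek's classification, which depends only on the sutured norm and product/annulus decomposition tools just listed). The $\rank\SHI=1$ case immediately yields that $T$ is a braid in a product sutured manifold, so $K$ is a braid closure, which is case~(1).

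In the $\rank\SHI=2$ case, the almost-product classification gives a decomposition of $(Y_T,\gamma_T)$ along essential product annuli into a product piece glued to exactly one of $T_4$, a slope-$2$ solid torus, or a slope-$2$ trefoil exterior. The latter two possibilities are ruled out exactly as in Lemma~\ref{lem:nocomponentstep2}: each forces $H_1$ to contain a $\Z/2$ summand, which is incompatible with $Y_T$ being the complement in $S^3$ of a disk together with a tangle. The $T_4$ possibility is then handled by the purely topological reassembly arguments of Lemma~\ref{lem:nocomponentstep1} and Lemma~\ref{lem:suturedclaspext}; these take as input only the decomposition of $(Y_T,\gamma_T)$ along a minimal family of essential product annuli and the ambient $S^3$ geometry, and produce the three possible regluings that realize $T$ as (a) a braid (almost product case), (b) a clasp-braid, or (c) a stabilized clasp-braid. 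Closing up along $U$ yields the corresponding three cases of the conclusion.

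The main obstacle is verifying that all of the needed sutured instanton Floer ingredients (product detection, annulus/horizontal decomposition formulas, polytope/Thurston-norm detection, and the embedded almost-product classification) are either available in the literature or are routine adaptations of their $\SFH$ counterparts; once these are in hand the topological reassembly of Section~\ref{sec:mainthmpf} transfers verbatim. A secondary subtlety is ensuring that the top annular grading of $\AHI(K;\C)$ really equals $\SHI$ of the cut-open exterior rather than a strict subspace, which is where the Xie--Zhang tangle invariant description enters essentially.
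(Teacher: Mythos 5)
Your proposal is correct and matches the paper's argument in all essentials: the paper likewise identifies the top annular grading with the sutured instanton homology of the cut-open exterior, invokes Kronheimer--Mrowka's decomposition result, Ghosh--Li's product-annulus decomposition (which is why the $H_2=0$ observation, guaranteed here because the meridional disk must meet $K$, is needed), and Li--Ye's instanton classification of rank-$2$ knot exteriors, before re-running the topological reassembly of Lemma~\ref{lem:nocomponentstep1}. The ingredients you flag as the ``main obstacle'' are exactly the ones the paper cites as available in the literature, so your outline is the intended proof.
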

We remark that a version of this statement could be proven for links if~\cite[Corollary 1.16]{ghosh2019decomposing} were to be generalized to a version of Corollary~\ref{cor:trivialinclusion}.

\begin{proof}[Proof of Proposition~\ref{prop:AHI}]
    The proof follows that of Corollary~\ref{cor:annularlinkfloer}, but applying results for instanton Floer homology rather than link Floer homology. Specifically, sutured instanton Floer homology behaves as sutured (Heegaard) Floer homology under sutured decomposition by~\cite[Proposition 7.11]{kronheimer2010knots}. Observe that any meridional surface~\footnote{We have called such surfaces ``longitudinal" thus far, in accordance with Martin's terminology~\cite{martin2022khovanov}, but use Xie's terminology here.} intersects $K$ since we are assuming that the maximal non-trivial Alexander grading is non-zero. It follows that the sutured manifold obtained by decomposing the exterior of $K$ in the thickened annulus has second homology group zero. Work of Ghosh-Li~\cite[Corollary 1.16]{ghosh2019decomposing} allows us to find annuli  to decompose along to obtain a reduced sutured manifold with sutured instanton Floer homology of rank at most $2$. Noting that this sutured manifold must have genus one boundary and hence be a knot exterior, we can apply the classification of knot exteriors in $S^3$ with sutured instanton Floer homology of rank $2$ given in~\cite[Theorem 1.4]{li2022seifert}. We can now re-run the topological argument given in Proposition~\ref{lem:nocomponentstep1} to conclude the result, noting that the axis is unknotted.
\end{proof}

\begin{remark}\label{rem:AKHAHI}
We return briefly to Question~\ref{q:nearlybraidedannular}. As we noted earlier, Grigsby-Ni showed that annular Khovanov homology detects braids amongst annular links~\cite{grigsby_sutured_2014}. More precisely, an annular link $L$ is a braid if and only if $\AKh(L;\Z/2)$ is of rank one in the maximal non-trivial annular grading. For annular links with annular Khovanov homology of rank $2$ in the maximal non-trivial annular grading for which Xie's spectral sequence from the maximum non-trivial annular grading of annular Khovanov homology collapses immediately, such a classification would be exactly that given in the annular instanton setting. It is less clear what would happen in the event that the spectral sequence does not collapse.
\end{remark}

We now study annular knots with annular Khovanov homology of rank $2$ in the maximal non-trivial annular grading, in the case that that grading is $0$, $1$ or $2$.

In the case that the maximum non-trivial annular grading of annular Khovanov homology is $0$ we have simply that $L$ is the unknot. To see this first note that $L$ has wrapping number zero by work of Zhang-Xie~\cite{xie_instanton_2019}. The fact that $K$ is the unknot then follows from the fact that the only link with Khovanov homology of rank two is the unknot~\cite{kronheimer2011khovanov}.

\begin{proposition}\label{prop:AKH1}
    Suppose $K$ is an annular knot. Suppose that the maximal non-trivial annular grading of $\AKh(K;\C)$ is one and that $\rank(\AKh(K;\C,1))\leq 2$. Then  $K$ is a one braid and $\rank(\AKh(K;\C,1))=1$.
    
\end{proposition}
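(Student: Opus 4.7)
The plan is to mimic the strategy used earlier in the section for Proposition~\ref{prop:AKH2}, passing from annular Khovanov homology to annular instanton Floer homology via Xie's spectral sequence, and then applying the instanton-theoretic structure result Proposition~\ref{prop:AHI} together with the wrapping number bound Lemma~\ref{lem:wrappingnumber1detection}.

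First I would invoke Lemma~\ref{lem:wrappingnumber1detection} to conclude that $K$ has wrapping number $1$. Next, since Xie's spectral sequence from $\AKh(K;\C)$ to $\AHI(K;\C)$ respects the annular grading and since the maximum non-trivial annular grading in both theories agree (under the hypothesis that it is not zero, which is forced here because $\rank(\AKh(K;\C,1))\neq 0$), I would deduce the rank bound $\rank(\AHI(K;\C,1))\leq \rank(\AKh(K;\C,1))\leq 2$. Applying Proposition~\ref{prop:AHI} then yields that $K$ is either a braid closure, a clasp-braid closure, or a stabilized clasp-braid closure.

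Now I would rule out the latter two possibilities on topological grounds: clasp-braid closures have wrapping number exactly $2$ and stabilized clasp-braid closures have wrapping number at least $2$, whereas our $K$ has wrapping number $1$. Therefore $K$ must be a braid closure, and since the wrapping number is $1$, it is a one-braid, i.e.\ a meridional unknot in the thickened annulus (the closure of a single strand running once around the annular axis).

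Finally, I would confirm the rank statement by a direct computation: the annular Khovanov complex of a one-braid is supported on a single annular generator in annular grading $1$ (and its mirror in annular grading $-1$), giving $\rank(\AKh(K;\C,1))=1$. The only potential obstacle here is verifying that the hypotheses of Lemma~\ref{lem:wrappingnumber1detection} and of Proposition~\ref{prop:AHI} transfer cleanly, in particular that the $\C$-coefficient hypotheses are consistent with Xie's instanton spectral sequence and that Proposition~\ref{prop:AHI} applies (which it does, since it is stated for knots). The main technical content all comes from already-established results, so the proof should be short and clean.
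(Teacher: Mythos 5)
Your proposal is correct and follows essentially the same route as the paper: Lemma~\ref{lem:wrappingnumber1detection} gives wrapping number one, Xie's spectral sequence gives $\rank(\AHI(K;\C,1))\leq 2$, and Proposition~\ref{prop:AHI} combined with the fact that (stabilized) clasp-braid closures have index at least two forces $K$ to be a one-braid. The only addition is your explicit final rank computation, which the paper leaves implicit; it is a harmless (and correct) extra check.
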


\begin{proof}
    Suppose $K$ is as in the statement of the Proposition. By Lemma~\ref{lem:wrappingnumber1detection}, $L$ has wrapping number one. Moreover, we have that $0\leq\rank(\AHI(K;\C,1))\leq 2$, so the result now follows from Proposition~\ref{prop:AHI}.
\end{proof}

A similar statement holds in the case that the maximal non-trivial annular grading is two.

\begin{proposition}\label{prop:AKH2}
    Suppose $K$ is an annular knot. Suppose that the maximal non-trivial annular grading of $\AKh(K;\C)$ is two and that $\rank(\AKh(K;\C,2))\leq 2$. Then either:
    \begin{enumerate}
        \item $K$ is a braid, in which case $\rank(\AKh(K;\C,2))=1$ or;
        \item $K$ is a clasp-braid closure  in which case $\rank(\AKh(K;\C,2))=2$.
    \end{enumerate}
\end{proposition}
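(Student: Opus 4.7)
My plan is to mirror the structure of Proposition~\ref{prop:AKH1} but carry out an extra step at the end to rule out the stabilized clasp-braid closure case and to pin down the rank in each surviving case. First, by Lemma~\ref{lem:wrappingnumber1detection}, the hypothesis that the maximal non-trivial annular grading of $\AKh(K;\C)$ equals $2$ forces $K$ to have wrapping number $2$. Next, I would invoke Xie's spectral sequence from $\AKh(K;\C)$ to $\AHI(K;\C)$, which preserves the annular grading, to deduce $\rank(\AHI(K;\C,2)) \leq \rank(\AKh(K;\C,2)) \leq 2$. Combined with the sutured-instanton version of the wrapping-number detection (\cite[Corollary 1.16]{xie2021instantons}), which guarantees $\AHI(K;\C,2)\neq 0$ as the wrapping number is $2$, we conclude that the maximal non-trivial annular grading of $\AHI(K;\C)$ is $2$ and the rank there is $1$ or $2$.

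At this point Proposition~\ref{prop:AHI} applies, telling us that $K$ is a braid closure, a clasp-braid closure, or a stabilized clasp-braid closure. The third possibility is ruled out immediately: stabilized clasp-braid closures have index at least $3$ by the observation after Definition of stabilized clasp-braid closures in Section~\ref{sec:almostbraids}, whereas the index of $K$ with respect to its unknotted axis equals its wrapping number, which is $2$. So only the first two cases survive.

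It remains to compute $\rank(\AKh(K;\C,2))$ in each case. If $K$ is a braid closure of wrapping number $2$, then $K$ is a $2$-braid closure, and by the computation of Grigsby-Licata-Wehrli~\cite[Section 9.3]{grigsby_annular_2018} recalled in Equation~\eqref{eq:akhsigma}, $\rank(\AKh(K;\C,2))=1$. If $K$ is a clasp-braid closure of wrapping number $2$, then since the braid part must have zero strands (the clasp already contributes both strands), $K$ is obtained from a clasp in a disk by identifying the top and bottom via some framing, so $K$ belongs to the family $\{L_n\}_{n\in\Z}$ of Figure~\ref{annularlinks}. The direct computation in Section~\ref{sec:AKHcomputations} shows that for every such $n$ (restricting to knot closures), $\rank(\AKh(L_n;\C,2))=2$, coming from the two $V_2$ summands in the top-homological degrees.

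The main step that requires care is the identification of wrapping number $2$ clasp-braid closures with the family $\{L_n\}$: I would justify it by noting that decomposing the exterior of the unknotted axis along its longitudinal disk converts the clasp-braid data to a clasp in a $3$-ball, and the closure is then determined by a framing, parameterized by $\Z$. Once this reduction is in place, the rank equality $\rank(\AKh(K;\C,2))=2$ in the clasp-braid case follows from the explicit computations in Lemma~\ref{lem:2nearbraidcomp} and its neighbors, completing the proof.
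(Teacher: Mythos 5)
Your argument follows the paper's proof essentially verbatim: wrapping number $2$ via Lemma~\ref{lem:wrappingnumber1detection}, Xie's spectral sequence to bound $\rank(\AHI(K;\C,2))$, Proposition~\ref{prop:AHI} to list the three cases, and the observation that stabilized clasp-braid closures have index (equivalently, wrapping number) at least three to eliminate the third case. Your additional verification of the rank values in the two surviving cases, via the Grigsby--Licata--Wehrli computation and the identification of wrapping number $2$ clasp-braid closures with the family $\{L_n\}$ computed in Lemma~\ref{lem:2nearbraidcomp}, is correct and fills in a detail the paper leaves implicit.
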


\begin{proof}
Suppose $K$ is as in the statement of the Proposition. By Lemma~\ref{lem:wrappingnumber1detection}, $K$ has wrapping number $2$. Consider Xie's spectral sequence from $\AKh(K;\C)$ to $\AHI(K;\C)$~\cite{xie2021instantons}. It follows from~\cite[Theorem 5.16]{xie2021instantons} that $\AHI(K;\C)$ has maximum non-trivial annular grading $2$ and $\AHI(K;\C)$ is of rank at most $2$ in that grading. The result now follows from Proposition~\ref{prop:AHI}, noting that the wrapping number of a stabilized clasp-braid is at least three.

\end{proof}

 We can apply Proposition~\ref{prop:AKH2} to obtain some new annular knot detection results.

\begin{proof}[Proof of Theorem~\ref{thm:AKH}]
    Suppose $K$ is an annular knot with $\AKh(K;\C)\cong\AKh(L_n)$ for some $n$. Observe that the hypothesis of Proposition~\ref{prop:AKH2} apply, so we have that $K$ is a clasp-braid closure, i.e. $K$ is of the form $L_n$ for some $n$. The result then follows from Lemma~\ref{lem:2nearbraidcomp}, which implies that if $\AKh(L_n)\cong\AKh(L_m)$ then $n=m$.
\end{proof}

\bibliographystyle{plain}
\bibliography{bibliography}
\end{document}